\numberwithin{equation}{section}
\theoremstyle{plain}
\newtheorem{thm}{Theorem}[section]
\theoremstyle{definition}
\newtheorem{defn}[thm]{\protect\definitionname}
\theoremstyle{plain}
\newtheorem{lem}[thm]{\protect\lemmaname}
\theoremstyle{plain}
\newtheorem{prop}[thm]{\protect\propositionname}
\theoremstyle{plain}
\theoremstyle{plain}
\newtheorem{cor}[thm]{\protect\corollaryname}
\theoremstyle{remark}
\newtheorem*{claim*}{\protect\claimname}
\theoremstyle{plain}
\newtheorem*{thm*}{\protect\theoremname}  
\theoremstyle{plain}
\newtheorem*{lem*}{\protect\lemmaname}
\theoremstyle{plain}
\newtheorem*{prop*}{\protect\propositionname}
\theoremstyle{remark}
\newtheorem{rem}[thm]{Remark}
\theoremstyle{remark}
\newtheorem*{rem*}{Remark}
  \providecommand{\corollaryname}{Corollary}
  \providecommand{\definitionname}{Definition}
  \providecommand{\claimname}{Claim}
  \providecommand{\lemmaname}{Lemma}
  \providecommand{\propositionname}{Proposition}
\providecommand{\theoremname}{Theorem}
\newcommand{\R}{\mathbb{R}}
\renewcommand{\P}{\mathbb{P}}
\newcommand{\E}{\mathbb{E}}
\newcommand{\X}{\mathbb{X}}
\newcommand{\norm}[1]{\left\Vert #1\right\Vert}
\begin{document}

\begin{frontmatter}
\title{Estimating the Reach of a Manifold}
\runtitle{Estimating the Reach of a Manifold}

\begin{aug}
\author{\fnms{Eddie} \snm{Aamari}%
\thanksref{a1,t1,t2}%
\ead[label=e1]{aamari@lpsm.paris}%
\ead[label=u1,url]{lpsm.paris/~aamari/}%
}%
,
\author{\fnms{Jisu} \snm{Kim}%
\thanksref{a3,t4,t5}\ead[label=e2]{jisuk1@andrew.cmu.edu}%
\ead[label=u2,url]{stat.cmu.edu/~jisuk/}%
}%
,
\\
\author{\fnms{Fr\'ed\'eric} \snm{Chazal}%
\thanksref{a3,t1,t2}%
\ead[label=e3]{frederic.chazal@inria.fr}%
\ead[label=u3,url]{geometrica.saclay.inria.fr/team/Fred.Chazal/}%
}%
,
\author{\fnms{Bertrand} \snm{Michel}%
\thanksref{a4,t1,t2}%
\ead[label=e4]{bertrand.michel@ec-nantes.fr}%
\ead[label=u4,url]{bertrand.michel.perso.math.cnrs.fr/}%
}%
,
\\
\author{\fnms{Alessandro} \snm{Rinaldo}%
\thanksref{a2,t5}%
\ead[label=e5]{arinaldo@cmu.edu}%
\ead[label=u5,url]{stat.cmu.edu/~arinaldo/}%
}%
,
\and
\author{\fnms{Larry} \snm{Wasserman}%
\thanksref{a2}%
\ead[label=e6]{larry@stat.cmu.edu}%
\ead[label=u6,url]{stat.cmu.edu/~larry/}%
}

\thankstext{t1}{Research supported by ANR project TopData ANR-13-BS01-0008}
\thankstext{t2}{Research supported by Advanced Grant of the European Research Council GUDHI}
\thankstext{t4}{Supported by Samsung Scholarship}
\thankstext{t5}{Partially supported by NSF CAREER Grant DMS 1149677}
\runauthor{E. Aamari, J. Kim et al.}

\address[a1]{
CNRS, LPSM \\
Université Paris Diderot \\
\printead{e1}
\printead{u1}
}

\address[a3]{
Inria Saclay -- \^{I}le-de-France \\
\printead{e2,e3} \\
\printead{u2,u3}
}
\address[a2]{
Department of Statistics \\
Carnegie Mellon University \\
\printead{e5,e6} \\
\printead{u5,u6}
}

\address[a4]{
Département Informatique et Mathématiques \\
\'Ecole Centrale de Nantes \\
\printead{e4}
\printead{u4}
}

\end{aug}

\begin{abstract}
Various problems in manifold estimation make use of a quantity called the {\em reach}, denoted by $\tau_M$, which is a measure of the regularity of the manifold. This paper is the first investigation into the problem of how to estimate the reach. First, we study the geometry of the reach through an approximation perspective. 
We derive new geometric results on the reach for submanifolds without boundary. An estimator $\hat{\tau}$ of $\tau_M$ is proposed in an oracle framework where tangent spaces are known, and bounds assessing its efficiency are derived. In the case of i.i.d. random point cloud $\X_n$, $\hat{\tau}(\X_n)$ is showed to achieve uniform expected loss bounds over a $\mathcal{C}^3$-like model. Finally, we obtain upper and lower bounds on the minimax rate for estimating the reach.

\end{abstract}

\begin{keyword}[class=MSC]
\kwd[Primary ]{62G05}
\kwd[; secondary ]{62C20, 68U05}
\end{keyword}

\begin{keyword}
\kwd{Geometric Inference}
\kwd{Reach}
\kwd{Minimax Risk}
\end{keyword}

\end{frontmatter}

\section{Introduction}
\label{sec:introduction}

\subsection{Background and Related Work}

Manifold estimation
has become an increasingly important problem in statistics and machine learning.
There is now a large literature on methods and theory for
estimating manifolds.
See, for example,
\cite{Kim2015,Genovese12,Fefferman16,
Boissonnat14,
NiyogiSW2008,
belkin2006manifold,gine2006empirical}.

Estimating a manifold, or functionals of a manifold,
requires regularity conditions.
In nonparametric function estimation, regularity conditions often
take the form of smoothness constraints.
In manifold estimation problems,
a common assumption is that the reach $\tau_M$ of the manifold $M$ is non-zero.

First introduced by Federer \cite{Federer1959}, 
the reach $\tau_M$ of a set $M\subset \R^D$ is the largest number such that any point at distance less than $\tau_M$ from $M$ has a unique nearest point on $M$.
If a set has its reach greater than $\tau_{min}>0$, then one can roll freely a ball of radius $\tau_{min}$ around it \cite{Cuevas12}. 
The reach is affected by two factors: the curvature of the manifold and
the width of the narrowest bottleneck-like structure of $M$, which quantifies how
close $M$ is from being self-intersecting.

Positive reach is the minimal regularity assumption on sets in geometric measure
theory and integral geometry \cite{Federer69,Thale08}. 
Sets with positive reach exhibit a structure that is close to being differential --- the
so-called tangent and normal cones. The value of the reach itself quantifies the
degree of regularity of a set, with larger values associated to more regular
sets.
The positive reach assumption is routinely imposed in the statistical analysis of geometric structures in order to ensure good statistical properties \cite{Cuevas12} and to derive
theoretical guarantees. 
For example, in manifold reconstruction, the reach helps formalize minimax rates~\cite{Genovese12,Kim2015}. 
The optimal manifold estimators of \cite{Aamari15} implicitly use reach as a scale parameter in their construction.
In homology inference \cite{NiyogiSW2008,Balakrishnan12}, the reach drives the minimal sample size required to consistently estimate topological invariants.
It is used in \cite{Cuevas07} as a regularity parameter in the estimation of the Minkowski boundary lengths and surface areas.
The reach has also been explicitly used as a regularity parameter in geometric inference, such as in volume estimation \cite{Arias16} and manifold clustering~\cite{Arias13}.
Finally, the reach often plays the role of a scale parameter in dimension reduction techniques such as vector
diffusions maps \cite{Singer12}. 
Problems in computational geometry such as manifold reconstruction also rely on
assumptions on the
reach \cite{Boissonnat14}.

In this paper we study the problem of estimating reach.
To do so, we first provide new geometric results on the reach.
We also give the first bounds on the minimax rate
for estimating reach.
As a first attempt to study reach estimation in the literature, we will mainly work in a framework where a point cloud is observed jointly with tangent spaces, before relaxing this constraint in Section~\ref{sec:unknownTangent}.
Such an oracle framework has direct applications in digital imaging~\cite{Klette04,Hug17}, where a very high resolution image or 3D-scan, represented as a manifold, enables to determine precisely tangent spaces for arbitrary finite set of points \cite{Hug17}.

There are very few papers on this problem.
When the embedding dimension is $3$, the estimation of the local feature size (a localized
version of the reach) was tackled in a deterministic way in
\cite{Dey06}. To some extent, the estimation of the medial axis 
(the set of points that have strictly more than one nearest point
on $M$) and its generalizations~\cite{Cuevas14,Attali09} can be viewed
as an indirect way to estimate the reach.
A test procedure designed to
validate whether data actually comes from a smooth manifold satisfying a condition on the reach was
developed in~\cite{Fefferman16}. The authors derived a consistent test procedure, but the results do not permit any
inference bound on the reach.
When a sample is uniformly distributed over a full-dimensional set, \cite{Rodriguez16} proposes a selection procedure for the radius of $r$-convexity of the set, a quantity closely related to the reach.

\subsection{Outline}

In Section \ref{sec:framework} we provide some differential geometric background
and define the statistical problem at hand.  New
geometric properties of the reach are derived in Section
\ref{sec:geometry}, and their consequences for its inference follow {in}
Section \ref{sec:estimator} in a setting where tangent spaces are
known.  We then derive minimax bounds in
Section~\ref{sec:minimax}.
An extension to a model
where tangent spaces are unknown is discussed {in} Section
\ref{sec:unknownTangent}, and we conclude with some open questions in
Section \ref{sec:conclusion}.
For sake of readability, the proofs are given in the Appendix.

\section{Framework}
\label{sec:framework}

\subsection{Notions of Differential Geometry}
In what follows, $D\geq 2$ and $\mathbb{R}^D$ is endowed with the Euclidean
inner product $\left\langle \cdot, \cdot \right\rangle$ and the associated norm $\norm{\cdot}$.
The associated closed ball of radius $r$ and center $x$ is denoted by $\mathcal{B}(x,r)$.
We will consider compact connected submanifolds $M$  of $\R^D$ of fixed
dimension $1\leq d < D$ and without boundary \cite{DoCarmo92}. 
For every point $p$ in $M$, the tangent space of $M$ at $p$ is denoted
by $T_p M$: 
it is the $d$-dimensional linear subspace of $\R^D$ composed of the directions that $M$ spans in the neighborhood of $p$.
Besides the Euclidean structure given
by $\R^D \supset M$, a submanifold is endowed with an intrinsic distance
induced by the ambient Euclidean one, and called the geodesic distance.
Given a
smooth path $c: [a,b] \rightarrow M$, the length of $c$ is defined as $Length(c)
= \int_a^b \norm{c'(t)}dt$. One can show \cite{DoCarmo92} that there exists a path $\gamma_{p \rightarrow q}$ of minimal length joining $p$ and $q$. 
Such an arc is called geodesic, and the geodesic distance between $p$ and $q$ is given by $d_M(p,q) = Length(\gamma_{p \rightarrow q})$. 
We let $\mathcal{B}_M(p,s)$ denote the closed geodesic ball of center $p\in M$ and of radius $s$.
A geodesic $\gamma$ such that $\norm{\gamma'(t)} = 1$ for all $t$ is called arc-length parametrized. 
Unless stated otherwise, 
we always assume that geodesics are parametrized by arc-length.
For all $p \in M$ and all unit vectors $v \in T_p M$, we denote by $\gamma_{p,v}$ the unique arc-length parametrized geodesic of $M$ such that $\gamma_{p,v}(0)=p$ and $\gamma'_{p,v}(0)=v$. The exponential map is defined as $\exp_p(vt) = \gamma_{p,v}(t)$.
 Note that from the compactness of $M$, $\exp_p: T_p M \rightarrow M$ is defined globally on $T_p M$. For any two nonzero vectors $u,v\in \mathbb{R}^{D}$, we let $\angle(u,v)=d_{\mathcal{S}^{D-1}}(\frac{u}{\left\Vert u\right\Vert},\frac{v}{\left\Vert v\right\Vert})$ be the angle between $u$ and $v$. 

\subsection{Reach}

First introduced by Federer \cite{Federer1959}, the reach regularity parameter
is defined as follows.
Given a closed subset $A \subset \R^D$, the medial axis $Med(A)$ of $A$ is the
subset of $\R^D$ consisting of the points that have at least two nearest neighbors on $A$. 
Namely, denoting by $d(z,A) = \inf_{p \in A} \norm{p-z}$ the distance function to $A$,
\begin{equation}
\label{eq:medialaxis}
Med(A)
=
\left\{
z \in \R^D
|
\exists p\neq q \in A, \norm{p-z} = \norm{q-z} = d(z,A)
\right\}.
\end{equation}
The reach of $A$ is then defined as the minimal distance from $A$ to $Med(A)$.
\begin{defn}
The reach of a closed subset $A \subset \R^D$ is defined as
\begin{align}\label{eqn:reach_with_medial_axis}
\tau_A 
=
\inf_{p \in A} d\left(p,Med(A) \right)
=
\inf_{z \in Med(A)} d\left(z,A \right).
\end{align}
\end{defn}
Some authors refer to $\tau_A^{-1}$ as the \textit{condition number} \cite{NiyogiSW2008,Singer12}.
From the definition of the medial axis in \eqref{eq:medialaxis}, the projection $\pi_A(x) = \arg\min_{p \in A} \norm{p-x}$ onto $A$ is well defined outside $Med(A)$. 
The reach is the largest distance $\rho \geq 0$ such that $\pi_A$ is well defined on the $\rho$-offset $\left\{ x \in \R^D | d(x,A) < \rho \right\}$. 
Hence, the reach condition can be seen as a generalization of convexity, since a set $A \subset \R^D$ is convex if and only if $\tau_A = \infty$.
In the case of submanifolds, one can reformulate the definition of the reach in the following manner.

\begin{thm}[Theorem 4.18 in \cite{Federer1959}]
\begin{align}\label{eqn:reach_as_a_supremum_federer}
\tau_M = \underset{q\neq p \in M}{\inf} ~ \frac{\norm{q-p}^2}{2d(q-p,T_p M)}.
\end{align}
\end{thm}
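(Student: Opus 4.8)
The plan is to prove the two inequalities $\tau_M\ge\rho$ and $\tau_M\le\rho$ separately, where $\rho$ denotes the right-hand side of \eqref{eqn:reach_as_a_supremum_federer}. Throughout I use that $\tau_M=\inf_{z\in Med(M)}d(z,M)$ by \eqref{eqn:reach_with_medial_axis}, that the nearest-point projection $\pi_M$ is single-valued on the open tube $U:=\{x\in\R^D:d(x,M)<\tau_M\}$, and that $d(v,T_pM)=\sup\{\langle v,\eta\rangle:\eta\in(T_pM)^{\perp},\,\norm{\eta}=1\}$ for all $v\in\R^D$ and $p\in M$.

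For $\tau_M\ge\rho$ I would argue straight from the medial axis. Let $z\in Med(M)$ and pick $p\neq q\in M$ realizing $\norm{z-p}=\norm{z-q}=d(z,M)=:r$; then $r>0$, and first-order optimality of $p$ for $y\mapsto\norm{z-y}$ on $M$ forces $z-p\perp T_pM$, so that $\eta:=(z-p)/r$ is a unit normal at $p$. Expanding $r^2=\norm{z-q}^2=\norm{p+r\eta-q}^2$ gives $\langle q-p,\eta\rangle=\norm{q-p}^2/(2r)>0$, hence $d(q-p,T_pM)\ge\langle q-p,\eta\rangle=\norm{q-p}^2/(2r)$, i.e.\ $r\ge\norm{q-p}^2/\bigl(2\,d(q-p,T_pM)\bigr)\ge\rho$. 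Taking the infimum over $z\in Med(M)$ yields $\tau_M\ge\rho$; this uses nothing beyond first-order optimality and Cauchy--Schwarz.

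For $\tau_M\le\rho$ I would first note, by expanding $\norm{q-(p+\tau_M\eta)}^2$ and optimizing over unit normals $\eta\in(T_pM)^\perp$, that $\tau_M\le\rho$ is equivalent to the \emph{tangent-ball property}: for every $p\in M$ and every unit $\eta\in(T_pM)^\perp$, no point of $M$ lies strictly inside $\mathcal{B}(p+\tau_M\eta,\tau_M)$. To prove this, fix $p$ and $\eta$, set $c_t:=p+t\eta$, and consider $A:=\{t\in[0,\tau_M):\pi_M(c_t)=p\}$. Then $0\in A$, and $A$ is relatively closed in $[0,\tau_M)$: if $t_n\in A$ with $t_n\to t_\infty<\tau_M$, then letting $n\to\infty$ in $\norm{q'-c_{t_n}}^2\ge t_n^2$ (valid for every $q'\in M$, since $\pi_M(c_{t_n})=p$) gives $d(c_{t_\infty},M)=t_\infty$, attained at $p$; but $t_\infty<\tau_M$ forces $c_{t_\infty}\notin Med(M)$, so its nearest point is unique and equals $p$, i.e.\ $t_\infty\in A$. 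Hence if $\sup A=\tau_M$, choosing $t_n\in A$ with $t_n\uparrow\tau_M$ and passing to the limit in $\norm{q'-c_{t_n}}\ge t_n$ yields $\norm{q'-(p+\tau_M\eta)}\ge\tau_M$ for all $q'\in M$, which is the tangent-ball property. So the whole direction reduces to excluding the possibility $\sup A=:t^\ast<\tau_M$.

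That exclusion is, I expect, the crux. If $\sup A=:t^\ast<\tau_M$, then $\pi_M(c_{t^\ast})=\{p\}$ while, by maximality of $t^\ast$, $\pi_M(c_{t^\ast+s})=:q_s\neq p$ for small $s>0$, and a short argument (using $d(c_{t^\ast},M)=t^\ast$ and uniqueness of the projection at $c_{t^\ast}$) gives $q_s\to p$ as $s\downarrow0$. On one hand, $\norm{q_s-c_{t^\ast+s}}\le\norm{p-c_{t^\ast+s}}$ expands to $\langle q_s-p,\eta\rangle\ge\norm{q_s-p}^2/\bigl(2(t^\ast+s)\bigr)$; on the other, expressing $q_s-p$ via $\exp_p$ and using the $\mathcal{C}^2$ regularity of $M$, the normal component of $q_s-p$ is quadratically small, $\langle q_s-p,\eta\rangle\le\tfrac12(\kappa+o(1))\norm{q_s-p}^2$ as $s\downarrow0$, where $\kappa\ge0$ is read off the second fundamental form of $M$ at $p$. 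Comparing the two and letting $s\downarrow0$ forces $t^\ast\ge1/\kappa$. Since the reach is at most every radius of curvature of $M$ --- classically, any geodesic of $M$, regarded as a curve in $\R^D$, has curvature $\le1/\tau_M$ --- and $1/\kappa$ is at least the minimal radius of curvature at $p$, we get $\tau_M\le1/\kappa\le t^\ast$, contradicting $t^\ast<\tau_M$. Hence $\sup A=\tau_M$ always, so the tangent-ball property holds and $\tau_M\le\rho$. If one prefers not to reprove the local behaviour of $\pi_M$, this whole direction can instead be read off Federer's structural description of the nearest-point projection \cite[Thm.~4.8]{Federer1959}; either way, the obstacle is the same, namely disentangling the ``bottleneck'' mechanism, transparent from the medial axis, from the ``curvature'' mechanism.
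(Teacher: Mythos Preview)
The paper does not prove this statement at all --- it is simply quoted as Theorem~4.18 in Federer~\cite{Federer1959} --- so there is no in-paper argument to compare against. I will therefore assess your proposal on its own merits.

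Your first direction $\tau_M\ge\rho$ is clean and correct: the first-order optimality of $p$ gives $z-p\perp T_pM$, and the expansion of $\norm{z-q}^2$ is exactly right.

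The second direction has a genuine circularity. Your contradiction argument ends by invoking the curvature bound ``any geodesic of $M$ has curvature $\le 1/\tau_M$'' (Proposition~\ref{thm:geometry:upper_bound_on_second_derivative} in the paper, from \cite{NiyogiSW2008}). But the standard proof of that bound --- including the one in \cite{NiyogiSW2008} --- proceeds via the tangent-ball property: one shows that the open ball $\accentset{\circ}{\mathcal{B}}(p+\tau_M\eta,\tau_M)$ is disjoint from $M$, and then reads off the curvature bound by comparing $M$ locally to the bounding sphere. That tangent-ball property is precisely what your argument is trying to establish, so you are assuming what you want to prove.

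You correctly note in your last sentence that the whole direction can be read off Federer's Theorem~4.8, in particular item~(12), which states directly that $\pi_M(p+t\eta)=p$ for all $0\le t<\tau_M$ and all unit normals $\eta$ at $p$; this is proved in \cite{Federer1959} \emph{before} Theorem~4.18 and independently of it. But once you grant 4.8(12), your set $A$ equals $[0,\tau_M)$ outright, and the entire contradiction argument via $t^\ast$, $q_s$, and the second fundamental form becomes superfluous. So the honest structure of a self-contained proof is: cite or reprove Federer~4.8(12), take the limit $t\uparrow\tau_M$ to get the tangent-ball property, and conclude $\tau_M\le\rho$ by your expansion of $\norm{q-(p+\tau_M\eta)}^2$. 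Your elaborate detour through curvature does not add an independent route; it hides the same dependence on 4.8(12) one level deeper.
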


\begin{figure}[ht]
\centering
\includegraphics[clip = true, trim = 0 0 0 0.15\textwidth,width=0.8\textwidth]{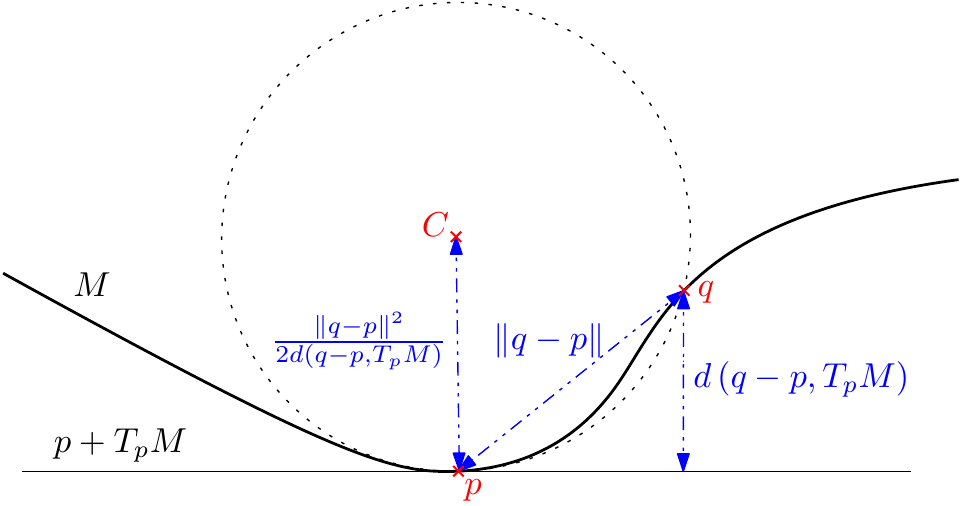}
\caption{Geometric interpretation of quantities involved in (\ref{eqn:reach_as_a_supremum_federer}).}\label{fig:tangent_ball_2d}
\end{figure}
This formulation has the advantage of involving only points on $M$ and its tangent spaces, while (\ref{eqn:reach_with_medial_axis}) uses the distance to the medial axis $Med(M)$, which is a global quantity. 
The formula \eqref{eqn:reach_as_a_supremum_federer} will be the starting point of the estimator proposed in this paper (see Section \ref{sec:estimator}).

The ratio appearing in~\eqref{eqn:reach_as_a_supremum_federer} can be interpreted geometrically, as suggested in Figure \ref{fig:tangent_ball_2d}. This ratio is the radius of an ambient ball, tangent to $M$ at $p$ and passing through $q$.
Hence, at a differential level, the reach gives a lower bound on the radii of curvature of $M$.
Equivalently, $\tau_M^{-1}$ is an upper bound on the curvature of $M$.

\begin{prop}[Proposition 6.1 in \cite{NiyogiSW2008}]
\label{thm:geometry:upper_bound_on_second_derivative}
Let $M \subset \R^D$ be a submanifold, and $\gamma_{p,v}$ an arc-length parametrized geodesic of $M$. Then for all $t$,
\[
\norm{\gamma_{p,v}''(t)} \leq 1/ \tau_{M}.
\]
\end{prop}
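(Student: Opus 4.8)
The plan is to read the bound straight off Federer's reformulation \eqref{eqn:reach_as_a_supremum_federer} of the reach, using the single extra ingredient that the ambient acceleration of a geodesic is a purely normal vector. By uniqueness of geodesics, the shifted curve $s\mapsto\gamma_{p,v}(s+t_0)$ is the arc-length geodesic with initial data $\bigl(\gamma_{p,v}(t_0),\gamma_{p,v}'(t_0)\bigr)$, so it suffices to bound $\norm{\gamma_{p,v}''(0)}$ for an arbitrary arc-length geodesic; write $p=\gamma_{p,v}(0)\in M$. Since $\gamma_{p,v}$ is a geodesic, the tangential part of $\gamma_{p,v}''(0)$ is the covariant derivative of $\gamma_{p,v}'$ along $\gamma_{p,v}$, which vanishes; hence $\gamma_{p,v}''(0)\perp T_pM$ and in particular $d\bigl(\gamma_{p,v}''(0),T_pM\bigr)=\norm{\gamma_{p,v}''(0)}$.

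Next I would Taylor-expand the curve at $0$. For $q=\gamma_{p,v}(t)\in M$, the $\mathcal{C}^2$ regularity and the arc-length parametrization give
\[
q-p = t\,\gamma_{p,v}'(0) + \tfrac{t^2}{2}\,\gamma_{p,v}''(0) + o(t^2),
\qquad \norm{\gamma_{p,v}'(0)}=1.
\]
Because $\gamma_{p,v}'(0)\in T_pM$ while $\gamma_{p,v}''(0)\perp T_pM$, projecting $q-p$ onto $T_pM$ annihilates the linear term and leaves
\[
d(q-p,\,T_pM) = \tfrac{t^2}{2}\,\norm{\gamma_{p,v}''(0)} + o(t^2).
\]
Differentiating $\norm{\gamma_{p,v}'}^2\equiv 1$ at $0$ gives $\langle\gamma_{p,v}'(0),\gamma_{p,v}''(0)\rangle=0$, so that $\norm{q-p}^2 = t^2 + o(t^2)$.

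Finally I would insert $q=\gamma_{p,v}(t)$ into the inequality $2\,\tau_M\, d(q-p,T_pM)\le \norm{q-p}^2$ coming from \eqref{eqn:reach_as_a_supremum_federer}, which yields
\[
\tau_M\bigl(t^2\,\norm{\gamma_{p,v}''(0)} + o(t^2)\bigr) \;\le\; t^2 + o(t^2).
\]
Dividing by $t^2$ and letting $t\to 0$ gives $\tau_M\,\norm{\gamma_{p,v}''(0)}\le 1$, and since the base point along the geodesic was arbitrary this is exactly $\norm{\gamma_{p,v}''(t)}\le 1/\tau_M$ for all $t$ (the statement being vacuous when $\tau_M=0$, so one may assume $\tau_M>0$). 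The only delicate points are the justification that the geodesic acceleration is normal to $M$ — a standard consequence of the definition of a geodesic once the ambient $\mathcal{C}^2$ smoothness of $\gamma_{p,v}$ is available — and the bookkeeping of the $o(t^2)$ remainders; there is no substantial obstacle, and the computation is precisely the rigorous version of the informal remark in the text that $\tau_M$ lower-bounds every radius of curvature of $M$.
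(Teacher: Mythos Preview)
Your argument is correct. The paper does not supply its own proof of this proposition: it is quoted verbatim as Proposition~6.1 of \cite{NiyogiSW2008}, and the closely related bound on the second fundamental form (Proposition~\ref{thm:geometry:injectivity_radius}(i)) is likewise attributed to \cite{NiyogiSW2008} without an argument. So there is no in-paper proof to compare against.

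That said, your approach --- Taylor-expand the geodesic, use that the ambient acceleration of a geodesic is normal to $M$, and plug into Federer's formula~\eqref{eqn:reach_as_a_supremum_federer} --- is exactly the mechanism the paper later exploits in Lemma~\ref{lem:geometry.maxcurvature_bound}, where the same expansion with an explicit integral remainder yields
\[
\norm{\gamma_{p\to q}''(0)} \leq \frac{2d(q-p,T_pM)}{\norm{q-p}^2} + \frac{2}{t_0^2}\left\Vert\int_0^{t_0}\!\!\int_0^t\bigl(\gamma''(s)-\gamma''(0)\bigr)\,ds\,dt\right\Vert.
\]
Your proof is precisely the limit $t_0\to 0$ of this inequality combined with $\frac{2d(q-p,T_pM)}{\norm{q-p}^2}\le \frac{1}{\tau_M}$, so it is fully consistent with the paper's toolkit and with the standard proof in \cite{NiyogiSW2008}.
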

In analogy with function spaces, the class 
$\left\{M \subset \R^D | \tau_M \geq \tau_{min} > 0 \right\}$ 
can be interpreted as the Hölder space $\mathcal{C}^2(1/\tau_{min})$.
In addition, as illustrated in Figure~\ref{fig:bone_medial_axis}, the condition
$\tau_M \geq \tau_{min}> 0$ also prevents bottleneck structures where $M$ is
nearly self-intersecting. 
This idea will be made rigorous in Section~\ref{sec:geometry}.

\begin{figure}[ht]
\centering
\includegraphics[width=0.7\textwidth]{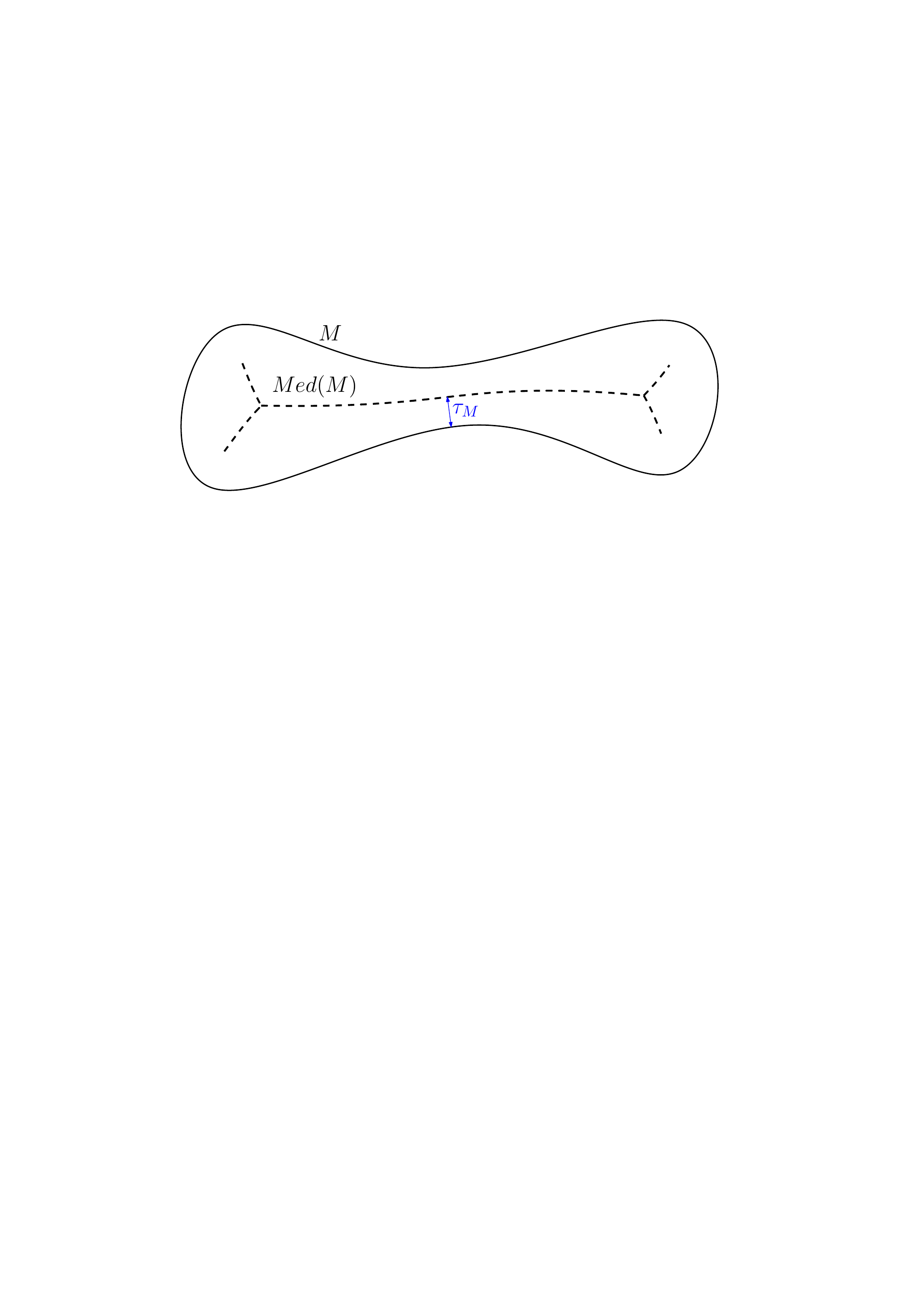}
\caption{
A narrow bottleneck structure yields a small reach $\tau_M$.}\label{fig:bone_medial_axis}
\end{figure}

\subsection{Statistical Model and Loss}
Let us now describe the regularity assumptions we will use throughout. To avoid
arbitrarily irregular shapes, we consider submanifolds $M$ with their reach lower bounded by $\tau_{min}>0$.
Since the parameter of interest $\tau_M$ is a $\mathcal{C}^2$-like quantity, it is natural --- and actually necessary, as we shall see in Proposition \ref{thm:minimax_nonconsistency} --- 
to require an extra degree of smoothness.
For example, by imposing an upper bound on the third order derivatives of geodesics.

\begin{defn}\label{def:geometric_model}
Let $\mathcal{M}^{d,D}_{\tau_{min},L}$ denote the set of compact connected $d$-dimensional submanifolds $M \subset \R^D$ without boundary such that $\tau_M \geq \tau_{min}$, and for which every arc-length parametrized geodesic $\gamma_{p,v}$ is $\mathcal{C}^3$ and satisfies
\begin{align*}
\norm{\gamma_{p,v}'''(0)} \leq L.
\end{align*}
\end{defn}
The regularity bounds $\tau_{min}$ and $L$ are assumed to exist only for the purpose of deriving uniform estimation bounds. 
However, we emphasize the fact that the forthcoming estimator $\hat{\tau}$ \eqref{eq:estimator.estimator} does not require them in its construction.

It is important to note that any compact $d$-dimensional $\mathcal{C}^3$-submanifold $M \subset \R^D$ belongs to such a class $\mathcal{M}^{d,D}_{\tau_{min},L}$, provided that $\tau_{min} \leq \tau_M$ and that $L$ is large enough.
Note also that since the third order condition $\norm{\gamma_{p,v}'''(0)} \leq L$ needs to hold for all $(p,v)$, we have in particular that $\norm{\gamma_{p,v}'''(t)} \leq L$ for all $t \in \R$.
To our knowledge, such a quantitative $\mathcal{C}^3$  assumption on the geodesic trajectories has not been considered in the computational geometry literature. 

Any submanifold $M \subset \R^D$ of dimension $d$ inherits a natural measure $vol_M$ from the $d$-dimensional Hausdorff measure $\mathcal{H}^d$ on $\R^D$ \cite[p. 171]{Federer69}.
We will consider distributions $Q$ that have densities with respect to $vol_M$ that are bounded away from zero.

\begin{defn}\label{def:model_with_unknown_tangent_spaces}
We let $\mathcal{Q}^{d,D}_{\tau_{\min},L,f_{\min}}$ denote the set of
distributions $Q$ having support $M \in \mathcal{M}^{d,D}_{\tau_{min},L}$ and
with a Hausdorff density $f = \frac{d Q}{d vol_M}$ satisfying $\inf_{x \in
M}f(x) \geq f_{\min} >0$ on $M$.
\end{defn}
As for $\tau_{min}$ and $L$, the knowledge of $f_{min}$ will not be required in the construction of the estimator $\hat{\tau}$ \eqref{eq:estimator.estimator} described below.

In order to focus on the geometric aspects of the reach, we will first consider the case where tangent spaces are observed at all the sample points. 
As mentioned in the introduction, the knowledge of tangent spaces is a reasonable assumption in digital imaging~\cite{Klette04}. This assumption will eventually be relaxed in Section \ref{sec:unknownTangent}.

We let $\mathbb{G}^{d,D}$ denote the Grassmannian of dimension $d$ of $\R^D$, that is the set of all $d$-dimensional linear subspaces of $\R^D$. 

\begin{defn}\label{def:model_with_known_tangent_spaces}
For any distribution $Q \in \mathcal{Q}^{d,D}_{\tau_{\min},L,f_{\min}}$ with support $M$ we associate the distribution $P$ of the random variable $(X,T_X M)$ on $\R^D \times \mathbb{G}^{d,D}$, where $X$ has distribution $Q$. We let $\mathcal{P}^{d,D}_{\tau_{\min},L,f_{\min}}$ denote the set of all such distributions.
\end{defn}

Formally, one can write $P(d x \, d T) = \delta_{T_x M}(d T) Q(d x)$, where $\delta_{\cdot}$ denotes the Dirac measure.
An i.i.d. $n$-sample of $P$ is of the form $(X_1,T_1),\ldots,(X_n,T_n) \in \R^D \times \mathbb{G}^{d,D}$, where $X_1,\ldots,X_n$ is an i.i.d. $n$-sample of $Q$ and $T_i = T_{X_i} M$ with $M = supp(Q)$.
For a distribution $Q$ with support $M$ and associated distribution $P$ on $\R^D
\times \mathbb{G}^{d,D}$, we will write $\tau_P = \tau_Q = \tau_M$, with a slight abuse of notation.

Note that the model does not explicitly impose an upper bound on $\tau_M$. 
Such an upper bound would be redundant, since the lower bound on $f_{min}$ does impose such an upper bound, as we now state in the following result.
The proof relies on a volume argument (Lemma
\ref{thm:geometry:diameter_bound}), leading to a bound on the diameter of $M$,
and on a topological argument (Lemma \ref{thm:geometry:reach_vs_diameter}) to link the reach and the diameter.

\begin{prop}\label{thm:geometry:max_reach_volumic}
Let $M\subset \R^D$ be a connected closed $d$-dimensional manifold, and let $Q$
be a probability distribution with support $M$. Assume that $Q$ has a density $f$  with
respect to the Hausdorff measure on $M$ such that  $\inf_{x \in M} f(x)  \geq f_{min}>0$. Then,
\[
\tau_M^d \leq \frac{C_d}{f_{min}},
\]
for some constant $C_d>0$ depending only on $d$.
\end{prop}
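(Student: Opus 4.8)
The plan is to play the density lower bound against a volume lower bound: $f\geq f_{\min}$ forces $vol_M(M)$ to be small, whereas a manifold with large reach must have large volume. The subtlety — and the reason the argument is routed through the diameter — is that this volume lower bound cannot be seen on a single geodesic ball of radius $\approx\tau_M$, since such a ball need not be contained in $M$ when $M$ has small diameter. First, integrating $f\geq f_{\min}$ over $M$ gives $1=\int_M f\, d vol_M\geq f_{\min}\, vol_M(M)$, so $vol_M(M)\leq 1/f_{\min}$. We may assume $0<\tau_M<\infty$: the case $\tau_M=0$ is trivial, and $\tau_M=\infty$ is excluded for a compact boundaryless manifold of positive dimension by the convexity characterization of infinite reach.

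Next comes the volume argument (Lemma~\ref{thm:geometry:diameter_bound}), turning the volume bound into a diameter bound. Choose a maximal subset $\{p_1,\dots,p_N\}\subset M$ that is $(\tau_M/2)$-separated for the geodesic distance; by maximality the geodesic balls $\mathcal{B}_M(p_i,\tau_M)$ cover $M$, while the balls $\mathcal{B}_M(p_i,\tau_M/4)$ are pairwise disjoint. Since the reach bounds the curvature of $M$ (Proposition~\ref{thm:geometry:upper_bound_on_second_derivative}), each small geodesic ball contains the image under $\exp_{p_i}$ of a Euclidean ball of radius $\tau_M/4$ in $T_{p_i}M$, on which the Jacobian of $\exp_{p_i}$ is bounded below, so $vol_M(\mathcal{B}_M(p_i,\tau_M/4))\geq c_d\tau_M^d$ for a dimensional constant $c_d>0$; summing over the disjoint balls gives $N\leq vol_M(M)/(c_d\tau_M^d)$. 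As $M$ is connected, the covering balls form a connected cover, hence any two points of $M$ are joined by a chain of at most $N$ consecutive overlapping balls of radius $\tau_M$, which yields $\mathrm{diam}(M)\leq C_d'\,\tau_M N\leq C_d''\, vol_M(M)/\tau_M^{d-1}\leq C_d''/(f_{\min}\,\tau_M^{d-1})$.

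Then comes the topological argument (Lemma~\ref{thm:geometry:reach_vs_diameter}), showing $\mathrm{diam}(M)$ is bounded below by a fixed multiple of $\tau_M$. Since $M$ is compact and boundaryless, $\exp_p$ is globally defined (Section~\ref{sec:framework}); fixing any $p\in M$ and unit $v\in T_pM$ and setting $q=\gamma_{p,v}(\tau_M)$, the bound $\norm{\gamma_{p,v}''}\leq 1/\tau_M$ from Proposition~\ref{thm:geometry:upper_bound_on_second_derivative} gives, via a second order Taylor expansion, $\norm{q-p-\tau_M v}\leq \tau_M/2$, hence $\norm{q-p}\geq \tau_M/2$ and $\mathrm{diam}(M)\geq \tau_M/2$. (Alternatively, applying Federer's formula~\eqref{eqn:reach_as_a_supremum_federer} to a suitable almost-antipodal pair on the smallest enclosing ball of $M$ gives the sharper $\mathrm{diam}(M)\geq \tau_M$; either constant suffices.)

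Chaining the three estimates, $\tau_M/2\leq \mathrm{diam}(M)\leq C_d''/(f_{\min}\tau_M^{d-1})$, i.e. $\tau_M^d\leq 2C_d''/f_{\min}$, which is the claim with $C_d=2C_d''$. I expect the main obstacle to be the second step, and within it the uniform lower bound $vol_M(\mathcal{B}_M(p,r))\gtrsim_d r^d$ up to scale $r\asymp\tau_M$: extracting this from the reach assumption requires quantitative control of the exponential map of $M$ — injectivity and a lower bound on its Jacobian at scales proportional to $\tau_M$ — together with a comparison between intrinsic geodesic balls and ambient Euclidean balls at those scales. This is where the dimensional constant $c_d$ is produced and where the $\mathcal{C}^2$ geometry encoded by the reach is genuinely used; the remaining steps are short.
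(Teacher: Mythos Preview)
Your proof is correct, and the first two steps match the paper's route (the density gives $vol_M(M)\leq 1/f_{\min}$, and the packing/covering argument is exactly the content of Lemma~\ref{thm:geometry:diameter_bound}). The third step --- bounding $\mathrm{diam}(M)$ below by a multiple of $\tau_M$ --- is where you diverge. The paper proves $\tau_M\leq\mathrm{diam}(M)$ by a topological argument: Jung's theorem bounds $d_H(M,\mathrm{conv}(M))$ by a multiple of $\mathrm{diam}(M)$, and if $\tau_M$ exceeded this, the nearest-point projection $\pi_M$ would retract the convex set $\mathrm{conv}(M)$ onto $M$, forcing $M$ to be contractible --- impossible for a closed $d$-manifold, which has nontrivial top $\mathbb{Z}/2\mathbb{Z}$-homology. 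Your Taylor expansion of the geodesic is more elementary and entirely self-contained within the differential-geometric toolkit already in play, avoiding homology and Jung's theorem, at the cost of the constant $1/2$ in place of $1$. The paper's argument, on the other hand, applies verbatim to any compact set with positive reach that is not homotopy equivalent to a point (no smooth structure or geodesics needed), so it is more general in scope even if heavier in prerequisites.
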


To simplify the statements and the proofs, we focus on a loss involving the condition number. Namely, we measure the error with the loss
\begin{equation}\label{eq:loss}
\ell(\tau,\tau') = \left| \frac{1}{\tau} - \frac{1}{\tau'} \right|^p, \quad p\geq 1.
\end{equation}

In other words, we will consider the estimation of the condition number $\tau_M^{-1}$ instead of the reach $\tau_M$.

\begin{rem}\label{rmk:equivalence_of_reach_and_condition_number}
For a distribution $P \in \mathcal{P}^{d,D}_{\tau_{\min},L,f_{\min}}$, Proposition \ref{thm:geometry:max_reach_volumic} asserts that $\tau_{min} \leq \tau_P \leq \tau_{max} := \left(C_d/f_{min}\right)^{1/d}$. Therefore, in an inference set-up, we can always restrict to estimators $\hat{\tau}$ within the bounds $\tau_{min} \leq \hat{\tau} \leq \tau_{max}$. Consequently,
\begin{equation*}
\frac{1}{\tau_{max}^{2p}} \left| \tau_P - \hat{\tau} \right|^p
\leq
\left| \frac{1}{\tau_P} - \frac{1}{\hat{\tau}} \right|^p
\leq
\frac{1}{\tau_{min}^{2p}} \left| \tau_P - \hat{\tau} \right|^p,
\end{equation*}
so that the estimation of the reach $\tau_P$ is equivalent to the estimation of the condition number $\tau_P^{-1}$, up to constants. 
\end{rem}

With the statistical framework developed above, we can now see explicitly why
the third order condition $\norm{\gamma'''} \leq L < \infty$ is necessary.
Indeed, the following Proposition \ref{thm:minimax_nonconsistency} demonstrates that relaxing this constraint ---
\textit{i.e.} setting $L = \infty$ --- renders the problem of reach estimation
intractable.
Its proof is to be found in Section \ref{sec:appendix:construction}.
Below, $\sigma_d$ stands for the volume of the $d$-dimensional unit sphere $\mathcal{S}^{d}$.

\begin{prop}\label{thm:minimax_nonconsistency}
There exists a universal constant $c > 1/100$ such that given $\tau_{min} > 0$, provided that $f_{min} \leq ({2^{d+1}\tau_{min}^d \sigma_d})^{-1}$, we have for all $n\geq1$,
\[
\inf_{\hat{\tau}_n} \sup_{P \in \mathcal{P}^{d,D}_{\tau_{min},L=\infty, f_{min}}} \E_{P^n} \left| \frac{1}{\tau_P} - \frac{1}{\hat{\tau}_n} \right|^p \geq \left(\frac{c}{\tau_{min}}\right)^p,
\]
where the infimum is taken over the estimators $\hat{\tau}_n = \hat{\tau}_n\left(X_1,T_1,\ldots,X_n,T_n \right)$. 
\end{prop}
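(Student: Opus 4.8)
The plan is to establish this lower bound via Le Cam's two-point method. The key idea is that when $L=\infty$, we may construct, for each $n$, two distributions $P_0, P_1 \in \mathcal{P}^{d,D}_{\tau_{min},\infty,f_{min}}$ whose reaches differ by a fixed amount of order $\tau_{min}$, yet which are statistically indistinguishable from $n$ samples because their supports agree outside a tiny region. Concretely, I would take $M_0$ to be a ``baseline'' manifold — say a sphere $\mathcal{S}^d$ of radius $\tau_{min}$ (suitably embedded in $\R^D$), which has reach exactly $\tau_{min}$ — and build $M_1$ by performing a localized $\mathcal{C}^2$ bump perturbation supported on a geodesic ball of radius $r_n \to 0$, chosen so that the perturbed manifold develops a pinch or a sharp curvature spike forcing $\tau_{M_1}$ to drop to roughly $\tau_{min}/2$ (so that $|1/\tau_{P_0} - 1/\tau_{P_1}| \geq c'/\tau_{min}$ for an explicit $c' > 1/50$, say). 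Because the third-derivative bound is waived, the transition region can be made arbitrarily narrow: the perturbation only needs to be $\mathcal{C}^2$ with curvature $\leq 1/\tau_{min}$, but its third derivative is allowed to blow up like $1/r_n$.

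The next step is to control the densities. On $M_i$ I place the distribution $Q_i$ with density $f_i = \mathbf{1}_{M_i}/vol_{M_i}(M_i)$, i.e.\ uniform. Since $M_0$ and $M_1$ coincide except on a set of $vol$-measure $O(r_n^d)$, and since the total volume of each is bounded below (it is at least the volume of a $\tau_{min}$-sphere), the hypothesis $f_{min} \leq (2^{d+1}\tau_{min}^d\sigma_d)^{-1}$ guarantees that both $f_i \geq f_{min}$ for $r_n$ small enough: the uniform density on a sphere of radius $\tau_{min}$ is $1/(\sigma_d \tau_{min}^d)$, and the perturbation changes the normalizing volume by a negligible multiplicative factor. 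The associated tangent-space-augmented distributions $P_i$ on $\R^D \times \mathbb{G}^{d,D}$ inherit this. For the total variation distance, $\|P_0 - P_1\|_{TV} = \|Q_0 - Q_1\|_{TV} \leq 1 - (1 - O(r_n^d))^{?}$; more carefully, one couples $Q_0$ and $Q_1$ so that they agree with probability $1 - O(r_n^d)$, giving $\|P_0^{\otimes n} - P_1^{\otimes n}\|_{TV} \leq n \cdot O(r_n^d)$, which we drive below $1/2$ by choosing $r_n \asymp n^{-1/d}$ times a small constant. Then Le Cam's inequality yields $\inf_{\hat\tau_n} \max_{i} \E_{P_i^n}\ell(\tau_{P_i},\hat\tau_n) \geq \frac{1}{2}(c'/\tau_{min})^p \cdot (1 - \|P_0^{\otimes n} - P_1^{\otimes n}\|_{TV}) \geq \frac{1}{4}(c'/\tau_{min})^p$, which gives the claim with $c = c'/4^{1/p} > 1/100$ after checking constants.

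The main obstacle is the geometric construction of $M_1$: one must exhibit an explicit family of compact, connected, boundaryless $d$-submanifolds that (i) agree with the sphere outside a ball of radius $r_n$, (ii) remain everywhere $\mathcal{C}^2$ with reach bounded below by $\tau_{min}$ \emph{globally} (this is the delicate point — a naive bump can create a small-reach bottleneck on the \emph{wrong} side, or violate the reach lower bound, so the perturbation direction and profile must be chosen with care, e.g.\ an inward dimple that brings two antipodal-ish sheets close, or a controlled neck), and (iii) have reach strictly and quantifiably less than, say, $\frac{3}{4}\tau_{min}$, with the gap independent of $r_n$. The cleanest route is probably to take $M_1$ to exhibit a \emph{bottleneck}: two nearly-parallel sheets pushed to within distance $\approx \tau_{min}$ of each other via a perturbation that is flat ($\mathcal{C}^2$ with vanishing curvature) in the neck but whose third derivative is forced to be $\Omega(1/r_n)$ at the edges of the transition — this decouples the reach-lowering (a bottleneck, controlled by $\|q-p\|^2/2d(q-p,T_pM)$ in Federer's formula) from the curvature bound, and it is exactly the mechanism the $L < \infty$ assumption is meant to rule out. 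Verifying the reach bounds rigorously via \eqref{eqn:reach_as_a_supremum_federer} — splitting the infimum into ``$p,q$ both near the bump,'' ``both far,'' and ``one near one far'' — is the bulk of the work and is deferred, as the excerpt notes, to Section~\ref{sec:appendix:construction}.
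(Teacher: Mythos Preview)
Your Le Cam strategy is the right one and matches the paper's approach, but there is a genuine gap in the construction that makes the proposal, as written, unworkable.

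\medskip
\textbf{The contradiction between your conditions (ii) and (iii).} You take $M_0$ to be a sphere of radius $\tau_{min}$, so $\tau_{M_0}=\tau_{min}$, and then ask for $M_1$ to satisfy both ``reach bounded below by $\tau_{min}$ globally'' (your (ii), required for $P_1\in\mathcal{P}^{d,D}_{\tau_{min},\infty,f_{min}}$) and ``reach strictly less than $\tfrac{3}{4}\tau_{min}$'' (your (iii), to get the separation). These are incompatible. Any perturbation that lowers the reach of a radius-$\tau_{min}$ sphere immediately pushes $M_1$ out of the model. The fix is to start from a sphere of radius $2\tau_{min}$, so that $\tau_{M_0}=2\tau_{min}$ leaves room: the perturbed $M_1$ can have reach anywhere in $[\tau_{min},2\tau_{min})$ while staying in the model. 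This is exactly what the paper does, and it is also why the density assumption is $f_{min}\le (2^{d+1}\tau_{min}^d\sigma_d)^{-1}$: the uniform density on a $d$-sphere of radius $2\tau_{min}$ is $(2^d\sigma_d\tau_{min}^d)^{-1}$, and the perturbation can at most double the volume, so the extra factor of $2$ is needed.

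\medskip
\textbf{Bottleneck versus curvature bump.} Your suggested mechanism for lowering the reach is a bottleneck (``two nearly-parallel sheets pushed to within distance $\approx\tau_{min}$''). On a single sphere this is hard to localize: bringing two sheets close requires a dimple of depth comparable to the radius, which cannot be supported on a ball of radius $r_n\to 0$. The paper instead uses a \emph{curvature} bump: a smooth outward push $\Phi(x)=x+\eta\,\phi(x/\ell)v$ of height $\eta$ and width $\ell$, which creates near the edge of the bump an extra curvature of order $\eta/\ell^2$. Choosing $\eta=\ell^2/(23\cdot 2\tau_{min})$ makes this extra curvature a \emph{fixed} constant $1/(46\tau_{min})$ independent of $\ell$, yielding $|1/\tau_{M_0}-1/\tau_{M_1}|\ge 1/(46\tau_{min})$, while the support of the perturbation has measure $O((\ell/\tau_{min})^d)\to 0$. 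Crucially, $\|d\Phi-I_D\|_{op}$ and $\|d^2\Phi\|_{op}$ stay bounded (so Federer's reach stability gives $\tau_{M_1}\ge\tau_{min}$), whereas $\|d^3\Phi\|_{op}\asymp \eta/\ell^3\asymp 1/(\tau_{min}\ell)\to\infty$ --- this is precisely where $L=\infty$ is exploited. With these two hypotheses, Le Cam's lemma and $\ell\to 0$ give the bound with $c=1/92>1/100$; there is no need to tune $r_n\asymp n^{-1/d}$, since the separation is independent of the bump width.
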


Thus, one cannot expect to derive consistent uniform approximation bounds  for the reach solely under the condition $\tau_M \geq \tau_{min}$. This result is natural, since the problem at stake is to estimate 
a differential quantity of order two. Therefore, some notion of uniform $\mathcal{C}^3$ regularity is needed.

\section{Geometry of the Reach}

\label{sec:geometry}

In this section, we give a precise geometric description of how the reach arises.
In particular, below we will show that the reach is determined either by a bottleneck structure or an area of high curvature (Theorem \ref{thm:geometry.local_global}). 
These two cases are referred to as \textit{global} reach and \textit{local}
reach, respectively.
All the proofs for this section are to be found in Section \ref{sec:appendix:geometry_of_the_reach}.

Consider the formulation (\ref{eqn:reach_with_medial_axis}) of the reach as the infimum of the distance between $M$ and its medial axis $Med(M)$.
By definition of the medial axis \eqref{eq:medialaxis}, if the infimum is attained it corresponds to a point $z_0$ in $Med(M)$ at distance $\tau_M$ from $M$, which we call an \textit{axis point}.
Since $z_0$ belongs to the medial axis of $M$, it has at least two nearest neighbors $q_1,q_2$ on $M$, which we call a \textit{reach attaining pair} 
(see Figure \ref{subfig:trichotomy.reachpair}).
By definition, $q_1$ and $q_2$ belong to $\mathcal{B}(z_0,\tau_M)$ and cannot be farther than $2\tau_M$ from each other. 
We say that $(q_1,q_2)$ is a \textit{bottleneck} of $M$ in the extremal case $\norm{q_2-q_1} = 2\tau_M$ of antipodal points of $\mathcal{B}(z_0,\tau_M)$
(see Figure \ref{subfig:trichotomy.bottleneck}).
Note that the ball $\mathcal{B}(z_0,\tau_M)$ meets $M$ only on its boundary $\partial \mathcal{B}(z_0,\tau_M)$.

\begin{defn}	\label{def:geometry.bottleneck}
	Let $M \subset \R^D$ be a submanifold with reach $\tau_{M}>0$.
\begin{itemize}
	\item A pair of points $(q_{1},q_{2})$ in $M$ is called \textit{reach attaining} if there exists $z_{0}\in Med(M)$ such that $q_{1},q_{2}\in{\mathcal{B}(z_{0},\tau_{M})}$. 
	We call $z_{0}$ the \textit{axis point} of $(q_{1},q_{2})$,
	and $\norm{q_{1}-q_{2}}\in(0,2\tau_M]$ its \textit{size}.
	\item A reach attaining pair $(q_{1},q_{2})\in M^{2}$ is said to be a \textit{bottleneck} of
	$M$ if its size is $2\tau_{M}$,
	that is $\norm{q_{1}-q_{2}}=2\tau_{M}$. 
\end{itemize}
\end{defn}

As stated in the following Lemma \ref{thm:geometry.circlearc}, if a reach attaining pair is not a bottleneck --- that is $\norm{q_1-q_2} < 2\tau_M${, as in  Figure \ref{subfig:trichotomy.reachpair} }---, then $M$ contains an arc of a circle of radius $\tau_M$. 
In this sense, this ``semi-local" case --- when $\norm{q_1-q_2}$ can be arbitrarily small --- is not generic. 
Though, we do not exclude this case in the analysis.
\begin{lem}
	\label{thm:geometry.circlearc}
Let $M\subset \mathbb{R}^D$ be a compact submanifold with reach $\tau_{M}>0$. 
Assume that $M$ has a reach attaining pair $(q_{1},q_{2})\in M^{2}$ with size $\left\Vert q_{1}-q_{2} \right\Vert < 2\tau_{M}$. 
Let $z_{0} \in Med(M)$ be their associated axis point, and write $c_{z_{0}}(q_{1},q_{2})$ for the shorter arc of the circle with center $z_{0}$ and endpoints as $q_{1}$ and $q_{2}$. 

Then $c_{z_{0}}(q_{1},q_{2})\subset M$, and this arc (which has constant curvature $1/{\tau_{M}}$) is the geodesic joining $q_{1}$ and $q_{2}$.
\end{lem}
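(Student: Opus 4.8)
The plan is to show that the two nearest points $q_1, q_2$ and the axis point $z_0$ pin down a circular arc that must be contained in $M$, essentially because any deviation of $M$ from this arc would either create a point strictly closer to $z_0$ than $\tau_M$ (contradicting that $q_1,q_2$ realize the distance $d(z_0, M) = \tau_M$) or would force the reach below $\tau_M$ via Federer's formula \eqref{eqn:reach_as_a_supremum_federer}. First I would set up coordinates: place $z_0$ at the origin, so that $q_1, q_2 \in \partial \mathcal{B}(z_0, \tau_M)$, i.e. $\norm{q_1} = \norm{q_2} = \tau_M$, and since the size is strictly less than $2\tau_M$ the vectors $q_1, q_2$ are not antipodal, so they span a $2$-plane $\Pi$ and the shorter arc $c_{z_0}(q_1,q_2) \subset \Pi \cap \partial\mathcal{B}(z_0,\tau_M)$ is well-defined with interior angle $<\pi$.

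**Next** I would identify the tangent directions at $q_1$ and $q_2$. Because $q_i$ is a nearest point of $M$ to $z_0$ and $M$ is a submanifold without boundary, the segment $z_0 q_i$ is orthogonal to $T_{q_i}M$ (this is the standard first-order optimality condition for the distance function, already implicit in Federer's theory of sets of positive reach). So the unit tangent vector of a geodesic leaving $q_i$ is perpendicular to $q_i - z_0$. I would then consider the arc-length parametrized geodesic $\gamma$ of $M$ joining $q_1$ to $q_2$ (a minimizing geodesic exists by compactness/completeness). The key analytic device is to control the scalar function $h(t) = \norm{\gamma(t) - z_0}^2$. We know $h \geq \tau_M^2$ everywhere on $M$ (nearest-point distance), $h(0) = h(\mathrm{length}) = \tau_M^2$ at the endpoints, and $h'(0) = 2\langle \gamma'(0), q_1 - z_0\rangle = 0$ by the orthogonality above, similarly at the other endpoint. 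Differentiating twice, $h''(t) = 2 + 2\langle \gamma''(t), \gamma(t) - z_0 \rangle \leq 2 + 2 \norm{\gamma''(t)} \norm{\gamma(t)-z_0}$; since $\norm{\gamma''} \leq 1/\tau_M$ by Proposition \ref{thm:geometry:upper_bound_on_second_derivative}, as long as $\norm{\gamma(t)-z_0}$ stays near $\tau_M$ this gives $h'' \lesssim 4$, but more importantly I want the reverse: because $h$ attains its global minimum value $\tau_M^2$ at both endpoints with zero derivative there, and $h \geq \tau_M^2$, a convexity-type or integral argument forces $h \equiv \tau_M^2$ on the whole geodesic — i.e. the geodesic stays on the sphere $\partial\mathcal{B}(z_0,\tau_M)$.

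**Then**, knowing $\gamma$ lies on the sphere of radius $\tau_M$ about $z_0$, I would upgrade this to: $\gamma$ is a great-circle arc of that sphere, in fact the arc of the circle of radius exactly $\tau_M$. Here I use that $\norm{\gamma''(t)} \leq 1/\tau_M$ together with the constraint $\norm{\gamma(t)-z_0}^2 = \tau_M^2$: differentiating the constraint gives $\langle \gamma'(t), \gamma(t)-z_0\rangle = 0$ and $\langle \gamma''(t), \gamma(t)-z_0\rangle = -\norm{\gamma'(t)}^2 = -1$, so the normal component of $\gamma''$ along $\gamma - z_0$ has magnitude exactly $1/\tau_M$; combined with the upper bound $\norm{\gamma''} \leq 1/\tau_M$, the tangential component (within the sphere) of $\gamma''$ vanishes, which is exactly the geodesic equation on the sphere — so $\gamma$ is a great circle of $\partial\mathcal{B}(z_0, \tau_M)$, hence a circle of radius $\tau_M$. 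Since $\gamma$ also passes through $q_1$ and $q_2$ and (being a length-minimizing geodesic of $M$) has length at most the length of the shorter sphere-arc, it must coincide with $c_{z_0}(q_1,q_2)$, giving both $c_{z_0}(q_1,q_2) \subset M$ and the claim that it is the geodesic joining $q_1$ and $q_2$, with constant curvature $1/\tau_M$.

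**The main obstacle** I anticipate is the rigorous justification that $h(t) = \norm{\gamma(t)-z_0}^2$ is constant rather than merely bounded below — one has to rule out $\gamma$ wandering off the sphere and coming back, using only $h \geq \tau_M^2$, the boundary data $h(0)=h(\ell)=\tau_M^2$, $h'(0)=h'(\ell)=0$, and the second-derivative bound. The clean way is probably to argue that if $h$ were not identically $\tau_M^2$ then $\gamma$ restricted to $M$ would leave $\partial\mathcal{B}(z_0,\tau_M)$, but then a short sub-arc of the true sphere-circle would be a strictly shorter path in $\R^D$ between nearby points of $M$ than $\gamma$; to convert this into a contradiction one needs that this shorter sphere-arc actually lies on $M$ — which is circular — so instead one should run a direct variational/first-variation argument on $M$ itself, or invoke that the sphere-arc is a lower envelope forced by the reach condition applied along $\gamma$ (Federer's formula says $\norm{\gamma(t) - z_0}$ cannot dip below $\tau_M$ near where the segment to $z_0$ is normal). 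I would expect the cleanest route is the one via $h'' $: show $\int h'' = h'(\ell) - h'(0) = 0$ while pointwise $h''(t) \geq 0$ whenever $h(t) = \tau_M^2$ is a local min, forcing $h'' \equiv 0$ along with $h \equiv \tau_M^2$, though making the sign analysis of $h''$ tight will require care with the bound $\langle \gamma'', \gamma - z_0\rangle \geq -\norm{\gamma''}\cdot\norm{\gamma-z_0}$ and controlling $\norm{\gamma - z_0}$ a priori.
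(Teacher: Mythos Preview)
Your approach is genuinely different from the paper's, and your Step~3 --- once $\gamma$ is known to lie on $\partial\mathcal{B}(z_0,\tau_M)$, the bound $\norm{\gamma''}\le 1/\tau_M$ forces $\gamma'' = -(\gamma-z_0)/\tau_M^2$, hence a great-circle arc --- is correct and clean. The difficulty, which you flag yourself, is Step~2, and the gap there is real. The constraints you extract on $h(t)=\norm{\gamma(t)-z_0}^2$ are $h\ge\tau_M^2$, $h(0)=h(\ell)=\tau_M^2$, $h'(0)=h'(\ell)=0$, together with $h''(t)\ge 2-2\sqrt{h(t)}/\tau_M$. These do \emph{not} force $h\equiv\tau_M^2$: wherever $h>\tau_M^2$ the lower bound on $h''$ is strictly negative, so nothing prevents $h$ from rising off $\tau_M^2$ with $h''>0$, reaching an interior maximum with $h''<0$, and returning. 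Your $\int_0^\ell h''=0$ idea only bites if $h''\ge 0$ everywhere, which is not available. Second-order information at the single ball $\mathcal{B}(z_0,\tau_M)$ is simply not enough to pin the geodesic to the sphere; one needs global input about how $M$ sits relative to a whole family of balls of radius strictly below $\tau_M$.

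This is precisely the extra input the paper supplies, by a completely different route that never analyzes an $M$-geodesic a priori. It traps $M$ set-theoretically: it intersects $M$ with a countable family of closed balls $\mathcal{B}(p_i,\tau_i)$ with $\tau_i\in(\tau_0,\tau_M)$, whose centers $p_i$ densely fill the locus $\{p:\norm{p-q_1}=\norm{p-q_2}\in(\tau_0,\tau_M)\}$. Elementary planar geometry (Lemmas~\ref{lem:geometry_circledisk_intersection} and~\ref{lem:geometry_plane_outpoint}) shows that the resulting intersection $A_\infty$, after removing the open ball $\accentset{\circ}{\mathcal{B}}(z_0,\tau_M)$ (which $M$ avoids), equals exactly the arc $c_{z_0}(q_1,q_2)$. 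A separate reach-stability result for nested ball intersections (Lemma~\ref{lem:geometry_reach_intersection_balls}) guarantees $\tau_{M\cap A_\infty}\ge\tau_0>\norm{q_1-q_2}/2$, so $M\cap A_\infty$ is contractible; a contractible subset of an arc containing both endpoints must be the whole arc. Thus the paper establishes $c_{z_0}(q_1,q_2)\subset M$ first, and the geodesic assertion follows afterward.
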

In particular, in this ``semi-local" situation,  since $\tau_M^{-1}$ is the norm of the second derivative of a geodesic of $M$ 
(the exhibited shorter arc of the circle of radius $\tau_M$), the reach can be viewed as
arising from directional curvature.

Now consider the case where the infimum \eqref{eqn:reach_with_medial_axis} is not attained.
In this case, the following Lemma \ref{thm:geometry.principalcurvature} asserts that $\tau_M$ is created by curvature. 
\begin{lem}\label{thm:geometry.principalcurvature}
Let $M \subset \R^D$ be a compact submanifold with reach $\tau_{M}>0$.
Assume that for all $z\in Med(M)$, $d(z,M) > \tau_{M}$. 
Then there exists $q_{0}\in M$ and a geodesic $\gamma_{0}$ such that $\gamma_{0}(0)=q_{0}$ and $\left\Vert \gamma_{0}''(0) \right\Vert =\frac{1}{\tau_{M}}$.
\end{lem}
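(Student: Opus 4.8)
The plan is to read off the desired geodesic from a minimizing sequence for the medial-axis formulation $\tau_M=\inf_{z\in Med(M)}d(z,M)$ of \eqref{eqn:reach_with_medial_axis}. First I would choose $z_n\in Med(M)$ with $r_n:=d(z_n,M)\to\tau_M$; by \eqref{eq:medialaxis} each $z_n$ admits two distinct nearest points $q_1^n\neq q_2^n$ on $M$, and the unit vectors $u_i^n:=(z_n-q_i^n)/r_n$ are normal, $u_i^n\perp T_{q_i^n}M$. Since $M$ is compact and $(r_n)$ is bounded, I pass to a subsequence along which $z_n\to z_0$, $q_1^n\to q_1$, $q_2^n\to q_2$; continuity of $d(\cdot,M)$ then gives $d(z_0,M)=\tau_M$, while $\norm{z_0-q_1}=\norm{z_0-q_2}=\tau_M$, so $q_1,q_2$ are both nearest points of $M$ to $z_0$. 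This is where the hypothesis enters: if $q_1\neq q_2$ then $z_0\in Med(M)$ with $d(z_0,M)=\tau_M$, contradicting $d(z,M)>\tau_M$ for all $z\in Med(M)$; hence $q_1=q_2=:q_0$. Consequently $\delta_n:=\norm{q_2^n-q_1^n}\to0$ and, by continuity of the geodesic distance on the compact manifold $M$, also $\ell_n:=d_M(q_1^n,q_2^n)\to0$.

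Next I would squeeze the reach ratio of the collapsing pair $(q_1^n,q_2^n)$. Let $\gamma_n\colon[0,\ell_n]\to M$ be a unit-speed minimal geodesic from $q_1^n$ to $q_2^n$. On one hand, since $u_1^n$ is a unit normal at $q_1^n$ and $q_2^n-q_1^n=r_nu_1^n-r_nu_2^n$, I obtain $d(q_2^n-q_1^n,T_{q_1^n}M)\ge\langle q_2^n-q_1^n,u_1^n\rangle=r_n(1-\langle u_1^n,u_2^n\rangle)>0$ together with $\delta_n^2=2r_n^2(1-\langle u_1^n,u_2^n\rangle)$, hence $\delta_n^2/\bigl(2\,d(q_2^n-q_1^n,T_{q_1^n}M)\bigr)\le r_n$; combined with Federer's formula \eqref{eqn:reach_as_a_supremum_federer}, which provides the reverse bound $\tau_M\le\delta_n^2/\bigl(2\,d(q_2^n-q_1^n,T_{q_1^n}M)\bigr)$, the squeeze forces $\delta_n^2/\bigl(2\,d(q_2^n-q_1^n,T_{q_1^n}M)\bigr)\to\tau_M$. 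On the other hand, Taylor expansion with integral remainder gives $q_2^n-q_1^n=\ell_n\gamma_n'(0)+\int_0^{\ell_n}(\ell_n-s)\gamma_n''(s)\,ds$; since $\gamma_n'(0)\in T_{q_1^n}M$ and $\norm{\gamma_n''}\le1/\tau_M$ by Proposition \ref{thm:geometry:upper_bound_on_second_derivative}, this yields $d(q_2^n-q_1^n,T_{q_1^n}M)\le\frac{\ell_n^2}{2}\sup_{[0,\ell_n]}\norm{\gamma_n''}$, whereas the chord-versus-arc estimates $\delta_n\le\ell_n$ and $\delta_n\ge\ell_n(1-\ell_n/(2\tau_M))$ (again using $\norm{\gamma_n''}\le1/\tau_M$) give $\delta_n/\ell_n\to1$.

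Combining the two estimates, $\sup_{[0,\ell_n]}\norm{\gamma_n''}\ge2\,d(q_2^n-q_1^n,T_{q_1^n}M)/\ell_n^2=(\delta_n/\ell_n)^2\,\bigl(\delta_n^2/(2\,d(q_2^n-q_1^n,T_{q_1^n}M))\bigr)^{-1}\to1/\tau_M$, and since $\sup_{[0,\ell_n]}\norm{\gamma_n''}\le1/\tau_M$ as well, I conclude $\sup_{[0,\ell_n]}\norm{\gamma_n''}\to1/\tau_M$. I then pick $t_n\in[0,\ell_n]$ realizing this supremum and set $p_n:=\gamma_n(t_n)$, $v_n:=\gamma_n'(t_n)$; then $p_n\to q_0$ (as $\norm{p_n-q_1^n}\le t_n\le\ell_n\to0$), $\norm{v_n}=1$, and $\gamma_n''(t_n)$ equals the second fundamental form of $M$ evaluated at $(p_n,v_n)$. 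By compactness of the unit tangent bundle of $M$ and continuity of the second fundamental form, a subsequence of $(p_n,v_n)$ converges to some $(q_0,v_0)$ with $v_0\in T_{q_0}M$, $\norm{v_0}=1$, and $\norm{\gamma_0''(0)}=1/\tau_M$ where $\gamma_0:=\gamma_{q_0,v_0}$ (recalling that the acceleration of a geodesic equals the second fundamental form of its velocity). This $\gamma_0$ is the geodesic we seek.

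The main obstacle will be this final passage to the limit: I only control $\sup_{[0,\ell_n]}\norm{\gamma_n''}$ along the shrinking geodesics and must upgrade this to a genuine geodesic attaining directional curvature exactly $1/\tau_M$, which relies on compactness of the unit tangent bundle together with continuity of the second fundamental form. The other delicate point is ensuring that the minimizing reach-attaining pairs $(q_1^n,q_2^n)$ truly collapse, i.e.\ that $\ell_n\to0$ --- this is precisely where the hypothesis $d(z,M)>\tau_M$ on $Med(M)$ is indispensable, since without it the infimum could be realized by a bottleneck carrying no associated high curvature. The Taylor expansion and the chord-versus-arc comparisons are routine.
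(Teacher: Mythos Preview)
Your argument is correct, but it proceeds along a genuinely different line from the paper's.

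The paper works entirely on the side of Federer's formula \eqref{eqn:reach_as_a_supremum_federer}: it defines $\varphi(p,q)=2d(q-p,T_pM)/\norm{q-p}^2$ on $M^2\setminus\bar\Delta$ and, via an auxiliary lemma (Lemma~\ref{lem:geometry_reach_attain_global}), shows that if $\varphi(p,q)=1/\tau_M$ were attained at some pair $p\neq q$ then one could exhibit $z_0\in Med(M)$ with $d(z_0,M)=\tau_M$, contradicting the hypothesis. Thus $\sup_{M^2\setminus\Delta_r}\varphi<1/\tau_M$ for every $r>0$, forcing $\sup_{\Delta_r\setminus\bar\Delta}\varphi=1/\tau_M$; a separate lemma (Lemma~\ref{lem:geometry.maxcurvature_bound}) then identifies this diagonal limit with the maximal directional curvature, and compactness of the unit tangent bundle produces the attaining $(q_0,v_0)$.

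You instead start from the medial-axis formulation $\tau_M=\inf_{z\in Med(M)}d(z,M)$, take a minimizing sequence $z_n$, and let the hypothesis force the two nearest points $q_1^n,q_2^n$ to coalesce. From there you extract curvature by a direct Taylor-expansion squeeze on the collapsing pair, rather than invoking a pre-packaged curvature lemma. What your route buys is self-containment: you avoid the detour through Lemma~\ref{lem:geometry_reach_attain_global} (constructing a medial-axis point from a reach-attaining pair) and the general curvature bound of Lemma~\ref{lem:geometry.maxcurvature_bound}, replacing both by elementary estimates tailored to the sequence at hand. The paper's route, in contrast, reuses two lemmas that are independently useful elsewhere in the article, so its proof of this particular statement is shorter once those are in place. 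Both arguments ultimately hinge on the same two ingredients you correctly identify: the hypothesis forces the relevant pairs to collapse, and continuity of the second fundamental form on the compact unit tangent bundle lets you pass to the limit.
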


To summarize, there are three distinct geometric instances in which the reach
may be realized:

\begin{itemize}
\item (See Figure \ref{subfig:trichotomy.bottleneck})
$M$ has a bottleneck: by definition, $\tau_M$ originates from a structure having scale $2\tau_M$.
\item (See Figure \ref{subfig:trichotomy.reachpair})
$M$ has a reach attaining pair but no bottleneck: then $M$ contains an arc of a circle of radius $\tau_M$ (Lemma \ref{thm:geometry.circlearc}), so that $M$ actually contains a zone with radius of curvature $\tau_M$.		
\item (See Figure~\ref{subfig:trichotomy.reachpoint})
$M$ does not have a reach attaining pair: then $\tau_M$ comes from a curvature-attaining point (Lemma~\ref{thm:geometry.principalcurvature}), that is
a point with radius of curvature~$\tau_M$.
\end{itemize}

\begin{figure}
\centering
		\subfloat[A bottleneck.]{
			\includegraphics[width=0.5\textwidth]{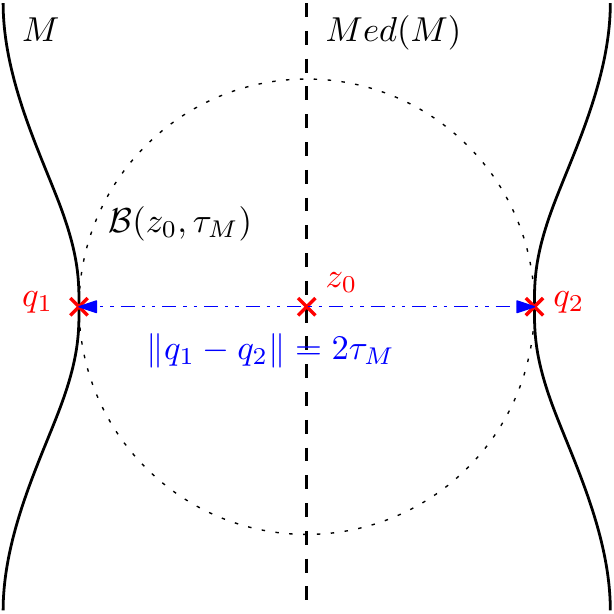}
			\label{subfig:trichotomy.bottleneck}
		}

		\subfloat[
		A non-bottleneck reach attaining pair.
		]{		
			\includegraphics[width=0.5\textwidth]{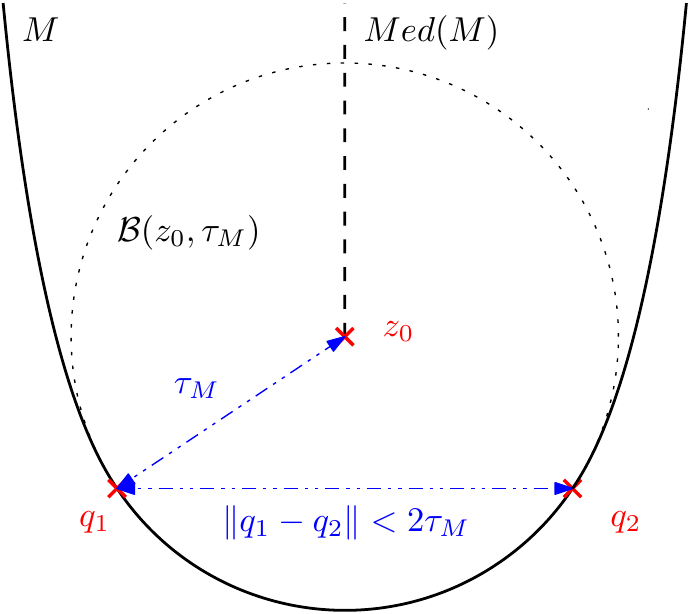}
			\label{subfig:trichotomy.reachpair}
		}
		\subfloat[
		{
		Curvature-attaining point.
		}	
		]{
			\includegraphics[width=0.5\textwidth]{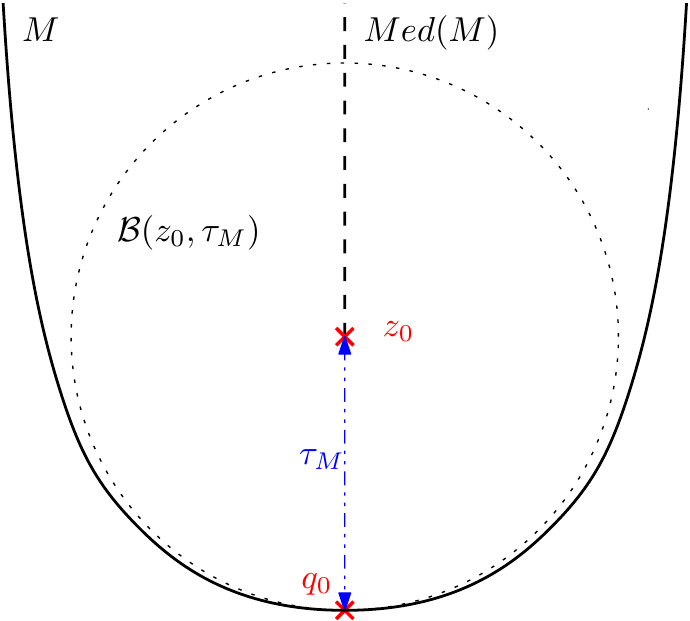}
			\label{subfig:trichotomy.reachpoint}
		}

	\caption{
		The different ways for the reach to be attained, as described in Lemma~\ref{thm:geometry.circlearc} and Lemma~\ref{thm:geometry.principalcurvature}.
	}
	\label{fig:geometry.trichotomy}
\end{figure}

From now on, we will treat the first case separately from the other two. We are now in a position to state the main result of this section. It is a straightforward consequence of Lemma~\ref{thm:geometry.circlearc} and Lemma \ref{thm:geometry.principalcurvature}.

\begin{thm}\label{thm:geometry.local_global}
	Let $M\subset \mathbb{R}^D$ be a compact submanifold with reach $\tau_{M}>0$. At least one of the following two assertions holds.
	\begin{itemize}
		\item\emph{(Global Case)} $M$ has a bottleneck $(q_{1},q_{2})\in M^{2}$, that is, there exists $z_{0}\in Med(M)$ such that $q_{1},q_{2}\in\partial\mathcal{B}(z_{0},\tau_{M})$ and $\left\Vert q_{1}-q_{2}\right\Vert =2\tau_{M}$. 
		\item\emph{(Local Case)} There exists $q_{0}\in M$ and an arc-length parametrized geodesic $\gamma_{0}$ such that $\gamma_{0}(0)=q_{0}$ and $\left\Vert \gamma_{0}''(0)\right\Vert =\frac{1}{\tau_{M}}$.
	\end{itemize}
\end{thm}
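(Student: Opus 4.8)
The plan is to split on whether the infimum in the medial-axis formulation \eqref{eqn:reach_with_medial_axis} of $\tau_M$ is attained. By the very definition of the reach, every $z\in Med(M)$ satisfies $d(z,M)\geq\tau_M$, so exactly one of the following holds: either (a) there exists $z_0\in Med(M)$ with $d(z_0,M)=\tau_M$, or (b) $d(z,M)>\tau_M$ for \emph{every} $z\in Med(M)$. I will show that case (a) yields either a bottleneck or the Local Case, and that case (b) yields the Local Case. Since Proposition \ref{thm:geometry:upper_bound_on_second_derivative} already guarantees $\norm{\gamma''}\leq 1/\tau_M$ for every arc-length parametrized geodesic, the equality $\norm{\gamma_0''(0)}=1/\tau_M$ obtained in the Local Case is automatically the extremal value of the directional curvature, which is the content we want.

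Case (b) is immediate: its hypothesis is exactly that of Lemma \ref{thm:geometry.principalcurvature}, which directly produces a point $q_0\in M$ and an arc-length parametrized geodesic $\gamma_0$ with $\gamma_0(0)=q_0$ and $\norm{\gamma_0''(0)}=1/\tau_M$, i.e. the Local Case.

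For case (a), fix $z_0\in Med(M)$ with $d(z_0,M)=\tau_M$. By the definition \eqref{eq:medialaxis} of the medial axis, $z_0$ has two distinct nearest points $q_1\neq q_2$ on $M$, both at distance $\tau_M$, so $q_1,q_2\in\partial\mathcal{B}(z_0,\tau_M)$ and $0<\norm{q_1-q_2}\leq 2\tau_M$; in particular $(q_1,q_2)$ is a reach attaining pair with axis point $z_0$ in the sense of Definition \ref{def:geometry.bottleneck}. Now split on its size. If $\norm{q_1-q_2}=2\tau_M$, then $(q_1,q_2)$ is by definition a bottleneck, which is the Global Case. If $\norm{q_1-q_2}<2\tau_M$, apply Lemma \ref{thm:geometry.circlearc}: the shorter arc $c_{z_0}(q_1,q_2)$ of the circle with center $z_0$ and radius $\tau_M$ lies in $M$, has constant curvature $1/\tau_M$, and is the geodesic of $M$ joining $q_1$ and $q_2$. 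Taking $\gamma_0$ to be the arc-length parametrization of this geodesic and $q_0:=\gamma_0(0)$ any of its points, the arc-length second derivative of a circular arc of radius $\tau_M$ has norm $1/\tau_M$ at every parameter, and since $\gamma_0$ is a geodesic of $M$ this vector is precisely the one bounded in Proposition \ref{thm:geometry:upper_bound_on_second_derivative}; hence $\norm{\gamma_0''(0)}=1/\tau_M$, again the Local Case.

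I expect there to be no serious obstacle here: the substance is entirely carried by Lemmas \ref{thm:geometry.circlearc} and \ref{thm:geometry.principalcurvature} (proved elsewhere in the Appendix), and what remains is bookkeeping. The only points needing a moment of care are: verifying that cases (a) and (b) are exhaustive, which uses $\tau_M=\inf_{z\in Med(M)}d(z,M)$ and hence $d(z,M)\geq\tau_M$ on $Med(M)$; checking that the two nearest points of $z_0$ form a reach attaining pair matching Definition \ref{def:geometry.bottleneck}; and observing that the constant curvature $1/\tau_M$ of the circular arc equals $\norm{\gamma_0''}$ for its arc-length parametrization, which coincides with the quantity appearing in Proposition \ref{thm:geometry:upper_bound_on_second_derivative} because the arc is a geodesic. (One could also note that $Med(M)\neq\emptyset$ since a compact connected $d$-manifold without boundary with $d\geq 1$ is never convex, so $\tau_M<\infty$; but this is not strictly needed, as case (b) is vacuously the hypothesis of Lemma \ref{thm:geometry.principalcurvature} when $Med(M)=\emptyset$.)
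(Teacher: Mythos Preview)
Your proposal is correct and follows exactly the paper's own route: the paper states that Theorem \ref{thm:geometry.local_global} ``is a straightforward consequence of Lemma~\ref{thm:geometry.circlearc} and Lemma~\ref{thm:geometry.principalcurvature}'', and the trichotomy you spell out (bottleneck; non-bottleneck reach attaining pair handled by Lemma~\ref{thm:geometry.circlearc}; no reach attaining pair handled by Lemma~\ref{thm:geometry.principalcurvature}) is precisely the argument sketched in the paragraph preceding the theorem. Your case split on whether some $z_0\in Med(M)$ achieves $d(z_0,M)=\tau_M$ is equivalent to the paper's split on the existence of a reach attaining pair, and the bookkeeping you flag (exhaustiveness of (a)/(b), matching Definition~\ref{def:geometry.bottleneck}, and reading off $\norm{\gamma_0''}=1/\tau_M$ from the circular arc) is all in order.
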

Let us emphasize the fact that the global case and the local case of Theorem~\ref{thm:geometry.local_global} are not mutually exclusive.
Theorem \ref{thm:geometry.local_global} provides a description of the
reach as arising from global and local geometric structures that, to the best of our knowledge, is new.
Such a distinction is especially important in our problem. 
Indeed, the global and local cases may yield different approximation properties and
require different statistical analyses. 
However, since one does not know a priori whether the reach arises from a global or a
local structure, an estimator of $\tau_M$ should be able to handle both cases simultaneously.

\section{Reach Estimator and its Analysis}\label{sec:estimator}
In this section, we propose an estimator $\hat{\tau}(\cdot)$ for the reach and
demonstrate its properties and rate of consistency under the loss \eqref{eq:loss}. 
For the sake of clarity in the analysis, we assume the tangent spaces to be known at every sample point. 
This assumption will be relaxed in Section \ref{sec:unknownTangent}.

We rely on the formulation of the reach given in
(\ref{eqn:reach_as_a_supremum_federer}) (see also Figure
\ref{fig:tangent_ball_2d}), and define $\hat{\tau}$ as a plugin estimator as
follows:
given a point cloud $\X \subset M$,
\begin{equation}\label{eq:estimator.estimator}
\hat{\tau}(\mathbb{X})=\inf_{x \neq y \in \X}\frac{\norm{y-x}^{2}}{2d(y-x,T_{x}M)}.
\end{equation}
In particular, we have $\hat{\tau}(M) = \tau_M$. 
Since the infimum \eqref{eq:estimator.estimator} is taken over a set $\X$ smaller than $M$, $\hat{\tau}(\X)$ always overestimates $\tau_M$. 
In fact, $\hat{\tau}(\X)$  is decreasing in the number of distinct
points in $\X$, a useful property that we formalize in the following result,
whose proof is immediate. 
\begin{cor}	\label{thm:estimator.overestimate}
Let $M$ be a submanifold with reach $\tau_{M}$ and $\mathbb{Y} \subset \mathbb{X}\subset M$ be two nested subsets. Then $\hat{\tau}(\mathbb{Y})\geq\hat{\tau}(\mathbb{X})\geq\tau_{M}.$

\end{cor}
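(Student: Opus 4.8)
The statement to prove is Corollary~\ref{thm:estimator.overestimate}, and the paper already flags that its proof is ``immediate''. Here is how I would lay it out.

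\medskip

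The plan is to unwind the definition \eqref{eq:estimator.estimator} of $\hat{\tau}$ as an infimum over ordered pairs of distinct points, and exploit the elementary fact that an infimum taken over a larger index set can only be smaller. First I would fix the submanifold $M$ with reach $\tau_M$ and two nested finite point clouds $\mathbb{Y} \subset \mathbb{X} \subset M$. For the inequality $\hat{\tau}(\mathbb{Y}) \geq \hat{\tau}(\mathbb{X})$, I observe that the set of pairs $\{(x,y) : x \neq y \in \mathbb{Y}\}$ over which the defining infimum for $\hat{\tau}(\mathbb{Y})$ ranges is a subset of $\{(x,y) : x \neq y \in \mathbb{X}\}$, since every point of $\mathbb{Y}$ is a point of $\mathbb{X}$. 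The function being minimized, $(x,y) \mapsto \norm{y-x}^2 / \bigl(2 d(y-x, T_x M)\bigr)$, is the same in both cases (it depends only on $M$ through the tangent spaces, not on which cloud we picked), so the infimum over the smaller set $\mathbb{Y}^2$ is at least the infimum over the larger set $\mathbb{X}^2$; that is, $\hat{\tau}(\mathbb{Y}) \geq \hat{\tau}(\mathbb{X})$. One should note the edge case: if $\mathbb{Y}$ has at most one point the infimum is over the empty set, so $\hat{\tau}(\mathbb{Y}) = +\infty$ by convention, and the inequality holds trivially.

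\medskip

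For the inequality $\hat{\tau}(\mathbb{X}) \geq \tau_M$, I would apply the same monotonicity reasoning one more level up, taking $\mathbb{X} \subset M$ itself as the nested pair: by Federer's formula \eqref{eqn:reach_as_a_supremum_federer}, $\tau_M$ is exactly the infimum of the very same ratio over all pairs $q \neq p \in M$, and $\mathbb{X}^2$ is a subset of the set of such pairs (with $x$ playing the role of $p$ and $y$ the role of $q$). Hence $\hat{\tau}(\mathbb{X}) = \inf_{x\neq y \in \mathbb{X}} \norm{y-x}^2/(2 d(y-x,T_xM)) \geq \inf_{q\neq p \in M} \norm{q-p}^2/(2d(q-p,T_pM)) = \tau_M$. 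Chaining the two gives $\hat{\tau}(\mathbb{Y}) \geq \hat{\tau}(\mathbb{X}) \geq \tau_M$, which is the claim. As a sanity remark one may add that $\hat{\tau}(M) = \tau_M$, consistent with the chain when $\mathbb{X} = M$.

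\medskip

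There is essentially no obstacle here; the only point requiring the slightest care is the bookkeeping about empty or singleton point clouds and the associated convention $\inf \emptyset = +\infty$, together with making explicit that the minimand does not change when the index set shrinks. For completeness I would state both observations as one-line remarks rather than belabor them. Everything else is the standard ``infimum is monotone decreasing in its index set'' principle applied twice, once from $\mathbb{Y}$ to $\mathbb{X}$ and once from $\mathbb{X}$ to $M$ via Federer's reformulation.
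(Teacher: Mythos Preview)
Your proposal is correct and matches the paper's approach exactly: the paper states that the proof is immediate, and what you have written is precisely the unwinding of that immediacy via monotonicity of the infimum over nested index sets together with Federer's formula \eqref{eqn:reach_as_a_supremum_federer}.
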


We now derive the rate of convergence of $\hat{\tau}$.  
We analyze the global case (Section~\ref{subsec:estimator.global}) and the local
case (Section \ref{subsec:estimator.local}) separately.
In both cases, we first determine the performance of the estimator in a
deterministic framework, and then derive an expected loss bounds when
$\hat{\tau}$ is applied to a random sample.

Respectively, the proofs for Section \ref{subsec:estimator.global} and Section \ref{subsec:estimator.local} are to be found in Section \ref{sec:appendix:global_case} and Section \ref{sec:appendix:local_case}.

\subsection{Global Case}\label{subsec:estimator.global}

Consider the global case, that is, $M$ has a bottleneck structure (Theorem \ref{thm:geometry.local_global}).
Then the infimum (\ref{eqn:reach_as_a_supremum_federer}) is achieved at a bottleneck pair $(q_1,q_2) \in M^2$. 
When $\X$ contains points that are close to $q_1$ and $q_2$, one may expect that
the infimum over the sample points should also be close to
\eqref{eqn:reach_as_a_supremum_federer}:  
that is, that $\hat{\tau}(\X)$ should be close to $\tau_M$.

\begin{prop}\label{thm:estimator.global_twopoints}
Let $M \subset \mathbb{R}^D$ be a submanifold with reach $\tau_{M}>0$ that has a bottleneck $(q_{1},q_{2})\in M^{2}$ (see Definition \ref{def:geometry.bottleneck}), and $\mathbb{X}\subset M$. 
If there exist $x,y\in \X$ with $\left\Vert q_{1}-x \right\Vert <\tau_{M}$ and $\left\Vert q_{2}-y \right\Vert <\tau_{M}$, then
	\[
	0
	\leq
	\frac{1}{\tau_{M}}-\frac{1}{\hat{\tau}(\mathbb{X})}
	\leq
	\frac{1}{\tau_{M}}-\frac{1}{\hat{\tau}(\{x,y\})}
	\leq
	\frac{4}{\tau_{M}^{2}}\max\left\{ d_M(q_{1},x),d_M(q_{2},y)\right\}.
	\]
\end{prop}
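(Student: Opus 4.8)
The plan is to dispatch the two leftmost inequalities by the monotonicity of $\hat\tau$, reduce the rightmost one to a single scalar estimate built from the bottleneck geometry, and close the estimate with a Lipschitz bound on tangent spaces. First, since $\{x,y\}\subseteq\X\subseteq M$, Corollary~\ref{thm:estimator.overestimate} gives $\hat{\tau}(\{x,y\})\geq\hat{\tau}(\X)\geq\tau_M>0$, hence $\frac{1}{\tau_M}\geq\frac{1}{\hat{\tau}(\X)}\geq\frac{1}{\hat{\tau}(\{x,y\})}$, which is the first two inequalities. Put $\delta=\max\{d_M(q_1,x),d_M(q_2,y)\}$. If $\delta\geq\tau_M/4$, the last inequality is immediate because $\frac{4}{\tau_M^2}\delta\geq\frac{1}{\tau_M}\geq\frac{1}{\tau_M}-\frac{1}{\hat{\tau}(\{x,y\})}$, so I assume $\delta<\tau_M/4$. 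Also $x\neq y$, since otherwise $2\tau_M=\norm{q_1-q_2}\leq\norm{q_1-x}+\norm{y-q_2}<2\tau_M$; thus $\hat{\tau}(\{x,y\})$ is the minimum of $\frac{\norm{y-x}^2}{2d(y-x,T_xM)}$ and $\frac{\norm{x-y}^2}{2d(x-y,T_yM)}$, so $\frac{1}{\hat{\tau}(\{x,y\})}\geq\frac{2d(y-x,T_xM)}{\norm{y-x}^2}$ and it suffices to prove
\[
\frac{1}{\tau_M}-\frac{2\,d(y-x,T_xM)}{\norm{y-x}^2}\ \leq\ \frac{4}{\tau_M^2}\,\delta .
\]

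Next I would unfold the bottleneck. Let $z_0\in Med(M)$ be the axis point of $(q_1,q_2)$; since $q_1,q_2\in\partial\mathcal{B}(z_0,\tau_M)$ with $\norm{q_1-q_2}=2\tau_M$ they are antipodal, so $z_0=\tfrac12(q_1+q_2)$ and, writing $u=\tfrac{1}{2\tau_M}(q_2-q_1)$, we get $z_0-q_1=\tau_M u$ with $\norm{u}=1$. Because $q_1$ attains $d(z_0,M)=\tau_M$ and $M$ is boundaryless, first-order optimality gives $z_0-q_1\perp T_{q_1}M$, i.e.\ $u\perp T_{q_1}M$. The heart of the argument is the lower bound
\[
d(y-x,T_xM)=\norm{(y-x)-P_x(y-x)}\ \geq\ \langle y-x,u\rangle-\norm{y-x}\,\norm{P_xu},
\]
$P_x$ denoting orthogonal projection onto $T_xM$, which follows from Cauchy--Schwarz (using $\norm{u}=1$) and self-adjointness of $P_x$, after writing $\langle (y-x)-P_x(y-x),u\rangle=\langle y-x,u\rangle-\langle y-x,P_xu\rangle$. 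I then estimate: $\langle y-x,u\rangle=\langle q_2-q_1,u\rangle+\langle y-q_2,u\rangle+\langle q_1-x,u\rangle\geq 2\tau_M-2\delta$ and $\norm{y-x}\leq 2\tau_M+2\delta$, both via $\norm{\cdot}\leq d_M(\cdot,\cdot)$; and, since $P_{T_{q_1}M}u=0$, $\norm{P_xu}=\norm{(P_{T_xM}-P_{T_{q_1}M})u}\leq\norm{P_{T_xM}-P_{T_{q_1}M}}_{\mathrm{op}}\leq d_M(q_1,x)/\tau_M\leq\delta/\tau_M$.

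Finally I would assemble. These three estimates give $d(y-x,T_xM)\geq D:=2\tau_M-4\delta-2\delta^2/\tau_M$, which is positive when $\delta<\tau_M/4$, while $\norm{y-x}\leq B:=2\tau_M+2\delta$; hence $\frac{2d(y-x,T_xM)}{\norm{y-x}^2}\geq\frac{2D}{B^2}$, and a direct computation gives $\frac{1}{\tau_M}-\frac{2D}{B^2}=\frac{2\delta(2\tau_M+\delta)}{\tau_M(\tau_M+\delta)^2}$, which is $\leq\frac{4}{\tau_M^2}\delta$ since it is equivalent to $0\leq 3\tau_M\delta+2\delta^2$. This proves the displayed inequality and thereby the Proposition. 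The main obstacle is not this bookkeeping — the constant $4$ is exactly the first-order coefficient of the expansion in $\delta/\tau_M$, the case split at $\tau_M/4$ only ensuring $D\geq0$ — but rather the tangent-space estimate $\norm{P_{T_xM}-P_{T_{q_1}M}}_{\mathrm{op}}\leq d_M(q_1,x)/\tau_M$ used above, namely the $\tau_M^{-1}$-Lipschitz control of $p\mapsto P_{T_pM}$ along minimizing geodesics; this is obtained by integrating the second-fundamental-form bound underlying Proposition~\ref{thm:geometry:upper_bound_on_second_derivative}, and (as it should be here, since the statement assumes only $\tau_M>0$) it uses $\tau_M$ alone and not $L$.
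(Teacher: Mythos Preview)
Your proof is correct and follows essentially the same route as the paper's: both reduce to lower-bounding $d(y-x,T_xM)$ via the bottleneck direction $q_2-q_1\in T_{q_1}M^\perp$ together with the tangent-space variation bound $\norm{\pi_{T_xM}-\pi_{T_{q_1}M}}_{\mathrm{op}}\leq d_M(q_1,x)/\tau_M$ (which the paper cites as \cite[Lemma 11]{Boissonnat18}), and then close with the same algebraic inequality. The only cosmetic differences are that the paper uses the $1$-Lipschitzness of $d(\,\cdot\,,T_xM)$ in place of your projection-onto-$u$ trick, obtaining the slightly tighter lower bound $d(y-x,T_xM)\geq 2\tau_M-4t$, and handles the range of $t/\tau_M$ via concavity of $u\mapsto 1-\tfrac{1-2u}{(1+u)^2}$ rather than a case split at $\tau_M/4$.
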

The error made by $\hat{\tau}(\X)$ decreases linearly in the maximum of the distances to the critical points $q_1$ and $q_2$. 
In other words, the radius of the tangent sphere in Figure
\ref{fig:tangent_ball_2d} grows at most linearly in $t$ when we perturb by  $t< \tau_M$
its basis point $p=q_1$ and the point $q=q_2$ it passes through.

Based on the deterministic bound in Proposition
\ref{thm:estimator.global_twopoints}, we can now give an upper bound on the
expected loss under the model $\mathcal{P}_{\tau_{\min},L,f_{\min}}^{d,D}$. 
We recall that, throughout the paper, $\X_n = \left\{X_1,\ldots,X_n\right\}$ is an i.i.d. sample with common distribution $Q$ associated to $P$ (see Definition \ref{def:model_with_known_tangent_spaces}).

\begin{prop}\label{thm:estimator.global_risk}
	Let $P\in\mathcal{P}_{\tau_{\min},L,f_{\min}}^{d,D}$ and $M=supp(P)$. Assume that $M$ has a bottleneck $(q_{1},q_{2})\in M^{2}$ (see Definition \ref{def:geometry.bottleneck}). Then,
	\[
	\mathbb{E}_{P^n}
	\left[
		\left|\frac{1}{\tau_{M}}-\frac{1}{\hat{\tau}(\X_n)}\right|^{p}
	\right]
	\leq 
	C_{\tau_{M},f_{\min},d,p}
	n^{-\frac{p}{d}},
	\]
	where $C_{\tau_{M},f_{\min},d,p}$ depends only on $\tau_{M}$, $f_{\min}$, $d$, and $p$, and is a decreasing function of $\tau_{M}$.
\end{prop}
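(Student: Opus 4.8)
The plan is to turn the deterministic estimate of Proposition~\ref{thm:estimator.global_twopoints} into an expected-loss bound by a routine nearest-neighbour argument. Let $(q_1,q_2)$ be the bottleneck provided by the hypothesis (Definition~\ref{def:geometry.bottleneck}), so $\norm{q_1-q_2}=2\tau_M$, and given the sample let $\hat x$ and $\hat y$ be the sample points minimising the geodesic distances $d_M(q_1,\cdot)$ and $d_M(q_2,\cdot)$, respectively; set $R=\max\{d_M(q_1,\hat x),d_M(q_2,\hat y)\}$. On the event $A=\{R<\tau_M\}$ one has $\norm{q_1-\hat x}\le d_M(q_1,\hat x)<\tau_M$ and $\norm{q_2-\hat y}<\tau_M$, and moreover $\hat x\neq\hat y$ since $\norm{\hat x-\hat y}\ge\norm{q_1-q_2}-\norm{q_1-\hat x}-\norm{q_2-\hat y}>0$; hence Proposition~\ref{thm:estimator.global_twopoints} applies pathwise on $A$ with $x=\hat x,\,y=\hat y$ and yields $0\le\tau_M^{-1}-\hat\tau(\X_n)^{-1}\le 4\tau_M^{-2}R$. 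On $A^c$ I would use only the always-valid bound $0\le\tau_M^{-1}-\hat\tau(\X_n)^{-1}\le\tau_M^{-1}$ coming from Corollary~\ref{thm:estimator.overestimate}. Combining the two cases,
\[
\E_{P^n}\!\left[\left|\frac{1}{\tau_M}-\frac{1}{\hat\tau(\X_n)}\right|^{p}\right]\le\Big(\frac{4}{\tau_M^{2}}\Big)^{p}\E\!\left[\min(R,\tau_M)^{p}\right]+\frac{1}{\tau_M^{p}}\,\P(R\ge\tau_M),
\]
so it suffices to bound both terms by a constant times $n^{-p/d}$.

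For this the one geometric input I need is the standard lower bound on the volume of small geodesic balls of a submanifold of reach $\tau_M$: there is $\omega_d>0$, depending only on $d$, with $vol_M(\mathcal{B}_M(p,r))\ge\omega_d r^{d}$ for all $p\in M$ and all $r\le\tau_M$ — an estimate of the same nature as the volume argument behind Proposition~\ref{thm:geometry:max_reach_volumic}. Since $Q$ has density $f\ge f_{\min}$ with respect to $vol_M$, for $r\le\tau_M$ we get $\P(d_M(q_1,\hat x)>r)=(1-Q(\mathcal{B}_M(q_1,r)))^{n}\le e^{-n f_{\min}\omega_d r^{d}}$, and likewise for $\hat y$, so $\P(R>r)\le 2e^{-n f_{\min}\omega_d r^{d}}$ on $(0,\tau_M]$. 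In particular $\P(R\ge\tau_M)\le 2e^{-n f_{\min}\omega_d\tau_M^{d}}$, while $\E[\min(R,\tau_M)^{p}]=\int_0^{\tau_M^{p}}\P(R>t^{1/p})\,dt$ is, after a change of variable, at most a dimensional constant times $\Gamma(p/d)(n f_{\min}\omega_d)^{-p/d}=C_{d,p}f_{\min}^{-p/d}n^{-p/d}$. Finally, using $e^{-na}\le(p/d)^{p/d}e^{-p/d}(na)^{-p/d}$ to convert the exponential terms into $O(n^{-p/d})$ as well, and substituting back, gives $\E_{P^n}[\cdot]\le C_{\tau_M,f_{\min},d,p}\,n^{-p/d}$. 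Inspecting the constants obtained — the coefficient $(4/\tau_M^{2})^{p}C_{d,p}f_{\min}^{-p/d}$ of the first term is decreasing in $\tau_M$, and in the second term the powers of $\tau_M$ combine into a non-positive power — each depends only on $(\tau_M,f_{\min},d,p)$ and can be taken non-increasing in $\tau_M$, which is the last assertion.

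The main obstacle is the probabilistic step, and within it the one nontrivial point is to have the uniform small-ball volume bound $vol_M(\mathcal{B}_M(p,r))\ge\omega_d r^{d}$ available with a constant depending only on $d$ (in particular not on $\tau_{\min}$ or $L$); once this is granted, the rate $n^{-p/d}$ is produced by a routine Gamma integral and the remaining work is only bookkeeping of constants and checking their monotonicity in $\tau_M$. The deterministic reduction, by contrast, is immediate from Proposition~\ref{thm:estimator.global_twopoints}, and the fallback bound $\tau_M^{-1}$ on the event $A^c$ is immediate from Corollary~\ref{thm:estimator.overestimate}.
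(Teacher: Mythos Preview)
Your proposal is correct and follows essentially the same route as the paper: both arguments combine the deterministic estimate of Proposition~\ref{thm:estimator.global_twopoints} with the small-ball volume lower bound (the paper invokes Proposition~\ref{thm:geometry:injectivity_radius}\,(v), which gives exactly the $vol_M(\mathcal{B}_M(p,r))\ge c_d r^d$ you need for $r\le\tau_M$), obtain an exponential tail bound for the event that the geodesic balls around $q_1,q_2$ miss the sample, and integrate to get the $n^{-p/d}$ rate. The only cosmetic difference is that the paper bounds $\P(|\tau_M^{-1}-\hat\tau(\X_n)^{-1}|>s)$ directly for $s<\tau_M^{-1}$ and integrates over $[0,\tau_M^{-p}]$, whereas you introduce the nearest-neighbour variable $R$ and split on $\{R<\tau_M\}$; the resulting constants and their monotonicity in $\tau_M$ coincide.
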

Proposition \ref{thm:estimator.global_risk} follows straightforwardly from Proposition \ref{thm:estimator.global_twopoints} combined with the fact that with high probability, the balls centered at the bottleneck points $q_1$ and $q_2$ with radii $\mathcal{O}(n^{-1/d})$ both contain a sample point of $\X_n$.

\subsection{Local Case}\label{subsec:estimator.local}

Consider now the local case, that is, there exists $q_0 \in M$ and $v_0 \in T_{q_0} M$ such that the geodesic $\gamma_0 = \gamma_{q_0,v_0}$ has second derivative $\norm{\gamma_0''(0)} = {1}/{\tau_M}$ (Theorem~\ref{thm:geometry.local_global}). 
Estimating $\tau_M$ boils down to estimating the curvature of $M$ at $q_0$ in the direction $v_0$.

We first relate directional curvature to the increment $\frac{\norm{y-x}^2}{2 d(y-x,T_x M)}$ involved in the estimator $\hat{\tau}$ \eqref{eq:estimator.estimator}.
Indeed, since the latter quantity is the radius of a sphere tangent at $x$ and passing through $y$ (Figure \ref{fig:tangent_ball_2d}), it approximates the radius of curvature in the direction $y-x$ when $x$ and $y$ are close.
For $x,y \in M$, we let $\gamma_{x \rightarrow y}$ denote the arc-length parametrized geodesic joining $x$ and $y$, with the convention $\gamma_{x \rightarrow y}(0) = x$.
\begin{lem}\label{lem:estimator.local_twopoints}
	Let $M\in\mathcal{M}_{\tau_{\min},L}^{d,D}$ with reach $\tau_{M}$
	and $\mathbb{X}\subset M$ be a subset. Let $x,y\in\mathbb{X}$ with $d_{M}(x,y)<\pi\tau_{M}$.
	Then,
	\[
	0
	\leq
	\frac{1}{\tau_{M}}-\frac{1}{\hat{\tau}(\mathbb{X})}
	\leq
	\frac{1}{\tau_{M}}-\frac{1}{\hat{\tau}(\{x,y\})}
	\leq
	\frac{1}{\tau_{M}}- \norm{ \gamma_{x \rightarrow y}''(0) } + \frac{1}{3}Ld_{M}(x,y).
	\]
\end{lem}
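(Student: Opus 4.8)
The plan is first to dispose of the two leftmost inequalities and then reduce the statement to a two-point estimate. Since $\{x,y\}\subset\X\subset M$, Corollary~\ref{thm:estimator.overestimate} gives $\hat\tau(\{x,y\})\ge\hat\tau(\X)\ge\tau_M>0$; taking reciprocals yields simultaneously $0\le\frac{1}{\tau_M}-\frac{1}{\hat\tau(\X)}$ and $\frac{1}{\tau_M}-\frac{1}{\hat\tau(\X)}\le\frac{1}{\tau_M}-\frac{1}{\hat\tau(\{x,y\})}$. It therefore remains to prove the two-point bound
\[
\frac{1}{\hat\tau(\{x,y\})}=\frac{2\,d(y-x,T_xM)}{\norm{y-x}^2}\ \ge\ \norm{\gamma_{x\to y}''(0)}-\frac{1}{3}L\,d_M(x,y).
\]
I would note at the outset that the hypothesis $d_M(x,y)<\pi\tau_M$ is used only to guarantee that the minimizing geodesic $\gamma_{x\to y}$ is well-defined, whereas $x\ne y$ already forces $\norm{y-x}>0$, so the displayed ratio is meaningful.

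For the two-point bound I would write $\gamma=\gamma_{x\to y}$ and $\ell=d_M(x,y)$, so that $\gamma(0)=x$, $\gamma(\ell)=y$, and $\gamma=\gamma_{x,v}$ for the unit tangent vector $v=\gamma'(0)\in T_xM$; in particular $\norm{\gamma'''(t)}\le L$ for $t\in[0,\ell]$ by the $\mathcal{C}^3$ bound of the model. The key geometric input I would invoke is that the acceleration of a geodesic of a submanifold is normal to it, i.e. $\gamma''(0)\in (T_xM)^\perp$. Taylor's formula with integral remainder then gives
\[
y-x=\ell\,\gamma'(0)+\frac{\ell^2}{2}\gamma''(0)+R,\qquad R=\frac{1}{2}\int_0^\ell(\ell-t)^2\gamma'''(t)\,dt,\quad \norm{R}\le\frac{L\ell^3}{6}.
\]
Projecting $y-x$ orthogonally onto $(T_xM)^\perp$ annihilates the $\gamma'(0)$ term, leaves $\gamma''(0)$ unchanged, and does not increase $\norm{R}$, so $d(y-x,T_xM)\ge\frac{\ell^2}{2}\norm{\gamma''(0)}-\norm{R}\ge\frac{\ell^2}{2}\norm{\gamma''(0)}-\frac{L\ell^3}{6}$.

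To finish I would use that the chord is no longer than the geodesic arc, $\norm{y-x}\le\ell$, hence $\norm{y-x}^2\le\ell^2$; together with $d(y-x,T_xM)\ge0$ and the previous estimate this gives
\[
\frac{2\,d(y-x,T_xM)}{\norm{y-x}^2}\ \ge\ \frac{2\,d(y-x,T_xM)}{\ell^2}\ \ge\ \frac{2}{\ell^2}\left(\frac{\ell^2}{2}\norm{\gamma''(0)}-\frac{L\ell^3}{6}\right)=\norm{\gamma''(0)}-\frac{L\ell}{3},
\]
which is the asserted two-point bound (it holds trivially when its right-hand side is $\le 0$, a case that also subsumes the degenerate possibility $d(y-x,T_xM)=0$). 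Substituting this into $\frac{1}{\tau_M}-\frac{1}{\hat\tau(\{x,y\})}$ yields the rightmost inequality and completes the proof. I do not anticipate a genuine obstacle here; the only points deserving care are citing the normality of geodesic acceleration, tracking the degenerate case, and recording precisely where the hypothesis $d_M(x,y)<\pi\tau_M$ is used.
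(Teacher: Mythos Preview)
Your proof is correct and follows essentially the same route as the paper: reduce to the two-point case via Corollary~\ref{thm:estimator.overestimate}, Taylor-expand the geodesic, use $\gamma'(0)\in T_xM$ and $\gamma''(0)\in(T_xM)^\perp$ to lower-bound $d(y-x,T_xM)$, and then use $\norm{y-x}\le d_M(x,y)$; the paper packages the expansion step into Lemma~\ref{lem:geometry.maxcurvature_bound} (second-order remainder bounded via $\norm{\gamma''(s)-\gamma''(0)}\le Ls$) while you go directly to the third-order remainder, which is equivalent. One small slip: your displayed equality $\frac{1}{\hat\tau(\{x,y\})}=\frac{2d(y-x,T_xM)}{\norm{y-x}^2}$ should be ``$\ge$'', since $\hat\tau(\{x,y\})$ is an infimum over the ordered pairs $(x,y)$ and $(y,x)$; this does not affect the argument.
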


Let us now state how directional curvatures are stable with respect to
perturbations of the base point and the direction.
We let $\kappa_p$ denote the maximal directional curvature of $M$ at $p \in M$, that is,
\begin{align*}
\kappa_{p} = \sup_{v\in\mathcal{B}_{T_{p}M}(0,1)}\left\Vert \gamma_{p,v}''(0)\right\Vert
.
\end{align*}
\begin{lem}
	\label{lem:estimator.local_butterfly} 
Let $M\in\mathcal{M}_{\tau_{\min},L}^{d,D}$ with reach $\tau_{M}$ and $q_{0},x,y\in M$ be such that $x,y\in\mathcal{B}_{M}\left(q_{0},\frac{\pi\tau_{M}}{2}\right)$.
Let $\gamma_{0}$ be a geodesic such that $\gamma_{0}(0)=q_{0}$ and
$\left\Vert \gamma_{0}''(0)\right\Vert = \kappa_{q_0}$. Write
\begin{align*}
\theta_{x}:=\angle(\gamma_{0}'(0),\gamma_{q_{0}\to x}'(0)) 
, \hspace{1em}
\theta_{y}:=\angle(\gamma_{0}'(0),\gamma_{q_{0}\to y}'(0))
,
\end{align*}
and suppose that $|\theta_{x}-\theta_{y}|\geq\frac{\pi}{2}$.
Then,
\[
\left\Vert \gamma_{x\to y}''(0)\right\Vert \geq \kappa_{q_{0}}-(\kappa_{x}-\kappa_{q_{0}})-2L d_{M}(q_{0},x)-(2\kappa_{x}+6\kappa_{q_{0}})\sin^{2}(|\theta_{x}-\theta_{y}|).
\]
\end{lem}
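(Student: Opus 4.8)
The plan is to reduce the left‑hand side to the second fundamental form of $M$ at $x$, prove that directions close to a \emph{critical} direction of the second fundamental form have nearly maximal directional curvature with a loss that is quadratic (not linear) in the angle, transport the maximal‑curvature direction $\gamma_0'(0)$ from $q_0$ to $x$, and control the angle between $\gamma_{x\to y}'(0)$ and the transported direction by a geodesic‑triangle estimate. Write $v:=\gamma_{x\to y}'(0)$, a unit vector in $T_xM$, and let $\mathrm{II}_x$ denote the second fundamental form at $x$, so that $\gamma_{x\to y}''(0)=\mathrm{II}_x(v,v)$ and $\kappa_x=\sup_{\|u\|=1}\|\mathrm{II}_x(u,u)\|$. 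By Proposition~\ref{thm:geometry:upper_bound_on_second_derivative} and polarization, $\|\mathrm{II}_x(u,u')\|\le 1/\tau_M$ for unit $u,u'$, and the $\mathcal{C}^3$ bound $\|\gamma'''\|\le L$ controls the variation of $\mathrm{II}$ under a change of base point along a geodesic.

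The first ingredient is a near‑maximality estimate: if $w\in T_xM$ is unit and \emph{critical} for $u\mapsto\|\mathrm{II}_x(u,u)\|$, meaning $\langle\mathrm{II}_x(w,z),\mathrm{II}_x(w,w)\rangle=0$ for all $z\perp w$ (which holds when $w$ is a maximizer), then for $\alpha:=\min\{\angle(v,w),\angle(v,-w)\}$,
\[
\|\mathrm{II}_x(v,v)\|\;\ge\;\|\mathrm{II}_x(w,w)\|-2\kappa_x\sin^2\alpha .
\]
Indeed, writing $v=\cos\alpha\,w\pm\sin\alpha\,z$ with $z\perp w$ unit and projecting $\mathrm{II}_x(v,v)$ onto the unit normal $\mathrm{II}_x(w,w)/\|\mathrm{II}_x(w,w)\|$, the cross term vanishes by criticality, the $ww$‑term contributes $\|\mathrm{II}_x(w,w)\|\cos^2\alpha$, and the $zz$‑term is at least $-\kappa_x\sin^2\alpha$. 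The absence of a term linear in $\alpha$ is exactly why a critical (maximal) direction has to be used; this is the source of the $\sin^2$ losses in the statement.

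Next I would transport $\gamma_0'(0)$, together with the orthogonal directions witnessing its criticality at $q_0$, along the geodesic $\gamma_{q_0\to x}$, obtaining a unit $v_0\in T_xM$. Using the second‑fundamental‑form variation estimates (controlled by $L$ and $\tau_M$, as already used in this section) one gets $\|\mathrm{II}_x(v_0,v_0)\|\ge\kappa_{q_0}-2L\,d_M(q_0,x)$ together with an \emph{approximate} criticality of $v_0$ at $x$; rerunning the previous computation with $w$ replaced by $v_0$ (the cross term is now nonzero but controlled) yields
\[
\|\mathrm{II}_x(v,v)\|\;\ge\;\kappa_{q_0}-2L\,d_M(q_0,x)-2\kappa_x\sin^2\alpha'-R ,
\]
where $\alpha':=\min\{\angle(v,v_0),\angle(v,-v_0)\}$ and the remainder $R$, arising from the cross term and from comparing $\kappa_x$ with $\kappa_{q_0}$, is absorbed into the $-(\kappa_x-\kappa_{q_0})$ and $-2L\,d_M(q_0,x)$ budget. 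It remains to bound $\alpha'$: in the exponential chart at $q_0$, using that $d_M(x,y)<\pi\tau_M$ so that geodesics are minimizing and the chart is well behaved, I would compare $\gamma_{x\to y}$ with the straight segment joining $x$ and $y$ in that chart and show $\sin\alpha'\lesssim\sin|\theta_x-\theta_y|$ up to a curvature correction of order $d_M(q_0,x)/\tau_M$ --- the point being that when $|\theta_x-\theta_y|$ is near $\pi$ the geodesic from $x$ to $y$ passes essentially through $q_0$ along $\gamma_0$, so $\gamma_{x\to y}'(0)\approx\pm v_0$. Substituting and collecting constants (the generous $2\kappa_x+6\kappa_{q_0}$, together with $\kappa_x,\kappa_{q_0}\le 1/\tau_M$, absorbing the angle‑comparison factors) gives the claim.

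The main obstacle is the last step: quantifying, purely in terms of the angular data $\theta_x,\theta_y$ at $q_0$, how close $\gamma_{x\to y}'(0)$ is to the transported direction $v_0$. This requires comparing the genuine geodesic of $M$ from $x$ to $y$ with its tangent‑space proxy in the chart at $q_0$, bounding the discrepancy by the curvature scale $1/\tau_M$, and handling the fact that $\gamma_0'(0)$, $\gamma_{q_0\to x}'(0)$ and $\gamma_{q_0\to y}'(0)$ need not be coplanar, so the estimate must be phrased through $|\theta_x-\theta_y|$ alone --- consistently, the inequality is vacuous unless $|\theta_x-\theta_y|$ is close to $\pi$, which is precisely the regime in which this picture is clean. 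A secondary technical point is making the transport of the criticality structure from $q_0$ to $x$ quantitative enough that its error fits within the stated budget.
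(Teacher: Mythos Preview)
Your approach contains a genuine gap at exactly the point you flag: controlling the angle between $\gamma_{x\to y}'(0)$ and the \emph{transported} maximal direction $v_0$.  You propose to compare the geodesic $\gamma_{x\to y}$ with a straight segment in the exponential chart at $q_0$, but this would introduce curvature corrections of order $d_M(q_0,x)/\tau_M$, and it is not clear these fit into the stated budget; the directions $\gamma_0'(0)$, $\gamma_{q_0\to x}'(0)$, $\gamma_{q_0\to y}'(0)$ need not be coplanar, and you never actually establish $\sin\alpha'\lesssim\sin|\theta_x-\theta_y|$.  A second, related issue is that transporting the \emph{criticality structure} of $\gamma_0'(0)$ from $q_0$ to $x$ and quantifying the resulting ``approximate criticality'' is left entirely vague.

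The paper sidesteps both difficulties with one idea: do \emph{not} transport the maximal direction at all.  At the point $x$, compare $\gamma_{x\to y}'(0)$ not with a transported $v_0$ but with $\gamma_{x\to q_0}'(0)=-\gamma_{q_0\to x}'(t_x)$, the tangent at $x$ to the geodesic back to $q_0$.  The angle $\theta$ between these two vectors is precisely the interior angle of the geodesic triangle $(x,y,q_0)$ at the vertex $x$, and a Toponogov-type comparison (Lemma~\ref{lem:estimator_triangle_angle}) gives $\theta+|\theta_x-\theta_y|\le\pi$, hence $\sin\theta\le\sin|\theta_x-\theta_y|$ under the hypothesis $|\theta_x-\theta_y|\ge\pi/2$.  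The price is that $\gamma_{x\to q_0}'(0)$ is \emph{not} a critical direction at $x$, so the sharp estimate you derived (your first ingredient, which is Corollary~\ref{cor:estimator_curvatureangle}~(ii)) does not apply there.  The paper instead proves a weaker bilinear inequality valid for \emph{any} pair of unit directions (Lemma~\ref{lem:estimator_bilinear}~(i) and Corollary~\ref{cor:estimator_curvatureangle}~(i)):
\[
\|\gamma_{x\to y}''(0)\|\ge 2\cos^2\theta\,\|\gamma_{q_0\to x}''(t_x)\|-(1+2\sin^2\theta)\kappa_x.
\]
The factor $2$ and the additive $-\kappa_x$ are what generate the seemingly generous coefficients $-(\kappa_x-\kappa_{q_0})$ and $-(2\kappa_x+6\kappa_{q_0})$ in the statement.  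Then $\|\gamma_{q_0\to x}''(t_x)\|$ is pushed back to $q_0$ by the $L$-Lipschitz bound on $\gamma''$, and only \emph{there}, at $q_0$, is the critical-direction estimate (your first ingredient) invoked with the genuine maximizer $\gamma_0'(0)$.  In short: use the non-critical bilinear bound at $x$ so that the relevant angle is an honest triangle angle, then use the critical bound at $q_0$ where it is legitimate.
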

In particular, geodesics in a neighborhood of $q_0$ with directions close to
$v_0$ have curvature close to $\frac{1}{\tau_M}$. 
Combining Lemma \ref{lem:estimator.local_twopoints} and Lemma
\ref{lem:estimator.local_butterfly} yields the following deterministic bound  
in the local case.

\begin{prop}
	\label{thm:estimator.local_twopoints} 
Let $M\in\mathcal{M}_{\tau_{\min},L}^{d,D}$
	be such that there exist $q_{0}\in M$ and a geodesic $\gamma_{0}$
	such that $\gamma_{0}(0)=q_{0}$ and $\left\Vert \gamma_{0}''(0)\right\Vert =\frac{1}{\tau_{M}}$.
	Let $\mathbb{X}\subset M$ and $x,y\in\mathbb{X}$ be such that $x,y\in\mathcal{B}_{M}\left(q_{0},\frac{\pi\tau_{M}}{2}\right)$.
Write
\begin{align*}
\theta_{x}:=\angle(\gamma_{0}'(0),\gamma_{q_{0}\to x}'(0)) 
, \hspace{1em}
\theta_{y}:=\angle(\gamma_{0}'(0),\gamma_{q_{0}\to y}'(0))
,
\end{align*}
and suppose that $|\theta_{x}-\theta_{y}|\geq\frac{\pi}{2}$.
Then, 
\begin{align*}
0  \leq
\frac{1}{\tau_{M}}-\frac{1}{\hat{\tau}(\mathbb{X})}
&\leq
\frac{1}{\tau_{M}}-\frac{1}{\hat{\tau}(\{x,y\})}\\
& \leq\frac{8\sin^{2}(|\theta_{x}-\theta_{y}|)}{\tau_{M}} +L\left(\frac{1}{3}d_{M}(x,y)+2 d_{M}(q_{0},x)\right).
\end{align*}

\end{prop}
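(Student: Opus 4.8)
The plan is to simply chain the two preceding lemmas. The first two inequalities,
\[
0 \leq \frac{1}{\tau_M} - \frac{1}{\hat{\tau}(\mathbb{X})} \leq \frac{1}{\tau_M} - \frac{1}{\hat{\tau}(\{x,y\})},
\]
are immediate from Corollary \ref{thm:estimator.overestimate} (the estimator overestimates and is monotone under inclusion, since $\{x,y\}\subset\mathbb{X}$). So the real content is the final bound on $\frac{1}{\tau_M} - \frac{1}{\hat{\tau}(\{x,y\})}$.

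First I would verify the hypothesis of Lemma \ref{lem:estimator.local_twopoints} is met: since $x,y\in\mathcal{B}_M(q_0,\pi\tau_M/2)$, the triangle inequality for $d_M$ gives $d_M(x,y)\leq d_M(q_0,x)+d_M(q_0,y) < \pi\tau_M$, so the lemma applies and yields
\[
\frac{1}{\tau_M} - \frac{1}{\hat{\tau}(\{x,y\})} \leq \frac{1}{\tau_M} - \norm{\gamma_{x\to y}''(0)} + \frac{1}{3}L\,d_M(x,y).
\]
Next I would control $\frac{1}{\tau_M} - \norm{\gamma_{x\to y}''(0)}$ using Lemma \ref{lem:estimator.local_butterfly}. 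Here a subtlety arises: Lemma \ref{lem:estimator.local_butterfly} is phrased in terms of $\kappa_{q_0}$, the maximal directional curvature at $q_0$, whereas our hypothesis gives $\norm{\gamma_0''(0)} = 1/\tau_M$ for a particular geodesic $\gamma_0$. Since $1/\tau_M$ is the global upper bound on directional curvature (Proposition \ref{thm:geometry:upper_bound_on_second_derivative}), we have $\kappa_{q_0}\geq \norm{\gamma_0''(0)} = 1/\tau_M$ and also $\kappa_{q_0}\leq 1/\tau_M$, hence $\kappa_{q_0} = 1/\tau_M$; likewise $\kappa_x\leq 1/\tau_M = \kappa_{q_0}$, so the term $-(\kappa_x - \kappa_{q_0})$ in Lemma \ref{lem:estimator.local_butterfly} is nonnegative and may simply be dropped when we bound from below, and $2\kappa_x + 6\kappa_{q_0}\leq 8/\tau_M$. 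Therefore Lemma \ref{lem:estimator.local_butterfly} gives
\[
\norm{\gamma_{x\to y}''(0)} \geq \frac{1}{\tau_M} - 2L\,d_M(q_0,x) - \frac{8}{\tau_M}\sin^2(|\theta_x - \theta_y|),
\]
i.e. $\frac{1}{\tau_M} - \norm{\gamma_{x\to y}''(0)} \leq 2L\,d_M(q_0,x) + \frac{8}{\tau_M}\sin^2(|\theta_x - \theta_y|)$.

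Substituting this into the bound from Lemma \ref{lem:estimator.local_twopoints} gives exactly
\[
\frac{1}{\tau_M} - \frac{1}{\hat{\tau}(\{x,y\})} \leq \frac{8\sin^2(|\theta_x - \theta_y|)}{\tau_M} + L\Bigl(\frac{1}{3}d_M(x,y) + 2d_M(q_0,x)\Bigr),
\]
which is the claim. The only genuine point requiring care — the main (mild) obstacle — is the bookkeeping that identifies $\kappa_{q_0}$ with $1/\tau_M$ and discards the favorable sign of $-(\kappa_x-\kappa_{q_0})$, so that Lemma \ref{lem:estimator.local_butterfly}, stated for the \emph{maximal}-curvature geodesic, can be applied to our prescribed $\gamma_0$; one must also check that the angles $\theta_x,\theta_y$ appearing in Lemma \ref{lem:estimator.local_butterfly} are the same as those in the statement, which holds by definition. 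Everything else is routine triangle-inequality manipulation.
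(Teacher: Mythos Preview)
Your proposal is correct and follows precisely the approach indicated in the paper, which states that Proposition~\ref{thm:estimator.local_twopoints} results from ``combining Lemma~\ref{lem:estimator.local_twopoints} and Lemma~\ref{lem:estimator.local_butterfly}.'' The bookkeeping you single out---identifying $\kappa_{q_0}=1/\tau_M$ via Proposition~\ref{thm:geometry:upper_bound_on_second_derivative}, discarding the nonnegative term $-(\kappa_x-\kappa_{q_0})$, and bounding $2\kappa_x+6\kappa_{q_0}\leq 8/\tau_M$---is exactly the substitution needed, and your verification that $d_M(x,y)<\pi\tau_M$ and that the angles match is the remaining routine work.
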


In other words, since the reach boils down to directional curvature in the local case, $\hat{\tau}$ performs well if it is given as input a pair of points $x,y$ which are close to the point $q_0$ realizing the reach, and almost aligned with the direction of interest $v_0$.
Note that the error bound in the local case (Proposition \ref{thm:estimator.local_twopoints}) is very similar to that of the global case (Proposition \ref{thm:estimator.global_twopoints}) with an extra alignment term $\sin^{2}(|\theta_{x}-\theta_{y}|)$
.
This alignment term appears since, in the local case, the reach arises from directional curvature $\tau_M = \norm{\gamma_{q_0,v_0}''(0)}$ (Theorem \ref{thm:geometry.local_global}). Hence, it is natural that the accuracy of $\hat{\tau}(\X)$ depends on how precisely $\X$ samples the neighborhood of $q_0$ in the particular direction $v_0$.

Similarly to the analysis of the global case, the deterministic bound in Proposition
\ref{thm:estimator.local_twopoints} yields a bound on the risk of $\hat{\tau}(\X_n)$ when $\X_n = \left\{X_1,\ldots,X_n\right\}$ is random.

\begin{prop}\label{thm:estimator.local_risk}
	Let $P \in \mathcal{P}_{\tau_{\min},L,f_{\min}}^{d,D}$ and $M=supp(P)$.
Suppose there exists $q_{0}\in M$ and a geodesic $\gamma_{0}$ with $\gamma_{0}(0)=q_{0}$ and $\norm{\gamma_{0}''(0)}=\frac{1}{\tau_{M}}$.
	Then,
	\[
	\mathbb{E}_{P^{n}}
	\left[
	\left|
	\frac{1}{\tau_{M}}-\frac{1}{\hat{\tau}(\X_n)}
	\right|^{p}
	\right]
	\leq 
	C_{\tau_{\min},d,L,f_{\min},p}n^{-\frac{2p}{3d-1}},
	\]
	where $C_{\tau_{\min},d,L,f_{\min},p}$ depends only on $\tau_{\min}, d, L , f_{\min}$ and $p$.
	
\end{prop}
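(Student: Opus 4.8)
The plan is to couple the deterministic bound of Proposition~\ref{thm:estimator.local_twopoints} with a probabilistic estimate of how well $\X_n$ samples a neighbourhood of the curvature-attaining point $q_0$ in the distinguished direction $v_0 := \gamma_0'(0)$, and then to integrate the resulting random error. The key observation is that, up to constants depending only on $\tau_{\min}$ and $L$, the right-hand side of Proposition~\ref{thm:estimator.local_twopoints} is controlled by $\sin^2(|\theta_x-\theta_y|)+d_M(q_0,x)+d_M(x,y)$, and since $d_M(x,y)\le d_M(q_0,x)+d_M(q_0,y)$ it is enough to produce $x,y\in\X_n$ lying in thin geodesic cones of aperture $\epsilon$ about $q_0$ around the directions $+v_0$ and $-v_0$, with geodesic radius of order $\epsilon^2$; the exponent $2$ is chosen precisely to balance the $\sin^2$ term against the linear term. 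For such a pair, the hypotheses $x,y\in\mathcal B_M(q_0,\pi\tau_M/2)$ and $|\theta_x-\theta_y|\ge\pi/2$ of Proposition~\ref{thm:estimator.local_twopoints} hold once $\epsilon$ is below a constant $\epsilon_0=\epsilon_0(\tau_{\min},L,d)\le\pi/4$, and the proposition then yields $0\le\frac1{\tau_M}-\frac1{\hat\tau(\X_n)}\le C_0\,\epsilon^2$ with $C_0=C_0(\tau_{\min},L)$.

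Concretely, I would set $C_\pm(\epsilon)=\exp_{q_0}\!\bigl(\{v\in T_{q_0}M:\norm v\le\epsilon^2,\ \angle(v,\pm v_0)\le\epsilon\}\bigr)\subset\mathcal B_M(q_0,\epsilon^2)$ for $0<\epsilon\le\epsilon_0$, and define $E_n=\max\bigl(E_n^+,E_n^-\bigr)$ with $E_n^\pm=\inf\{\epsilon>0: C_\pm(\epsilon)\cap\X_n\neq\emptyset\}$ (the infimum is attained, since the cones are compact and nondecreasing in $\epsilon$ and $\X_n$ is finite). On the event $\{E_n\le\epsilon_0\}$ the discussion above, applied at scale $\epsilon=E_n$ to a pair $(x,y)\in\X_n^2$ realising $E_n^+$ and $E_n^-$, gives $\bigl|\frac1{\tau_M}-\frac1{\hat\tau(\X_n)}\bigr|^p\le C_0^p E_n^{2p}$; on its complement, Corollary~\ref{thm:estimator.overestimate} together with $\tau_M\ge\tau_{\min}$ gives the trivial bound $\bigl|\frac1{\tau_M}-\frac1{\hat\tau(\X_n)}\bigr|^p\le\tau_{\min}^{-p}$. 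Hence $\E_{P^n}\bigl[\,\cdot\,\bigr]\le C_0^p\,\E\bigl[E_n^{2p}\mathbf 1_{\{E_n\le\epsilon_0\}}\bigr]+\tau_{\min}^{-p}\,\P(E_n>\epsilon_0)$.

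To finish I would estimate the $Q$-mass of the cones: a Euclidean cone of radius $\epsilon^2$ and half-angle $\epsilon$ in $\R^d$ has volume of order $\epsilon^{2d}\cdot\epsilon^{d-1}=\epsilon^{3d-1}$, and since $\exp_{q_0}$ distorts $d$-volumes only by a bounded factor on balls of radius a fixed fraction of $\tau_M$, uniformly over $\mathcal M^{d,D}_{\tau_{\min},L}$, we get $Q(C_\pm(\epsilon))\ge c_1 f_{\min}\epsilon^{3d-1}$. Therefore $\P(E_n>\epsilon)\le 2(1-c_1 f_{\min}\epsilon^{3d-1})^n\le 2e^{-c_1 f_{\min} n\epsilon^{3d-1}}$ for $\epsilon\le\epsilon_0$, so $\P(E_n>\epsilon_0)$ is exponentially small in $n$, hence $O(n^{-2p/(3d-1)})$; and writing $\E[E_n^{2p}\mathbf 1_{\{E_n\le\epsilon_0\}}]=\int_0^\infty 2p\,\epsilon^{2p-1}\P(E_n>\epsilon,\,E_n\le\epsilon_0)\,d\epsilon$ and substituting $u=c_1 f_{\min} n\epsilon^{3d-1}$ yields $\E[E_n^{2p}\mathbf 1_{\{E_n\le\epsilon_0\}}]\le C_2\,\Gamma\!\bigl(\tfrac{2p}{3d-1}\bigr)(c_1 f_{\min} n)^{-2p/(3d-1)}$. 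Combining the two terms gives the announced rate $n^{-2p/(3d-1)}$, with constant depending only on $\tau_{\min},d,L,f_{\min},p$.

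The step I expect to require the most care is the volume comparison for the geodesic cones $C_\pm(\epsilon)$: one must bound the Jacobian of $\exp_{q_0}$ both above and below on $\mathcal B_{T_{q_0}M}(0,\epsilon^2)$ uniformly over $M\in\mathcal M^{d,D}_{\tau_{\min},L}$, which is exactly where the lower bound $\tau_M\ge\tau_{\min}$ enters quantitatively (estimates of the same nature underlie Proposition~\ref{thm:geometry:max_reach_volumic}). Once the radius--aperture tradeoff $r\asymp\epsilon^2$ is fixed so that the random error is a function of the single scalar $E_n$, the remaining tail integration is routine, exactly as in the global case (Proposition~\ref{thm:estimator.global_risk}).
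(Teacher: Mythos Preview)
Your proposal is correct and follows essentially the same approach as the paper. The paper parametrizes the geodesic cones by their radius $t_0$ with half-angle $\sqrt{t_0/\tau_{\min}}$ (so that the $\sin^2$ term and the linear terms in Proposition~\ref{thm:estimator.local_twopoints} balance), whereas you equivalently parametrize by half-angle $\epsilon$ with radius $\epsilon^2$; the volume estimate $Q(C_\pm(\epsilon))\gtrsim f_{\min}\epsilon^{3d-1}$ and the tail integration yielding $n^{-2p/(3d-1)}$ are identical in both, and the Jacobian bounds for $\exp_{q_0}$ that you flag as the delicate step are supplied in the paper by Proposition~\ref{thm:geometry:injectivity_radius}~(v).
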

This statement follows from Proposition \ref{thm:estimator.local_twopoints} together with the estimate of the probability of two points being drawn in a neighborhood of $q_0$ and subject to an alignment constraint.

Proposition \ref{thm:estimator.global_risk} and \ref{thm:estimator.local_risk} yield a convergence rate of $\hat{\tau}(\X_n)$ which is slower in the local case than in the global case. 
Recall that from Theorem \ref{thm:geometry.local_global}, the reach pertains to the size of a bottleneck structure in the global case, and to maximum directional curvature in the local case. 
To estimate the size of a bottleneck, observing two points close to each point in the bottleneck gives a good approximation.
However, for approximating maximal directional curvature, observing two points close to the curvature attaining point is not enough, but they should also be aligned with the highly curved direction. 
Hence, estimating the reach may be more difficult in the local case, and the difference in the convergence rates of Proposition \ref{thm:estimator.global_risk} and \ref{thm:estimator.local_risk} accords with this intuition.

Finally, let us point out that in both cases, neither the convergence rates nor the constants depend on the ambient dimension $D$.

\section{Minimax Estimates}\label{sec:minimax}
In this section we derive bounds on the minimax risk $R_n$ of
the estimation of the reach over the class $\mathcal{P}^{d,D}_{\tau_{\min},L,f_{\min}}$, that is
\begin{align}\label{eqn:minimax_risk_definition}
R_n = \inf_{\hat{\tau}_n} 
\sup_{P \in \mathcal{P}^{d,D}_{\tau_{\min},L,f_{min}}}
\E_{P^n} 
\left| 
\frac{1}{\tau_P} - \frac{1}{\hat{\tau}_n} 
\right|^p,
\end{align}
where the infimum ranges over all estimators $\hat{\tau}_n\bigl((X_1,T_{X_1}),\ldots,(X_n,T_{X_n})\bigr)$ based on an i.i.d. sample of size $n$ with the knowledge of the tangent spaces at sample points.
The minimax risk $R_n$ corresponds to the best expected risk that an estimator, based on $n$ samples, can achieve uniformly over the model $\mathcal{P}^{d,D}_{\tau_{\min},L,f_{min}}$ without the knowledge of the underlying distribution $P$.

The rate of convergence of the plugin estimator $\hat{\tau}_n = \hat{\tau}(\X_n)$ studied in the previous section leads to an upper
bound on $R_n$, which we state here for completeness.  
\begin{thm}\label{thm:upper_bound}
For all $n\geq 1$,
\[
R_n
\leq 
C_{\tau_{\min},d,L,f_{\min},p}n^{-\frac{2p}{3d-1}},
\]
for some constant $C_{\tau_{\min},d,L,f_{\min},p}$ depending only on $\tau_{\min}, d, L , f_{\min}$ and $p$. 
\end{thm}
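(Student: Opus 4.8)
The plan is to bound $R_n$ from above by the risk of the single plugin estimator $\hat{\tau}(\X_n)$ of \eqref{eq:estimator.estimator} and then to control that risk uniformly over the model using the geometric dichotomy of Theorem \ref{thm:geometry.local_global}. Since by definition $R_n = \inf_{\hat{\tau}_n}\sup_{P}\E_{P^n}\,\ell(\tau_P,\hat{\tau}_n) \le \sup_{P}\E_{P^n}\,\ell(\tau_P,\hat{\tau}(\X_n))$, with $\ell$ as in \eqref{eq:loss}, it is enough to exhibit one constant $C = C_{\tau_{\min},d,L,f_{\min},p}$ for which $\E_{P^n}\,\ell(\tau_P,\hat{\tau}(\X_n)) \le C\, n^{-2p/(3d-1)}$ for every $P \in \mathcal{P}^{d,D}_{\tau_{\min},L,f_{\min}}$.

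Next I would fix such a $P$, with support $M$, and split into the two cases of Theorem \ref{thm:geometry.local_global}. If $M$ is in the Global Case (a bottleneck exists), Proposition \ref{thm:estimator.global_risk} yields $\E_{P^n}\,\ell(\tau_M,\hat{\tau}(\X_n)) \le C_{\tau_M,f_{\min},d,p}\, n^{-p/d}$; if $M$ is in the Local Case (a curvature-attaining geodesic exists), Proposition \ref{thm:estimator.local_risk} yields $\E_{P^n}\,\ell(\tau_M,\hat{\tau}(\X_n)) \le C_{\tau_{\min},d,L,f_{\min},p}\, n^{-2p/(3d-1)}$. Because $p \ge 1$ and $d \ge 1$ give $p/d \ge 2p/(3d-1)$, we have $n^{-p/d} \le n^{-2p/(3d-1)}$ for all $n \ge 1$, so in both cases the risk is at most a constant times $n^{-2p/(3d-1)}$; the slower of the two rates --- hence the one binding the minimax bound --- is the local one.

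Finally I would uniformize the constant. The only remaining $P$-dependence is through $\tau_M$ in the global-case constant $C_{\tau_M,f_{\min},d,p}$, which Proposition \ref{thm:estimator.global_risk} states is decreasing in $\tau_M$; since $\tau_M \ge \tau_{\min}$, this constant is at most $C_{\tau_{\min},f_{\min},d,p}$. Setting $C_{\tau_{\min},d,L,f_{\min},p}$ to be the maximum of this bound and the local-case constant gives a quantity depending only on $\tau_{\min},d,L,f_{\min},p$ that dominates $\E_{P^n}\,\ell(\tau_P,\hat{\tau}(\X_n))\, n^{2p/(3d-1)}$ for every admissible $P$. Taking the supremum over $P$ then finishes the proof. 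I do not expect a genuine obstacle here: the statement is essentially bookkeeping on top of the trichotomy (Theorem \ref{thm:geometry.local_global}) and the two already-established risk bounds, and the only point requiring a moment's care is to observe that the local rate is the slower one and to absorb the $\tau_M$-dependence of the global constant via $\tau_M \ge \tau_{\min}$.
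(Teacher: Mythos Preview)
Your proposal is correct and follows exactly the approach the paper intends: the paper presents Theorem~\ref{thm:upper_bound} as an immediate consequence of the plugin estimator's risk bounds, and your argument --- bound $R_n$ by the risk of $\hat{\tau}(\X_n)$, invoke the dichotomy of Theorem~\ref{thm:geometry.local_global}, apply Propositions~\ref{thm:estimator.global_risk} and~\ref{thm:estimator.local_risk}, compare rates via $p/d \ge 2p/(3d-1)$, and uniformize the global constant using its monotonicity in $\tau_M$ together with $\tau_M \ge \tau_{\min}$ --- is precisely the bookkeeping the paper leaves to the reader.
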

We now focus on deriving a lower bound on the minimax risk $R_n$.
The method relies on an application of Le Cam's Lemma \cite{Yu1997}. In what follows, let
\begin{align*}
TV\left(P,P'\right) 
= 
\frac{1}{2} \int |d P - dP'|
\end{align*}
denote the total variation distance between $P$ and $P'$, where $dP,dP'$ denote the respective densities of $P,P'$ with respect to any dominating measure.
Since 
$|x-z|^p + |z-y|^p \geq 2^{1-p} |x-y|^p$
, the following version of Le Cam's lemma results from \cite[Lemma 1]{Yu1997} and $(1 - TV(P^n,P'^n)) \geq (1-TV(P,P'))^n$.
\begin{lem}[Le Cam's Lemma]\label{lecam_lemma}
Let $P,P' \in  \mathcal{P}^{d,D}_{\tau_{\min},L,f_{\min}}$ with respective supports $M$ and $M'$. 
Then for all $n \geq 1$,
\[
R_n 
\geq
\frac{1}{2^{p}}\left|\frac{1}{\tau_M}-\frac{1}{\tau_{M'}}\right|^p \left(1-TV(P,P')\right)^{n}.
\]
\end{lem}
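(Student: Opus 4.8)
The plan is to run the standard two-point argument behind Le Cam's method, in the form packaged as Lemma~1 of \cite{Yu1997}, and then to pass from one observation to $n$ observations by tensorizing the total variation distance. First I would discard all of the model except the two distributions $P,P'$ in the statement. Since both lie in $\mathcal{P}^{d,D}_{\tau_{\min},L,f_{\min}}$ by hypothesis, for every estimator $\hat{\tau}_n$,
\begin{align*}
\sup_{P'' \in \mathcal{P}^{d,D}_{\tau_{\min},L,f_{\min}}} \E_{P''^{n}}\,\ell(\tau_{P''},\hat{\tau}_n)
&\geq
\max\Bigl\{\E_{P^{n}}\,\ell(\tau_M,\hat{\tau}_n),\ \E_{P'^{n}}\,\ell(\tau_{M'},\hat{\tau}_n)\Bigr\}\\
&\geq
\tfrac12\Bigl(\E_{P^{n}}\,\ell(\tau_M,\hat{\tau}_n)+\E_{P'^{n}}\,\ell(\tau_{M'},\hat{\tau}_n)\Bigr),
\end{align*}
where $\ell(\tau,\tau')=|1/\tau-1/\tau'|^{p}$. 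The separation input is the displayed elementary inequality $|a-c|^p+|c-b|^p\geq 2^{1-p}|a-b|^p$: taking $a=1/\tau_M$, $b=1/\tau_{M'}$ and $c=1/\hat{\tau}_n$ shows that $\ell(\tau_M,\hat{\tau}_n)+\ell(\tau_{M'},\hat{\tau}_n)\geq \Delta:=2^{1-p}|1/\tau_M-1/\tau_{M'}|^{p}$, a bound that holds pointwise, irrespective of the value taken by $\hat{\tau}_n$.

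Next I would apply the Le Cam bound of \cite{Yu1997} to the pair of product measures $P^{n},P'^{n}$: writing $dP^{n},dP'^{n}$ for their densities with respect to a common dominating measure,
\begin{align*}
\E_{P^{n}}\,\ell(\tau_M,\hat{\tau}_n)+\E_{P'^{n}}\,\ell(\tau_{M'},\hat{\tau}_n)
&\geq \int \min\bigl(dP^{n},dP'^{n}\bigr)\bigl(\ell(\tau_M,\hat{\tau}_n)+\ell(\tau_{M'},\hat{\tau}_n)\bigr)\\
&\geq \Delta\int\min\bigl(dP^{n},dP'^{n}\bigr) = \Delta\bigl(1-TV(P^{n},P'^{n})\bigr).
\end{align*}
Chaining this with the previous display and taking the infimum over estimators yields $R_n\geq \tfrac12\Delta\bigl(1-TV(P^{n},P'^{n})\bigr)=2^{-p}|1/\tau_M-1/\tau_{M'}|^{p}\bigl(1-TV(P^{n},P'^{n})\bigr)$.

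It then remains to replace $1-TV(P^{n},P'^{n})$ by $(1-TV(P,P'))^{n}$, using the tensorization inequality $1-TV(\mu\otimes\nu,\mu'\otimes\nu')\geq(1-TV(\mu,\mu'))(1-TV(\nu,\nu'))$ together with induction on $n$. There is no genuine obstacle in this argument; it is an assembly of classical facts. The only points that require care are verifying that the two-point family sits inside the model (granted by the hypotheses on $P,P'$), keeping track of the two factors of $2$ so as to land on the constant $2^{-p}$, and invoking the tensorization bound in the correct direction.
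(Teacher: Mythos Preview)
Your proof is correct and follows exactly the route the paper indicates: the elementary inequality $|x-z|^p+|z-y|^p\geq 2^{1-p}|x-y|^p$, then \cite[Lemma~1]{Yu1997}, then the tensorization $(1-TV(P^{n},P'^{n}))\geq(1-TV(P,P'))^{n}$. You have simply unpacked the one-line justification into its constituent steps.
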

Lemma \ref{lecam_lemma} states that in order to derive a lower bound on $R_n$
one needs to consider distributions (hypotheses) in the model that are stochastically close to each other --- i.e. with small total variation distance
--- but for which the associated reaches are as different as possible.
A lower bound on the minimax risk over
$\mathcal{P}^{d,D}_{\tau_{min},L,f_{min}}$ requires the hypotheses to belong to
the class. 
Luckily, in our problem it will be enough to construct hypotheses from the
simpler class  $\mathcal{Q}^{d,D}_{\tau_{\min},L,f_{\min}}$.
Indeed, we have the following isometry result between
$\mathcal{Q}^{d,D}_{\tau_{\min},L,f_{\min}}$ and $\mathcal{P}^{d,D}_{\tau_{min},L,f_{min}}$ for the total variation distance, as proved in Section \ref{sec:appendix:TV_lemmas}.
We use here the notation of Definition \ref{def:model_with_known_tangent_spaces}

\begin{lem}\label{thm:total_variation_identiy}
Let $Q,Q' \in \mathcal{Q}^{d,D}_{\tau_{\min},L,f_{\min}}$ be distributions on 
$\R^D$ with associated distributions $P,P' \in \mathcal{P}^{d,D}_{\tau_{\min},L,f_{\min},}$ on $\R^D \times \mathbb{G}^{d,D}$. Then,
\[
TV\left(P,P'\right) =  TV\left(Q,Q'\right).
\]
\end{lem}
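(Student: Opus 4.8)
The plan is to realize ``attaching the tangent direction'' as a bimeasurable embedding, so that $P$ and $P'$ become pushforwards of $Q$ and $Q'$ under (essentially) the same measurable map, and then to invoke two elementary facts: a pushforward can only decrease the total variation distance, and for probability measures $TV(\mu,\nu)=\sup_{A}|\mu(A)-\nu(A)|$, the supremum over all measurable sets $A$. Throughout, let $\Pi:\R^D\times\mathbb{G}^{d,D}\to\R^D$ denote the first-coordinate projection.

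First I would show $TV(P,P')\geq TV(Q,Q')$. From the description $P(dx\,dT)=\delta_{T_xM}(dT)\,Q(dx)$ one reads off $\Pi_{*}P=Q$: for Borel $B\subseteq\R^D$, $P(\Pi^{-1}(B))=P(B\times\mathbb{G}^{d,D})=Q(B\cap M)=Q(B)$, and similarly $\Pi_{*}P'=Q'$. Since $f$-preimages of measurable sets are measurable, $TV(f_{*}\mu,f_{*}\nu)\leq TV(\mu,\nu)$ for every measurable $f$; applied with $f=\Pi$ this gives $TV(Q,Q')=TV(\Pi_{*}P,\Pi_{*}P')\leq TV(P,P')$.

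For the reverse inequality I would produce a single Borel injection $\psi:M\cup M'\to\R^D\times\mathbb{G}^{d,D}$ with $\psi_{*}Q=P$ and $\psi_{*}Q'=P'$, after which the same contraction property gives $TV(P,P')=TV(\psi_{*}Q,\psi_{*}Q')\leq TV(Q,Q')$ and finishes the proof. The candidate is $\psi(x)=(x,T_xM)$ for $x\in M$ and $\psi(x)=(x,T_xM')$ for $x\in M'\setminus M$. It is injective because $\Pi\circ\psi=\mathrm{id}$, it is Borel because the tangent map is continuous on each of $M,M'$ (both are at least $\mathcal{C}^{1,1}$, being sets of positive reach), and $\psi_{*}Q=P$ is immediate since $Q$ is carried by $M$. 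The identity $\psi_{*}Q'=P'$ would be just as immediate if $\psi$ coincided on all of $M'$ with $x\mapsto(x,T_xM')$; the two maps differ only on $N:=\{x\in M\cap M':T_xM\neq T_xM'\}$, so it suffices to check $vol_{M'}(N)=0$, whence $Q'(N)=0$ because $Q'$ has a density with respect to $vol_{M'}$.

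This measure-theoretic fact is the only genuinely geometric step, and the one I expect to be the main obstacle; everything around it is bookkeeping with pushforwards. To establish it, one argues that since $M$ and $M'$ are $\mathcal{C}^1$ submanifolds of the common dimension $d$, the tangent spaces agree $\mathcal{H}^d$-almost everywhere on $M\cap M'$: reading $M$ and $M'$ in local charts, $\mathcal{H}^d$-a.e. point of $M\cap M'$ is a Lebesgue density point of $M\cap M'$ relative to $M$ and relative to $M'$, and blowing up at such a point shows the rescaled sets converge to a subset of $T_xM$ which both exhausts $T_xM$ and lies inside $T_xM'$, forcing $T_xM=T_xM'$; hence $\mathcal{H}^d(N)=0$. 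Combining the two inequalities gives $TV(P,P')=TV(Q,Q')$. One may also avoid naming $\psi$ and instead verify directly that for every Borel $A$ the set $B:=\Pi(A\cap\Gamma_M)\cup\Pi(A\cap\Gamma_{M'})$ — Borel because $\Pi$ restricts to a homeomorphism of each compact graph $\Gamma_M=\{(x,T_xM):x\in M\}$, $\Gamma_{M'}$ onto $M$, $M'$ — satisfies $P(A)=Q(B)$ and $P'(A)=Q'(B)$ up to the null set $N$; this is the same argument in different dress.
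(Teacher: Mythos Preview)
Your proof is correct and reaches the same conclusion via a genuinely different route. The paper does not argue by pushforward contraction in two directions; instead it writes down an explicit dominating measure $\nu(dx\,dT)=\delta_{\{T_xM,T_xM'\}}(dT)\,\mathcal{H}^d(dx)$ on $\R^D\times\mathbb{G}^{d,D}$, computes the densities $\bar f(x,T)=\mathbbm{1}_{T=T_xM}f(x)$ and $\bar f'(x,T)=\mathbbm{1}_{T=T_xM'}f'(x)$, and integrates $|\bar f-\bar f'|$ directly, splitting according to whether $T_xM=T_xM'$. Both approaches collapse to the same geometric obstruction, namely that $N=\{x\in M\cap M':T_xM\neq T_xM'\}$ is $\mathcal{H}^d$-null, but they handle it differently: the paper proves this by the implicit function theorem (locally writing $M$ as a graph over $T_xM'$ with nonzero differential, so $M\cap M'$ sits inside a $(d-1)$-dimensional level set), whereas you invoke the Lebesgue density theorem on each manifold and a blow-up argument to force $T_xM=T_xM'$ at common density points. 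Your packaging via $\psi_*$ is cleaner in that it avoids naming a dominating measure on the product and makes the isometry transparent as ``pushforward by an a.e.\ common lift''; the paper's explicit density computation is more elementary in that it avoids any appeal to tangent measures or rectifiability, relying only on the implicit function theorem.
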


In order to construct hypotheses in
$\mathcal{Q}^{d,D}_{\tau_{\min},L,f_{\min}}$ we take advantage of the fact that
the class $\mathcal{M}^{d,D}_{\tau_{min},L}$ has good stability properties,
which we now describe.
Here, since submanifolds do not have natural parametrizations, the notion of perturbation can be well formalized using diffeomorphisms of the ambient space $\R^D \supset M$. 
Given a smooth map $\Phi: \R^D \rightarrow \R^D$,  we denote by $d_x^i \Phi$ its
differential of order $i$ at $x$. Given a tensor field $A$ between Euclidean
spaces, let $\norm{A}_{op} = \sup_x \norm{A_x}_{op}$, where $\norm{A_x}_{op}$ is the operator norm induced by the Euclidean norm.
The  next result states, informally, that  the reach
and geodesics third derivatives of a submanifold that is perturbed by a
diffeomorphism that is $\mathcal{C}^3$-close to the identity map do not change
much. The proof of Proposition \ref{thm:diffeomorphism_stability} can be found in Section \ref{sec:appendix:construction}.
\begin{prop}
\label{thm:diffeomorphism_stability}
Let $M \in  \mathcal{M}^{d,D}_{\tau_{min}L}$ be fixed,
and let $\Phi : \R^D \rightarrow \R^D$ be a global $\mathcal{C}^3$-diffeomorphism.
If $\norm{I_D - d \Phi}_{op}$, $\norm{d^2 \Phi}_{op}$ and $\norm{d^3 \Phi}_{op}$ are small enough, then $M' = \Phi(M) \in \mathcal{M}^{d,D}_{\frac{\tau_{min}}{2},2L}$.
\end{prop}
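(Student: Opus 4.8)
The plan is to control separately the two quantities defining membership in $\mathcal{M}^{d,D}_{\tau_{min}/2,2L}$: the reach of $M' = \Phi(M)$, and the third derivatives of its arc-length parametrized geodesics. Throughout, write $\varepsilon = \max\{\norm{I_D - d\Phi}_{op}, \norm{d^2\Phi}_{op}, \norm{d^3\Phi}_{op}\}$, so that $\Phi$ is $\varepsilon$-close to the identity in $\mathcal{C}^3$; the goal is to show that for $\varepsilon$ small enough (depending only on $d,D,\tau_{min},L$) both desired bounds hold. Note first that for $\varepsilon < 1$, $d\Phi$ is everywhere invertible with $\norm{(d\Phi)^{-1}}_{op} \leq (1-\varepsilon)^{-1}$, so $\Phi$ is a genuine $\mathcal{C}^3$-diffeomorphism and $M'$ is a compact connected $d$-dimensional submanifold without boundary; it remains to track the quantitative constants.

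For the reach, I would use Federer's characterization \eqref{eqn:reach_as_a_supremum_federer}: $\tau_{M'}^{-1} = \sup_{p'\neq q' \in M'} 2 d(q'-p', T_{p'}M')/\norm{q'-p'}^2$. Writing $p' = \Phi(p)$, $q' = \Phi(q)$ with $p,q \in M$, I would Taylor-expand $\Phi$ to second order: $q' - p' = d_p\Phi(q-p) + R$ with $\norm{R} \leq \tfrac{1}{2}\varepsilon\norm{q-p}^2$ (using the $\mathcal{C}^2$ bound along the segment, but more carefully along a geodesic of $M$ so that the intrinsic-extrinsic distance comparison from positive reach applies). Meanwhile $T_{p'}M' = d_p\Phi(T_p M)$. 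The numerator $d(q'-p', T_{p'}M')$ is then comparable to $d(d_p\Phi(q-p), d_p\Phi(T_p M)) + O(\varepsilon\norm{q-p}^2)$, and since $d_p\Phi$ is $\varepsilon$-close to an isometry, $d(d_p\Phi(u), d_p\Phi(V)) \leq \norm{d_p\Phi}_{op}\, d(u,V) + (\text{distortion})\norm{u}$, where the distortion term is $O(\varepsilon)\norm{u}$ but — crucially — for $u = q-p$ with $p,q$ on $M$ of reach $\tau_{min}$, one has $d(q-p, T_pM) \leq \norm{q-p}^2/(2\tau_{min})$, so the $O(\varepsilon)\norm{q-p}$ error must be absorbed against $\norm{q-p}^2$. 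This forces splitting into two regimes: when $\norm{q-p}$ is bounded below by a fixed fraction of $\tau_{min}$, all ratios are $O(\tau_{min}^{-1})$ and the $\varepsilon$-perturbation is harmless; when $\norm{q-p}$ is small, I instead compare with the directional curvature via Proposition \ref{thm:geometry:upper_bound_on_second_derivative} and the $\mathcal{C}^3$ bound, reducing to the geodesic estimate below. Combining, $\tau_{M'}^{-1} \leq \tau_M^{-1}(1 + C\varepsilon) \leq 2/\tau_{min}$ for $\varepsilon$ small.

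For the geodesic third-derivative bound, I would argue as follows. Let $\tilde\gamma$ be an arc-length parametrized geodesic of $M'$; then $c = \Phi^{-1}\circ\tilde\gamma$ is a curve on $M$, smooth of class $\mathcal{C}^3$ since $\Phi^{-1}$ is $\mathcal{C}^3$, with $\norm{c'}$ close to $1$ (within $C\varepsilon$). Reparametrizing $c$ by arc-length gives a curve $\bar c$ on $M$ which is not a geodesic but whose geodesic curvature is $O(\varepsilon)$-small; its acceleration and jerk can be bounded in terms of those of true geodesics of $M$ via the model bound $\norm{\gamma'''} \leq L$ (and $\norm{\gamma''} \leq 1/\tau_{min}$) plus the second fundamental form of $M$, all of which are controlled on $\mathcal{M}^{d,D}_{\tau_{min},L}$. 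Then $\tilde\gamma = \Phi\circ c$, and differentiating three times by the chain rule, $\tilde\gamma''' $ is a universal polynomial in $d\Phi, d^2\Phi, d^3\Phi$ and $c', c'', c'''$; each term involving $d^2\Phi$ or $d^3\Phi$ carries a factor $\varepsilon$, while the leading term is $d\Phi(c''')$, which has norm at most $(1+\varepsilon)(L + O(\varepsilon))$. Hence $\norm{\tilde\gamma'''(0)} \leq L(1 + C\varepsilon) \leq 2L$ for $\varepsilon$ small enough. The main obstacle is the bookkeeping in the small-distance regime of the reach estimate: one must be careful that the first-order distortion of $d\Phi$, which is only $O(\varepsilon)$ relative to $\norm{q-p}$, does not overwhelm the second-order quantity $d(q-p,T_pM) = O(\norm{q-p}^2)$ — this is handled precisely by passing to geodesic coordinates and using the $\mathcal{C}^3$ control, which is exactly why the hypothesis $M \in \mathcal{M}^{d,D}_{\tau_{min},L}$ (and not merely $\tau_M \geq \tau_{min}$) is needed.
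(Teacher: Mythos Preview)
Your reach argument misidentifies the obstacle. For any invertible linear map $A$ and subspace $V$, the bound $d(Au, AV) \leq \norm{A}_{op}\, d(u,V)$ is purely multiplicative (take $A\pi_V(u) \in AV$), so the additive ``first-order distortion $O(\varepsilon)\norm{u}$'' you flag does not exist. Once this term is removed, your Taylor-expansion argument goes through with only $\mathcal{C}^2$ control on $\Phi$; the $L$-hypothesis is \emph{not} needed for the reach bound, contrary to your closing remark. The paper in fact bypasses the entire hands-on computation by citing Federer's stability theorem (Lemma~\ref{reach_stability}, i.e.\ \cite[Theorem~4.19]{Federer1959}): with $K,N,R$ the Lipschitz constants of $\Phi,\Phi^{-1},d\Phi$, one has $\tau_{\Phi(M)} \geq \tau_M/((K+R\tau_M)N^2)$, which gives $\tau_{M'}\geq \tau_{min}/2$ immediately for small $\norm{I_D-d\Phi}_{op}$ and $\norm{d^2\Phi}_{op}$.

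For the geodesic bound, your chain-rule approach is viable but the step ``bound $c'''$ in terms of true geodesics of $M$'' hides the real work: $c=\Phi^{-1}\circ\tilde\gamma$ is not a geodesic of $M$, so the model bound does not apply directly, and controlling $c'''$ requires differentiating the (pulled-back) geodesic equation of $M'$, which is exactly the ODE comparison you are trying to avoid. The paper makes this comparison explicit by working in a fixed local chart $\Psi_p$ of $M$: both the $M$-geodesic and the $M'$-geodesic with matched initial data have lifts $c,\tilde c$ satisfying \eqref{geodesic_equation} with Christoffel symbols $\Gamma^\ell$ and $\tilde\Gamma^\ell$ respectively, where $\tilde\Gamma^\ell$ depends on $\Gamma^\ell,d\Phi,d^2\Phi$; differentiating the ODE once expresses $c'''(0)-\tilde c'''(0)$ in terms of these data and $d^3\Phi$, and the difference is small. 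Your extrinsic route and the paper's coordinate route ultimately perform the same perturbation analysis, but the coordinate version makes the dependence on $d^3\Phi$ and on $L$ transparent.
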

\begin{figure}[h]
\centering
\includegraphics[trim = 0mm 90mm 0mm 0mm, clip= True, width=0.7\textwidth]{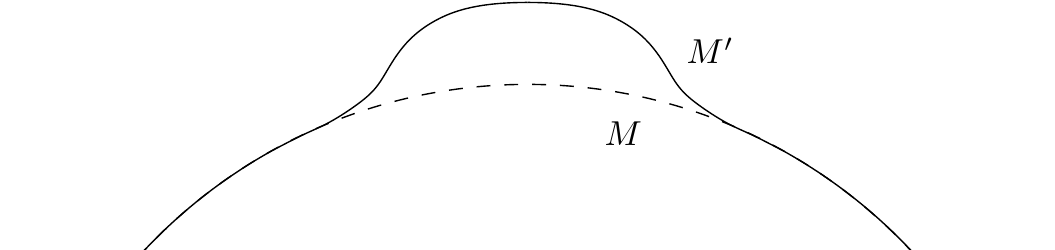}
\caption{Hypotheses of Proposition \ref{thm:two_hypotheses_construction}.}
\label{fig:two_hypotheses_simplified}
\end{figure}
Now we construct the two hypotheses $Q,Q'$ as follows (see Figure \ref{fig:two_hypotheses_simplified}).
Take $M$ to be a $d$-dimensional sphere and $Q$ to be the uniform distribution on it. 
Let $M' = \Phi(M)$, where  $\Phi$ is a bump-like diffeomorphism having the curvature of $M'$ to be different of that of $M$ in some small neighborhood. Finally, let $Q'$ be the uniform
distribution on $M'$. The proof of Proposition \ref{thm:two_hypotheses_construction} is to be found in Section \ref{sec:appendix:construction}.
\begin{prop}\label{thm:two_hypotheses_construction}
Assume that $L \geq  ({2\tau_{min}^2})^{-1}$ and $f_{min} \leq ({2^{d+1}\tau_{min}^d \sigma_d})^{-1}$. 
Then for $\ell > 0$ small enough, there exist $Q,Q' \in \mathcal{Q}^{d,D}_{\tau_{\min},L,f_{min}}$ with respective supports $M$ and $M'$ such that
\[
\left| \frac{1}{\tau_M} - \frac{1}{\tau_{M'}} \right| \geq 
c_d \frac{\ell}{\tau_{min}^2}
\text{~~ and ~~}
TV\left( Q,Q' \right) 
\leq 
12 \left(\frac{\ell}{2 \tau_{min}}\right)^d.
\]
\end{prop}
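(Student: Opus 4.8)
The plan is to build the two hypotheses very explicitly and then estimate, on the one hand, the change in curvature induced by the bump diffeomorphism and, on the other hand, the total variation distance between the two uniform measures; the stability result of Proposition~\ref{thm:diffeomorphism_stability} is what guarantees that the perturbed manifold stays in the model.

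\medskip

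\noindent\textbf{Construction.} First I would fix the base manifold: let $M = \mathcal{S}^d(2\tau_{min})$ be a $d$-sphere of radius $2\tau_{min}$, embedded in a $(d+1)$-dimensional affine subspace of $\R^D$. Since its reach is exactly $2\tau_{min} \geq \tau_{min}$ and all geodesics are great circles with $\gamma''' \equiv 0$ in the appropriate sense (so $\norm{\gamma'''(0)} = $ the relevant bound, which is $\leq L$ under the hypothesis $L \geq (2\tau_{min}^2)^{-1}$, needed because the sphere of radius $2\tau_{min}$ already forces $\norm{\gamma''}$ of size $(2\tau_{min})^{-1}$ and the $\mathcal C^3$ bookkeeping through the embedding costs a factor), one checks $M \in \mathcal{M}^{d,D}_{\tau_{min},L}$. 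Take $Q = $ uniform on $M$; its density is the constant $1/vol_M(M) = 1/(\sigma_d (2\tau_{min})^d)$, which is $\geq f_{min}$ precisely under the assumption $f_{min} \leq (2^{d+1}\tau_{min}^d\sigma_d)^{-1}$, so $Q \in \mathcal{Q}^{d,D}_{\tau_{min},L,f_{min}}$. Next, introduce a smooth compactly supported bump $\phi:\R^D\to\R^D$ supported in a ball of radius $\sim \ell$, of the form $\Phi = I_D + \epsilon(\ell)\,\psi$ where $\psi$ is a fixed smooth normal-direction displacement profile scaled so that its support has radius $\ell$; by scaling, $\norm{d^2\Phi}_{op}$ and $\norm{d^3\Phi}_{op}$ are of order $\epsilon(\ell)/\ell$ and $\epsilon(\ell)/\ell^2$ while the induced normal displacement on $M$ is of order $\epsilon(\ell)$ and the change in second fundamental form is of order $\epsilon(\ell)/\ell^2$. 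Choosing $\epsilon(\ell) \asymp \ell^2/\tau_{min}$ (equivalently $\ell$ small) makes $\norm{I_D - d\Phi}_{op}, \norm{d^2\Phi}_{op}, \norm{d^3\Phi}_{op}$ all small, so Proposition~\ref{thm:diffeomorphism_stability} gives $M' = \Phi(M) \in \mathcal{M}^{d,D}_{\tau_{min}/2, 2L}$. To land back in $\mathcal{M}^{d,D}_{\tau_{min},L}$ rather than the relaxed class, I would instead start from a sphere of radius $4\tau_{min}$ (or re-scale the conclusion), so that halving the reach still leaves it $\geq \tau_{min}$ and doubling $L$ is absorbed; similarly arrange constants so $2L$ is still the allowed bound — this is a cosmetic adjustment of the radius and the definition of $L$, and I would state it cleanly at the outset.

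\medskip

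\noindent\textbf{Reach gap.} Here the key point is that the bump is designed precisely to create, at its center $p_0$, a direction $v_0$ in which the second fundamental form of $M'$ differs from that of $M$ by an amount of order $\epsilon(\ell)/\ell^2 \asymp 1/\tau_{min}$... more carefully, I want the change in directional curvature to be $\asymp \ell/\tau_{min}^2$, which forces the profile to be scaled as $\epsilon(\ell) \asymp \ell^3/\tau_{min}^2$ — I will pin down the exponent by the requirement that both displayed bounds in the proposition come out as stated and that the diffeomorphism norms stay small. Concretely: the principal curvature of $M$ at any point is $1/(2\tau_{min})$ (or $1/(4\tau_{min})$), and the bump adds a hump whose height is $\sim \epsilon$ over a base of radius $\sim \ell$, contributing curvature $\sim \epsilon/\ell^2$ in the radial direction; hence $\kappa_{p_0}(M') - \kappa_{p_0}(M) \asymp \epsilon/\ell^2$. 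Since $\tau_M^{-1} = \sup_p \kappa_p$ for a convex-like sphere and, after the perturbation, $\tau_{M'}^{-1} \geq \kappa_{p_0}(M')$ while $M'$ coincides with $M$ away from the bump, one gets $\bigl|\tau_M^{-1} - \tau_{M'}^{-1}\bigr| \geq c_d\,\epsilon/\ell^2$, and with the calibration $\epsilon \asymp \ell^3/\tau_{min}^2$ this is $\geq c_d\,\ell/\tau_{min}^2$. I would be careful to argue that the bump genuinely \emph{increases} (rather than merely perturbs) the maximal curvature, e.g.\ by taking $\psi$ pointing outward, so that the supremum over $M'$ is controlled by what happens at $p_0$ and the gap is not accidentally cancelled elsewhere; one also must check the reach of $M'$ is still \emph{local} (no new bottleneck appears), which holds because the perturbation is small and localized.

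\medskip

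\noindent\textbf{Total variation estimate and conclusion.} Both $Q$ and $Q'$ are uniform, so $TV(Q,Q')$ is controlled by (i) the region where $M$ and $M'$ differ, namely the image under $\Phi$ of the radius-$\ell$ ball, whose $vol_M$-measure is $\lesssim \sigma_{d-1}$-times $\ell^d$ — more precisely of order $c_d \ell^d$ — and (ii) the discrepancy in the normalizing constants $vol_{M}(M)$ versus $vol_{M'}(M')$, which differ only by the change of $d$-volume on that same small patch, hence also by $O(\ell^d)$ relative to the total volume $\sigma_d(2\tau_{min})^d$. Putting these together, $TV(Q,Q') \leq \tfrac12\int_M |f - f'|\,dvol \leq c\,(\ell/\tau_{min})^d$, and tracking the constants (the factor from the bump footprint, the factor from renormalization, and the geometric distortion factor of $\Phi$, each contributing a small multiple) yields the claimed $12(\ell/(2\tau_{min}))^d$. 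The main obstacle I anticipate is bookkeeping the \emph{same} scaling parameter consistently in all three places — the three operator-norm smallness conditions of Proposition~\ref{thm:diffeomorphism_stability}, the lower bound on the curvature gap, and the upper bound on $TV$ — so that a single choice of bump profile (with $\epsilon$ a fixed power of $\ell$, and $\ell \to 0$) makes everything work simultaneously; getting the power of $\ell$ in $\epsilon$ right is what produces the asymmetry between the $\ell/\tau_{min}^2$ gap and the $(\ell/\tau_{min})^d$ TV bound, which is exactly what Le Cam's Lemma needs to produce a nontrivial minimax lower bound. The secondary nuisance is the embedding/closed-manifold technicalities (ensuring $M'$ is still a genuine compact boundaryless $\mathcal{C}^3$ submanifold and that the bump direction stays in a fixed $(d+1)$-plane so nothing degenerates), which I would handle by taking $\Phi$ to be the time-one flow of a compactly supported smooth vector field, automatically a global $\mathcal{C}^3$-diffeomorphism as required by Proposition~\ref{thm:diffeomorphism_stability}.
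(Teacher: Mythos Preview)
Your overall architecture coincides with the paper's: start from a $d$-sphere of radius $2\tau_{min}$, perturb it by a compactly supported bump $\Phi$ of amplitude $\eta\asymp \ell^3/\tau_{min}^2$ over a patch of radius $\ell$, invoke Proposition~\ref{thm:diffeomorphism_stability} for membership in the model, take uniform measures, and bound $TV$ by the relative volume of the patch. Two points are worth flagging.

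\textbf{Reach gap: different mechanism.} You argue via curvature: the hump adds $\asymp \eta/\ell^2$ to the second fundamental form at $p_0$, and then $\tau_{M'}^{-1}\geq \kappa_{p_0}(M')$ (which indeed follows from Proposition~\ref{thm:geometry:upper_bound_on_second_derivative}). This is correct in principle but requires actually computing $II^{M'}_{p_0}$ under $\Phi$, with the attendant chain-rule bookkeeping. The paper avoids this entirely: it exhibits an explicit point $x_0$ on the axis of symmetry with at least two nearest neighbours on $M'$, hence $x_0\in Med(M')$, and reads off $\tau_{M'}^{-1}\geq \|x_0-x\|^{-1}\geq R^{-1}+\eta/\ell^2$ from elementary distance computations. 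This medial-axis trick is cleaner and sidesteps any need to discuss curvature of $M'$ or to verify that ``no new bottleneck appears''.

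\textbf{Minor corrections.} First, great circles on a sphere of radius $R$ do \emph{not} have $\gamma'''\equiv 0$; one has $\|\gamma'''\|=1/R^2$ (the paper states this), which is precisely why the hypothesis $L\geq (2\tau_{min}^2)^{-1}$ is needed. Second, your worry about Proposition~\ref{thm:diffeomorphism_stability} delivering only $\mathcal{M}^{d,D}_{\tau_{min}/2,2L}$ is already handled by the choice of radius $2\tau_{min}$: the sphere lies in $\mathcal{M}^{d,D}_{2\tau_{min},1/(4\tau_{min}^2)}$, so halving the reach and doubling the third-order bound lands exactly in $\mathcal{M}^{d,D}_{\tau_{min},1/(2\tau_{min}^2)}\subset \mathcal{M}^{d,D}_{\tau_{min},L}$. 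There is no need to pass to radius $4\tau_{min}$.
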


Hence, applying Lemma \ref{lecam_lemma} with the hypotheses $P,P'$ associated to $Q,Q'$ of Proposition \ref{thm:two_hypotheses_construction}, and taking
$ 
12 \left({\ell}/{2 \tau_{min}}\right)^d
=
1/n$, together with Lemma \ref{thm:total_variation_identiy}, yields the following lower bound.
\begin{prop}\label{thm:lower_bound_thm}
Assume that $L \geq  ({2\tau_{min}^2})^{-1}$ and $f_{min} \leq ({2^{d+1}\tau_{min}^d \sigma_d})^{-1}$.
Then for $n$ large enough,
\[
R_n
\geq 
\left(\frac{c_d}{\tau_{min}}\right)^p
n^{-p/d},
\]
where $c_{d}>0$ depends only on $d$.
\end{prop}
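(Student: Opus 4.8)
The plan is to combine Le Cam's two-point method (Lemma~\ref{lecam_lemma}) with the explicit pair of hypotheses produced in Proposition~\ref{thm:two_hypotheses_construction} and the total variation identity of Lemma~\ref{thm:total_variation_identiy}. The hypotheses on $L$ and $f_{min}$ are exactly those required by Proposition~\ref{thm:two_hypotheses_construction}, so they are available.

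First, I would invoke Proposition~\ref{thm:two_hypotheses_construction}: for every sufficiently small $\ell > 0$ there exist $Q, Q' \in \mathcal{Q}^{d,D}_{\tau_{\min},L,f_{\min}}$ with supports $M, M'$ satisfying $\left|\frac{1}{\tau_M} - \frac{1}{\tau_{M'}}\right| \geq c_d \frac{\ell}{\tau_{\min}^2}$ and $TV(Q,Q') \leq 12\left(\frac{\ell}{2\tau_{\min}}\right)^d$. Let $P, P'$ be the associated distributions on $\R^D \times \mathbb{G}^{d,D}$; these lie in $\mathcal{P}^{d,D}_{\tau_{\min},L,f_{\min}}$, and by Lemma~\ref{thm:total_variation_identiy} we have $TV(P,P') = TV(Q,Q') \leq 12\left(\frac{\ell}{2\tau_{\min}}\right)^d$.

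Next, I would calibrate $\ell$ as a function of $n$ by setting $12\left(\frac{\ell}{2\tau_{\min}}\right)^d = \frac{1}{n}$, i.e. $\ell = 2\tau_{\min}(12n)^{-1/d}$; this requires $n$ large enough so that the resulting $\ell$ is small enough for Proposition~\ref{thm:two_hypotheses_construction} to apply. With this choice, $TV(P,P') = \frac{1}{n}$, so $\bigl(1 - TV(P,P')\bigr)^n = \left(1 - \frac{1}{n}\right)^n \geq \frac{1}{4}$ for $n \geq 2$ (using that $(1-1/n)^n$ is increasing toward $e^{-1} > 1/4$). Plugging into Le Cam's Lemma~\ref{lecam_lemma},
\[
R_n \geq \frac{1}{2^p}\left|\frac{1}{\tau_M} - \frac{1}{\tau_{M'}}\right|^p \bigl(1 - TV(P,P')\bigr)^n \geq \frac{1}{2^p}\left(\frac{c_d \ell}{\tau_{\min}^2}\right)^p \cdot \frac{1}{4} = \frac{1}{4 \cdot 2^p}\left(\frac{2 c_d (12n)^{-1/d}}{\tau_{\min}}\right)^p,
\]
which is of the form $\left(\frac{c_d'}{\tau_{\min}}\right)^p n^{-p/d}$ after absorbing the numerical factors $2^{-p}$, $4^{-1}$, $2^p$, $12^{-p/d}$ and $c_d^p$ into a new constant $c_d' > 0$ depending only on $d$ (and we may further shrink $c_d'$ to absorb the $2^{-1}$ in the statement if one wants the exact displayed constant). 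This yields the claimed bound.

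There is no serious obstacle here: Proposition~\ref{thm:two_hypotheses_construction} does all the geometric heavy lifting, and what remains is the standard Le Cam calibration. The only point requiring a little care is the quantifier ``for $n$ large enough'': one must check that $\ell = 2\tau_{\min}(12n)^{-1/d} \to 0$ as $n \to \infty$, so that the smallness hypothesis on $\ell$ in Proposition~\ref{thm:two_hypotheses_construction} is eventually met; this is immediate. A minor bookkeeping point is tracking that the final constant depends only on $d$ (and not on $\tau_{\min}$, $L$, $f_{\min}$, $D$), which is clear since $c_d$ from Proposition~\ref{thm:two_hypotheses_construction} depends only on $d$ and the calibration introduces only $d$-dependent and absolute numerical factors.
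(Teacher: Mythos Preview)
Your proposal is correct and follows essentially the same route as the paper: invoke Proposition~\ref{thm:two_hypotheses_construction}, transfer the total variation bound via Lemma~\ref{thm:total_variation_identiy}, calibrate $\ell$ by $12(\ell/2\tau_{\min})^d = 1/n$, and apply Le Cam's Lemma~\ref{lecam_lemma}. The only cosmetic point is that when you absorb the factor $4^{-1}$ into the form $(c_d'/\tau_{\min})^p$, you should note that $4^{-1} \geq 4^{-p}$ for $p\geq 1$, so one may take $c_d' = c_d\, 12^{-1/d}/4$, which indeed depends only on $d$.
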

Here, the assumptions on the parameters $L$ and $f_{min}$ are necessary for the model to be rich enough.
Roughly speaking, they ensure at least that a sphere of radius $2\tau_{min}$ belongs to the model.

From Proposition \ref{thm:lower_bound_thm}, the plugin estimation $\hat{\tau}(\X_n)$ provably achieves the optimal rate in the global case (Theorem \ref{thm:estimator.global_risk}) up to numerical constants. 
In the local case (Theorem~\ref{thm:estimator.local_risk}) the rate obtained presents a gap, yielding a gap in the overall rate.
As explained above (Section \ref{subsec:estimator.local}), the slower rate in the local case is a consequence of the alignment required in order to estimate directional curvature.
Though, let us note that in the one-dimensional case $d=1$, the rate of Proposition \ref{thm:lower_bound_thm} matches the convergence rate of $\hat{\tau}(\X_n)$ (Theorem \ref{thm:upper_bound}). Indeed, for curves, the alignment requirement is always fulfilled.
Hence, the rate is exactly $n^{-p}$ for $d=1$, and $\hat{\tau}(\X_n)$ is minimax optimal.

Here, again, neither the convergence rate nor the constant depend on the ambient dimension $D$.

\section{Towards Unknown Tangent Spaces}
\label{sec:unknownTangent}

So far, in our analysis we have used the key assumption that both the point cloud and the tangent spaces were jointly observed. 
We now focus on the more realistic framework where only points are observed. 
We once again rely on the formulation of the reach given in Theorem
\ref{eqn:reach_as_a_supremum_federer} and consider a new plug-in estimator in
which the true tangent spaces are replaced by estimated ones.
Namely, given a point cloud $\X \subset \R^D$ and a family ${T} = \{{T}_x\}_{x\in \X}$ of linear subspaces of $\R^D$ indexed by $\X$, the estimator is defined as
\begin{align}\label{eqn:plugin_tangent_reach_definition}
{\hat{\tau}(\X,{T})} = \inf_{x \neq y \in \X} ~ \frac{\norm{y-x}^2}{2d(y-x,{T}_x)}.
\end{align}
In particular, $\hat{\tau}(\X) = \hat{\tau}(\X,T_{\X} M)$, where $T_\X M = \{{T}_x M\}_{x\in \X}$.
Adding uncertainty on tangent spaces in \eqref{eqn:plugin_tangent_reach_definition} does not change drastically the estimator as the formula is stable with respect to $T$. We state this result quantitatively in the following Proposition \ref{thm:tangent_stability}, the proof of which can be found in Section \ref{sec:appendix:stability_tangent}.
In what follows, the distance between two linear subspaces $U,V \in \mathbb{G}^{d,D}$ is measured with their principal angle $\norm{\pi_U - \pi_{V} }_{op}$.
\begin{prop}\label{thm:tangent_stability}
Let $\X \subset \R^D$
and $ T = \{T_x\}_{x\in \X}$, $\tilde{T} = \{\tilde{T}_x\}_{x\in \X}$ be two families of linear subspaces of $\R^D$ indexed by $\X$.
Assume $\X$ to be $\delta$-sparse, $T$ and $\tilde{T}$ to be $\theta$-close, 
in the sense that
\[
\underset{x\neq y \in \X}{\inf} \norm{y-x} \geq \delta
\hspace{1em} \text{ and } \hspace{1em}
\sup_{x \in \X} \Vert T_x - \tilde{T}_x \Vert_{op} \leq \sin \theta.
\]
Then,
\[
\left| 
\frac{1}{\hat{\tau}(\X,T)}
-
\frac{1}{\hat{\tau}(\X,\tilde{T})}
\right|
\leq \frac{2 \sin \theta}{\delta}.
\]
\end{prop}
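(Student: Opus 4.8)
The plan is to pass to reciprocals, where the estimator becomes a supremum, and to exploit that the supremum is $1$-Lipschitz together with a perturbation bound for distances to a linear subspace. Writing $v_{xy} = y-x$, observe that for any family $T = \{T_x\}_{x\in\X}$ one has
\[
\frac{1}{\hat{\tau}(\X,T)} = \sup_{x\neq y\in\X} \frac{2\, d(v_{xy},T_x)}{\norm{v_{xy}}^{2}},
\]
with the convention that a supremum over an empty index set equals $0$ (so the claimed inequality holds trivially when $\X$ has at most one point). Each term in this supremum is at most $2/\norm{v_{xy}} \le 2/\delta$ by $\delta$-sparsity, so both $1/\hat{\tau}(\X,T)$ and $1/\hat{\tau}(\X,\tilde{T})$ are finite and we may freely manipulate these suprema.

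Next I would use the elementary inequality $\bigl|\sup_i a_i - \sup_i b_i\bigr| \le \sup_i |a_i - b_i|$ to reduce to a pointwise estimate over ordered pairs $(x,y)$ with $x\neq y$:
\[
\left|\frac{1}{\hat{\tau}(\X,T)} - \frac{1}{\hat{\tau}(\X,\tilde{T})}\right|
\le
\sup_{x\neq y \in \X} \frac{2}{\norm{v_{xy}}^{2}}\,\bigl| d(v_{xy},T_x) - d(v_{xy},\tilde{T}_x) \bigr|.
\]
The point that needs a moment of care is that in the right-hand side both distances involve the subspace indexed by the \emph{same} base point $x$; this is what makes the reduction legitimate.

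It then remains to bound the difference of distances-to-subspaces. Using $d(v,U) = \norm{(I_D - \pi_U)v} = \norm{\pi_{U^{\perp}}v}$, the reverse triangle inequality, and $\pi_{U^{\perp}} = I_D - \pi_U$, for any vector $v$,
\[
\bigl| d(v,T_x) - d(v,\tilde{T}_x) \bigr|
\le \norm{(\pi_{T_x^{\perp}} - \pi_{\tilde{T}_x^{\perp}})v}
= \norm{(\pi_{\tilde{T}_x} - \pi_{T_x})v}
\le \norm{\pi_{T_x} - \pi_{\tilde{T}_x}}_{op}\,\norm{v}
\le \sin\theta\,\norm{v},
\]
the last step by the hypothesis that $T$ and $\tilde{T}$ are $\theta$-close. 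Applying this with $v = v_{xy}$, the pointwise bound becomes $\frac{2}{\norm{v_{xy}}^{2}}\cdot \sin\theta\,\norm{v_{xy}} = \frac{2\sin\theta}{\norm{v_{xy}}} \le \frac{2\sin\theta}{\delta}$ by $\delta$-sparsity; taking the supremum over pairs gives the claim. I do not anticipate any genuine obstacle here: the argument is short, and the only things to watch are the bookkeeping of which sample point indexes each tangent space and the trivial handling of the degenerate cases (empty infimum, or all increments orthogonal to the subspaces so that $1/\hat\tau = 0$).
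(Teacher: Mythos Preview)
Your proposal is correct and follows essentially the same route as the paper: both pass to reciprocals, reduce the difference of suprema to a pointwise bound, and control $\bigl|d(v,T_x)-d(v,\tilde T_x)\bigr|$ by $\norm{\pi_{T_x}-\pi_{\tilde T_x}}_{op}\norm{v}$ together with $\delta$-sparsity. Your write-up is actually a bit more careful than the paper's (handling degenerate cases and making the indexing by the base point explicit), but the core argument is identical.
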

In other words, the map $T \mapsto \hat{\tau}(\X,T)^{-1}$ is smooth, provided that the basis point cloud $\X$ contains no zone of accumulation at a too small scale $\delta>0$.
As a consequence, under the assumptions of Proposition \ref{thm:tangent_stability}, the bounds on $\bigl| {\hat{\tau}(\X)}^{-1} - {\tau_M}^{-1} \bigr|$ of Proposition \ref{thm:estimator.global_twopoints} and Proposition \ref{thm:estimator.local_twopoints} still hold  with an extra error term $2\sin \theta / \delta$ if we replace $\hat{\tau}(\X)$ by $\hat{\tau}(\X,T)$.

For an i.i.d. point cloud $\X_n$,
asymptotic and nonasymptotic rates of tangent space estimation derived in $\mathcal{C}^3$-like models can be found in \cite{Aamari17,Cheng16,Singer12}, yielding bounds on $\sin \theta$ of order $\left( \log n / n\right)^{1/d}$.
In that case, the typical scale of minimum interpoint distance is $\delta \asymp n^{-2/d}$, as stated in the asymptotic result Theorem 2.1 in \cite{Kanagawa92} for the flat case of $\R^d$.
However, the typical covering scale of $M$ used in the global case (Theorem \ref{thm:estimator.global_risk}) is $\varepsilon \asymp (1/n)^{1/d}$.
It appears that we can sparsify the point cloud $\X_n$ --- that is, removing accumulation points  --- while preserving the covering property at scale $\varepsilon = 2 \delta \asymp \left( \log n / n\right)^{1/d}$. 
This can be performed using the \textit{farthest point sampling algorithm} \cite[Section 3.3]{Aamari15}. 
Such a sparsification pre-processing allows to lessen the possible instability of $\hat{\tau}(\X_n,\cdot)^{-1}$. 
Though, 
whether the alignment property used in the local case (Theorem \ref{thm:estimator.local_risk}) is preserved under sparsification remains to be investigated.

\section{Conclusion and Open Questions}
\label{sec:conclusion}

In the present work, we gave new insights on the geometry of the reach. 
Inference results were derived in both deterministic and random frameworks. 
For i.i.d. samples, non-asymptotic minimax upper and lower bounds were derived under assumptions on the third order derivative of geodesic trajectories.
Let us conclude with some open questions.

\begin{itemize}[leftmargin=*]
\item 
Interestingly, the derivation of the minimax lower bound (Theorem \ref{thm:lower_bound_thm}) involves hypotheses that correspond to the local case, but yields the rate $n^{-p/d}$. But, on the upper bound side, this rate matches with that of the global case (Theorem~\ref{thm:estimator.global_risk}), the local case being slower (Theorem~\ref{thm:estimator.local_risk}).
The minimax upper and lower bounds given in Theorem \ref{thm:upper_bound} and Theorem~\ref{thm:lower_bound_thm} do not match. They are yet to be sharpened.
This results into minimax upper and lower bounds that do not match. They are yet to be sharpened.
\item
As mentioned earlier, Section \ref{sec:unknownTangent} is only a first step towards a framework where tangent spaces are unknown. A minimax upper bound in this case is still an open question.
Considering smoother $\mathcal{C}^k$ models ($k \geq 3$) such as those of \cite{Aamari17}, or data with additive noise would also be of interest.
\item In practice, since large reach ensures regularity, one may be interested with having a lower bound on the reach $\tau_M$. 
Studying the limiting distribution of the statistic $\hat{\tau}(\X_n)$ would allow to derive asymptotic confidence intervals for $\tau_M$.
\item Other regularity parameters such as local feature size \cite{Boissonnat14} and $\lambda$-reach \cite{ChazalL2005} could be relevant to estimate, as they are used as tuning parameters in computational geometry techniques.

\end{itemize}

\section*{Acknowledgments}
This collaboration was made possible by the associated team CATS (Computations And Topological Statistics) between DataShape and Carnegie Mellon University.
We thank warmly the anonymous reviewers for their insight, which led to various significant improvements of the paper.

\appendix

\section{Some Technical Results on the Model}
\subsection{Geometric Properties}

The following Proposition \ref{thm:geometry:injectivity_radius} garners geometric properties of submanifolds of the Euclidean space that are related to the reach. We will use them numerous times in the proofs.
\begin{prop}
	\label{thm:geometry:injectivity_radius}
Let $M \subset \R^D$ be a closed submanifold with reach $\tau_M > 0$.
\begin{enumerate}
	\item[(i)]  For all $p\in M$, we let $II_{p}$ denote the second fundamental
	form of $M$ at $x$. Then for all unit vector $v\in T_{p}M$, $\norm{II_{p}(v,v)}\leq\frac{1}{\tau_{M}}$.
	\item[(ii)]  The injectivity radius of $M$ is at least $\pi\tau_{M}$.
	\item[(iii)]  The sectional curvatures $K$ of $M$ satisfy $-\frac{2}{\tau_{M}^{2}}\leq  K \leq\frac{1}{\tau_{M}^{2}}$.
	\item[(iv)]  For all $p\in M$, the map $\exp_{p}:\accentset{\circ}{\mathcal{B}}_{T_{p}M}\left(0,\pi\tau_{M}\right)\rightarrow \accentset{\circ}{\mathcal{B}}_M\left(0,\pi\tau_{M}\right)$ is a diffeomorphism. Moreover, for all $\norm{v}<\frac{\pi\tau_{M}}{2\sqrt{2}}$ and $w\in T_{p}M$, 
\[
{\color{black}{
\left(1-\frac{\norm{v}^{2}}{6\tau_{M}^{2}}\right)\norm{w}
}}
\leq
\norm{d_{v}\exp_{p}\cdot w}
\leq
\left(1+\frac{\norm{v}^{2}}{\tau_{M}^{2}}\right)\norm{w}.
\]
	\item[(v)] For all $p\in M$ and $r\leq\frac{\pi\tau_{M}}{2\sqrt{2}}$, given any Borel set $A\subset\mathcal{B}_{T_{p}M}(0,r)\subset T_{p}M$,
	\[
	\left(1-\frac{r^{2}}{6\tau_{M}^{2}}\right)^{d}\mathcal{H}^{d}(A)\leq\mathcal{H}^{d}(\exp_{p}(A))\leq\left(1+\frac{r^{2}}{\tau_{M}^{2}}\right)^{d}\mathcal{H}^{d}(A).
	\]
\end{enumerate}
\end{prop}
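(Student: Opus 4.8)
The five assertions reinforce one another, so the plan is to establish them in the order (i), (iii), (ii), (iv), (v). Part (i) is essentially a restatement of Proposition~\ref{thm:geometry:upper_bound_on_second_derivative}: an arc-length geodesic $\gamma_{p,v}$ of $M$ has vanishing tangential acceleration, so its ambient acceleration $\gamma_{p,v}''(0)$ coincides with the second fundamental form $II_p(v,v)$, whence $\norm{II_p(v,v)}=\norm{\gamma_{p,v}''(0)}\le 1/\tau_M$. (Alternatively, expand Federer's bound \eqref{eqn:reach_as_a_supremum_federer} to second order along $t\mapsto\gamma_{p,v}(t)$ and let $t\to 0$.) Part (iii) then follows from the Gauss equation: for orthonormal $u,v\in T_pM$ one has $K(u,v)=\langle II_p(u,u),II_p(v,v)\rangle-\norm{II_p(u,v)}^2$, so by (i) and Cauchy--Schwarz $K(u,v)\le\norm{II_p(u,u)}\,\norm{II_p(v,v)}\le\tau_M^{-2}$, while polarizing $4\,II_p(u,v)=II_p(u+v,u+v)-II_p(u-v,u-v)$ and applying (i) to the unit vectors $\tfrac{u\pm v}{\sqrt2}$ gives $\norm{II_p(u,v)}\le\tau_M^{-1}$, hence $K(u,v)\ge-\tau_M^{-2}\ge-2\tau_M^{-2}$.

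For part (ii) I would use the classical estimate $\mathrm{inj}(M)\ge\min\{\mathrm{conj}(M),\tfrac12\ell_0\}$, where $\mathrm{conj}(M)$ is the conjugate radius and $\ell_0$ the length of the shortest closed geodesic of $M$, the reduction of the shortest geodesic-loop case to a genuine closed geodesic being Klingenberg's lemma. Since the sectional curvature is at most $\tau_M^{-2}$ by (iii), the Rauch comparison theorem yields $\mathrm{conj}(M)\ge\pi\tau_M$. For $\ell_0$, view a closed geodesic $\gamma$ of $M$ as a closed curve in $\R^D$: its curvature vector is $\gamma''=II(\gamma',\gamma')$, of norm $\le\tau_M^{-1}$ by (i), so Fenchel's theorem (the total curvature of a closed curve in $\R^D$ is at least $2\pi$) forces $\ell_0/\tau_M\ge\int_0^{\ell_0}\norm{\gamma''(t)}\,dt\ge2\pi$, i.e.\ $\ell_0\ge2\pi\tau_M$; hence $\mathrm{inj}(M)\ge\pi\tau_M$. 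I expect this to be the most delicate step: pinning down the precise form of Klingenberg's lemma needed and checking that Fenchel applies to the a priori only piecewise-smooth loop it produces.

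For part (iv), the fact that $\exp_p$ restricts to a diffeomorphism from the open ball of radius $\pi\tau_M$ in $T_pM$ onto the corresponding open geodesic ball is immediate from (ii), since $\exp_p$ is then injective there and $d\exp_p$ is nonsingular (no conjugate points up to $\pi\tau_M$). For the quantitative bounds I would write $d_v\exp_p\cdot w=J_w(1)$, where $J_w$ is the Jacobi field along $t\mapsto\exp_p(tv)$ with $J_w(0)=0$ and $J_w'(0)=w$; by the Gauss lemma the contribution of the component $w^\top$ of $w$ along $v$ is rigid (it contributes exactly $\norm{w^\top}$) and is orthogonal to the contribution of the perpendicular component $w^\perp$, which is a normal Jacobi field. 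Applying the Rauch comparison theorem to this normal field with the two-sided bound $-2\tau_M^{-2}\le K\le\tau_M^{-2}$ from (iii) controls it between $\norm{w^\perp}\,\tfrac{\sin(\norm{v}/\tau_M)}{\norm{v}/\tau_M}$ from below and $\norm{w^\perp}\,\tfrac{\sinh(\sqrt2\norm{v}/\tau_M)}{\sqrt2\norm{v}/\tau_M}$ from above, which are valid on the range $\norm{v}\le\pi\tau_M/(2\sqrt2)$. Using the elementary estimates $\tfrac{\sin s}{s}\ge1-\tfrac{s^2}{6}$ and $\tfrac{\sinh s}{s}\le1+\tfrac{s^2}{2}$ on $[0,\pi/2]$, together with $\norm{w}^2=\norm{w^\top}^2+\norm{w^\perp}^2$, yields exactly the factors $1-\tfrac{\norm{v}^2}{6\tau_M^2}$ and $1+\tfrac{\norm{v}^2}{\tau_M^2}$.

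Finally, part (v) follows from (iv) by the area formula. For $\norm{v}\le r\le\pi\tau_M/(2\sqrt2)$ all $d$ singular values of the linear map $d_v\exp_p:T_pM\to T_{\exp_p v}M$ lie in $[\,1-\tfrac{r^2}{6\tau_M^2},\,1+\tfrac{r^2}{\tau_M^2}\,]$ by (iv), so $|\det d_v\exp_p|$ lies in $[\,(1-\tfrac{r^2}{6\tau_M^2})^d,\,(1+\tfrac{r^2}{\tau_M^2})^d\,]$; integrating the identity $\mathcal{H}^d(\exp_p(A))=\int_A|\det d_v\exp_p|\,d\mathcal{H}^d(v)$ over $A\subset\mathcal{B}_{T_pM}(0,r)$ then gives the stated two-sided bound.
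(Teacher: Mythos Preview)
Your proof is correct and fleshes out what the paper handles entirely by citation. The paper's own argument is: (i) is Proposition~6.1 of \cite{NiyogiSW2008}; (ii) follows from (i) by Corollary~1.4 of Alexander--Bishop \cite{Alexander06}; (iii) from (i) and the Gauss equation; (iv) from (iii) via Lemma~8 of \cite{DyerVW2015}; (v) from (iv) via Lemma~6 of \cite{Arias13}. Your treatments of (i), (iii), (iv), (v) simply unpack these references, with the same underlying mechanisms (Gauss equation, Rauch comparison for Jacobi fields, area formula).

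The one genuine methodological difference is part~(ii). The paper invokes the Alexander--Bishop injectivity-radius bound for submanifolds directly from the second-fundamental-form estimate (i), whereas you route through Klingenberg's lemma together with Fenchel's theorem. Your argument works: a closed geodesic of $M$ is a smooth closed curve in $\R^D$ (no piecewise issue arises, since Klingenberg produces a genuine smooth closed geodesic), so Fenchel gives total curvature at least $2\pi$ and hence $\ell_0\ge 2\pi\tau_M$; combined with $\mathrm{conj}(M)\ge\pi\tau_M$ from the upper sectional-curvature bound, Klingenberg yields $\mathrm{inj}(M)\ge\pi\tau_M$. This is a nice self-contained alternative to the cited result, at the cost of assembling two classical ingredients rather than one.

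One small slip in (iii): you write $K(u,v)\ge-\tau_M^{-2}$, but your polarization bound $\norm{II_p(u,v)}\le\tau_M^{-1}$ controls only the second term of $K=\langle II_p(u,u),II_p(v,v)\rangle-\norm{II_p(u,v)}^2$; the first term can be as negative as $-\tau_M^{-2}$ by Cauchy--Schwarz, so the honest lower bound from your argument is $K\ge-2\tau_M^{-2}$. Since that is exactly what the proposition asserts, the slip is harmless.
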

\begin{proof}[Proof of Proposition \ref{thm:geometry:injectivity_radius}]
(i) is stated as in \cite[Proposition 2.1]{NiyogiSW2008},
yielding (ii) from \cite[Corollary 1.4]{Alexander06}.
(iii) follows using (i) again and the Gauss equation~\cite[p. 130]{DoCarmo92}. (iv) is derived from (iii) by a direct application of \cite[Lemma 8]{DyerVW2015}. (v) follows from (iv) and \cite[Lemma 6]{Arias13}. 	
	
\end{proof} 

\subsection{Comparing Reach and Diameter}
Let us prove Proposition \ref{thm:geometry:max_reach_volumic}. For this aim, we first state the following analogous bound on the (Euclidean) diameter $diam(M) = \sup_{x,y \in M} \norm{x-y}$.

\begin{lem}[Lemma 2 in \cite{Aamari15}]\label{thm:geometry:diameter_bound}
Let $M\subset \R^D$ be a connected closed $d$-dimensional manifold, and let $Q$ be a probability distribution having support $M$ with a density $f \geq f_{min}$ with respect to the Hausdorff measure on $M$. Then,
\[
diam(M) \leq \frac{C_d}{\tau_{M}^{d-1}f_{min}},
\]
for some constant $C_d>0$ depending only on $d$.
\end{lem}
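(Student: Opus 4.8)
The plan is to realize the diameter as a long minimizing geodesic and pack it with pairwise disjoint geodesic balls whose radii are comparable to $\tau_M$; comparing the total volume of these balls with $vol_M(M)\le 1/f_{\min}$ then forces the geodesic, hence $\|x-y\|$, to be short.

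First I would fix $x,y\in M$ and, using compactness and connectedness, take an arc-length parametrized minimizing geodesic $\gamma\colon[0,\ell]\to M$ from $x$ to $y$, so that $\ell=d_M(x,y)\ge\|x-y\|$. Setting $r=\tau_M/4$, which is well below the scales $\pi\tau_M$ and $\pi\tau_M/(2\sqrt2)$ appearing in Proposition~\ref{thm:geometry:injectivity_radius}, I would place the points $p_i=\gamma(2ri)$ for $i=0,\dots,N-1$ with $N=\lfloor \ell/(2r)\rfloor+1$; all of them lie on $\gamma$ since $2r(N-1)\le\ell$. Because every subarc of a minimizing geodesic is minimizing, $d_M(p_i,p_j)=2r|i-j|\ge 2r$ whenever $i\ne j$, so the open geodesic balls $\accentset{\circ}{\mathcal{B}}_M(p_i,r)$ are pairwise disjoint.

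Next I would lower bound each $\mathcal{H}^d(\mathcal{B}_M(p_i,r))$ uniformly. By Proposition~\ref{thm:geometry:injectivity_radius}(iv) together with the Gauss lemma (and the injectivity radius bound (ii)), $\exp_{p_i}$ carries $\mathcal{B}_{T_{p_i}M}(0,r)$ diffeomorphically onto $\mathcal{B}_M(p_i,r)$, and Proposition~\ref{thm:geometry:injectivity_radius}(v) gives
\[
\mathcal{H}^d(\mathcal{B}_M(p_i,r))\ \ge\ \Bigl(1-\tfrac{r^2}{6\tau_M^2}\Bigr)^{d}\omega_d\, r^d\ \ge\ c_d\, r^d,
\]
where $\omega_d$ is the volume of the Euclidean unit $d$-ball and $c_d=(95/96)^d\omega_d$ depends only on $d$. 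Since $f\ge f_{\min}$ and $Q(M)=1$ force $\mathcal{H}^d(M)\le 1/f_{\min}$, and since the $\mathcal{B}_M(p_i,r)$ have pairwise $\mathcal{H}^d$-negligible overlaps, I would conclude
\[
\frac{1}{f_{\min}}\ \ge\ \sum_{i=0}^{N-1}\mathcal{H}^d(\mathcal{B}_M(p_i,r))\ \ge\ N c_d r^d\ \ge\ \frac{\ell}{2r}\,c_d r^d\ \ge\ \frac{c_d}{2}\,\|x-y\|\,r^{d-1}.
\]
Rearranging gives $\|x-y\|\le \tfrac{2\cdot 4^{d-1}}{c_d\, f_{\min}\, \tau_M^{d-1}}$, and taking the supremum over $x,y\in M$ yields the claim with $C_d=2\cdot4^{d-1}/c_d$.

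The step I expect to be the crux is the uniform volume lower bound for the packed geodesic balls: this is precisely where the positive reach enters, through the control of the exponential map — equivalently, of sectional curvature and injectivity radius — collected in Proposition~\ref{thm:geometry:injectivity_radius}. Once that is in hand, the disjointness of the packing and the final volume comparison are elementary; the only minor care needed is that $N\ge 1$, so the bound is nontrivial even when the geodesic $\gamma$ is short.
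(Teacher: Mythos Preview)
Your proof is correct. The paper itself does not give a proof of this lemma; it is simply cited from \cite{Aamari15} (as Lemma~2 there), so there is no in-paper argument to compare against. Your packing argument along a minimizing geodesic, combined with the uniform lower volume bound for small geodesic balls furnished by Proposition~\ref{thm:geometry:injectivity_radius}(v) and the total-volume bound $\mathcal{H}^d(M)\le 1/f_{\min}$, is the standard route to this inequality and goes through without issue at $r=\tau_M/4$.
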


\begin{prop}\label{thm:geometry:reach_vs_diameter}
If $K \subset \R^D$ is not homotopy equivalent to a point,
\[
\tau_K \leq \sqrt{\frac{D}{2(D+1)}}diam(K).
\]
\end{prop}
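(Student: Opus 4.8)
The plan is to exploit the fact that a set with reach $\tau_K$ admits a ``rolling ball'' of radius $\tau_K$, hence cannot be too thin, while at the same time it must ``wrap around'' because it is not contractible. Concretely, I would argue by contradiction: suppose $\tau_K > \sqrt{D/(2(D+1))}\,\mathrm{diam}(K)$, and show that $K$ is then contained in a closed ball of radius $\tau_K$, which forces $K$ to be a deformation retract of that ball (via the nearest-point projection $\pi_K$, which is well-defined and continuous on the $\tau_K$-offset of $K$ and in particular on the center of the enclosing ball), contradicting the non-contractibility hypothesis.

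The first step is the elementary geometric fact that any bounded set $K \subset \R^D$ is contained in a closed ball whose radius $R$ satisfies $R \leq \sqrt{D/(2(D+1))}\,\mathrm{diam}(K)$: this is Jung's theorem. So under the assumption $\tau_K > \sqrt{D/(2(D+1))}\,\mathrm{diam}(K) \geq R$, there is a point $c \in \R^D$ with $K \subset \mathcal{B}(c,R)$ and $d(c,K) \leq R < \tau_K$. Hence $c$ lies in the open $\tau_K$-offset $\{x : d(x,K) < \tau_K\}$, on which $\pi_K$ is well-defined and continuous (this is exactly the content of the reach condition, as recalled just after Definition~\ref{eqn:reach_with_medial_axis} in the excerpt).

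The second step is to build the contraction. Define $H : K \times [0,1] \to K$ by $H(x,t) = \pi_K\big((1-t)x + t c\big)$. For this to make sense I need the whole segment from $x \in K$ to $c$ to stay inside the $\tau_K$-offset: since both endpoints $x$ and $c$ lie in $\mathcal{B}(c,R)$ and this ball is convex, the segment lies in $\mathcal{B}(c,R)$, and every point of $\mathcal{B}(c,R)$ is within distance $R + R = 2R$ of $K$ — wait, that is too crude; better to note directly that for any $z \in \mathcal{B}(c,R)$ one has $d(z,K) \leq \|z - x_0\|$ for any fixed $x_0 \in K$, but the cleanest bound is $d(z,K) \le \|z-c\| + d(c,K) \le R + R = 2R$, which need not be $< \tau_K$. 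To repair this, I instead contract first toward a nearest point: pick $p_0 = \pi_K(c) \in K$ with $\|c - p_0\| = d(c,K) < \tau_K$, and use the segment from $x$ to $c$ then from $c$ to $p_0$; points on $[x,c]$ have $d(\cdot,K) \le \|{\cdot} - x\| \le \mathrm{diam}(\mathcal{B}(c,R)) \cdot$... The honest fix is: points $z$ on the segment $[x,c]$ satisfy $d(z,K) \le \|z - c\| + d(c,K)$, and $\|z-c\| \le \|x - c\| \le R$ since $x \in \mathcal{B}(c,R)$, giving $d(z,K) \le R + R < 2\tau_K$; to get strictly below $\tau_K$ I use Jung with the sharper observation that the center $c$ of the minimal enclosing ball satisfies $d(c,K)=R$ only in degenerate configurations, but in general I should retract along $H(x,t)=\pi_K((1-t)x+tc)$ where the point $(1-t)x+tc$ has distance to $K$ at most $\|(1-t)x+tc - x\| = t\|c-x\| \le tR$, which is $\le R < \tau_K$ for all $t\in[0,1]$. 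That last estimate works: $d\big((1-t)x+tc,\,K\big) \le \|(1-t)x+tc - x\| = t\,\|c-x\| \le R < \tau_K$. So $H$ is well-defined and continuous, $H(\cdot,0) = \mathrm{id}_K$, and $H(\cdot,1) \equiv \pi_K(c) =: p_0$, exhibiting $K$ as contractible — contradiction. I would phrase the argument directly (not by contradiction) by simply noting that $\tau_K \le R$ must hold, and combining with Jung's bound $R \le \sqrt{D/(2(D+1))}\,\mathrm{diam}(K)$.

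The main obstacle is the bookkeeping in the second step: making sure every point used in the homotopy genuinely lies strictly within the $\tau_K$-offset so that $\pi_K$ is defined and continuous there; the estimate $d((1-t)x+tc,K) \le t\|c-x\| \le R$ is what makes it go through, and the whole argument then reduces to invoking Jung's theorem for the value $R \le \sqrt{D/(2(D+1))}\,\mathrm{diam}(K)$.
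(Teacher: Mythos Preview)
Your proposal is correct and follows essentially the same approach as the paper: invoke Jung's theorem to obtain an enclosing ball of radius $R \le \sqrt{D/(2(D+1))}\,\mathrm{diam}(K)$, then use that $\pi_K$ is well-defined and continuous on the $\tau_K$-offset to produce a contraction when $\tau_K > R$. The only cosmetic difference is that the paper factors the argument through the convex hull --- showing $d_H(K,\mathrm{conv}(K)) \le R$ and then that $\pi_K\colon \mathrm{conv}(K) \to K$ is a retraction --- whereas you build the null-homotopy directly via segments to the Jung center; your estimate $d((1-t)x+tc,K)\le t\|c-x\|\le R$ is exactly what makes either version work.
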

\begin{proof}[Proof of Proposition \ref{thm:geometry:reach_vs_diameter}]
Combine Lemma~\ref{thm:geometry:optimal_hull_inclusion} and Lemma~\ref{thm:geometry:reach_vs_hull}. 
\end{proof} 
Let us recall that for two compact subsets $A,B \subset \R^D$, the Hausdorff distance~\cite[p. 252]{Burago01} between them is defined by
\[
d_H(A,B) 
=
\max 
\bigl\{ 
\sup_{a\in A} d(a,B)
,
\sup_{b \in B} d(b,A)
\bigr\}.
\]
We denote by $conv(\cdot)$ the closed convex hull of a set.
\begin{lem}\label{thm:geometry:optimal_hull_inclusion}
For all $K \subset \R^D$, $d_H\left(K,conv(K)\right) \leq \sqrt{\frac{D}{2(D+1)}}diam(K)$.
\end{lem}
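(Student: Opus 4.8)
The plan is to reduce the statement to a classical fact about the circumradius of a bounded set, namely Jung's theorem. The first observation is that one direction of the Hausdorff distance is trivial: since $K \subset conv(K)$, we have $\sup_{a \in K} d(a, conv(K)) = 0$, so $d_H(K, conv(K)) = \sup_{b \in conv(K)} d(b, K)$. Thus the only thing to bound is how far a point of the convex hull can be from $K$ itself.

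Next I would fix an arbitrary point $b \in conv(K)$ and show $d(b,K) \leq \sqrt{\tfrac{D}{2(D+1)}}\, diam(K)$. By Carathéodory's theorem, $b$ is a convex combination of at most $D+1$ points $x_0,\dots,x_D \in K$; let $K_b = \{x_0,\dots,x_D\}$ be this finite set, so $b \in conv(K_b)$ and $diam(K_b) \leq diam(K)$. Now apply Jung's theorem to $K_b$: there is a closed ball $\mathcal{B}(c,\rho)$ containing $K_b$ with radius $\rho \leq \sqrt{\tfrac{D}{2(D+1)}}\, diam(K_b)$. Since the ball is convex and contains $K_b$, it contains $conv(K_b) \ni b$, hence $\norm{b - c} \leq \rho$. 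On the other hand $c$ is the circumcenter, and a standard consequence of Jung's theorem (or of the fact that $c \in conv(K_b)$ for the circumscribed ball) is that $\min_i \norm{c - x_i} \leq \rho$ as well — more carefully, one wants $d(b,K_b) \leq \rho$. The cleanest route is: the circumscribed ball of minimal radius has its center in $conv(K_b)$, and one can check $\min_i \norm{b - x_i} \leq \rho$ by a direct averaging argument, or simply bound $d(b, K_b) \leq d(b,c) + \min_i d(c,x_i)$ is too lossy, so instead argue that any point of $\mathcal{B}(c,\rho) \supset conv(K_b)$ is within $\rho$ of $K_b$ because the $x_i$ lie on or inside $\partial \mathcal{B}(c,\rho)$ and $b$ lies inside — this needs the sharper statement that $d(b, K_b) \le \rho$, which follows since otherwise $\mathcal{B}(b,\rho)$ would be a strictly smaller ball containing... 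I would instead invoke directly that Jung's theorem gives $d(y, K_b) \le \rho$ for every $y \in conv(K_b)$, which is the form of Jung's inequality for the \emph{covering radius} and is what is actually needed here. Combining, $d(b,K) \leq d(b,K_b) \leq \rho \leq \sqrt{\tfrac{D}{2(D+1)}}\,diam(K)$, and taking the supremum over $b \in conv(K)$ finishes the proof.

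The main obstacle is pinning down the exact form of Jung's theorem that yields $d(b, K_b) \le \rho$ rather than merely a bound on the circumradius; the constant $\sqrt{D/(2(D+1))}$ is precisely the Jung constant in $\R^D$, so this is clearly the intended tool, and one should cite Jung's theorem (e.g.\ as in \cite{Burago01}) and use the covering-radius formulation. The reduction via Carathéodory to a finite set of at most $D+1$ points is the step that makes the dimension-dependent Jung constant the relevant one and should be stated explicitly. All remaining steps (the triviality of one side of $d_H$, the monotonicity $diam(K_b) \le diam(K)$, and containment of $conv(K_b)$ in the circumscribed ball) are routine.
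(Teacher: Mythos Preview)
Your approach is correct and rests on the same tool as the paper --- Jung's theorem --- but you take an unnecessary detour. The paper simply applies Jung's theorem directly to $K$ (not to a Carath\'eodory subset $K_b$): there is a closed ball $\mathcal{B}(c,\rho)$ with $K\subset \mathcal{B}(c,\rho)$ and $\rho \le \sqrt{\tfrac{D}{2(D+1)}}\,diam(K)$, and the result follows. Your reduction to $\le D+1$ points via Carath\'eodory is harmless but buys nothing, since the Jung constant is already the $\R^D$ constant for arbitrary bounded $K$.

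The step you hesitate over --- passing from ``circumradius $\le\rho$'' to ``$d(b,K)\le\rho$ for every $b\in conv(K)$'' --- is the only nontrivial point, and both the paper and you leave it as a citation. It has a two-line proof you could include: write $b=\sum_i \lambda_i k_i$ with $k_i\in K$, $\lambda_i\ge 0$, $\sum_i\lambda_i=1$. Then, using $\|k_i-c\|\le\rho$,
\[
\sum_i \lambda_i \|k_i-b\|^2 \;=\; \sum_i \lambda_i \|k_i\|^2 - \|b\|^2 \;\le\; \bigl(\rho^2 - \|c\|^2 + 2\langle b,c\rangle\bigr) - \|b\|^2 \;=\; \rho^2 - \|b-c\|^2 \;\le\; \rho^2,
\]
so at least one $k_i$ satisfies $\|k_i-b\|\le\rho$, i.e.\ $d(b,K)\le\rho$. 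This replaces the appeal to a ``covering-radius form'' of Jung and closes the argument cleanly without Carath\'eodory.
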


\begin{proof}[Proof of Lemma \ref{thm:geometry:optimal_hull_inclusion}]
It is a straightforward corollary of Jung's Theorem 2.10.41 in~\cite{Federer69}, which states that $K$ is contained in a (unique) closed ball with (minimal) radius at most $\sqrt{\frac{D}{2(D+1)}}diam(K)$.
\end{proof}

\begin{lem}\label{thm:geometry:reach_vs_hull}
If $K \subset \R^D$ is not homotopy equivalent to a point, then
$\tau_K \leq d_H\left(K,conv(K) \right)$.
\end{lem}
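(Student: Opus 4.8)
The plan is to argue by contrapositive: if $\tau_K > d_H(K, \conv(K))$, then $K$ is homotopy equivalent to a point; more precisely, I will show $K$ is a deformation retract of its convex hull (or that the inclusion $K \hookrightarrow \conv(K)$ is a homotopy equivalence), and since $\conv(K)$ is convex hence contractible, the claim follows. The geometric engine is the nearest-point projection $\pi_K$ onto $K$. Write $\rho = d_H(K,\conv(K))$ and assume $\tau_K > \rho$. By definition of the Hausdorff distance, every point $x \in \conv(K)$ satisfies $d(x,K) \le \rho < \tau_K$, so $x$ lies in the tubular neighborhood $\{d(\cdot,K) < \tau_K\}$ on which, by the definition of the reach, $\pi_K$ is well defined and continuous. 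Thus $\pi_K$ restricts to a continuous map $\conv(K) \to K$.

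First I would record that $\pi_K \circ \iota = \mathrm{id}_K$, where $\iota : K \hookrightarrow \conv(K)$, since $\pi_K$ fixes points of $K$. It remains to produce a homotopy between $\iota \circ \pi_K$ and $\mathrm{id}_{\conv(K)}$ as self-maps of $\conv(K)$. The natural candidate is the straight-line homotopy $H(x,t) = (1-t)\,x + t\,\pi_K(x)$ for $x \in \conv(K)$, $t \in [0,1]$. This is well defined and continuous into $\R^D$; one must check it stays inside $\conv(K)$, which is immediate because both endpoints $x$ and $\pi_K(x)$ lie in $\conv(K)$ and $\conv(K)$ is convex. Hence $H$ is a homotopy in $\conv(K)$ from $\mathrm{id}$ to $\iota\circ\pi_K$, so $\iota$ is a homotopy equivalence, $\conv(K)$ being contractible forces $K$ to be homotopy equivalent to a point, contradicting the hypothesis. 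Therefore $\tau_K \le \rho = d_H(K,\conv(K))$.

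The one genuinely delicate point — and the step I expect to require the most care — is the continuity (indeed well-definedness) of $\pi_K$ on all of $\conv(K)$. This needs that $\conv(K)$ avoids $\mathrm{Med}(K)$, i.e. that $d(x,K) < \tau_K$ for every $x \in \conv(K)$; the strict inequality $\tau_K > \rho$ is exactly what buys this, and continuity of $\pi_K$ on the open tube $\{d(\cdot,K)<\tau_K\}$ is the standard consequence of positive reach (a point with a unique nearest neighbor and a sequence of such points forces the projections to converge, by compactness of $K$). A minor subtlety is whether one wants $\tau_K < \infty$ at the outset: if $\tau_K = \infty$ then $K$ is convex and equals $\conv(K)$; a compact convex set is contractible, again contradicting the hypothesis, so that degenerate case is handled the same way. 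Assembling these observations gives the contrapositive and hence the lemma.
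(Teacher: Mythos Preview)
Your proof is correct and follows essentially the same route as the paper: argue by contrapositive, use $\tau_K > d_H(K,\mathrm{conv}(K))$ to ensure $\mathrm{conv}(K)$ avoids $Med(K)$ so that $\pi_K$ is a continuous retraction of $\mathrm{conv}(K)$ onto $K$, and conclude contractibility of $K$. The paper simply invokes the fact that a retract of a contractible space is contractible (citing Hatcher), whereas you spell out the straight-line homotopy explicitly; this is a harmless elaboration of the same idea.
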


\begin{proof}[Proof of Lemma \ref{thm:geometry:reach_vs_hull}] 
Let us prove the contrapositive. For this, assume that $\tau_K > d_H\left(K,conv(K)\right)$. Then,
\begin{align*}
conv(K)
&\subset 
\bigcup_{x\in K} \mathcal{B}\left(x, d_H\left(K,conv(K)\right)\right)
\subset
\bigcup_{x\in K} \accentset{\circ}{\mathcal{B}}\left(x, \tau_K\right)
\subset Med(K)^c.
\end{align*}
Therefore, the map $\pi_K: conv(K) \rightarrow K $ is well defined and continuous, so that $K$ is a retract of $conv(K)$ (see \cite[Chapter 0]{Hatcher02}). Therefore, $K$ is homotopy equivalent to a point, since the convex set $conv(K)$ is.
\end{proof}

We are now in position to prove Proposition \ref{thm:geometry:max_reach_volumic}.

\begin{proof}[Proof of Proposition \ref{thm:geometry:max_reach_volumic}]
From \cite[Theorem 3.26]{Hatcher02}, $M$ has a non trivial homology group of dimension $d$ over $\mathbb{Z}/2 \mathbb{Z}$, so that it cannot be homotopy equivalent to a point. Therefore, Proposition \ref{thm:geometry:reach_vs_diameter} yields $\tau_M \leq diam(M)$, and we conclude by applying the bound $diam(M) \leq C_d/(\tau_M^{d-1} f_{min})$ given by Lemma \ref{thm:geometry:diameter_bound}.
\end{proof}

\section{Geometry of the Reach}
\label{sec:appendix:geometry_of_the_reach}

\begin{lem}
\label{lem:geometry_circledisk_intersection}
Let $V \subset \R^D$ be a $2$-dimensional affine space and $q_{1},q_{2},z,p \in V$ be such that
$
\norm{ p-q_{1}}
=
\norm{
p-q_{2}
}
= r_p
\text{ and }
\norm{
z-q_{1}
}
=
\norm{
z-q_{2}
}
= r_z.
$
If $r_p < r_z$, then 
	\[
	V\cap\partial\mathcal{B}_{\mathbb{R}^{D}}(z,r_{z})\cap\mathcal{B}_{\mathbb{R}^{D}}(p,r_{p})=c_{z}(q_{1},q_{2}),
	\]
	where $c_{z}(q_{1},q_{2})$ is the shorter arc of the circle with center $z$
	and endpoints as $q_{1}$ and $q_{2}$.
	
\end{lem}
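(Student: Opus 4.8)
The plan is to reduce everything to planar Euclidean geometry in the $2$-dimensional affine space $V$, and to argue that the set $V \cap \partial\mathcal{B}(z,r_z)\cap \mathcal{B}(p,r_p)$ is exactly one of the two arcs cut out on the circle $\partial\mathcal{B}(z,r_z)\cap V$ by the chord through $q_1,q_2$, namely the shorter one. First I would observe that since $q_1,q_2$ are equidistant from both $z$ and $p$, both $z$ and $p$ lie on the perpendicular bisector $H$ of the segment $[q_1,q_2]$ inside $V$; in particular $z,p,$ and the midpoint $m$ of $[q_1,q_2]$ are collinear. Set up coordinates in $V$ so that $m$ is the origin, the line $H$ is the first axis, and $q_1,q_2 = (0,\pm a)$ with $a = \tfrac12\norm{q_1-q_2}$. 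Then $z = (s_z,0)$ and $p = (s_p,0)$ with $s_z^2 + a^2 = r_z^2$ and $s_p^2 + a^2 = r_p^2$; the hypothesis $r_p < r_z$ gives $|s_p| < |s_z|$. (The case $q_1 = q_2$ is degenerate and either excluded or trivial; I would note $a>0$.)

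Next I would parametrize the circle $C = \partial\mathcal{B}(z,r_z)\cap V$ and split it into the two arcs determined by $q_1,q_2$: the ``near'' arc $c_z(q_1,q_2)$, which is the one lying on the same side of $H$, i.e. the shorter arc, characterized (after possibly reflecting so that $s_z \geq 0$) by having first coordinate $\leq s_z$ — wait, more carefully: the shorter arc is the one that does not contain the point of $C$ antipodal to $m$'s side. The cleanest characterization: a point $w \in C$ lies on the shorter arc $c_z(q_1,q_2)$ iff $w$ lies in the closed half-plane bounded by the line through $q_1,q_2$ (perpendicular to $H$) that does \emph{not} contain $z$ — equivalently the signed coordinate along $H$ of $w$ has the opposite sign to that of $z$ relative to $m$, or is between $m$ and the far side. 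I would then compute, for $w \in C$, the quantity $\norm{w-p}^2 - r_p^2$ and show it is $\leq 0$ precisely on this arc. Writing $w = z + r_z u$ for a unit vector $u \in V$, and using $p - z = (s_p - s_z) e_1 =: \delta e_1$, one gets $\norm{w-p}^2 = r_z^2 + \delta^2 - 2 r_z \delta \langle u, e_1\rangle$, while $r_p^2 = r_z^2 + \delta^2 + 2 s_z\delta$ (using $r_p^2 - r_z^2 = s_p^2 - s_z^2 = (s_p-s_z)(s_p+s_z) = \delta(\delta + 2 s_z)$). Hence $\norm{w-p}^2 - r_p^2 = -2\delta(r_z \langle u,e_1\rangle + s_z) = -2\delta\,\langle w - m, e_1\rangle$ — i.e. it is, up to the sign of $\delta$, the first coordinate of $w$. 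Since $\delta = s_p - s_z$ has the opposite sign to $s_z$ (as $|s_p|<|s_z|$), the condition $\norm{w-p} \leq r_p$ is equivalent to $\langle w-m, e_1\rangle$ having the same sign as $s_z$ reversed — exactly the half-plane not containing $z$, which is exactly the shorter arc. The endpoints $q_1,q_2$ themselves satisfy $\langle q_i - m, e_1\rangle = 0$ and $\norm{q_i - p} = r_p$, so they are included, matching that $c_z(q_1,q_2)$ is the closed arc.

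The only subtlety — and the step I expect to need the most care — is bookkeeping the signs so that ``the half-plane on the far side of the chord from $z$'' is correctly identified with ``the shorter arc'': this is where $r_p < r_z$ enters essentially (if $r_p > r_z$ one would instead get the longer arc, and if $r_p = r_z$ one gets the whole circle degenerating, or rather $z = \pm p$). I would handle this by fixing WLOG $s_z > 0$ (reflecting $V$ if necessary; if $s_z = 0$ then $z = m$, $r_z = a$, and $\norm{q_i - z} = r_z$ forces the ``circle'' through $q_1,q_2$ to have $q_1,q_2$ antipodal, and one checks the statement directly or notes $r_p < a$ is impossible since $r_p \geq \norm{q_i - p} \geq$ distance from $p$ to the chord$\,\geq 0$ while $\norm{q_i-p}=r_p$ forces... — actually $s_z=0$ with $r_p<r_z$ would force $s_p^2 = r_p^2 - a^2 < 0$, impossible, so this case cannot occur), then $\delta = s_p - s_z < 0$ since $|s_p| < s_z$, and the computation above shows $\norm{w-p}\le r_p \iff \langle w - m, e_1\rangle \le 0$, which is the arc on the opposite side of the chord from $z$, i.e. the shorter one since $s_z>0$ means the center $z$ is on the ``long'' side. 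Collecting these observations gives the claimed identity.
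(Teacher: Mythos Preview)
Your argument is correct. The coordinate computation is clean: once you place $m$ at the origin, $z=(s_z,0)$, $p=(s_p,0)$, $q_i=(0,\pm a)$, the identity
\[
\norm{w-p}^2 - r_p^2 \;=\; -2(s_p-s_z)\,\langle w, e_1\rangle
\]
for $w$ on the circle $C$ does all the work, and the sign analysis ($|s_p|<|s_z|$, WLOG $s_z>0$, hence $s_p-s_z<0$) correctly identifies $\{\norm{w-p}\le r_p\}$ with the half-plane $\{\langle w,e_1\rangle\le 0\}$, whose trace on $C$ is precisely the minor arc through $(s_z-r_z,0)$. The degenerate case $s_z=0$ is indeed excluded since it would force $s_p^2=r_p^2-a^2<r_z^2-a^2=0$.

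Your route is genuinely different from the paper's. The paper gives a soft topological argument: it first observes that for every point $x$ of $K=C\cap\mathcal{B}(p,r_p)$ one has $d(x,Med(C))=r_z>r_p$, then invokes a general reach-intersection lemma (\cite[Lemma 3.4]{RatajZ2017}) to get $\tau_K>r_p$ and hence that $K$ is contractible; being a closed connected subset of a circle, $K$ is therefore an arc. It then argues that the endpoints of this arc must lie on $\partial\mathcal{B}(p,r_p)$, so they coincide with $\{q_1,q_2\}=C\cap\partial\mathcal{B}(p,r_p)$, and finally rules out the longer arc by the diameter bound $2r_p<2r_z$. Your approach is more elementary and self-contained---no appeal to reach or contractibility, just one line of algebra that simultaneously gives connectedness, the endpoints, and which of the two arcs---at the cost of being coordinate-dependent and specific to this planar situation. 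The paper's argument, while heavier, illustrates a template (intersect with balls of radius below the reach, conclude contractibility) that it reuses elsewhere, notably in the proof of Lemma~\ref{thm:geometry.circlearc}.
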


\begin{figure}[h!]
	\centering
	\includegraphics{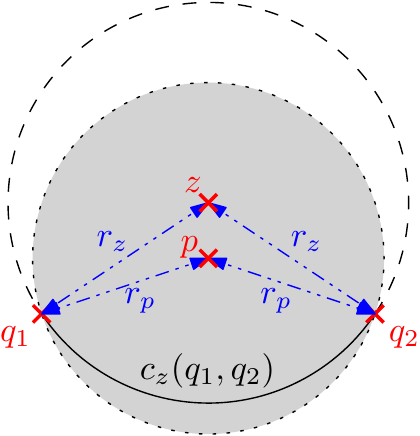}
	\caption{Layout of Lemma \ref{lem:geometry_circledisk_intersection}.
	}
	\label{fig:geometry_circledisk_intersection}
\end{figure}

\begin{proof}[Proof of Lemma \ref{lem:geometry_circledisk_intersection}]
	
	Since everything is intersected with the $2$-dimensional space $V$,
	we can assume that $D=2$ without loss of generality. For short, we write $K = \partial\mathcal{B}_{\mathbb{R}^{2}}(z,r_{z})\cap\mathcal{B}_{\mathbb{R}^{2}}(p,r_{p})$.

	First note that $\{q_1,q_2\} \subset K$, so that $K \neq\emptyset$.
	Furthermore, for all $x\in K$,
	$d(x,Med(\partial\mathcal{B}_{\mathbb{R}^{2}}(z,r_{z}))=r_{z}>r_{p}$,
	so that $\tau_K > r_p$ from \cite[Lemma 3.4 (i)]{RatajZ2017}.
	Hence, applying \cite[Lemma 3.4 (ii)]{RatajZ2017}, we get that $K$ is contractible. In particular, $K$ is connected.
	
	Since $K$ is a closed connected subset of the circle $\partial\mathcal{B}_{\mathbb{R}^{2}}(z,r_{z})$, $K$ is an arc of a circle. Let $c_{1},c_{2}$ denote its endpoints.
	
	Let us now show that $\{c_{1},c_{2}\}\subset\partial\mathcal{B}_{\mathbb{R}^{2}}(z,r_{z})\cap\partial\mathcal{B}_{\mathbb{R}^{2}}(p,r_{p})$, or equivalently that $\left\Vert c_{1}-p\right\Vert =\left\Vert c_{2}-p\right\Vert =r_{p}$.
	Indeed, if $x\in K$ is such that $\left\Vert x-p\right\Vert <r_{p}$ then there exists $r_{x}>0$
	such that $\mathcal{B}_{\mathbb{R}^{2}}(x,r_{x})\subset\mathcal{B}_{\mathbb{R}^{2}}(p,r_{p})$.
	Then $\partial\mathcal{B}_{\mathbb{R}^{2}}(z,r_{z})\cap\mathcal{B}_{\mathbb{R}^{2}}(x,r_{x})\neq\emptyset$,
	so $\partial\mathcal{B}_{\mathbb{R}^{2}}(z,r_{z})\cap\mathcal{B}_{\mathbb{R}^{2}}(x,r_{x})$
	is also an arc of a circle, and since $x\in\partial\mathcal{B}_{\mathbb{R}^{2}}(z,r_{z})$,
	$x$ cannot be an end point of the arc $\partial\mathcal{B}_{\mathbb{R}^{2}}(z,r_{z})\cap\mathcal{B}_{\mathbb{R}^{2}}(x,r_{x})$.
	
	The two circles $\partial\mathcal{B}_{\mathbb{R}^{2}}(z,r_{z})$ and $\partial\mathcal{B}_{\mathbb{R}^{2}}(p,r_{p})$ are different
	($r_{z}>r_{p}$), so their intersection contains at most two points.
	Since $q_{1}\neq q_{2}\in K = \partial\mathcal{B}_{\mathbb{R}^{2}}(z,r_{z})\cap\mathcal{B}_{\mathbb{R}^{2}}(p,r_{p})$,
	in fact $\{q_{1},q_{2}\}=\partial\mathcal{B}_{\mathbb{R}^{2}}(z,r_{z})\cap\partial\mathcal{B}_{\mathbb{R}^{2}}(p,r_{p})$.
	Consequently, $\{c_{1},c_{2}\}=\{q_{1},q_{2}\}$. That is, $q_{1}$ and $q_{2}$ are the endpoints of the arc $K$.
	
	Note that there are two arcs of the circle $\partial\mathcal{B}_{\mathbb{R}^{2}}(z,r_{z})$	with endpoints $q_{1}$ and $q_{2}$. Since $K = \partial\mathcal{B}_{\mathbb{R}^{2}}(z,r_{z})\cap\mathcal{B}_{\mathbb{R}^{2}}(p,r_{p})\subset\mathcal{B}_{\mathbb{R}^{2}}(p,r_{p})$
	and $r_{p}<r_{z}$, $K$ cannot contain two points at distance equal to $2r_{z}$. Hence, $K$	is the shorter arc of the circle $\partial\mathcal{B}_{\mathbb{R}^{2}}(z,r_{z})$
	with endpoints $q_{1}$ and $q_{2}$, which is exactly $c_{z}(q_{1},q_{2})$.

\end{proof}

\begin{lem}
	
	\label{lem:geometry_plane_outpoint}
	
	Let $V \subset \R^D$ be a $2$-dimensional affine space and $q_{1},q_{2},z,x \in V$. 
	Denote by $ L $ be the line passing $q_{1}$	and $q_{2}$. 
	Assume that $x,z\notin L $, and that the segment joining $z$ and $x$ intersects $ L $. 
	Let $p\in\mathbb{R}^{D}$ be such that $\left\Vert p-q_{1}\right\Vert =\left\Vert z-q_{1}\right\Vert $
	and $\left\Vert p-q_{2}\right\Vert =\left\Vert z-q_{2}\right\Vert $.
	Then 
$ \left\Vert p-x\right\Vert \leq\left\Vert z-x\right\Vert ,
$
and the equality holds if and only if $p=z$. 
	
\end{lem}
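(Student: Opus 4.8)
The plan is to set up coordinates in the $2$-dimensional affine plane $V$ and reduce the statement to an elementary computation about distances. First I would place the line $L$ (through $q_1$ and $q_2$) as the $x$-axis of $V$, so write $q_1 = (a_1,0)$ and $q_2 = (a_2,0)$ with $a_1 \neq a_2$. Since $p$ satisfies $\norm{p-q_1} = \norm{z-q_1}$ and $\norm{p-q_2} = \norm{z-q_2}$, the point $p$ lies on the intersection of two circles that also contain $z$; these two circles meet in at most two points, one of which is $z$, and the other is the mirror image of $z$ across $L$. Hence either $p = z$ or $p$ is the reflection of $z$ in the line $L$. (The degenerate case where the two circles are tangent at $z$ forces $p=z$ and the inequality is trivial.)

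Next, writing $z = (z_1, z_2)$ with $z_2 \neq 0$ (as $z \notin L$), the reflection is $p = (z_1, -z_2)$. Let $x = (x_1, x_2)$. The hypothesis that $x \notin L$ and that the segment $[z,x]$ crosses $L$ means that $x_2 \neq 0$ and $z_2, x_2$ have opposite signs, i.e. $z_2 x_2 < 0$. Then
\[
\norm{z-x}^2 - \norm{p-x}^2 = (z_2 - x_2)^2 - (-z_2 - x_2)^2 = -4 z_2 x_2 > 0,
\]
which gives $\norm{p-x} < \norm{z-x}$ strictly. If instead $p = z$, equality holds trivially; and conversely if $\norm{p-x} = \norm{z-x}$ while $p$ is the reflection of $z$, the computation above forces $z_2 x_2 = 0$, contradicting $x \notin L$ and $z_2 \neq 0$, so equality can only happen when $p = z$.

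The main thing to be careful about is the case analysis for $p$: one must justify that the two defining circles are genuinely distinct (which holds since $q_1 \neq q_2$ and $z \notin L$, so the two circle centers — which lie on $L$ — are distinct, unless $z$ is equidistant from $q_1$ and $q_2$, in which case the perpendicular bisector argument still pins $p$ to $\{z, \text{refl}(z)\}$). Once $p$ is identified as either $z$ or its mirror image across $L$, the inequality is the one-line signed computation above, using crucially that $[z,x]$ meeting $L$ forces $z$ and $x$ to lie on opposite sides of $L$. I do not expect any real obstacle here; the only subtlety is handling the boundary/degenerate configurations cleanly so that the equality case is correctly characterized.
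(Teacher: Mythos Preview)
Your argument has a genuine gap: you silently assume that $p$ lies in the $2$-plane $V$, but the statement allows $p \in \R^D$. In $\R^D$ the constraints $\norm{p-q_1}=\norm{z-q_1}$ and $\norm{p-q_2}=\norm{z-q_2}$ cut out the intersection of two $(D-1)$-spheres, which is a $(D-2)$-dimensional sphere, not just the two points $\{z,\text{refl}_L(z)\}$. Concretely, in your coordinates (with $V=\R^2\times\{0\}^{D-2}$ and $L$ the first axis), the admissible $p$'s are exactly the points $(z_1,p_2,\dots,p_D)$ with $p_2^2+\cdots+p_D^2=z_2^2$. Your reflection case $p=(z_1,-z_2,0,\dots,0)$ is only one of these.

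The repair is easy and in fact your computation extends: for such $p$ one gets
\[
\norm{z-x}^2-\norm{p-x}^2 = (z_2-x_2)^2 - \bigl((p_2-x_2)^2+p_3^2+\cdots+p_D^2\bigr) = 2x_2(p_2-z_2)\ge 0,
\]
since $|p_2|\le|z_2|$ and $x_2,z_2$ have opposite signs; equality forces $p_2=z_2$ and hence $p_3=\cdots=p_D=0$, i.e.\ $p=z$. The paper avoids coordinates altogether: it introduces the point $y\in L$ where the segment $[z,x]$ meets $L$, observes that the two distance constraints pin down $\langle p-q_1,q_2-q_1\rangle$ and hence $\norm{p-y}=\norm{z-y}$ for \emph{any} $p\in\R^D$ satisfying them, and then concludes by the triangle inequality $\norm{z-x}=\norm{z-y}+\norm{y-x}=\norm{p-y}+\norm{y-x}\ge\norm{p-x}$. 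That route makes the role of the hypothesis ``$[z,x]$ crosses $L$'' (namely, that $y$ lies \emph{between} $z$ and $x$) and the equality case completely transparent, and it never needs the dimension reduction that tripped you up. As a minor aside, your parenthetical about the two circles possibly being tangent or coinciding is moot: their centers are $q_1\neq q_2$, so the circles are always distinct, and tangency would require $z\in L$, which is excluded.
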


\begin{proof}[Proof of Lemma \ref{lem:geometry_plane_outpoint}]
	
	Let $y$ denote the intersection point of $ L $ and the line segment between $z$ and $x$. 
	Since $\left\Vert p-q_{1}\right\Vert =\left\Vert z-q_{1}\right\Vert $
	and $\left\Vert p-q_{2}\right\Vert =\left\Vert z-q_{2}\right\Vert $,
	\begin{align*}
	\cos(\angle(p-q_{1},q_{2}-q_{1}))
	&=
	\cos(\angle(z-q_{1},q_{2}-q_{1}))
	\\
	&=\frac{\left\Vert z-q_{1}\right\Vert ^{2}+\left\Vert q_{2}-q_{1}\right\Vert ^{2}-\left\Vert z-q_{2}\right\Vert ^{2}}{2\left\Vert z-q_{1}\right\Vert \left\Vert q_{2}-q_{1}\right\Vert },
	\end{align*}
from which we derive
	\begin{align*}
	\left\Vert p-y\right\Vert^2  & =\left\Vert p-q_{1}\right\Vert ^{2}+\left\Vert y-q_{1}\right\Vert ^{2}-2\left\Vert p-q_{1}\right\Vert \left\Vert y-q_{1}\right\Vert \cos(\angle(p-q_{1},q_{2}-q_{1}))
	\\
	& =
	\left\Vert z-q_{1}\right\Vert ^{2}+\left\Vert y-q_{1}\right\Vert ^{2}-2\left\Vert z-q_{1}\right\Vert \left\Vert y-q_{1}\right\Vert \cos(\angle(z-q_{1},q_{2}-q_{1}))
	\\
	& =\left\Vert z-y\right\Vert^2 .
	\end{align*}
	Using the fact that $y$ belongs to the segment joining $x$ and $z$, we get
	\begin{align*}
	\left\Vert z-x\right\Vert  & =\left\Vert z-y\right\Vert +\left\Vert y-x\right\Vert \\
	& =\left\Vert p-y\right\Vert +\left\Vert y-x\right\Vert \\
	& \geq\left\Vert p-x\right\Vert
	.
	\end{align*}
	Finally, note that since $x,z\notin L $ and $y\in L $, the equality holds if and only if $\angle(x-y,p-y)=\pi$. But $\left\Vert p-y\right\Vert =\left\Vert z-y\right\Vert $
	and $x,y$, and $z$ are colinear, so this is possible if and only if
	$p=z$.
	
\end{proof}	

The following lemma can be seen as an extension of \cite[Lemma 3.4 (i)]{RatajZ2017}.

\begin{lem}
	
	\label{lem:geometry_reach_intersection_balls}
	
	Let $A\subset\mathbb{R}^{D}$ be a set with positive reach $\tau_{A}>0$,
	and let $\{B_{i}\}_{i\in I}$ be a collection of balls indexed by
	$I$. Suppose $\bigcap_{i\in I}B_{i}\cap A$ is nonempty. Let $r_{i}$
	be the radius of $B_{i}$, and suppose $r_{i}<\tau_{A}$ for all $i\in I$.
	\begin{enumerate}
		\item[(i)]  If $I$ is finite, then 
		$
		\tau_{\bigcap_{i\in I}B_{i}\cap A}>\min_{i\in I}r_{i}.
		$
		\item[(ii)]  If $I$ is countably infinite, then 
		$
		\tau_{\bigcap_{i\in I}B_{i}\cap A}\geq\inf_{i\in I}r_{i}.
		$
	\end{enumerate}
\end{lem}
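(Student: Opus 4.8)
The plan is to reduce the statement to an intersection of finitely many (for part (i)) or countably many (for part (ii)) balls by controlling the geometry locally. First I would treat part (i). The key tool is Federer's result \cite[Theorem 4.8]{Federer1959} (stability of positive reach under intersection with a ball of radius smaller than the reach), which already handles the case $|I|=1$: if $r < \tau_A$ then $\tau_{B \cap A} \geq$ something positive; the sharper version \cite[Lemma 3.4 (i)]{RatajZ2017} quoted in the excerpt gives precisely $\tau_{B \cap A} > r$ when $B = \mathcal{B}(z,r)$ with $r < \tau_A$ (using $d(x, Med(\partial B)) = r$ for all $x$ in the sphere, together with the fact that the reach of $A$ dominates). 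So I would proceed by induction on $|I|$: write $\bigcap_{i \in I} B_i \cap A = B_{i_0} \cap \bigl( \bigcap_{i \neq i_0} B_i \cap A \bigr)$ where $i_0$ achieves $\min_i r_i$. The inductive hypothesis gives $\tau_{A'} > \min_{i \neq i_0} r_i \geq r_{i_0}$ for $A' = \bigcap_{i \neq i_0} B_i \cap A$, and then one more application of \cite[Lemma 3.4 (i)]{RatajZ2017} to $B_{i_0} \cap A'$ with $r_{i_0} < \tau_{A'}$ yields $\tau_{B_{i_0} \cap A'} > r_{i_0} = \min_i r_i$, closing the induction. One must check the nonemptiness hypothesis is preserved, which it is since $\bigcap_{i \in I} B_i \cap A \subset A'$ is nonempty.

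For part (ii), I would pass to the limit. Enumerate $I = \{1, 2, \dots\}$ and set $A_N = \bigcap_{i=1}^N B_i \cap A$, a decreasing sequence of compact sets with $\bigcap_N A_N = \bigcap_{i \in I} B_i \cap A =: A_\infty$, which is nonempty and compact. By part (i), $\tau_{A_N} > \min_{i \leq N} r_i \geq \inf_{i \in I} r_i =: \rho$ for every $N$. The goal is to show $\tau_{A_\infty} \geq \rho$. For this I would use a Hausdorff-convergence / lower-semicontinuity argument: $A_N \to A_\infty$ in the Hausdorff metric (a decreasing sequence of compacts with nonempty intersection converges to its intersection), and the reach is lower semicontinuous along Hausdorff-converging sequences, i.e. $\liminf_N \tau_{A_N} \geq \tau_{A_\infty}$ would go the wrong way — so instead I would argue directly at the level of the medial axis / nearest-point projection: suppose for contradiction $\tau_{A_\infty} < \rho$; then there is a point $w$ with $d(w, A_\infty) < \rho$ having two nearest points $a \neq b$ on $A_\infty$. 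Since $a, b \in A_N$ for all $N$ and $d(w, A_N) \leq d(w, A_\infty) < \rho$ decreasing up to... actually $d(w,A_N)$ is nondecreasing in $N$ toward $d(w,A_\infty)$, so $d(w,A_N) < \rho$ for all $N$; then for $N$ large the two nearest points of $w$ on $A_N$ are forced near $a$ and $b$ respectively (by continuity of nearest-point sets under Hausdorff convergence when there is a genuine near-tie at the limit), contradicting $\tau_{A_N} > \rho$. I'd make this rigorous by noting that $\pi_{A_N}(w)$ must converge to the set of nearest points of $w$ on $A_\infty$, which has at least two elements $a, b$ at mutual distance $\geq$ some fixed $\delta > 0$, so eventually $A_N$ has two points at comparable distance from $w$, both $< \rho$, forcing a medial-axis point of $A_N$ within distance $< \rho$, i.e. $\tau_{A_N} \leq \rho$ — the contradiction. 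Alternatively, and perhaps more cleanly, one can invoke the characterization that $\tau_A = \inf\{ d(w,A) : w \in Med(A)\}$ together with the fact that $Med(A_\infty) \subset \overline{\liminf Med(A_N)}$ in an appropriate sense; but the direct nearest-point argument avoids subtleties about the medial axis not being closed.

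The main obstacle I anticipate is the limiting step in (ii): the reach is \emph{not} continuous under Hausdorff convergence in general (it can jump down in the limit, which is exactly the inequality direction we want, but can also behave badly without compactness or uniform control), so one must be careful that the uniform lower bound $\tau_{A_N} > \rho$ genuinely transfers. The cleanest route is: take any $w \notin A_\infty$ with $d(w, A_\infty) < \rho$; I claim $w \notin Med(A_\infty)$. Since $d(w,A_N) \nearrow d(w,A_\infty) < \rho$, we have $w \notin Med(A_N)$ for each $N$ (as $\tau_{A_N} > \rho > d(w, A_N)$), so $\pi_{A_N}(w)$ is a single point $p_N$; by compactness extract $p_N \to p_\infty \in A_\infty$, and a standard argument shows $p_\infty$ is the unique nearest point of $w$ on $A_\infty$ (any other nearest point $q$ would satisfy $\|w - q\| = \|w - p_\infty\| = \lim \|w - p_N\| \leq \lim \|w - q\|$ with $q \in A_N$, forcing $q = p_\infty$ via the strict near-optimality coming from $\tau_{A_N} > \rho$, more precisely from the cone/projection estimates valid at scale $< \tau_{A_N}$). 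Hence $w$ has a unique nearest point, so $w \notin Med(A_\infty)$; since this holds for all $w$ with $d(w, A_\infty) < \rho$, we conclude $\tau_{A_\infty} \geq \rho$, as desired. I would write part (ii) in roughly this form, citing Federer's projection estimates for the uniqueness-forcing step.
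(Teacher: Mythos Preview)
Your treatment of part (i) is essentially identical to the paper's: induction on $|I|$ using \cite[Lemma 3.4 (i)]{RatajZ2017} at each step.

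For part (ii), you diverge from the paper, and in doing so you miss a one-line finish. You set up the decreasing compacts $A_N = \bigcap_{i\leq N} B_i \cap A$ with $\tau_{A_N} > \rho := \inf_i r_i$, exactly as the paper does, but then you dismiss the semicontinuity route on the grounds that ``$\liminf_N \tau_{A_N} \geq \tau_{A_\infty}$ would go the wrong way.'' This is where you go astray: the reach is \emph{upper} semicontinuous under uniform-on-compacts convergence of distance functions, which is the direction you need. Federer proved this as \cite[Theorem 5.9]{Federer1959}, and the paper simply invokes it: since $\{A_N\}$ is decreasing with nonempty intersection $A_\infty$, $d(\cdot,A_N) \to d(\cdot,A_\infty)$ uniformly on compacts, and Theorem 5.9 gives $\tau_{A_\infty} \geq \limsup_N \tau_{A_N} \geq \rho$ immediately.

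Your direct projection argument can be made rigorous, but the sketch as written has a genuine gap at the ``uniqueness-forcing step.'' Knowing that $p_N = \pi_{A_N}(w)$ is unique for each $N$ and that (along a subsequence) $p_N \to p_\infty \in A_\infty$ does \emph{not} by itself prevent $A_\infty$ from having a second nearest point $q$ to $w$: for finite $N$ one only has $\|w - q\| = r \geq \|w - p_N\|$, which in the limit becomes an equality and gives nothing. To close this you would need to push $w$ outward along the normal direction to $z_N = p_N + \rho\,(w-p_N)/\|w-p_N\|$, use $\rho < \tau_{A_N}$ to get $A_N \cap \overline{\mathcal{B}}(z_N,\rho) = \{p_N\}$, pass to the limit $z_\infty$, and then argue via the equality case of the triangle inequality that any nearest point $q$ of $w$ on $A_\infty$ lies on the ray from $z_\infty$ through $w$, forcing $q = p_\infty$. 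This works, but it is several careful steps, none of which appear in your proposal beyond a vague reference to ``Federer's projection estimates.'' The paper's citation of \cite[Theorem 5.9]{Federer1959} replaces all of this with a single sentence.
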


\begin{proof}[Proof of Lemma \ref{lem:geometry_reach_intersection_balls}]
	
	\begin{enumerate}[leftmargin=*]
		\item[(i)] 
	Since $I$ is finite, we can assume that $I = \{1,\ldots,k\}$ and that the sequence $(r_{i})_{1\leq i \leq k}$ is nonincreasing. We use an induction
	on $k$:
	\begin{itemize}[leftmargin=*]
		\item If $k=1$, since for all $x\in A\cap B_{1}$, $d(x,Med(A))\geq\tau_{A}>r_{1}$,
		\cite[Lemma 3.4 (i)]{RatajZ2017} gives that $\tau_{B_{1}\cap A}>r_{1}$. 
		\item Suppose now that $\tau_{\bigcap_{i-1}^{j}B_{i}\cap A}>r_{j}$ for some
		$j<k$. Then for all $x\in\bigcap_{i=1}^{j+1}B_{i}\cap A=\left(\bigcap_{i=1}^{j}B_{i}\cap A\right)\cap B_{j+1}$,
		$d\left(x,Med\left(\bigcap_{i=1}^{j}B_{i}\cap A\right)\right)>r_{j}\geq r_{j+1}$.
		Applying again \cite[Lemma 3.4 (i)]{RatajZ2017} gives 
		\[
		\tau_{\bigcap_{i=1}^{j+1}B_{i}\cap A}=\tau_{\left(\bigcap_{i=1}^{j}B_{i}\cap A\right)\cap B_{j+1}}>r_{j+1}=\min_{1 \leq i \leq j+1}r_{i}.
		\]
	\end{itemize}
	By induction on $k$, we get the result.
	
	\item[(ii)] 
	
	Note that if $\inf_{i\in I}r_{i}=0$, there is nothing to prove. Hence we only consider the case where $\inf_{i\in I}r_{i}>0$.
	
	Since $I$ is countable, we can assume that $I=\mathbb{N}$. For $k\in\mathbb{N}$,
	let $C_{k}:=\bigcap_{i=1}^{k}B_{i}\cap A$. In particular,
	$
	C_\infty := \cap_{k=1}^{\infty}C_{k} = \cap_{i=1}^{\infty}B_{i}\cap A.
	$
	From the finite case (i), 
	\[
	\tau_{C_{k}}>\min_{1 \leq i\leq k}r_{i}\geq\inf_{i\in\mathbb{N}}r_{i}.
	\]
	Now, since $\{C_{k}\}_{k=1}^{\infty}$ is a decreasing sequence of
	sets, the distance functions $d(\cdot,C_{k})$ converges to $d(\cdot,C_\infty)$.
	As the distance functions $d(\cdot,C_{k})$ are continuous,
	there convergence is uniform on any compact subset of $\mathbb{R}^{D}$.
	Hence,~\cite[Theorem 5.9]{Federer1959} yields 
	$\tau_{C_\infty}\geq\inf_{i\in\mathbb{N}}r_{i},
	$
	which concludes the proof.
\end{enumerate}
\end{proof}

\begin{proof}[Proof of Lemma \ref{thm:geometry.circlearc}]
	Let $p_{0}:=\frac{z_{0}+q_{1}+q_{2}}{3}$ and $\tau_{0}=\left\Vert p_0 - q_{1}\right\Vert <\tau_{M}$.
	Consider the subset $C_0$ of the median hyperplane of $q_1$ and $q_2$ defined by
	\[
	C_{0}
	:=
	\left\{ p\in\mathbb{R}^{D} | \left\Vert p-q_{1}\right\Vert =\left\Vert p-q_{2}\right\Vert \in(\tau_{0},\tau_{M})\right\}
	,
	\]
	and let $\{p_{i}\}_{i\in\mathbb{N}}\subset C_{0}$ be its countable dense subset.
	Write
	$\tau_{i}:=\left\Vert p_{i}-q_{1}\right\Vert $ and $B_{i}:=\mathcal{B}_{\mathbb{R}^{D}}(p_{i},\tau_{i})$.
Let $A_{\infty}:=\bigcap_{k=0}^{\infty}B_{k}$. Note that $\{q_{1},q_{2}\}\subset M\cap A_{\infty}$ 
	which implies that $M\cap A_{\infty}$ is nonempty. 
	Note also that by definition, $\tau_{i}\in(\tau_{0},\tau_{M})$ for all $i\in\mathbb{N}\cup\{0\}$.
	Hence from Lemma \ref{lem:geometry_reach_intersection_balls}
	(ii), $\tau_{M\cap A_{\infty}}\geq\tau_{0}.$
In addition, $M\subset\mathbb{R}^{D}\setminus\accentset{\circ}{\mathcal{B}}_{\mathbb{R}^{D}}(z_{0},\tau_{M})$, so that
	\[
	\{q_{1},q_{2}\}\subset M\cap A_{\infty}\subset A_{\infty}\setminus\accentset{\circ}{\mathcal{B}}_{\mathbb{R}^{D}}(z_{0},\tau_{M}).
	\]
Note that it is sufficient to show that $A_{\infty}\setminus\accentset{\circ}{\mathcal{B}}_{\mathbb{R}^{D}}(z_{0},\tau_{M})=c_{z_{0}}(q_{1},q_{2})$ to conclude the proof. Indeed, since $\tau_{M\cap A_{\infty}}\geq\tau_{0}>\frac{\left\Vert q_{2}-q_{1}\right\Vert }{2}$
	and $\emptyset\neq M\cap A_{\infty}\subset c_{z_{0}}(q_{1},q_{2})\subset\mathcal{B}_{\mathbb{R}^{D}}\left(\frac{q_{1}+q_{2}}{2},\frac{\left\Vert q_{2}-q_{1}\right\Vert }{2}\right)$, \cite[Lemma 3.4 (ii)]{RatajZ2017} implies that $M\cap A_{\infty}$ is contractible. In other words, $M\cap A_{\infty}$ is a contractible subset of the shorter arc of a circle $c_{z_{0}}(q_{1},q_{2})$ containing its endpoints $q_{1}$ and $q_{2}$, and hence $M\cap A_{\infty}=c_{z_{0}}(q_{1},q_{2})$. Therefore, 
$c_{z_{0}}(q_{1},q_{2})\subset M,$ which concludes the proof.
	
	It is left to show that $A_{\infty}\setminus\accentset{\circ}{\mathcal{B}}_{\mathbb{R}^{D}}(z_{0},\tau_{M})=c_{z_{0}}(q_{1},q_{2})$.
	To this aim, let us write $V:=z_{0}+span\left\{ q_{1}-z_{0},q_{2}-z_{0}\right\} $ for the	$2$-dimensional plane passing through $q_{1}, q_{2}$, and $z_{0}$.
	Then $\tau_0 = \left\Vert p_{0}-q_{1}\right\Vert =\left\Vert p_{0}-q_{2}\right\Vert <\left\Vert z_{0}-q_{1}\right\Vert =\left\Vert z_{0}-q_{2}\right\Vert =\tau_{M}$,
	and hence from Lemma \ref{lem:geometry_circledisk_intersection},
	$c_{z_{0}}(q_{1},q_{2})$ can be represented as 
	\begin{equation}
	c_{z_{0}}(q_{1},q_{2})=V\cap\partial\mathcal{B}_{\mathbb{R}^{D}}(z_{0},\tau_{M})\cap\mathcal{B}_{\mathbb{R}^{D}}(p_{0},\tau_{0}).\label{eq:geometry.circlearc_representation}
	\end{equation}
	The proof will hence be complete as soon as we have showed the equality
	\[
	A_{\infty}\setminus\accentset{\circ}{\mathcal{B}}_{\mathbb{R}^{D}}(z_{0},\tau_{M})
	=
	V\cap\partial\mathcal{B}_{\mathbb{R}^{D}}(z_{0},\tau_{M})\cap\mathcal{B}_{\mathbb{R}^{D}}(p_{0},\tau_{0}),
	\]
	which we tackle by showing the two inclusions.
	\begin{itemize}[leftmargin=*]
	\item \textit{(Direct inclusion)}
Let $x\in \R^D \setminus \mathcal{B}_{\mathbb{R}^{D}}(z_{0},\tau_{M})$.
	Since $\left\Vert z_{0}-q_{1}\right\Vert =\left\Vert z_{0}-q_{2}\right\Vert =\tau_{M}$, their exists $p_{i}$ satisfying $\left\Vert p_{i}-z_{0}\right\Vert <\frac{\left\Vert z_{0}-x\right\Vert -\tau_{M}}{2}$.
	Then,
	\[
	\left\Vert p_{i}-x\right\Vert \geq\left\Vert z_{0}-x\right\Vert -\left\Vert p_{i}-z_{0}\right\Vert \geq\frac{\left\Vert z_{0}-x\right\Vert +\tau_{M}}{2}>\tau_{M}>\left\Vert p_{i}-q_{1}\right\Vert ,
	\]
	so that $x\notin B_{i}=\mathcal{B}_{\mathbb{R}^{D}}\left(p_{i},\left\Vert p_{i}-q_{1}\right\Vert \right)$,
	and $x\notin A_{\infty}=\bigcap_{i=1}^{\infty}B_{i}$ as well. Hence
	this implies that 
	\begin{equation}
	(\mathbb{R}^{D}\setminus\mathcal{B}_{\mathbb{R}^{D}}(z_{0},\tau_{M}))\cap A_{\infty}=\emptyset.\label{eq:geometry.circlearc_outball}
	\end{equation}	
	Let now $x\in(\partial\mathcal{B}_{\mathbb{R}^{D}}(z_{0},\tau_{M}))\setminus V$.
	Since $x,q_{1}$, and $q_{2}$ are not colinear, we can find $p'\in V_x =  x+span\left\{ q_{1}-x,q_{2}-x\right\}$
	such that $\left\Vert p'-q_{1}\right\Vert =\left\Vert p'-q_{2}\right\Vert =\tau_{M}$ and the line segment between $p'$ and $x$ intersects the line $L$ passing by $q_{1}$ and $q_{2}$. 
	Then $q_{1}, q_{2}, x,$ and $p'$
	are lying on a $2$-dimensional plane $V_x$, and $x \notin L$. 
	Also, $x\notin V$, $q_{1},q_{2}\in V$, and $\left\Vert p'-q_{1}\right\Vert =\left\Vert p'-q_{2}\right\Vert =\tau_{M}>\frac{\left\Vert q_{1}-q_{2}\right\Vert }{2}$
	implies that $p'\notin V$, and hence $p'\neq z_{0}$. Hence from
	Lemma~\ref{lem:geometry_plane_outpoint}, 
	\[
	\left\Vert p'-x\right\Vert >\left\Vert z_{0}-x\right\Vert =\tau_{M}.
	\]
	Now, since $\left\Vert p'-q_{1}\right\Vert =\left\Vert p'-q_{2}\right\Vert =\tau_{M}$,
	there exists $p_{i'}$ be satisfying $\left\Vert p_{i'}-p'\right\Vert <\frac{\left\Vert p'-x\right\Vert -\tau_{M}}{2}$.
	Then 
	\[
	\left\Vert p_{i'}-x\right\Vert \geq\left\Vert p'-x\right\Vert -\left\Vert p_{i'}-p'\right\Vert \geq\frac{\left\Vert p'-x\right\Vert +\tau_{M}}{2}>\tau_{M}>\left\Vert p_{i'}-q_{1}\right\Vert ,
	\]
	and hence $x\notin B_{i'}=\mathcal{B}_{\mathbb{R}^{D}}\left(p_{i'},\left\Vert p_{i'}-q_{1}\right\Vert \right)$, $x\notin A_{\infty}=\bigcap_{i=1}^{\infty}B_{i}$ as well. Hence
	this implies that 
	\begin{equation}
	((\partial\mathcal{B}_{\mathbb{R}^{D}}(z_{0},\tau_{M}))\setminus V)\cap A_{\infty}=\emptyset.\label{eq:geometry.circlearc_outplane}
	\end{equation}
	
	Finally, by construction, $A_{\infty}\subset\mathcal{B}_{\mathbb{R}^{D}}(p_{0},\tau_{0})$. Combining this last inclusion with \eqref{eq:geometry.circlearc_outball} and \eqref{eq:geometry.circlearc_outplane} yields the desired inclusion
	\begin{equation}
	A_{\infty}\setminus\accentset{\circ}{\mathcal{B}}_{\mathbb{R}^{D}}(z_{0},\tau_{M})\subset V\cap\partial\mathcal{B}_{\mathbb{R}^{D}}(z_{0},\tau_{M})\cap\mathcal{B}_{\mathbb{R}^{D}}(p_{0},\tau_{0}).\label{eq:geometry.circlearc_inclusion_first}
	\end{equation}
	
	\item \textit{(Reverse inclusion)}	
	Let $x\in V\cap\partial\mathcal{B}_{\mathbb{R}^{D}}(z_{0},\tau_{M})\cap\mathcal{B}_{\mathbb{R}^{D}}(p_{0},\tau_{0})$,
	and fix $B_{i}=\mathcal{B}_{\mathbb{R}^{D}}\left(p_{i},\left\Vert p_{i}-q_{1}\right\Vert \right)$.
	Let $z_{0}'\in V$ be such that $\left\Vert z_{0}'-q_{1}\right\Vert =\left\Vert z_{0}'-q_{2}\right\Vert =\left\Vert p_{i}-q_{1}\right\Vert $
	and the line segment between $z_{0}'$ and $x$ intersects the
	line passing $q_{1}$ and $q_{2}$. Then $q_{1},q_{2},x,z_{0}'\in V$,
	and $x$ is not lying on the line passing $q_{1}$ and $q_{2}$. Hence
	from Lemma \ref{lem:geometry_plane_outpoint}, 
	\begin{equation}
	\left\Vert p_{i}-x\right\Vert \leq\left\Vert z_{0}'-x\right\Vert .\label{eq:geometry_circlearc_plane_outpoint}
	\end{equation}
	Since $x\in c_{z_{0}}(q_{1},q_{2})$ and $\left\Vert z_{0}'-q_{1}\right\Vert =\left\Vert z_{0}'-q_{2}\right\Vert <\tau_{M}=\left\Vert z_{0}-q_{1}\right\Vert =\left\Vert z_{0}-q_{2}\right\Vert $,
	Lemma \ref{lem:geometry_circledisk_intersection} yields
	\begin{equation}
	\left\Vert z_{0}'-x\right\Vert \leq\left\Vert z_{0}'-q_{1}\right\Vert .\label{eq:geometry_circlearc_circledisk_intersection}
	\end{equation}
	Hence \eqref{eq:geometry_circlearc_plane_outpoint} and \eqref{eq:geometry_circlearc_circledisk_intersection}
	gives the upper bound on $\left\Vert p_{i}-x\right\Vert $ as 
	\[
	\left\Vert p_{i}-x\right\Vert \leq\left\Vert z_{0}'-x\right\Vert \leq\left\Vert z_{0}'-q_{1}\right\Vert =\left\Vert p_{i}-q_{1}\right\Vert .
	\]
	Hence $x\in B_{i}$, and since choice of $x$ and $B_{i}$ were arbitrary,
	$V\cap\partial\mathcal{B}_{\mathbb{R}^{D}}(z_{0},\tau_{M})\cap\mathcal{B}_{\mathbb{R}^{D}}(p_{0},\tau_{0})\subset A_{\infty}$.
	But $\partial\mathcal{B}_{\mathbb{R}^{D}}(z_{0},\tau_{M})\cap\accentset{\circ}{\mathcal{B}}_{\mathbb{R}^{D}}(z_{0},\tau_{M})=\emptyset$,
	so that we get the desired inclusion
	\begin{equation}
	V\cap\partial\mathcal{B}_{\mathbb{R}^{D}}(z_{0},\tau_{M})\cap\mathcal{B}_{\mathbb{R}^{D}}(p_{0},\tau_{0})\subset A_{\infty}\setminus\accentset{\circ}{\mathcal{B}}_{\mathbb{R}^{D}}(z_{0},\tau_{M}).\label{eq:geometry.circlearc_inclusion_second}
	\end{equation}
\end{itemize}	
	Putting together \eqref{eq:geometry.circlearc_representation}, \eqref{eq:geometry.circlearc_inclusion_first},
	and \eqref{eq:geometry.circlearc_inclusion_second} we get
	\[
	A_{\infty}\setminus\accentset{\circ}{\mathcal{B}}_{\mathbb{R}^{D}}(z_{0},\tau_{M})=c_{z_{0}}(q_{1},q_{2}).
	\]
\end{proof}	
	
\begin{lem}
	\label{lem:geometry_reach_attain_global}
	Let $M\subset\mathbb{R}^{D}$ be a compact submanifold with reach
	$\tau_{M}>0$. If there exist $p\neq q\in M$ such that $\tau_{M}=\frac{\left\Vert q-p\right\Vert ^{2}}{2d(q-p,T_{p}M)}$,
	then there exists $z_{0}\in Med(M)$ with $d(z_{0},M)=\tau_{M}$.
\end{lem}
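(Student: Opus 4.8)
The plan is to produce an explicit axis point lying at distance exactly $\tau_M$ from $M$. Decompose $q-p = u+v$ orthogonally, with $u \in T_pM$ and $v \in (T_pM)^{\perp}$, so that $\norm{v} = d(q-p,T_pM)$. The hypothesis $\tau_M = \norm{q-p}^2/(2\norm{v})$ presupposes that the right-hand side is a finite real number, hence $\norm{v}>0$; set $\hat v := v/\norm{v}$, a unit normal to $M$ at $p$, and define $z_0 := p + \tau_M\,\hat v$. Geometrically $z_0$ is the centre of the ball of radius $\tau_M$ tangent to $M$ at $p$ and passing through $q$ (cf. Figure~\ref{fig:tangent_ball_2d}). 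The first thing I would record are the two elementary identities $\norm{z_0-p} = \tau_M$ and $\norm{z_0-q} = \tau_M$: the former holds by construction, and for the latter, expanding $\norm{z_0-q}^2 = \norm{(p-q)+\tau_M\hat v}^2$ and using $\langle q-p,\hat v\rangle = \norm{v}$ together with $2\tau_M\norm{v} = \norm{q-p}^2$ gives $\norm{z_0-q}^2 = \norm{p-q}^2 - 2\tau_M\norm{v} + \tau_M^2 = \tau_M^2$.

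The core step is to show that $d(z_0,M) \ge \tau_M$, i.e. that no point of $M$ lies strictly inside $\mathcal{B}(z_0,\tau_M)$. Suppose some $m \in M$ satisfies $\norm{m-z_0} < \tau_M$. Then $m \neq p$, since $\norm{p-z_0} = \tau_M$, and expanding $\norm{m-z_0}^2 = \norm{(m-p)-\tau_M\hat v}^2 < \tau_M^2$ yields $\norm{m-p}^2 < 2\tau_M \langle m-p,\hat v\rangle$. Because $\hat v$ is a unit vector orthogonal to $T_pM$, Cauchy--Schwarz gives $\langle m-p,\hat v\rangle \le d(m-p,T_pM)$, whence $\norm{m-p}^2 < 2\tau_M\, d(m-p,T_pM)$; the left side being positive, $d(m-p,T_pM) > 0$ and we may rearrange to get $\norm{m-p}^2/\bigl(2\, d(m-p,T_pM)\bigr) < \tau_M$, contradicting the Federer formula \eqref{eqn:reach_as_a_supremum_federer}. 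Hence no such $m$ exists, so $d(z_0,M) \ge \tau_M$; combined with $\norm{z_0-p} = \tau_M$ this gives $d(z_0,M) = \tau_M$. Finally, $p$ and $q$ are two distinct points of $M$ both at distance $d(z_0,M)$ from $z_0$, so by the definition \eqref{eq:medialaxis} of the medial axis, $z_0 \in Med(M)$, and $d(z_0,M) = \tau_M$, which is exactly the claim.

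I do not anticipate a genuine obstacle here. The one point that deserves care is that the contradiction with \eqref{eqn:reach_as_a_supremum_federer} can be obtained directly at radius exactly $\tau_M$, with no limiting argument over radii $r < \tau_M$ needed; this hinges entirely on the elementary inequality $\langle m-p,\hat v\rangle \le d(m-p,T_pM)$, valid because $\hat v$ is a unit normal vector at $p$.
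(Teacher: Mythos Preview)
Your argument is correct and defines the same candidate axis point $z_0$ as the paper, but the way you establish $d(z_0,M)=\tau_M$ is genuinely different. The paper appeals to \cite[Theorem 4.8 (12)]{Federer1959}, which guarantees that for $\lambda\in(0,1)$ the point $p+\lambda(z_0-p)$ (lying along a normal direction at distance $<\tau_M$) has $p$ as its unique nearest point on $M$; this gives $d(p+\lambda(z_0-p),M)=\lambda\tau_M$, and one sends $\lambda\to 1$. You instead argue directly at the endpoint, showing that any $m\in M$ strictly inside $\mathcal{B}(z_0,\tau_M)$ would violate Federer's formula \eqref{eqn:reach_as_a_supremum_federer} via the inequality $\langle m-p,\hat v\rangle \leq d(m-p,T_pM)$. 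Your route is more self-contained (no external citation beyond \eqref{eqn:reach_as_a_supremum_federer}, already stated in the paper) and avoids the limit; the paper's route, on the other hand, actually yields slightly more along the way, namely that $p$ is the \emph{unique} nearest point of each interior segment point, which is not needed here but is the natural mechanism behind the projection map. Both arguments work equally well for this lemma.
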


\begin{figure}[h!]
	\begin{center}
		\includegraphics{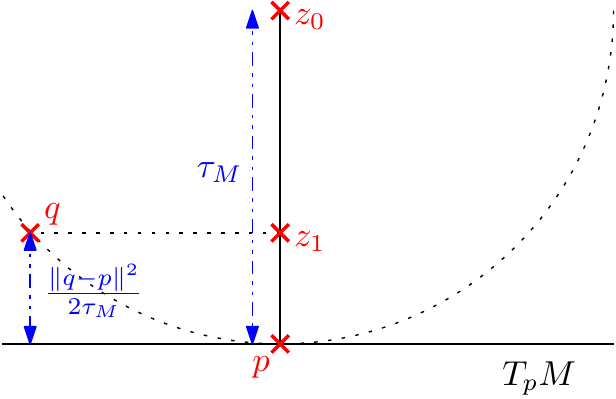}
	\end{center}
	\caption{
	Layout of the proof of Lemma \ref{lem:geometry_reach_attain_global}.
	}
	\label{fig:geometry_reach_attain_global}
\end{figure}

\begin{proof}[Proof of Lemma \ref{lem:geometry_reach_attain_global}]
	Write $z_{0}:=p+\tau_{M}\frac{\pi_{T_{p}M^{\perp}}(q-p)}{\left\Vert \pi_{T_{p}M^{\perp}}(q-p)\right\Vert }$.
	Clearly, $\left\Vert z_{0}-p\right\Vert =\tau_{M}$, and $z_{0}-p \in T_{p}M^{\perp}$, 
	so \cite[Theorem 4.8 (12)]{Federer1959} implies that for all $\lambda\in(0,1)$, $\pi_{M}(p+\lambda(z_{0}-p)) = p$, and hence $d(p+\lambda(z_{0}-p),M)=\left\Vert\lambda(z_{0}-p)\right\Vert=\lambda\tau_{M}$. Sending $\lambda\to 1$ yields that $d(z_{0},M)=\tau_{M}$.
	Let us show that $\left\Vert z_{0}-q\right\Vert =\tau_{M}$, which	will imply that $\left\Vert z_{0}-p\right\Vert = \left\Vert z_{0}-q\right\Vert = d(z_{0},M)=\tau_{M}$, and hence that $z_{0}\in Med(M)$,
	which will conclude the proof.
	
	Let $z_{1}:=p+\pi_{T_{p}M^{\perp}}(q-p)$ (see Figure \ref{fig:geometry_reach_attain_global}). Note that $z_{0}-z_{1}$
	and $q-z_{1}$ are simplified as 
	\begin{align*}
	z_{0}-z_{1} & =\left(\frac{\tau_{M}}{\left\Vert \pi_{T_{p}M^{\perp}}(q-p)\right\Vert }-1\right)\pi_{T_{p}M^{\perp}}(q-p),\\
	q-z_{1} & =(q-p)-\pi_{T_{p}M^{\perp}}(q-p)=\pi_{T_{p}M}(q-p).
	\end{align*}
	In particular, $z_{0}-z_{1}\perp q-z_{1}$, which yields
	\begin{align*}
	\left\Vert z_{0}-q\right\Vert^2  & =\left\Vert z_{0}-z_{1}\right\Vert ^{2}+\left\Vert q-z_{1}\right\Vert ^{2}
	\\
	&=
	\left(\tau_{M}-\left\Vert \pi_{T_{p}M^{\perp}}(q-p)\right\Vert \right)^{2}+\left\Vert \pi_{T_{p}M}(q-p)\right\Vert ^{2}
	.
	\end{align*}
	Noticing that
	\begin{align*}
	\left\Vert \pi_{T_{p}M^{\perp}}(q-p)\right\Vert  & =d(q-p,T_{p}M)=\frac{\left\Vert q-p\right\Vert ^{2}}{2\tau_{M}},
	\end{align*}
and	
	\begin{align*}
	\left\Vert \pi_{T_{p}M}(q-p)\right\Vert^2
	&=
	\left\Vert q-p\right\Vert ^{2}-\left\Vert \pi_{T_{p}M^{\perp}}(q-p)\right\Vert ^{2}
	=
	\left\Vert q-p\right\Vert^2 
	\left(
	1-\frac{\left\Vert q-p\right\Vert ^{2}}{4\tau_{M}^{2}}
	\right),
	\end{align*}
	we finally get
	\begin{align*}
	\left\Vert z_{0}-q\right\Vert^2
	&=
	\left(\tau_{M}-\frac{\left\Vert q-p\right\Vert ^{2}}{2\tau_{M}}\right)^{2}+\left\Vert q-p\right\Vert ^{2}\left(1-\frac{\left\Vert q-p\right\Vert ^{2}}{4\tau_{M}^{2}}\right)
	\\
	& =\tau_{M}^2.
	\end{align*}
\end{proof}

\begin{lem} \label{lem:geometry.maxcurvature_bound}
	Let $M\subset\mathbb{R}^{D}$ be a closed submanifold with reach $\tau_{M}>0$.
	Then for all $p,q\in M$ with $t_{0}:=d_{M}(p,q)\leq\tau_{M}/2$,
	\begin{equation*}
	\left\Vert \gamma_{p\to q}''(0)\right\Vert \leq\frac{2d(q-p,T_{p}M)}{\left\Vert q-p\right\Vert ^{2}}+\frac{2}{t_{0}^{2}}\left\Vert \int_{0}^{t_{0}}\int_{0}^{t}(\gamma_{p\to q}''(s)-\gamma_{p\to q}''(0))dsdt\right\Vert,
	\end{equation*}
	and 
	\begin{equation*}
	\left\Vert \gamma_{p\to q}''(0)\right\Vert \geq\frac{2d(q-p,T_{p}M)}{\left\Vert q-p\right\Vert ^{2}}-\frac{3\left\Vert q-p\right\Vert }{\tau_{M}^{2}}-\frac{2}{t_{0}^{2}}\left\Vert \int_{0}^{t_{0}}\int_{0}^{t}(\gamma_{p\to q}''(s)-\gamma_{p\to q}''(0))dsdt\right\Vert.	\end{equation*}
	In particular, when $M$ is $\mathcal{C}^{2}$, 
	\begin{equation*}
	\sup_{\substack{p\in M\\
			v\in{T_{p}M},\norm{v}=1
		}
	}\norm{\gamma_{p,v}''(0)}
	=
	\limsup_{\substack{q\to p \\q\in M}}\frac{2d(q-p,T_{p}M)}{\left\Vert q-p\right\Vert ^{2}}.\label{eq:geometry.maxcurvature_limit}
	\end{equation*}
	
\end{lem}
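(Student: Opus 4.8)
The plan is to expand the arc-length geodesic $\gamma := \gamma_{p\to q}$ to second order and exploit that a geodesic of a submanifold has acceleration orthogonal to the submanifold. Since $t_0 := d_M(p,q)\le\tau_M/2$ lies below the injectivity radius (Proposition \ref{thm:geometry:injectivity_radius}), $\gamma$ is a genuine restriction of a geodesic $\gamma_{p,v}$, so $\gamma'(0)=v\in T_pM$ and $\gamma''(0)=II_p\bigl(\gamma'(0),\gamma'(0)\bigr)\in(T_pM)^\perp$. Writing Taylor's formula $\gamma(t_0)=\gamma(0)+t_0\gamma'(0)+\int_0^{t_0}\!\int_0^t\gamma''(s)\,ds\,dt$ as
\[
q-p=t_0\gamma'(0)+\tfrac{t_0^2}{2}\gamma''(0)+R_0,\qquad R_0:=\int_0^{t_0}\!\int_0^t(\gamma''(s)-\gamma''(0))\,ds\,dt,
\]
and projecting onto $(T_pM)^\perp$ (which kills the tangent term $t_0\gamma'(0)$ and fixes the normal term $\tfrac{t_0^2}{2}\gamma''(0)$) gives $d(q-p,T_pM)=\norm{\tfrac{t_0^2}{2}\gamma''(0)+\pi_{(T_pM)^\perp}(R_0)}$, hence by the triangle inequality $\bigl|\,d(q-p,T_pM)-\tfrac{t_0^2}{2}\norm{\gamma''(0)}\,\bigr|\le\norm{R_0}$. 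Dividing by $t_0^2/2$ already yields $\norm{\gamma''(0)}\le\frac{2d(q-p,T_pM)}{t_0^2}+\frac{2}{t_0^2}\norm{R_0}$, and since $\norm{q-p}\le t_0$ we may enlarge $\tfrac{1}{t_0^2}$ to $\tfrac{1}{\norm{q-p}^2}$ in the first term to obtain the stated upper bound.

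The same two-sided inequality gives $\norm{\gamma''(0)}\ge\frac{2d(q-p,T_pM)}{t_0^2}-\frac{2}{t_0^2}\norm{R_0}$, so for the lower bound it only remains to trade $\tfrac{1}{t_0^2}$ for $\tfrac{1}{\norm{q-p}^2}$ at cost $3\norm{q-p}/\tau_M^2$. I would compare chord and arc: from $\norm{\gamma'(s)-\gamma'(0)}\le s/\tau_M$ (Proposition \ref{thm:geometry:upper_bound_on_second_derivative}) one gets $\norm{q-p-t_0\gamma'(0)}\le t_0^2/(2\tau_M)$, hence $t_0-\norm{q-p}\le t_0^2/(2\tau_M)$ and, using $t_0\le\tau_M/2$, also $\norm{q-p}\ge\tfrac34 t_0$, so that $t_0^2-\norm{q-p}^2=(t_0-\norm{q-p})(t_0+\norm{q-p})\le t_0^3/\tau_M$. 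Feeding in Federer's bound $d(q-p,T_pM)\le\norm{q-p}^2/(2\tau_M)$ from \eqref{eqn:reach_as_a_supremum_federer},
\[
2d(q-p,T_pM)\Bigl(\frac{1}{\norm{q-p}^2}-\frac{1}{t_0^2}\Bigr)\le\frac{t_0}{\tau_M^2}\le\frac{3\norm{q-p}}{\tau_M^2},
\]
which is precisely the extra term, completing the lower bound. The only truly geometric ingredient is the orthogonality $\gamma''(0)\perp T_pM$; the rest is Taylor's formula and the chord-versus-arclength estimates above, which are where the (routine) bookkeeping lies — this I expect to be the main obstacle.

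For the $\mathcal{C}^2$ identity it suffices, pointwise in $p$, to prove $\limsup_{q\to p}\frac{2d(q-p,T_pM)}{\norm{q-p}^2}=\sup_{\norm{v}=1}\norm{\gamma_{p,v}''(0)}$ (taking $\sup_p$ then yields the displayed equality). To see ``$\ge$'', take $q=\gamma_{p,v}(t)$ with $t\to0$ in the lower bound: the middle term vanishes and, since $\frac{2}{t^2}\norm{R_0}\le\sup_{0\le u\le t}\norm{\gamma_{p,v}''(u)-\gamma_{p,v}''(0)}\to0$ by continuity of $\gamma_{p,v}''$, one gets $\limsup_{q\to p}\frac{2d(q-p,T_pM)}{\norm{q-p}^2}\ge\norm{\gamma_{p,v}''(0)}$ for every unit $v$. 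To see ``$\le$'', let $q_n\to p$, $\gamma_n:=\gamma_{p\to q_n}$, $t_n:=d_M(p,q_n)\to0$, $v_n:=\gamma_n'(0)$; the upper bound gives $\frac{2d(q_n-p,T_pM)}{\norm{q_n-p}^2}\le\norm{\gamma_n''(0)}+\sup_{0\le u\le t_n}\norm{\gamma_n''(u)-\gamma_n''(0)}$, and writing $\gamma_n''(u)=II_{\gamma_n(u)}(\gamma_n'(u),\gamma_n'(u))$ with $\norm{\gamma_n(u)-p}\le t_n$ and $\norm{\gamma_n'(u)-v_n}\le t_n/\tau_M$, uniform continuity of $II$ on the compact $M$ makes the last supremum tend to $0$ uniformly in $n$; hence $\limsup_{q\to p}\frac{2d(q-p,T_pM)}{\norm{q-p}^2}\le\sup_{\norm{v}=1}\norm{\gamma_{p,v}''(0)}$, and the two inequalities give the identity.
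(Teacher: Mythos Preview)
Your argument is correct and follows the paper's strategy: the second-order Taylor expansion of $\gamma$ together with $\gamma''(0)\perp T_pM$ (which the paper isolates as Lemma~\ref{lem:geometry.decomposition_projection_bound}) gives $\bigl|\tfrac{2d(q-p,T_pM)}{t_0^2}-\|\gamma''(0)\|\bigr|\le\tfrac{2}{t_0^2}\|R_0\|$, and one then trades $t_0$ for $\|q-p\|$.

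The one substantive difference is the chord--arc comparison in the lower bound. The paper invokes \cite[Proposition~6.3]{NiyogiSW2008} to obtain $d_M(p,q)^2\le\|q-p\|^2/(1-3\|q-p\|/\tau_M)$ and hence $\tfrac{1}{\|q-p\|^2}-\tfrac{1}{t_0^2}\le\tfrac{3}{\tau_M\|q-p\|}$; you instead derive $t_0-\|q-p\|\le t_0^2/(2\tau_M)$ directly from $\|\gamma''\|\le1/\tau_M$, deduce $t_0^2-\|q-p\|^2\le t_0^3/\tau_M$ and $t_0\le\tfrac{4}{3}\|q-p\|$. Your route is self-contained and avoids the external citation; both arrive at the same correction $3\|q-p\|/\tau_M^2$.

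One small slip in the $\mathcal{C}^2$ identity: your references to ``the lower bound'' and ``the upper bound'' are swapped. For $\limsup_{q\to p}\tfrac{2d(q-p,T_pM)}{\|q-p\|^2}\ge\|\gamma_{p,v}''(0)\|$ you need the \emph{first} inequality of the lemma, rearranged as $\tfrac{2d(q-p,T_pM)}{\|q-p\|^2}\ge\|\gamma''(0)\|-\tfrac{2}{t_0^2}\|R_0\|$ (this has no ``middle term''); for the reverse inequality you need the \emph{second}, rearranged as $\tfrac{2d(q-p,T_pM)}{\|q-p\|^2}\le\|\gamma''(0)\|+\tfrac{3\|q-p\|}{\tau_M^2}+\tfrac{2}{t_0^2}\|R_0\|$, so the middle term is present (and harmless) in your ``$\le$'' step. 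With this relabeling your argument via continuity of $\gamma_{p,v}''$ and uniform continuity of $II$ is correct; the paper in fact leaves this limit identity to the reader.
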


To prove Lemma \ref{lem:geometry.maxcurvature_bound} we need
the following straightforward result.

\begin{lem}\label{lem:geometry.decomposition_projection_bound}
	Let $U$ be a linear space and $u\in U$, $n\in U^{\perp}$. If $v=u+n+e$,
	then 
	\[
	\left|d(v,U)-\left\Vert n\right\Vert \right|\leq\left\Vert e\right\Vert .
	\]
\end{lem}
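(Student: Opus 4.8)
The plan is to reduce the quantity $d(v,U)$ to the norm of the orthogonal projection of $v$ onto the orthogonal complement $U^{\perp}$, and then to conclude by the reverse triangle inequality together with the fact that orthogonal projections are $1$-Lipschitz.

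First I would recall the elementary identity $d(w,U) = \norm{\pi_{U^{\perp}}(w)}$, valid for any vector $w$ and any \emph{linear} subspace $U$, where $\pi_{U^{\perp}}$ denotes the orthogonal projection onto $U^{\perp}$; this holds because the nearest point of $U$ to $w$ is exactly $\pi_U(w)$, so that the residual $w - \pi_U(w)$ is precisely $\pi_{U^{\perp}}(w)$. Applying this with $w = v = u + n + e$ and using the linearity of $\pi_{U^{\perp}}$ together with $\pi_{U^{\perp}}(u) = 0$ (since $u \in U$) and $\pi_{U^{\perp}}(n) = n$ (since $n \in U^{\perp}$), I obtain $d(v,U) = \norm{n + \pi_{U^{\perp}}(e)}$.

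Then I would invoke the reverse triangle inequality, which gives $\bigl|\, \norm{n + \pi_{U^{\perp}}(e)} - \norm{n} \,\bigr| \leq \norm{\pi_{U^{\perp}}(e)}$, and bound $\norm{\pi_{U^{\perp}}(e)} \leq \norm{e}$ because an orthogonal projection has operator norm at most $1$. Chaining these two inequalities with the identity from the previous step yields $\bigl|\, d(v,U) - \norm{n} \,\bigr| \leq \norm{e}$, which is the claim. There is no genuine obstacle here: the only point requiring a moment of care is that $U$ must be a linear subspace (not merely affine) for the identity $d(w,U) = \norm{\pi_{U^{\perp}}(w)}$ to hold, which is exactly the hypothesis made in the statement.
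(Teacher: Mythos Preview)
Your proof is correct. The paper does not actually give a proof of this lemma---it introduces it as a ``straightforward result'' and leaves the details to the reader---so your argument via $d(v,U)=\norm{\pi_{U^\perp}(v)}$, the reverse triangle inequality, and $\norm{\pi_{U^\perp}(e)}\leq\norm{e}$ is exactly the kind of routine verification the authors intended.
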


\begin{proof}[Proof of Lemma \ref{lem:geometry.maxcurvature_bound}]
	
	First note that from Proposition \ref{thm:geometry:injectivity_radius}
	(ii), $d_{M}(x,y)<\pi\tau_{M}$ ensures the existence and uniqueness
	of the geodesic $\gamma$. For short, let us write $\gamma=\gamma_{p\rightarrow q}$.
	
	The Taylor expansion of $\gamma$ at order two yields 
	\begin{align}
	q-p & =\gamma(t_{0})-\gamma(0)=t_{0}\gamma'(0)+\int_{0}^{t_{0}}\int_{0}^{t}\gamma''(s)dsdt\nonumber \\
	& =t_{0}\gamma'(0)+\frac{t_{0}^{2}}{2}\gamma''(0)+\int_{0}^{t_{0}}\int_{0}^{t}(\gamma''(s)-\gamma''(0))dsdt.\label{eq:taylor_geodesic}
	\end{align}
	Since $\gamma'(0)\in T_{p}M$ and $\gamma''(0)\in T_{p}M^{\perp}$,
	Lemma \ref{lem:geometry.decomposition_projection_bound} shows that
	\[
	\left|\frac{2d(q-p,T_{p}M)}{t_{0}^{2}}-\left\Vert \gamma''(0)\right\Vert \right|\leq\frac{2}{t_{0}^{2}}\left\Vert \int_{0}^{t_{0}}\int_{0}^{t}(\gamma''(s)-\gamma''(0))dsdt\right\Vert .
	\]
	Now, from $t_{0}=d_{M}(p,q)\geq\left\Vert q-p\right\Vert $, we derive
	the upper bound
	\begin{align*}
	\left\Vert \gamma''(0)\right\Vert  & \leq\frac{2d(q-p,T_{p}M)}{d_{M}(p,q)^{2}}+\frac{2}{t_{0}^{2}}\left\Vert \int_{0}^{t_{0}}\int_{0}^{t}(\gamma''(s)-\gamma''(0))dsdt\right\Vert \\
	& \leq\frac{2d(q-p,T_{p}M)}{\left\Vert q-p\right\Vert ^{2}}+\frac{2}{t_{0}^{2}}\left\Vert \int_{0}^{t_{0}}\int_{0}^{t}(\gamma''(s)-\gamma''(0))dsdt\right\Vert .
	\end{align*}
	For the lower bound, we apply \cite[Proposition 6.3]{NiyogiSW2008} to get
	\begin{align*}
	d_{M}(p,q)^{2} & \leq\tau_{M}^{2}\left(1-\sqrt{1-\frac{2\norm{q-p}}{\tau_{M}}}\right)^{2}\\
	& \leq\tau_{M}^{2}\frac{\left(\frac{\norm{q-p}}{\tau_{M}}\right)^{2}}{\left(1-\frac{2\norm{q-p}}{\tau_{M}}\right)^{3/2}}
	\leq\frac{\norm{q-p}^{2}}{1-3\frac{\norm{q-p}}{\tau_{M}}},
	\end{align*}
	or equivalently,
	\[
	\frac{1}{\norm{q-p}^{2}}-\frac{1}{d_{M}(p,q)^{2}}\leq\frac{3}{\tau_{M}\norm{q-p}}.
	\]
	As $d(q-p,T_{p}M)\leq\frac{\left\Vert q-p\right\Vert ^{2}}{2\tau_{M}}$ \eqref{eqn:reach_as_a_supremum_federer}, we finally derive 
	\begin{align*}
	\left\Vert \gamma''(0)\right\Vert  & \geq\frac{2d(q-p,T_{p}M)}{d_{M}(p,q)^{2}}-\frac{2}{t_{0}^{2}}\left\Vert \int_{0}^{t_{0}}\int_{0}^{t}(\gamma''(s)-\gamma''(0))dsdt\right\Vert \\
	& =\frac{2d(q-p,T_{p}M)}{\left\Vert q-p\right\Vert ^{2}}-2d(q-p,T_{p}M)\left(\frac{1}{\left\Vert q-p\right\Vert ^{2}}-\frac{1}{d_{M}(p,q)^{2}}\right)\\
	& \quad -\frac{2}{t_{0}^{2}}\left\Vert \int_{0}^{t_{0}}\int_{0}^{t}(\gamma''(s)-\gamma''(0))dsdt\right\Vert \\
	& \geq \frac{2d(q-p,T_{p}M)}{\left\Vert q-p\right\Vert ^{2}}-\frac{3\left\Vert q-p\right\Vert }{\tau_{M}^{2}}-\frac{2}{t_{0}^{2}}\left\Vert \int_{0}^{t_{0}}\int_{0}^{t}(\gamma''(s)-\gamma''(0))dsdt\right\Vert .
	\end{align*}
\end{proof}

\begin{proof}[Proof of Lemma \ref{thm:geometry.principalcurvature}]
For $r>0$, let $\Delta_{r}:=\left\{ (p,q)\in M^{2}|\left\Vert p-q\right\Vert <r\right\}$, and $\bar{\Delta} = \cap_{r>0} \Delta_r$ denote the diagonal of $M^2$.
Consider the map $\varphi: M^{2}\setminus \bar{\Delta} \to\mathbb{R}$ defined by $\varphi(p,q)={2d(q-p,T_{p}M)}/{\left\Vert q-p\right\Vert ^{2}}$.
By assumption, $d(z,M)>\tau_{M}$ for all $z\in Med(M)$. From Lemma \ref{lem:geometry_reach_attain_global}, this implies that for
all $p\neq q\in M$, $\varphi(p,q) < \tau_{M}^{-1}$. By compactness of $M^{2}\setminus\Delta_{r}$, this yields $\sup_{M^2\setminus \Delta_{r}} \varphi < \tau_M^{-1}.$ Hence, from the decomposition of \eqref{eqn:reach_as_a_supremum_federer} as
\begin{align*}
\frac{1}{\tau_{M}}
&=
\sup_{(p,q) \in M^2 \setminus \bar{\Delta}} \varphi(p,q)
=
\max
\left\{
\sup_{(p,q) \in M^2 \setminus \Delta_{r} } \varphi(p,q)
,
\sup_{(p,q) \in \Delta_{r} \setminus \bar{\Delta}} \varphi(p,q)
\right\}
,
\end{align*}
we get $\sup_{\Delta_{r} \setminus \bar{\Delta} } \varphi = \tau_M^{-1}$.
By letting $r>0$ go to zero and applying Lemma~\ref{lem:geometry.maxcurvature_bound}, this yields
\[
\sup_{\substack{p\in M\\
		v\in T_{p}M,\left\Vert v\right\Vert =1
	}
}\left\Vert \gamma_{p,v}''(0)\right\Vert =\lim_{r\to0}\sup_{(p,q)\in\Delta_{r}\setminus\bar{\Delta}}\varphi(p,q)
=
\frac{1}{\tau_M}.
\]
Finally, the unit tangent bundle $T^{(1)}M = \left\{(p,v), p \in M, v \in {T_p M}, \norm{v}=1 \right\}$ being compact, there exists $(q_{0},v_0) \in T^{(1)}M$ such that $\gamma_0 = \gamma_{q_{0},v_0}$ attains the supremum, i.e. $\norm{\gamma_0''(0)} = \tau_{M}^{-1}$, which concludes the proof.
\end{proof}

\section{Analysis of the Estimator}

\subsection{Global Case}
\label{sec:appendix:global_case}

\begin{proof}[Proof of Proposition \ref{thm:estimator.global_twopoints}]
The two left hand inequalities are direct consequences of Corollary \ref{thm:estimator.overestimate}, let us then focus on the third one.

We set $t$ to be equal to $\max\left\{ d_M(q_{1},x),d_M(q_{2},y)\right\}$, and $z_{1}:=x+ (q_{2}-q_{1})$. We have $\norm{z_{1}-x}=\norm{q_{2}-q_{1}}=2\tau_{M}$ and $\norm{y-q_{2}},\norm{q_{1}-x}\leq t$. Therefore,
from the definition of $\hat{\tau}$ in \eqref{eq:estimator.estimator} and the fact that the distance function to a linear space is $1$-Lipschitz, we get
\begin{align*}
\frac{1}{\hat{\tau}(\{x,y\})}
&\geq
\frac
{
	2d(y-x,T_{x}M)
}
{
	\norm{y-x}^2
}
\\
&=
\frac
{
	2d\left( (y-q_2) +(z_1-x) + (q_1-x),T_x M\right)
}
{
	\norm{(y-q_2) +(z_1-x) + (q_1-x)}^2
}
\\
& \geq
\frac{
	d(z_{1}-x,T_{x} M)-2t
}
{
	2(\tau_{M}+t)^{2}
}.
\end{align*}
Since $q_{1},q_{2}\in\mathcal{B}(z_{0},\tau_{M})$
and $\|q_{1}-q_{2}\|=2\tau_{M}$, 
$
z_{1}-x = q_{2}-q_{1} \in T_{q_{1}}M^\perp.
$
Furthermore, from \cite[Lemma 11]{Boissonnat18}, $\sin \angle (T_x M, T_{q_1} M) \leq t/\tau_M$ and hence
\begin{align*}
d(z_{1}-x,T_{x}M)
&\geq
d(z_1-x, T_{q_1} M) - \norm{z_1-x} \sin \angle (T_x M, T_{q_1} M)
\\
&\geq
d(q_2-q_1, T_{q_1} M) - \norm{q_2-q_1} \frac{t}{\tau_M}
\\
&=
2\tau_M \left( 1 - \frac{t}{\tau_M} \right).
\end{align*}
Combining the two previous bounds finally yields the announced result
\begin{align*}
\frac{1}{\tau_{M}}
-
\frac{1}{\hat{\tau}(\{x,y\})} 
& \leq
\frac{1}{\tau_{M}}
-
\frac{d(z_{1}-x,T_{x} M)-2t}{2(\tau_{M}+t)^{2}}
\\
& \leq
\frac{1}{\tau_{M}}
\left(
1-
\frac
{
1-2 {t}/{\tau_M}
}
{
\left(1+{t}/{\tau_{M}}\right)^{2}
}
\right)
\\
& \leq
\frac{4}{\tau_{M}^{2}}t,
\end{align*}
where the last inequality follows from the concavity of $[0,1] \ni u \mapsto 1 - \frac{1-2u}{(1+u)^2}$.
\end{proof}

\begin{proof}[Proof of Proposition \ref{thm:estimator.global_risk}]
Let $Q$ be the distribution on $\R^D$ associated to $P$.
Let $s<\frac{1}{\tau_{M}}$ and $t=\frac{\tau_{M}^2}{4}s \leq \tau_M/4$.
Write $\omega_{d}:=\mathcal{H}^d(\mathcal{B}_{\mathbb{R}^{d}}(0,1))$ for the volume of the $d$-dimensional unit ball. From Proposition \ref{thm:geometry:injectivity_radius} (v), for all $q \in M$,
	\begin{align*}
	Q
	\left(
	\mathcal{B}_M(q,t)
	\right)
	&\geq
	f_{min} \mathcal{H}^d\left(\mathcal{B}_M(q,t)\right)
	\\
	&\geq
	\omega_{d} f_{min} 
	\left(1-\left(\frac{t}{6\tau_{M}}\right)^{2}\right)^{d}
	t^{d}
	\\
	&\geq
	\omega_{d} f_{min} 
	\left(\frac{575}{576}\right)^{d}t^{d}.
	\end{align*}
Moreover, Proposition \ref{thm:estimator.global_twopoints} asserts that $\left| \frac{1}{\tau_M}-\frac{1}{\hat{\tau}(\X_n)} \right|>s$ implies that either $\mathcal{B}_{M}(q_{1},t)\cap\X_n =\emptyset$ or $\mathcal{B}_{M}(q_{2},t)\cap\X_n =\emptyset$.
Hence,
\begin{align*}
\P
\left(
\left| \frac{1}{\tau_M}-\frac{1}{\hat{\tau}(\X_n)} \right|>s
\right) 
& \leq 
\P
\left(
\mathcal{B}_{M}(q_{1},t)\cap\X_n=\emptyset
\right)
+
\P
\left(
\mathcal{B}_{M}(q_{2},t)\cap\X_n=\emptyset
\right)
\\
& \leq
2\left(1-\omega_{d} f_{min} \left(\frac{575}{576}\right)^{d} t^{d}\right)^{n}
\\
& \leq
2\exp\left(-n\omega_{d} f_{min} \left(\frac{575}{2304}\right)^{d}\tau_{M}^{2d}s^{d}\right).
\end{align*}
Integrating the above bound gives
\begin{align*}
\mathbb{E}_{P^{n}}
\left[
\left|
\frac{1}{\tau_M}-\frac{1}{\hat{\tau}(\X_n)} 
\right|^{p}
\right] 
& =
\int_{0}^{\frac{1}{\tau_{M}^{p}}}
\P
\left(
\left|
\frac{1}{\tau_M}-\frac{1}{\hat{\tau}(\X_n)}
\right|^{p}
>s
\right) d s
\\
& \leq
2\int_{0}^{\infty}
\exp\left(-n\omega_{d} f_{min} \left(\frac{575}{2304}\right)^{d}\tau_{M}^{2d}s^{\frac{d}{p}}\right)ds
\\
& =
\frac{2\left(\frac{2304}{575}\right)^{\frac{p}{d}}}{(n\omega_{d} f_{min})^{\frac{p}{d}}\tau_{M}^{2p}}\int_{0}^{\infty}x^{\frac{p}{d}-1}e^{-x}dx
\\
& 
:=
C_{\tau_{M},f_{\min},d,p}n^{-\frac{p}{d}}.
\end{align*}
where $C_{\tau_{M},f_{\min},d,p}$ depends only on $\tau_{M}$, $f_{min}$, $d$, $p$, and is a decreasing function of
$\tau_{M}$ when the other parameters are fixed. 
\end{proof}

\subsection{Local Case}
\label{sec:appendix:local_case}

\begin{proof}[Proof of Lemma \ref{lem:estimator.local_twopoints}]
First note that from Proposition \ref{thm:geometry:injectivity_radius} (ii), $d_{M}(x,y)<\pi\tau_{M}$ ensures the existence and uniqueness of the geodesic $\gamma_{x \rightarrow y}$.
The two left hand inequalities are direct consequences of Corollary \ref{thm:estimator.overestimate}. Let us then focus on the third one.
We write $t_{0}=d_{M}(x,y)$ and $\gamma = \gamma_{x \rightarrow y}$ for short.
By the definition~\eqref{eq:estimator.estimator} of $\hat{\tau}$,
\begin{equation}
\frac{1}{\hat{\tau}(\{x,y\})}
\geq
\frac{2d(y-x,T_{x} M)}{\norm{y-x}^2}.
\label{eq:estimator.local_twopoints_geodesic_approx}
\end{equation}
Furthermore, from Lemma \ref{lem:geometry.maxcurvature_bound},
\begin{equation}
\frac{2d(y-x,T_{x}M)}{\left\Vert y-x\right\Vert ^{2}}\geq\left\Vert \gamma''(0)\right\Vert -\frac{2}{t_{0}^{2}}\left\Vert \int_{0}^{t_{0}}\int_{0}^{t}(\gamma''(s)-\gamma''(0))dsdt\right\Vert .
\label{eq:estimator.local_twopoints_lowerbound_pre}
\end{equation}
But by definition of $\mathcal{M}_{\tau_{\min},L}^{d,D} \ni M$ (Definition \ref{def:geometric_model}), the geodesic $\gamma$ satisfies $\norm{\gamma''(s)-\gamma''(0)}\leq L|s|$, so that
\begin{align}
\frac{2}{t_{0}^{2}}\int_{0}^{t_{0}}\int_{0}^{t}\left\Vert \gamma''(s)-\gamma''(0)\right\Vert dsdt
&\leq
\frac{2}{t_{0}^{2}}\int_{0}^{t_{0}}\int_{0}^{t}L|s|dsdt
=
\frac{1}{3}Lt_{0}. \label{eq:estimator.local_twopoints_lowerbound_final}
\end{align}
Combining \eqref{eq:estimator.local_twopoints_geodesic_approx}, \eqref{eq:estimator.local_twopoints_lowerbound_pre} and \eqref{eq:estimator.local_twopoints_lowerbound_final} gives the announced inequality.
\end{proof}

To prove Lemma \ref{lem:estimator.local_butterfly}, we will use the following lemma on bilinear maps.

\begin{lem} \label{lem:estimator_bilinear}
	
	Let $\left(V,\left\langle \cdot,\cdot\right\rangle \right)$ and $\left(W,\left\langle \cdot,\cdot\right\rangle \right)$ be
	Hilbert spaces. Let $B:V\times V\to W$ be a continuous
	 bilinear map, and
	write 
	\[
	\lambda_{\max}:=\sup_{\substack{v\in V\\
			\left\Vert v\right\Vert =1
		}
	}\left\Vert B(v,v)\right\Vert .
	\]
	Then for all unit vectors $v,w\in V$,
	
	\begin{enumerate}[leftmargin=*]
		\item[(i)]  $\left\Vert B(w,w)-2\left\langle v,w\right\rangle ^{2}B(v,v)\right\Vert \leq(3-2\left\langle v,w\right\rangle ^{2})\lambda_{\max}$ .
		
		\item[(ii)]  If $v\in V$ satisfies that for all $\tilde{v}\perp v$, $\left\langle B(v,v),B(v,\tilde{v})+B(\tilde{v},v)\right\rangle =0$,
		then 
		\[
		\left\Vert B(w,w)\right\Vert \geq\left\langle v,w\right\rangle ^{2}\left\Vert B(v,v)\right\Vert -(1-\left\langle v,w\right\rangle ^{2})\lambda_{\max}.
		\]
		In particular, this holds whenever $\norm{v} = 1$ with $\left\Vert B(v,v)\right\Vert =\lambda_{\max}$.
	\end{enumerate}
\end{lem}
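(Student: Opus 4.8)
The plan is to reduce both statements to the single elementary fact that $\norm{B(u,u)}\le\lambda_{\max}$ for every unit vector $u$, by decomposing $w$ along $v$ and $v^{\perp}$ and, in part~(i), cancelling the cross terms against a well-chosen auxiliary \emph{unit} vector.

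\emph{Part (i).} First I would set $c:=\langle v,w\rangle$ and $\tilde{w}:=w-cv$, so that $\tilde{w}\perp v$ and $\norm{\tilde{w}}^{2}=1-c^{2}$. The key observation is that $w_{-}:=2cv-w=cv-\tilde{w}$ is again a unit vector, since $\norm{2cv-w}^{2}=4c^{2}-4c\langle v,w\rangle+1=1$. Expanding $B(w,w)=B(cv+\tilde{w},cv+\tilde{w})$ and $B(w_{-},w_{-})=B(cv-\tilde{w},cv-\tilde{w})$ by bilinearity and adding them, the terms $\pm c\bigl(B(v,\tilde{w})+B(\tilde{w},v)\bigr)$ cancel, leaving
\[
B(w,w)+B(w_{-},w_{-})=2c^{2}B(v,v)+2B(\tilde{w},\tilde{w}).
\]
Hence $B(w,w)-2c^{2}B(v,v)=2B(\tilde{w},\tilde{w})-B(w_{-},w_{-})$, and the triangle inequality together with $\norm{B(\tilde{w},\tilde{w})}\le\norm{\tilde{w}}^{2}\lambda_{\max}=(1-c^{2})\lambda_{\max}$ (apply the definition of $\lambda_{\max}$ to the unit vector $\tilde{w}/\norm{\tilde{w}}$; the case $\tilde{w}=0$, i.e.\ $w=\pm v$, is immediate) and $\norm{B(w_{-},w_{-})}\le\lambda_{\max}$ gives $\norm{B(w,w)-2\langle v,w\rangle^{2}B(v,v)}\le(3-2\langle v,w\rangle^{2})\lambda_{\max}$, which is~(i).

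\emph{Part (ii).} With the same notation, I would use $B(w,w)=c^{2}B(v,v)+c\bigl(B(v,\tilde{w})+B(\tilde{w},v)\bigr)+B(\tilde{w},\tilde{w})$ and take the inner product in $W$ with $B(v,v)/\norm{B(v,v)}$ (the case $B(v,v)=0$ is trivial, the asserted lower bound being then nonpositive, while $\norm{B(w,w)}\ge0$). Applying the hypothesis with $\tilde{v}:=\tilde{w}\perp v$ kills the middle term, and $\langle B(v,v),B(\tilde{w},\tilde{w})\rangle\ge-\norm{B(v,v)}\,\norm{\tilde{w}}^{2}\lambda_{\max}$ by Cauchy--Schwarz and the definition of $\lambda_{\max}$. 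Dividing by $\norm{B(v,v)}$ and bounding $\norm{B(w,w)}\ge\langle B(w,w),B(v,v)/\norm{B(v,v)}\rangle$ yields $\norm{B(w,w)}\ge c^{2}\norm{B(v,v)}-(1-c^{2})\lambda_{\max}$, which is the claim since $c=\langle v,w\rangle$. For the ``in particular'' clause, I would observe that if $\norm{v}=1$ with $\norm{B(v,v)}=\lambda_{\max}$ then for every unit $\tilde{v}\perp v$ the function $t\mapsto\norm{B(u(t),u(t))}^{2}$ with $u(t)=(v+t\tilde{v})/\sqrt{1+t^{2}}$ attains its maximum at $t=0$; since the normalising factor $1/(1+t^{2})^{2}$ has vanishing derivative at $t=0$, differentiating $\norm{B(v+t\tilde{v},v+t\tilde{v})}^{2}$ at $t=0$ gives $\langle B(v,v),B(v,\tilde{v})+B(\tilde{v},v)\rangle=0$, i.e.\ the hypothesis of~(ii) holds.

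\emph{Main obstacle.} There is no substantial difficulty here: the only non-mechanical step is spotting the auxiliary unit vector $w_{-}=2\langle v,w\rangle v-w$ in part~(i), which is exactly what makes the cross terms cancel and produces the constant $3-2\langle v,w\rangle^{2}$ rather than a weaker one. Everything else is bilinear bookkeeping plus Cauchy--Schwarz, together with the routine handling of the degenerate cases $\tilde{w}=0$ and $B(v,v)=0$.
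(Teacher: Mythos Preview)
Your proof is correct and essentially the same as the paper's. In part~(i) the paper uses the auxiliary unit vector $\bar{w}=-\cos\theta\, v+\sin\theta\, v^{\perp}$, which is just $-w_{-}$ in your notation, so the cancellation identity is identical; in part~(ii) the paper applies Pythagoras (using the orthogonality $B(v,v)\perp B(v,\tilde{v})+B(\tilde{v},v)$) followed by the triangle inequality, whereas you project onto $B(v,v)/\norm{B(v,v)}$ --- both are one-line consequences of the same hypothesis --- and for the ``in particular'' clause the paper phrases the first-order condition via Lagrange multipliers rather than your explicit curve $u(t)$, but the content is the same.
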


\begin{proof}[Proof of Lemma \ref{lem:estimator_bilinear}]
	
	Let $\theta=\arccos(\left\langle v,w\right\rangle )\in\left[0,\pi\right]$,
	and write $w=\cos\theta v+\sin\theta v^{\perp}$ for some unit vector
	$v^{\perp}\in V$ with $v^{\perp}\perp v$. Then $B(w,w)$ can be
	expanded as 
	\begin{equation}
	B(w,w)=\cos^{2}\theta B(v,v)+\cos\theta\sin\theta(B(v,v^{\perp})+B(v^{\perp},v))+\sin^{2}\theta B(v^{\perp},v^{\perp}).\label{eq:estimator_bilinear_expand_original}
	\end{equation}
	
	\begin{enumerate}[leftmargin=*]
		\item[(i)]  Consider $\bar{w}:=-\cos\theta v+\sin\theta v^{\perp}\in V$. Then
		$\bar{w}$ is a unit vector, and $B(\bar{w},\bar{w})$ can be similarly
		expanded as 
		\begin{equation}
		B(\bar{w},\bar{w})=\cos^{2}\theta B(v,v)-\cos\theta\sin\theta(B(v,v^{\perp})+B(v^{\perp},v))+\sin^{2}\theta B(v^{\perp},v^{\perp}),\label{eq:estimator_bilinear_expand_rotated}
		\end{equation}
		and hence summing up \eqref{eq:estimator_bilinear_expand_original}
		and \eqref{eq:estimator_bilinear_expand_rotated} gives 
		\begin{align*}
		B(w,w) + B(\bar{w},\bar{w}) & =2\cos^{2}\theta B(v,v)+2\sin^{2}\theta B(v^{\perp},v^{\perp}).
		\end{align*}
		As $\left\Vert B(v^{\perp},v^{\perp})\right\Vert$ and  $ \left\Vert B(\bar{w},\bar{w})\right\Vert$ are upper bounded by $\lambda_{\max}$,	this yields 
		\begin{align*}
		\left\Vert B(w,w)-2\cos^{2}\theta B(v,v)\right\Vert  & =\left\Vert 2\sin^{2}\theta B(v^{\perp},v^{\perp})-B(\bar{w},\bar{w})\right\Vert \\
		& \leq(1+2\sin^{2}\theta)\lambda_{\max}
		\\
		& =(3-2\cos^{2}\theta)\lambda_{\max},
		\end{align*}
		which is the announced bound.
		
		\item[(ii)]  From \eqref{eq:estimator_bilinear_expand_original}, $\left\Vert B(w,w)\right\Vert $
		can be lower bounded as 
		\begin{align}
		& \left\Vert B(w,w)\right\Vert \nonumber \\
		& \geq\left\Vert \cos^{2}\theta B(v,v)+\cos\theta\sin\theta(B(v,v^{\perp})+B(v^{\perp},v))\right\Vert -\sin^{2}\theta\left\Vert B(v^{\perp},v^{\perp})\right\Vert .\label{eq:estimator_bilinear_orthogonal_lower_bound}
		\end{align}
		But since $\left\langle B(v,v),B(v,v^{\perp})+B(v^{\perp},v)\right\rangle =0$, Pythagoras's theorem yields  
		\begin{align*}
		& \left\Vert \cos^{2}\theta B(v,v)+\cos\theta\sin\theta(B(v,v^{\perp})+B(v^{\perp},v))\right\Vert \\
		& =\sqrt{\cos^{4}\theta\left\Vert (B(v,v)\right\Vert ^{2}+\cos^{2}\theta\sin^{2}\theta\left\Vert (B(v,v^{\perp})+B(v^{\perp},v)\right\Vert ^{2}}\\
		& \geq\cos^{2}\theta\left\Vert (B(v,v)\right\Vert .
		\end{align*}
		Applying this and $\left\Vert B(v^{\perp},v^{\perp})\right\Vert \leq\lambda_{\max}$
		to \eqref{eq:estimator_bilinear_orthogonal_lower_bound} gives the final bound
		\[
		\left\Vert B(w,w)\right\Vert \geq\cos^{2}\theta\left\Vert B(v,v)\right\Vert -\sin^{2}\theta\lambda_{\max}.
		\]
		We now show the last claim, namely that $\norm{v} = 1$ and $\left\Vert B(v,v)\right\Vert =\lambda_{\max}$ are sufficient conditions for 
		\begin{equation}
		\left\langle B(v,v),B(v,\tilde{v})+B(\tilde{v},v)\right\rangle =0\text{ for all }\tilde{v}\perp v.\label{eq:estimator_bilinear_condition_orthogonal}
		\end{equation}
		For this aim, we take such a $v\in V$ and we consider $h:V\to\mathbb{R}$ defined by $h(u)=\left\Vert B(u,u)\right\Vert ^{2}$ and $g:V\to\mathbb{R}$ defined by $g(u)=\left\Vert u\right\Vert ^{2}-1$. Then $v$ is a solution of the optimization problem: 
		\begin{align*}
		& \text{maximize }h(u)\\
		& \text{subject to }g(u)=0.
		\end{align*}
		Since $h$ and $g$ are continuously differentiable, the Lagrange
		multiplier theorem asserts that their Fr{\' e}chet derivatives at
		$v$ satisfy $\ker d_v g \subset\ker d_v h$. 
		As $d_v h (u) =2\left\langle B(v,v),B(v,u)+B(u,v)\right\rangle$ and $d_v g(u) = 2\left\langle v,u\right\rangle $, this rewrites exactly as the claim \eqref{eq:estimator_bilinear_condition_orthogonal}.

	\end{enumerate}
	
\end{proof}

\begin{cor} \label{cor:estimator_curvatureangle} Let $M \subset \R^D$ be a $\mathcal{C}^2$-submanifold	and $p\in M$. Let $v_{0},v_{1}\in T_{p}M$ be unit tangent vectors, and let $\theta=\angle(v_{0},v_{1})$. 
Let $\gamma_{p,v}$ be the arc length parametrized geodesic starting from $p$ with velocity $v$, and write $\gamma_{i}=\gamma_{p,v_{i}}$ for $i=0,1$. Let $\kappa_{p}=\max_{v\in\mathcal{B}_{T_{p}M}(0,1)}\left\Vert \gamma_{p,v}''(0)\right\Vert $.
	Then, 
	\begin{enumerate}[leftmargin=*]
	\item[(i)]
		$\left\Vert \gamma_{1}''(0)\right\Vert \geq 2\left\Vert \gamma_{0}''(0)\right\Vert \cos^{2}\theta
		-
		\kappa_{p}
		(1+2\sin^{2}\theta)
		.$
		
	\item[(ii)] 
	If $v_{0}$ is a direction of maximum directional curvature,
	i.e. $\left\Vert \gamma_{0}''(0)\right\Vert =\kappa_p$, then
	$\left\Vert \gamma_{1}''(0)\right\Vert \geq\left\Vert \gamma_{0}''(0)\right\Vert -2\kappa_{p} \sin^{2}\theta.$
	\end{enumerate}
\end{cor}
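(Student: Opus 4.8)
The plan is to read off Corollary \ref{cor:estimator_curvatureangle} from Lemma \ref{lem:estimator_bilinear} applied to the second fundamental form. The starting point is the Gauss formula: for an arc-length parametrized geodesic $\gamma_{p,v}$ of $M \subset \R^D$ one has $\gamma_{p,v}''(0) = II_p(v,v)$, where $II_p : T_p M \times T_p M \to T_p M^{\perp}$ is the symmetric (automatically continuous, since $T_pM$ is finite-dimensional) second fundamental form of $M$ at $p$ \cite{DoCarmo92}. Taking $V = T_p M$ and $W = T_p M^{\perp}$, both equipped with the ambient inner product, and $B = II_p$ in Lemma \ref{lem:estimator_bilinear}, the constant $\lambda_{\max} = \sup_{\norm{v}=1}\norm{B(v,v)}$ equals $\kappa_p = \sup_{v \in \mathcal{B}_{T_p M}(0,1)}\norm{\gamma_{p,v}''(0)}$, and $\norm{B(v_i,v_i)} = \norm{\gamma_i''(0)}$ for $i \in \{0,1\}$. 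Since $v_0$ and $v_1$ are unit vectors, $\langle v_0, v_1 \rangle = \cos\theta$.

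For part (i), I would invoke Lemma \ref{lem:estimator_bilinear}(i) with $v = v_0$ and $w = v_1$, which gives $\norm{\gamma_1''(0) - 2\cos^2\theta\,\gamma_0''(0)} \leq (3 - 2\cos^2\theta)\kappa_p$. The reverse triangle inequality then yields $\norm{\gamma_1''(0)} \geq 2\cos^2\theta\,\norm{\gamma_0''(0)} - (3 - 2\cos^2\theta)\kappa_p$, and substituting $3 - 2\cos^2\theta = 1 + 2\sin^2\theta$ produces the stated bound.

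For part (ii), note that $\norm{v_0} = 1$ and $\norm{\gamma_0''(0)} = \norm{B(v_0,v_0)} = \kappa_p = \lambda_{\max}$, so the ``in particular'' case of Lemma \ref{lem:estimator_bilinear}(ii) applies with $v = v_0$ and $w = v_1$, giving $\norm{\gamma_1''(0)} \geq \cos^2\theta\,\norm{\gamma_0''(0)} - \sin^2\theta\,\lambda_{\max}$; using $\norm{\gamma_0''(0)} = \kappa_p$ this equals $(\cos^2\theta - \sin^2\theta)\,\norm{\gamma_0''(0)} = \norm{\gamma_0''(0)} - 2\kappa_p\sin^2\theta$, as announced. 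The only genuinely non-routine point is the reduction itself --- identifying $\gamma_{p,v}''(0)$ with $II_p(v,v)$ and recording that $II_p$ is a symmetric bilinear map, so that Lemma \ref{lem:estimator_bilinear} genuinely applies (the bound $\norm{II_p(v,v)} \leq 1/\tau_M$ underlying $\lambda_{\max} = \kappa_p$ also appears in Proposition \ref{thm:geometry:injectivity_radius}(i)); everything else is the elementary algebra just described.
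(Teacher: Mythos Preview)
Your proof is correct and essentially identical to the paper's: both apply Lemma \ref{lem:estimator_bilinear} to the bilinear form $B(v,w)$ satisfying $B(v,v)=\gamma_{p,v}''(0)$, then read off (i) via the reverse triangle inequality and (ii) via the ``in particular'' clause. The only cosmetic difference is that the paper names this form as the Hessian $d_0^2\exp_p$ of the exponential map, while you name it as the second fundamental form $II_p$; these coincide here since $\exp_p(tv)=\gamma_{p,v}(t)$ and geodesics have vanishing tangential acceleration.
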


\begin{proof}[Proof of Corollary \ref{cor:estimator_curvatureangle}]

	Consider the symmetric bilinear map $B:T_{p}M\times T_{p}M\to T_{p}M^{\perp}$
		given by the hessian of the exponential map $B(v,w):=d_{0}^{2}\exp_{p}(v,w).$
		In particular, for all $v\in T_{p}M$, $\gamma_{p,v}''(0)=B(v,v)$ and $\sup_{v\in V,\left\Vert v\right\Vert =1}\left\Vert B(v,v)\right\Vert =\kappa_{p}$.
		This allows us to tackle the two points of the result.
	
	\begin{enumerate}[leftmargin=*]
	\item[(i)]
		Applying Lemma \ref{lem:estimator_bilinear} (i) to $B$ with $v = v_0$ and $w = v_1$ yields
		\begin{align*}
		\left\Vert \gamma_{1}''(0)\right\Vert  & \geq\left\Vert 2\cos^{2}\theta\gamma_{0}''(0)\right\Vert -\left\Vert 2\cos^{2}\theta\gamma_{0}''(0)-\gamma_{1}''(0)\right\Vert \\
		& \geq
		2\left\Vert \gamma_{0}''(0)\right\Vert \cos^{2}\theta
		-
		\kappa_{p}
		(1+2\sin^{2}\theta)		
		\end{align*}
	
	\item[(ii)]	

	Since $v_{0}$ gives the maximal directional curvature, applying Lemma \ref{lem:estimator_bilinear} (ii) to $B$, $v = v_0$ and $w = v_1$ precisely yields
	$\left\Vert \gamma_{1}''(0)\right\Vert \geq\left\Vert \gamma_{0}''(0)\right\Vert - 2\kappa_p \sin^{2}\theta.$
	\end{enumerate}
\end{proof}

For a triangle in a Euclidean space, the sum of any two angles is upper bounded by $\pi$. The same property holds for a geodesic
triangle on a manifold if its side lengths are not too large compared
to its reach, which is formalized in the following Lemma \ref{lem:estimator_triangle_angle}.

\begin{lem}
	
	\label{lem:estimator_triangle_angle}
	
	Let $M\subset\mathbb{R}^{D}$ be a closed submanifold with reach $\tau_{M}>0$, and $x,y,z\in M$ be three distinct points. 
	Consider the geodesic triangle with vertices $x,y,z$, that is, the triangle formed by $\gamma_{x\to y}$, $\gamma_{y\to z}$, $\gamma_{z\to x}$. 
	
	If at least two of the side lengths of the triangle are strictly less than $\frac{\pi\tau_{M}}{2}$, then the sum of any two of its angles is less than or equal to $\pi$.
	
\end{lem}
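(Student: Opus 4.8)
The plan is to reduce the assertion to spherical trigonometry, using the comparison theorem for manifolds with an upper bound on sectional curvature, and then to settle the spherical case by an explicit computation with the law of cosines. First I would note that all three side lengths are strictly smaller than $\pi\tau_{M}$: two of them are $<\pi\tau_{M}/2$ by hypothesis, and the third is at most the sum of the other two (triangle inequality for $d_{M}$), hence $<\pi\tau_{M}$. Let $v$ be the vertex common to the two short sides; then the other two vertices lie in a geodesic ball $\mathcal{B}_{M}(v,r)$ with $r<\pi\tau_{M}/2$. By Proposition \ref{thm:geometry:injectivity_radius}(ii) the injectivity radius of $M$ is at least $\pi\tau_{M}$, and by Proposition \ref{thm:geometry:injectivity_radius}(iii) the sectional curvatures of $M$ are at most $1/\tau_{M}^{2}$; consequently the convexity radius of $M$ is at least $\pi\tau_{M}/2$, so $\mathcal{B}_{M}(v,r)$ is strongly convex, contains the three sides of the triangle, and carries a $\mathrm{CAT}(1/\tau_{M}^{2})$ metric. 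By the comparison theorem for an upper curvature bound, if $\bar{x}\bar{y}\bar{z}$ is the comparison triangle on the round sphere of radius $\tau_{M}$ with the same three side lengths, then each angle of the geodesic triangle $xyz$ is at most the corresponding angle of $\bar{x}\bar{y}\bar{z}$. It thus suffices to prove the statement on the sphere of radius $\tau_{M}$, i.e.\ (after rescaling) on the unit sphere, for a triangle two of whose sides are $<\pi/2$.

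For this spherical case, write $(a,b,c)$ for the side lengths and $(A,B,C)$ for the opposite angles, all lying in $(0,\pi)$. Starting from the spherical law of cosines $\cos a=\cos b\cos c+\sin b\sin c\cos A$ and its cyclic analogues, a direct computation gives
\[
\cos A+\cos B=\frac{\sin(a+b)}{\sin a\,\sin b\,\sin c}\bigl(\cos(a-b)-\cos c\bigr).
\]
The factor $\cos(a-b)-\cos c$ is nonnegative because $c\geq|a-b|$, and $\sin a,\sin b,\sin c>0$; hence $\cos A+\cos B\geq 0$ whenever $a+b\leq\pi$. Since moreover $\cos A+\cos B=2\cos\tfrac{A+B}{2}\cos\tfrac{A-B}{2}$ with $\cos\tfrac{A-B}{2}>0$, this is equivalent to $A+B\leq\pi$. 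Taking $(a,b)$ to be the pair of sides assumed to be $<\pi/2$, so that $a+b<\pi$, yields $A+B\leq\pi$ for the two angles opposite them; the same bound for the triangle in $M$ then follows from the comparison step, and the remaining pairs of angles are handled in the same way once the corresponding pair of side lengths is shown to sum to at most $\pi\tau_{M}$.

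The step I expect to be the main obstacle is exactly this last point: the inequality ``sum of two sides $\leq\pi\tau_{M}$'' is immediate for the pair of angles opposite the two sides assumed short, but a pair of angles involving the third side --- which a priori is only known to be $<\pi\tau_{M}$ --- needs a genuine use of the hypothesis that (at least) two sides are short, and this is where I would expect the argument to require the most care (conceivably with a sharpening of the conclusion to the pair of angles opposite the two short sides, which is the form invoked in Lemma \ref{lem:estimator.local_butterfly}). A secondary technical point is to justify that the comparison theorem applies in the required generality, i.e.\ that the triangle and the region it bounds lie in a convex ball free of conjugate points; this is exactly what the injectivity- and convexity-radius estimates above provide. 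A purely extrinsic alternative, bounding the angle between each side and its chord, appears too lossy, since such bounds control that angle only to order $\ell/\tau_{M}$ for a side of length $\ell$, which is uninformative once $\ell$ approaches $\pi\tau_{M}/2$.
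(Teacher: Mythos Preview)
Your overall route --- reduce to the sphere via an upper-curvature comparison and then do spherical trigonometry --- is exactly the paper's. Your identity $\cos A+\cos B=\frac{\sin(a+b)}{\sin a\,\sin b\,\sin c}\bigl(\cos(a-b)-\cos c\bigr)$ is correct and yields the clean equivalence $A+B\le\pi\iff a+b\le\pi$ on the unit sphere. The gap you flag is real and in fact cannot be closed: your equivalence shows that bounding a pair of angles that includes the one opposite the long side requires the corresponding pair of side lengths to sum to at most $\pi$, which the hypothesis does not force. Concretely, on the unit sphere (a closed submanifold with reach $1$) take $a=b=0.4\pi<\pi/2$ and $c=0.7\pi$; the law of cosines gives $\gamma\approx 139^{\circ}$ and $\alpha=\beta\approx 50^{\circ}$, so $\alpha+\gamma>\pi$ and the lemma as stated is false. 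The paper's own proof breaks at the analogous step: the passage from $\sin\bar\alpha=\frac{\sin a\,\sin\bar\gamma}{\sin c}$ to $\frac{\sin a\,\sin\bar\gamma}{\sqrt{1-\cos^{2}a\,\cos^{2}b}}$ requires $\cos^{2}c\le\cos^{2}a\,\cos^{2}b$, which fails in this example ($\approx 0.35$ versus $\approx 0.009$).

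One correction to your closing remark: in the proof of Lemma~\ref{lem:estimator.local_butterfly} the pair of angles actually used is the angle at $x$ together with (a lower bound for) the angle at $q_{0}$, and the latter lies opposite the long side $d_{M}(x,y)$ --- so the invoked pair is precisely the problematic one, not the pair opposite the two short sides. A clean repair is to strengthen the hypothesis to ``all three side lengths $<\pi\tau_{M}/2$''; then every pairwise sum of side lengths is $<\pi\tau_{M}$ and your spherical argument handles all three pairs of angles. This stronger hypothesis is available in the paper's downstream application (Proposition~\ref{thm:estimator.local_risk}), where the relevant side lengths are of order $\tau_{\min}/10$.
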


\begin{proof}[Proof of Lemma \ref{lem:estimator_triangle_angle}]
	
	Without loss of generality, suppose $d_{M}(x,y)$ is the longest side
	length: $d_{M}(y,z),d_{M}(z,x)\leq d_{M}(x,y)$. Then $d_{M}(y,z),d_{M}(z,x)\in\left(0,\frac{\pi\tau_{M}}{2}\right)$, so that $d_{M}(x,y)\in(0,\pi\tau_{M})$ by triangle inequality.
	
	Let $\mathcal{S}_{\tau_{M}}^{2}$ be a $d$-dimensional sphere of
	radius $\tau_{M}$. In what follows, for short, $\angle abc$ stands
	for $\angle(\gamma'_{b\to a}(0),\gamma'_{b\to c}(0))$. Let $\bar{x},\bar{y},\bar{z}\in\mathcal{S}_{\tau_{M}}^{2}$
	be such that $d_{\mathcal{S}_{\tau_{M}}^{2}}(\bar{x},\bar{y})=d_{M}(x,y)$,
	$d_{\mathcal{S}_{\tau_{M}}^{2}}(\bar{y},\bar{z})=d_{M}(y,z)$, and
	$d_{\mathcal{S}_{\tau_{M}}^{2}}(\bar{z},\bar{x})=d_{M}(z,x)$. From
	Proposition~\ref{thm:geometry:injectivity_radius} (ii) and the fact that $d_{M}(x,y)+d_{M}(y,z)+d_{M}(z,x)<2\pi\tau_{M}$,
	Toponogov's comparison theorem \cite[Section 4]{Karcher89}
	yields $\angle xyz\leq\angle\bar{x}\bar{y}\bar{z}$, $\angle yzx\leq\angle\bar{y}\bar{z}\bar{x}$,
	and $\angle xzy\leq\angle\bar{x}\bar{z}\bar{y}$. 
	Furthermore, the spherical law of cosines \cite[Proposition 18.6.8]{Berger87} together with $d_{M}(y,z),d_{M}(z,x)\in\left(0,\frac{\pi\tau_{M}}{2}\right)$,
	$d_{M}(x,y)\in(0,\pi\tau_{M})$, and the fact that $\cos(\cdot)$ is decreasing on $[0,\pi]$ imply 
	\[
	\cos\left(\angle\bar{z}\bar{x}\bar{y}\right)=\frac{\cos\left(\frac{d_{M}(y,z)}{\tau_{M}}\right)-\cos\left(\frac{d_{M}(z,x)}{\tau_{M}}\right)\cos\left(\frac{d_{M}(x,y)}{\tau_{M}}\right)}{\sin\left(\frac{d_{M}(z,x)}{\tau_{M}}\right)\sin\left(\frac{d_{M}(x,y)}{\tau_{M}}\right)}\geq0,
	\]
	so that $\angle zxy\leq\angle\bar{z}\bar{x}\bar{y}\leq\frac{\pi}{2}$.
	Symmetrically, we also have $\angle xyz\leq\frac{\pi}{2}$.
	
	If $\angle yzx\leq\frac{\pi}{2}$ also holds, then the final result is trivial, so from now on we will assume that $\angle yzx\geq\frac{\pi}{2}$.
	
	Thus, $\sin(\angle\bar{y}\bar{z}\bar{x})\leq\sin(\angle yzx)$, so applying the spherical law of sines and cosines \cite[Proposition 18.6.8]{Berger87},
	$d_{M}(y,z),d_{M}(z,x)\in\left(0,\frac{\pi\tau_{M}}{2}\right)$, and
	$\angle\bar{y}\bar{z}\bar{x}\in\left[\frac{\pi}{2},\pi\right]$ yield
	\begin{align*}
	& \sin(\angle zxy)\leq\sin(\angle\bar{z}\bar{x}\bar{y})
	\\
	& =\frac{\sin\left(\frac{d_{M}(y,z)}{\tau_{M}}\right)\sin(\angle\bar{y}\bar{z}\bar{x})}{\sqrt{1-\left(\cos\left(\frac{d_{M}(z,x)}{\tau_{M}}\right)\cos\left(\frac{d_{M}(y,z)}{\tau_{M}}\right)+\sin\left(\frac{d_{M}(z,x)}{\tau_{M}}\right)\sin\left(\frac{d_{M}(y,z)}{\tau_{M}}\right)\cos(\angle\bar{y}\bar{z}\bar{x})\right)^{2}}}
	\\
	& \leq\frac{\sin\left(\frac{d_{M}(y,z)}{\tau_{M}}\right)\sin(\angle\bar{y}\bar{z}\bar{x})}{\sqrt{1-\cos^{2}\left(\frac{d_{M}(z,x)}{\tau_{M}}\right)\cos^{2}\left(\frac{d_{M}(y,z)}{\tau_{M}}\right)}}
	\leq\sin(\angle\bar{y}\bar{z}\bar{x})
	\leq
	\sin(\angle yzx).
	\end{align*}
	This last bound together with $\angle zxy\leq\frac{\pi}{2}\leq\angle yzx$ yield $\angle zxy+\angle yzx\leq\pi$. 
	Symmetrically, we also have $\angle xxz+\angle xzy\leq\pi$. 
	Hence, the sum of any two angles is less than or equal to $\pi$.
	
\end{proof}

We are now in position to prove Lemma \ref{lem:estimator.local_butterfly}.

\begin{proof}[Proof of Lemma \ref{lem:estimator.local_butterfly}]
For short, in what follows, we let $t_{x}:=d_{M}(q_{0},x)$, $t_{y}:=d_{M}(q_{0},y)$, and 
$\theta:=\angle(\gamma_{x\to y}'(0),\gamma_{x\to q_{0}}'(0))=\pi - \angle(\gamma_{x\to y}'(0),\gamma_{q_{0}\to x}'(t_{x}))$ (see Figure \ref{fig:estimator.local_butterfly}).
From Corollary \ref{cor:estimator_curvatureangle} (i),
\begin{equation}
\left\Vert \gamma_{x\to y}''(0)\right\Vert \geq(2-2\sin^{2}\theta)\left\Vert \gamma_{q_{0}\to x}''(t_{x})\right\Vert -(1+2\sin^{2}\theta)\kappa_{x}.\label{eq:estimator_curvature_two_points}
\end{equation}
We now focus on the term $\left\Vert \gamma_{q_{0}\to x}''(t_{x})\right\Vert$. 
Since the direction $\gamma_{0}'(0)$ maximizes the directional curvature at $q_{0}$ and $\theta_{x}=\angle(\gamma_{0}'(0),\gamma_{q_{0}\to x}'(0))$, Corollary \ref{cor:estimator_curvatureangle} (ii)
yields
\[
\left\Vert \gamma_{q_{0}\to x}''(0)\right\Vert \geq(1-2\sin^{2}\theta_{x})\kappa_{q_{0}},
\]
and since $\gamma''_{q_0 \rightarrow x}$ is $L$-Lipschitz,
\begin{align}
\left\Vert \gamma_{q_{0}\to x}''(t_{x})\right\Vert  & \geq\left\Vert \gamma_{q_{0}\to x}''(0)\right\Vert -\left\Vert \gamma_{q_{0}\to x}''(t_{x})-\gamma_{q_{0}\to x}''(0)\right\Vert \nonumber \\
& \geq(1-2\sin^{2}\theta_{x})\kappa_{q_{0}}-Lt_{x}.\label{eq:estimator_curvature_one_point}
\end{align}
\begin{figure}[h]
	\centering
	\includegraphics[width=0.6\textwidth]{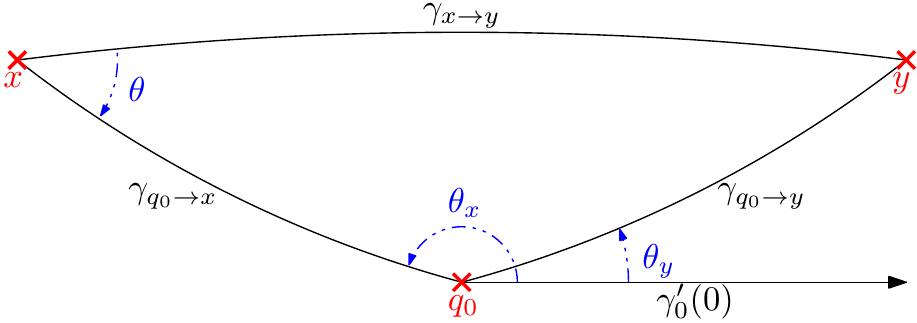}
	\caption{{Layout of Lemma \ref{lem:estimator.local_butterfly}.}}
	\label{fig:estimator.local_butterfly}
\end{figure}
Now, consider the geodesic triangle with vertices $x,y,q_{0}$, that is, the triangle formed by $\gamma_{x\to y}$, $\gamma_{q_{0}\to x}$, $\gamma_{q_{0}\to y}$,
as in Figure \ref{fig:estimator.local_butterfly}. Then Lemma \ref{lem:estimator_triangle_angle}
implies that 
$
\theta+|\theta_{x}-\theta_{y}|\leq\pi.$
Combined with the assumption that $|\theta_{x}-\theta_{y}|\geq\frac{\pi}{2}$, this yields
\begin{equation}
\sin\theta \leq\sin(|\theta_{x}-\theta_{y}|).\label{eq:estimator_sinbound}
\end{equation}
Putting together \eqref{eq:estimator_curvature_two_points}, 
\eqref{eq:estimator_curvature_one_point} and \eqref{eq:estimator_sinbound} gives the final bound
\begin{align*}
& \left\Vert \gamma_{x\to y}''(0)\right\Vert \\
& \geq2(1-\sin^{2}(|\theta_{x}-\theta_{y}|))\left((1-2\sin^{2}\theta_{x})\kappa_{q_{0}}-Lt_{x}\right)-(1+2\sin^{2}(|\theta_{x}-\theta_{y}|))\kappa_{x}\\
& =2\kappa_{q_{0}}-\kappa_{x}(1+2\sin^{2}(|\theta_{x}-\theta_{y}|))-2Lt_{x}\cos^{2}(|\theta_{x}-\theta_{y}|)\\
& \quad-2\kappa_{q_{0}}\left(\sin^{2}(|\theta_{x}-\theta_{y}|))+2\sin^{2}\theta_{x}-\sin^{2}\theta_{x}\sin^{2}(|\theta_{x}-\theta_{y}|)\right)\\
& \geq\kappa_{q_{0}}-(\kappa_{x}-\kappa_{q_{0}})-2Lt_{x}-(2\kappa_{x}+6\kappa_{q_{0}})\sin^{2}(|\theta_{x}-\theta_{y}|).
\end{align*}

\end{proof}

\begin{proof}[Proof of Proposition \ref{thm:estimator.local_risk}]
	
	In what follows, we let $t_{0}\leq\frac{\tau_{\min}}{10}$, 
\begin{align*}
B_{1}&:=\exp_{q_{0}}\Bigl(\Bigl\{ v\in T_{q_{0}}M:\,\left\Vert v\right\Vert \leq t_{0},\,\angle(\gamma_{0}'(0),v)\leq\sqrt{\frac{t_{0}}{\tau_{\min}}}\Bigr\}\Bigr),
\\
B_{2}&:=\exp_{q_{0}}\Bigl(\Bigl\{ v\in T_{q_{0}}M:\,\left\Vert v\right\Vert \leq t_{0},\,\angle(\gamma_{0}'(0),v)\geq\pi-\sqrt{\frac{t_{0}}{\tau_{\min}}}\Bigr\}\Bigr)
,
\end{align*}
and $B_{0}:=B_{1}\cup B_{2}$ (see Figure \ref{fig:estimator.cone}).
\begin{figure}
	\centering
	\includegraphics[width=0.5\textwidth]{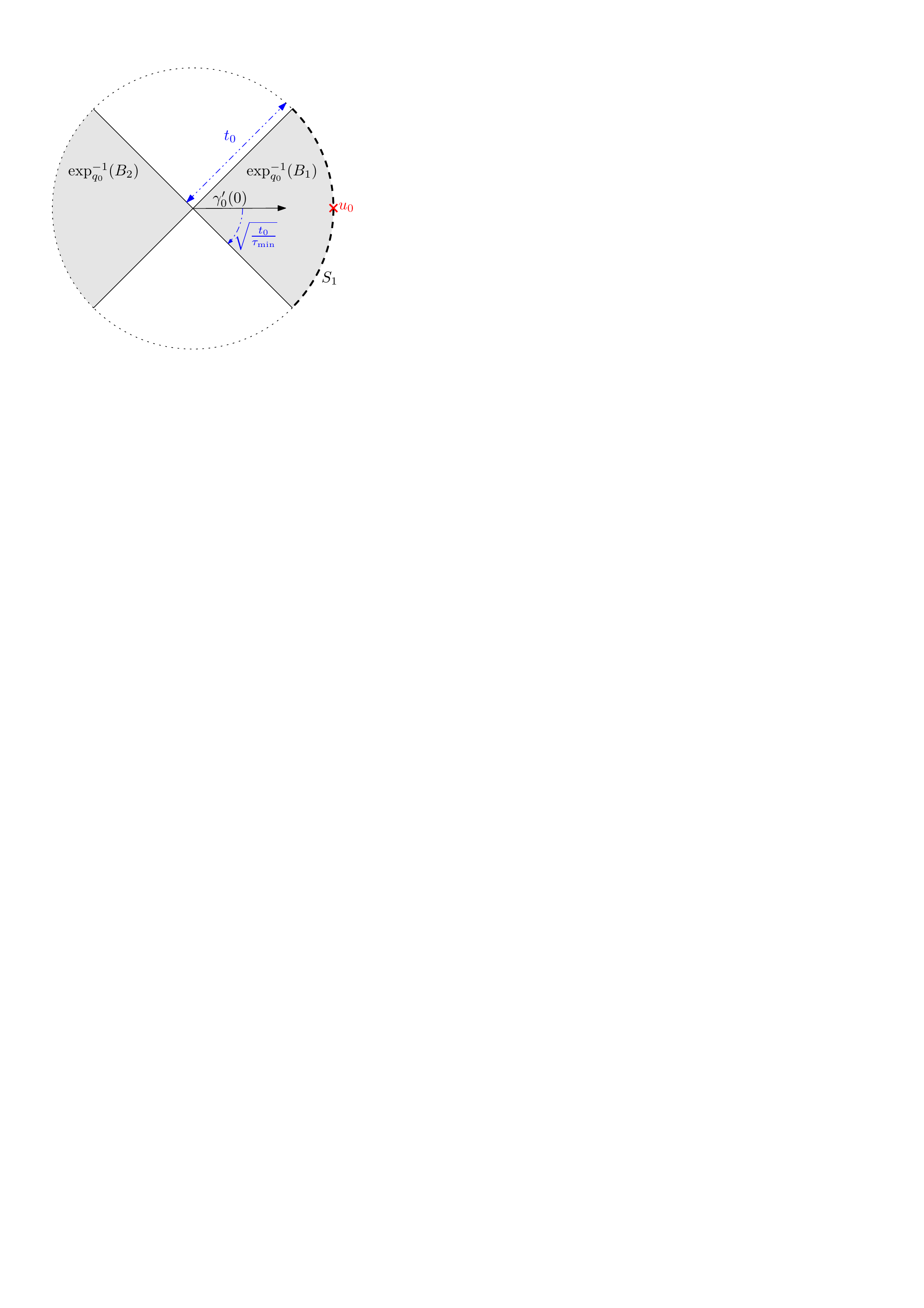}
	\caption{Layout of the proof of  Proposition \ref{thm:estimator.local_risk}.}\label{fig:estimator.cone}
\end{figure}
Let $\mathbb{X}\subset M$, and $x,y\in\mathbb{X}$
	be such that $x\in B_{1}$, $y\in B_{2}$.
	Writing $\theta_{x}:=\angle(\gamma_{0}'(0),\gamma_{q_{0}\to x}'(0))$
	and $\theta_{y}:=\angle(\gamma_{0}'(0),\gamma_{q_{0}\to y}'(0))$,
	then $\theta_{x}\leq\sqrt{\frac{t_{0}}{\tau_{\min}}}\leq\frac{\pi}{4}$
	and $\theta_{y}\geq\pi-\sqrt{\frac{t_{0}}{\tau_{\min}}}\geq\frac{3\pi}{4}$. Also, $d_{M}(q_{0},x)\leq t_{0}$ and $d_{M}(x,y)\leq2t_{0}$, so Proposition \ref{thm:estimator.local_twopoints} rewrites as
	\begin{align*}
	0 \leq\frac{1}{\tau_{M}}-\frac{1}{\hat{\tau}(\mathbb{X})}
	&\leq
	\frac{8\sin^{2}(|\theta_{x}-\theta_{y}|)}{\tau_{M}}+L\left(\frac{1}{3}d_{M}(x,y)+2d_{M}(q_{0},x)\right)\\
	& \leq\left(\frac{16}{\tau_{\min}\tau_{M}}+\frac{8L}{3}\right)t_{0}.
	\end{align*}
	A symmetric argument also applies when $x\in B_{2}$ and $y\in B_{1}$.
	Now, for any $s<\frac{1}{\tau_{M}}$, let
	$t_{0}(s):=\left(\frac{16}{\tau_{\min}^{2}}+\frac{8L}{3}\right)^{-1}s<\frac{\tau_{\min}}{10}$.
The above argument implies that if $\left|\frac{1}{\tau_{M}} - \frac{1}{\hat{\tau}(\mathbb{X})}\right|>s$, 	then for any $x,y\in\mathbb{X} \cap B_{0}$, one has either $x,y\in B_{1}$ or $x,y\in B_{2}$. Hence,
	\begin{align}
	& \mathbb{P}\left(\left|\frac{1}{\tau_{M}}-\frac{1}{\hat{\tau}(\X_n)}\right|>s\right)\nonumber \\
	& \leq\sum_{m=0}^{n} {{n}\choose{m}} \Bigl\{\mathbb{P}\left(X_{1},\ldots,X_{m}\in M\backslash B_{0},X_{m+1},\ldots,X_{n}\in B_{1}\right)\nonumber \\
	& \qquad\qquad\qquad\quad+\mathbb{P}\left(X_{1},\ldots,X_{m}\in M\backslash B_{0},X_{m+1},\ldots,X_{n}\in B_{2}\right)\Bigr\}\nonumber \\
	& =\sum_{m=0}^{n} {{n}\choose{m}}\left\{(1-Q(B_{0}))^{m}Q(B_{1})^{n-m}+(1-Q(B_{0}))^{m}Q(B_{2})^{n-m}\right\}\nonumber \\
	&\leq
	(1-Q(B_{2}))^{n}+(1-Q(B_{1}))^{n}
	.\label{eq:estimator_expansionbound}
	\end{align}
We now derive lower bounds for $Q(B_{1})$ and $Q(B_{2})$. 
	For this purpose, let $S_{1} := \exp_{q_{0}}^{-1}(B_{1})\cap\partial\mathcal{B}_{T_{q_{0}}M}(0,t_{0})$ (see Figure \ref{fig:estimator.cone}). Then $\exp_{q_{0}}^{-1}(B_{1})\subset \mathcal{B}_{T_{q_{0}}M}(0,t_{0})$ is a cone satisfying 
	\[
	\frac{\mathcal{H}^{d}\left(\exp_{q_{0}}^{-1}(B_{1})\right)}{\mathcal{H}^{d}\left(\mathcal{B}_{T_{q_{0}}M}(0,t_{0})\right)}=\frac{\mathcal{H}^{d-1}\left(S_{1}\right)}{\mathcal{H}^{d-1}\left(\partial\mathcal{B}_{T_{q_{0}}M}(0,t_{0})\right)}.
	\]
	Let $\omega_{d}:=\mathcal{H}^{d}(\mathcal{B}_{\mathbb{R}^{d}}(0,1))$
	and $\sigma_{d}:=\mathcal{H}^{d}(\partial\mathcal{B}_{\mathbb{R}^{d+1}}(0,1))$
	denote the volumes of the $d$-dimensional unit ball and sphere respectively, so that $\mathcal{H}^{d}\left(\mathcal{B}_{T_{q_{0}}M}(0,t_{0})\right)=\omega_{d}t_{0}^{d}$
	and $\mathcal{H}^{d-1}\left(\partial\mathcal{B}_{T_{q_{0}}M}(0,t_{0})\right)=\sigma_{d-1}t_{0}^{d-1}$.
	In view of deriving a lower bound on $\mathcal{H}^{d-1}\left(S_{1}\right)$, consider $u_{0}:=
	{t_{0}\gamma_{0}'(0)}
	\in S_{1}$. 
Since $\tau_{S_{1}}=t_{0}$
	and $\exp_{u_{0}}^{-1}(S_{1})\subset\mathcal{B}_{T_{u_{0}}S_{1}}\left(0,\tau_{\min}^{-\frac{1}{2}}t_{0}^{\frac{3}{2}}\right)$,
applying Proposition \ref{thm:geometry:injectivity_radius}
	(v) yields
	\begin{align*}
	\mathcal{H}^{d-1}\left(S_{1}\right) & \geq\left(1-\frac{t_{0}}{6\tau_{\min}}\right)^{d-1}\mathcal{H}^{d-1}\left(\mathcal{B}_{T_{u_{0}}S_{1}}\left(0,\tau_{\min}^{-\frac{1}{2}}t_{0}^{\frac{3}{2}}\right)\right)\\
	& \geq\left(\frac{59}{60}\right)^{d-1}\omega_{d-1}\tau_{\min}^{-\frac{d-1}{2}}t_{0}^{\frac{3d-3}{2}},
	\end{align*}
	and hence 
	\begin{align*}
	\mathcal{H}^{d-1}\left(\exp_{q_{0}}^{-1}(B_{1})\right) & =\frac{\mathcal{H}^{d}\left(\mathcal{B}_{T_{q_{0}}M}(0,t_{0})\right)\mathcal{H}^{d-1}\left(S_{1}\right)}{\mathcal{H}^{d-1}\left(\partial\mathcal{B}_{T_{q_{0}}M}(0,t_{0})\right)}\\
	& \geq \left(\frac{59}{60}\right)^{d-1}
	\frac{\omega_{d-1}}{d}\tau_{\min}^{-\frac{d-1}{2}}t_{0}^{\frac{3d-1}{2}}.
	\end{align*}
Finally, since $\exp_{q_{0}}^{-1}(B_{1})\subset\mathcal{B}_{T_{q_{0}}M}(q_{0},\frac{\tau_{M}}{10})$,
	Proposition \ref{thm:geometry:injectivity_radius} (v) yields
	\[
	\mathcal{H}^{d}\left(B_{1}\right)\geq\left(\frac{599}{600}\right)^{d}\mathcal{H}^{d}\left(\exp_{q_{0}}^{-1}(B_{1})\right)\geq\left(\frac{35341}{36000}\right)^{d}\frac{1}{d}\tau_{\min}^{-\frac{d-1}{2}}t_{0}^{\frac{3d-1}{2}},
	\]
	and hence,
	\[
	Q(B_{1})\geq\left(\frac{35341}{36000}\right)^{d}\frac{f_{\min}}{d}\tau_{\min}^{-\frac{d-1}{2}}t_{0}^{\frac{3d-1}{2}}\geq C_{\tau_{\min},d,L,f_{\min}}s^{\frac{3d-1}{2}}.
	\]
	By symmetry, the same bound holds for $Q(B_{2})$. 
	Applying these bounds to \eqref{eq:estimator_expansionbound} gives 
	\begin{align*}
	\mathbb{P}\left(\left|\frac{1}{\tau_{M}}-\frac{1}{\hat{\tau}(\X_n)}\right|>s\right) & \leq2\left(1-C_{\tau_{\min},d,L,f_{\min}}s^{\frac{3d-1}{2}}\right)^{n}\\
	& \leq2\exp\left(-C_{\tau_{\min},d,L,f_{\min}}ns^{\frac{3d-1}{2}}\right).
	\end{align*}
As for the proof of Proposition \ref{thm:estimator.global_risk}, the result then follows by integration.
\end{proof}

\section{Minimax Lower Bounds}

\subsection{Stability of the Model With Respect to Diffeomorphisms}
\label{sec:appendix:stability_diffeo}
To prove Proposition \ref{thm:diffeomorphism_stability}, we will use the following result stating that the reach is a stable quantity with respect to $\mathcal{C}^2$-perturbations.
\begin{lem}[Theorem 4.19 in \cite{Federer1959}]
\label{reach_stability}
Let $A \subset \mathbb{R}^D$ with $\tau_A \geq \tau_{min} > 0$ and $ \Phi  : \mathbb{R}^D \longrightarrow \mathbb{R}^D$ is a $\mathcal{C}^1$-diffeomorphism such that $ \Phi $,$ \Phi ^{-1}$, and $d  \Phi $ are Lipschitz with Lipschitz constants $K$,$N$ and $R$ respectively, then
	\[ \tau_{\Phi(A)} \geq \dfrac{\tau_{min}}{(K + R\tau_{min})N^2}. \]
\end{lem}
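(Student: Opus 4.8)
The plan is to read the reach off Federer's pointwise inequality and transport that inequality along $\Phi$. Recall (Theorem 4.18 combined with Theorem 4.8(7) in \cite{Federer1959}) that for a \emph{closed} set $E\subset\R^D$ and $r>0$ one has $\tau_E\geq r$ if and only if
\[
\langle v, b-a\rangle \leq \frac{\norm{v}}{2r}\norm{b-a}^2
\qquad\text{for all } a,b\in E,\ v\in\mathrm{Nor}(E,a),
\]
where $\mathrm{Nor}(E,a)$ is Federer's normal cone. The ``if'' direction is short: if $x$ had two distinct nearest points $a_1,a_2\in E$ with $\rho:=d(x,E)<r$, then $x-a_i\in\mathrm{Nor}(E,a_i)$, and adding the inequality for $(a_1,a_2,\,v=x-a_1)$ to the one for $(a_2,a_1,\,v=x-a_2)$ gives $\norm{a_1-a_2}^2\leq \tfrac{\rho}{r}\norm{a_1-a_2}^2$, a contradiction. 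Since $\Phi$ is a homeomorphism, $A'=\Phi(A)$ is closed, so it suffices to verify this inequality on $A'$ with $r=\tau_{min}/\bigl((K+R\tau_{min})N^2\bigr)$.

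Next I would transport normal vectors. Fix $a',b'\in A'$ and $v'\in\mathrm{Nor}(A',a')$ with $\norm{v'}=1$, and write $a'=\Phi(a)$, $b'=\Phi(b)$, $a,b\in A$. Because $\Phi$ is a $\mathcal{C}^1$-diffeomorphism, secant directions of $A$ at $a$ are mapped by the invertible linear map $d_a\Phi$ to secant directions of $A'$ at $a'$, so $\mathrm{Tan}(A',a')=d_a\Phi\bigl(\mathrm{Tan}(A,a)\bigr)$, and dualizing, $v:=(d_a\Phi)^{*}v'\in\mathrm{Nor}(A,a)$. Moreover, a first-order Taylor estimate along the segment $[a,b]$, using that $d\Phi$ is $R$-Lipschitz, yields
\[
\norm{\Phi(b)-\Phi(a)-d_a\Phi(b-a)}
=\Bigl\|\textstyle\int_0^1\bigl(d_{a+t(b-a)}\Phi-d_a\Phi\bigr)(b-a)\,dt\Bigr\|
\leq \frac{R}{2}\norm{b-a}^2 .
\]

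Finally I would assemble the bound. Decomposing $\langle v',b'-a'\rangle=\langle v',d_a\Phi(b-a)\rangle+\langle v',\Phi(b)-\Phi(a)-d_a\Phi(b-a)\rangle$, the first term equals $\langle v,b-a\rangle\leq \tfrac{\norm{v}}{2\tau_{min}}\norm{b-a}^2$ by Federer's inequality applied to $A$ (using $\tau_A\geq\tau_{min}$), where $\norm{v}=\norm{(d_a\Phi)^{*}v'}\leq\norm{d_a\Phi}_{op}\leq K$ since $\Phi$ is $K$-Lipschitz; the second term is at most $\tfrac{R}{2}\norm{b-a}^2$ by Cauchy--Schwarz and the Taylor bound. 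Adding and then using $\norm{b-a}=\norm{\Phi^{-1}(b')-\Phi^{-1}(a')}\leq N\norm{b'-a'}$ gives
\[
\langle v',b'-a'\rangle\leq\Bigl(\frac{K}{2\tau_{min}}+\frac{R}{2}\Bigr)\norm{b-a}^2
\leq\frac{(K+R\tau_{min})N^2}{2\tau_{min}}\norm{b'-a'}^2 ,
\]
which is precisely the characterizing inequality for $A'$ with $r=\tau_{min}/\bigl((K+R\tau_{min})N^2\bigr)$; hence $\tau_{A'}\geq r$.

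The main obstacle I expect is the careful justification of the cone-transport identity $\mathrm{Nor}(\Phi(A),\Phi(a))=\bigl((d_a\Phi)^{*}\bigr)^{-1}\mathrm{Nor}(A,a)$ for a general closed set of positive reach: one must check that limits of secant directions at $a$ are carried to limits of secant directions at $\Phi(a)$ by the invertible map $d_a\Phi$ (so that tangent cones map forward), then dualize. Everything else is routine Lipschitz/Taylor bookkeeping. If one prefers to avoid cones altogether, the same constant can be obtained by redoing the two-nearest-points argument directly on $A'$, replacing $\Phi(x_i)-\Phi(a)$ by $d_a\Phi(x_i-a)$ up to the $\tfrac{R}{2}\norm{x_i-a}^2$ error, but the cone formulation above is cleaner.
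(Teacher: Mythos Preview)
The paper does not give its own proof of this lemma: it is stated as Theorem~4.19 in Federer~\cite{Federer1959} and simply invoked. Your argument is essentially Federer's original proof --- characterize the reach via the normal-cone inequality of his Theorem~4.18, transport normals by $(d_a\Phi)^*$, control the second-order defect $\Phi(b)-\Phi(a)-d_a\Phi(b-a)$ by the Lipschitz constant $R$ of $d\Phi$, and absorb $\norm{b-a}\leq N\norm{b'-a'}$. The bookkeeping and constants are correct.

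The one point you flag as an obstacle, the identity $\mathrm{Tan}(\Phi(A),\Phi(a))=d_a\Phi\bigl(\mathrm{Tan}(A,a)\bigr)$ for a general closed set, is genuinely the only delicate step; Federer handles it in his Theorem~4.8(7), so you may either cite that directly or, as you sketch, argue that secant-direction limits at $a$ are mapped to secant-direction limits at $\Phi(a)$ by the invertible linear map $d_a\Phi$ together with its inverse $(d_a\Phi)^{-1}=d_{a'}\Phi^{-1}$ (both bounded), which gives both inclusions. One small remark: your quick ``if'' direction for the reach characterization only rules out points with two nearest neighbors; to conclude $\tau_{A'}\geq r$ in Federer's sense you also want the closure of $Med(A')$ to stay at distance $\geq r$, but this follows since the displayed inequality is closed under limits and hence the bound on $d(x,A')$ for medial points passes to the closure.
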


\begin{proof}[Proof of Proposition \ref{thm:diffeomorphism_stability}]
Let $M' = \Phi\left(M\right)$ be the image of $M$ by the mapping $\Phi$. Since $\Phi$ is a global diffeomorphism, $M'$ is a closed submanifold of dimension one. Moreover, $\Phi$ is $\norm{d \Phi}_{op} \leq (1 + \norm{ d \Phi - I_D }_{op})$-Lipschitz, $\Phi^{-1}$ is $\norm{d \Phi^{-1}}_{op} \leq ({1-\norm{ d \Phi - I_D }_{op}})^{-1}$-Lipschitz, and $d \Phi$ is $\norm{d^2 \Phi }_{op}$-Lipschitz. From Lemma \ref{reach_stability},
\begin{align*}
\tau_{M'} 
&\geq \frac{\tau_{min}(1 - \norm{ d \Phi - I_D }_{op})^2}{ \norm{d^2 \Phi }_{op} \tau_{min} + (1 + \norm{ d \Phi - I_D }_{op})} \geq \tau_{min}/2,
\end{align*}
where we used that $\norm{d^2 \Phi }_{op} \tau_{min} \leq 1/2$ and $\norm{ d \Phi - I_D }_{op} \leq 1/10$. All that remains to be proved now is the bound on the third order derivative of the geodesics of $M'$.
We denote by $\gamma$ and $\tilde{\gamma}$ the geodesics of $M$ and $M'$ respectively.

Let $p' = \Phi(p) \in M'$ and $v' = d_p \Phi . v \in T_{p'} M'$ be fixed.
Since $M \in \mathcal{M}^{d,D}_{\tau_{min},L}$ is a compact $\mathcal{C}^3$-submanifold with geodesics $\norm{\gamma'''(0)}\leq L$, $M$ can be parametrized locally by a $\mathcal{C}^3$ bijective map $\Psi_p: \mathcal{B}_{\R^d}(0,\varepsilon) \rightarrow M$ with $\Psi_p(0) = p$.
For a smooth curve $\gamma$ on $M$ nearby $p$, we let $c = (c_1,\ldots,c_d)^t$ denote its lift in the coordinates $\mathbf{x} = \Psi_p^{-1}$, that is $\gamma(t) = \Psi_p \circ c(t)$.
$\gamma = \gamma_{p,v}$ is the geodesic of $M$ with initial conditions $p$ and $v$ if and only if $c$ satisfies the geodesic equations (see \cite[p.62]{DoCarmo92}).
That is, the second order ordinary differential equation
\begin{align}\label{geodesic_equation}
\begin{cases}
c''_\ell(t)
+ 
\left\langle 
\Gamma^\ell\left( c(t) \right) \cdot c'(t)
, 
c'(t)
\right\rangle 
=
0,
\hspace{2em} (1 \leq \ell \leq d)
\\
c(0) = 0 \text{ and }  c'(0) = d_p \mathbf{x} . v,
\end{cases}
\end{align}
where $\Gamma^\ell = \bigl( \Gamma^\ell_{i,j} \bigr)_{1 \leq i,j \leq d}$ are the Christoffel symbols of the $\mathcal{C}^3$ chart $\mathbf{x}$, which depends only on $\mathbf{x}$ and its differentials of order $1$ and $2$.
By construction, $M'$ is parametrized locally by ${\Psi}'_{p'} = \Phi \circ \Psi_{p}$ yielding local coordinates $\mathbf{y} = {\Psi'}_{p'}^{-1} = \Psi_p^{-1} \circ \Phi^{-1}$ nearby $p' \in M'$.
Writing $\tilde{\Gamma}^\ell$ for the Christoffel's symbols of $M'$, $\tilde{\gamma}$ is a geodesic of $M'$ at $p'$ if its lift $\tilde{c} = \Psi_{p'}'^{-1} (\tilde{\gamma})$ satisfies \eqref{geodesic_equation} with $\Gamma^\ell$ replaced by $\tilde{\Gamma}^\ell$, and initial conditions $\tilde{c}(0) = c$ and $\tilde{c}'(0) = d_{p'} \mathbf{y}. v' = d_p \mathbf{x}.v$. From chain rule, the $\tilde{\Gamma}^\ell$'s depend on $\Gamma$, $d \Phi$, and $d^2 \Phi$.

Write $c'''(0) - \tilde{c}'''(0)$ by differentiating \eqref{geodesic_equation}: since $c(0) = \tilde{c}(0) = 0$ and $c''(0) = \tilde{c}''(0)$, we get that for $\norm{I_D - d \Phi}_{op}$, $\norm{d^2 \Phi}_{op}$ and $\norm{d^3 \Phi}_{op}$ small enough, $\norm{c'''(0) - \tilde{c}'''(0)}$ can be made arbitrarily small. 
In particular, $\tilde{\gamma}'''(0)$ gets arbitrarily close to $\gamma'''(0)$, so that $\norm{\tilde{\gamma}'''(0)} \leq \norm{\gamma'''(0)} + L \leq 2L$, which concludes the proof.
\end{proof}

\subsection{Lemmas on the Total Variation Distance}
\label{sec:appendix:TV_lemmas}
Prior to any actual construction, we show the following straightforward lemma bounding the total variation between uniform distribution on manifolds that are perturbations of each other.
For $M \subset \R^D$, write $\lambda_M = {\mathbbm{1}_M} \mathcal{H}^d / {\mathcal{H}^d(M)}$ for the uniform probability distribution on $M$.
\begin{lem}\label{total_variation_between_uniforms}
Let $M\subset \R^D$ be a compact $d$-dimensional submanifold and ${B} \subset \R^D$ be a Borel set. Let $\Phi : \R^D \rightarrow \R^D$ be a global diffeomorphism such that $\Phi_{|B^c}$ is the identity map and $\norm{d \Phi - I_D}_{op} \leq 2^{1/d}-1$. Then ${{\mathcal{H}^d}(\Phi(M))}\leq 2{{\mathcal{H}^d}(M)} $ and $ TV\left(\lambda_M,\lambda_{\Phi\left(M\right)}\right) \leq 12 \lambda_M\left(B\right)$.
\end{lem}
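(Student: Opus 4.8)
The plan is to exploit that $\Phi$ is the identity outside $B$, so the two measures only differ on $\Phi(B)\cup B$, and then bound everything by the distortion of the $d$-dimensional Hausdorff measure under $\Phi$. First I would record the Jacobian estimate: since $\norm{d\Phi - I_D}_{op}\leq 2^{1/d}-1$, every singular value of $d_x\Phi$ lies in $[2-2^{1/d},\,2^{1/d}]\subset(0,2^{1/d}]$, hence the $d$-dimensional Jacobian $J_d\Phi$ of the restriction $\Phi|_M$ (a product of $d$ such factors, by the area formula \cite[p.~171]{Federer69}) satisfies $J_d\Phi\leq 2$ pointwise, and also $J_d\Phi\geq (2-2^{1/d})^d$, which for the purposes here I only need to be positive. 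Applying the area formula to $\Phi|_M$ gives $\mathcal{H}^d(\Phi(M))=\int_M J_d\Phi\, d\mathcal{H}^d\leq 2\mathcal{H}^d(M)$, which is the first claim; and likewise $\mathcal{H}^d(\Phi(M))\geq \mathcal{H}^d(M\setminus B)=\mathcal{H}^d(M)-\mathcal{H}^d(M\cap B)$ since $\Phi$ fixes $M\setminus B$.

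Next I would write the total variation directly. Set $m=\mathcal{H}^d(M)$, $m'=\mathcal{H}^d(\Phi(M))$, $\beta = \mathcal{H}^d(M\cap B)=m\,\lambda_M(B)$. Using the definition $TV(\lambda_M,\lambda_{\Phi(M)})=\tfrac12\int|d\lambda_M-d\lambda_{\Phi(M)}|$, I would split the integral over the region where both manifolds coincide (i.e.\ over $M\setminus B=\Phi(M)\setminus\Phi(B)$, where the densities are the constants $1/m$ and $1/m'$) and over the complementary region, which is contained in $(M\cap B)\cup(\Phi(M)\cap \Phi(B))$ and whose $\lambda$-masses are at most $\beta/m$ and $\mathcal{H}^d(\Phi(M)\cap\Phi(B))/m'\leq 2\beta/m'$. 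This yields the bound
\[
TV\left(\lambda_M,\lambda_{\Phi(M)}\right)
\leq
\frac{1}{2}\,|m-m'|\cdot\frac{\mathcal{H}^d(M\setminus B)}{m\,m'}
+\frac{1}{2}\left(\frac{\beta}{m}+\frac{2\beta}{m'}\right).
\]

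Finally I would bound $|m-m'|$: from the two inequalities above, $m-\beta\leq m'\leq m+\beta$, so $|m-m'|\leq\beta$; moreover $m'\geq m-\beta$. Plugging in, the first term is at most $\tfrac12\beta/m' \le \tfrac12 \beta/(m-\beta)$ and the second is at most $\tfrac12(\beta/m+2\beta/(m-\beta))$. If $\beta \le m/2$ then $m-\beta\ge m/2$, giving a total $\le \tfrac12(\beta/(m/2)) + \tfrac12(\beta/m + 2\beta/(m/2)) = \beta/m + \tfrac12\beta/m + 2\beta/m = \tfrac{7}{2}\beta/m \le 12\,\lambda_M(B)$; and if $\beta > m/2$ then $\lambda_M(B)=\beta/m>1/2$ and the bound $TV\le 1 < 12\,\lambda_M(B)$ is trivial. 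In either case $TV(\lambda_M,\lambda_{\Phi(M)})\leq 12\,\lambda_M(B)$, as claimed. The only mildly delicate point is the bookkeeping of the region of non-coincidence and the case split on whether $\beta$ is small compared to $m$; the Jacobian estimate and the area formula are routine, so I expect no real obstacle here — the constant $12$ is deliberately loose.
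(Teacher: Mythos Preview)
Your proof is correct but follows a different route from the paper's. The paper invokes an external lemma (\cite[Lemma 7]{Arias13}) twice: once to get $\mathcal{H}^d(\Phi(M\cap B))\leq 2\,\mathcal{H}^d(M\cap B)$ from the Lipschitz constant of $\Phi$, and once to bound $TV(\lambda_M,\lambda_{\Phi(M)})\leq 4\,\mathcal{H}^d(M\triangle\Phi(M))/\mathcal{H}^d(M\cup\Phi(M))$, after which it observes $M\triangle\Phi(M)\subset (M\cap B)\cup(\Phi(M)\cap B)$ and concludes directly with the constant $12$, no case split needed. You instead work from first principles: the area formula with a pointwise Jacobian bound for the volume estimate, and an explicit density computation on the coincidence region $M\setminus B$ versus its complement for the total variation. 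Your approach is more self-contained (no external citation) and in fact yields the sharper constant $7/2$ when $\lambda_M(B)\leq 1/2$, at the cost of the extra case analysis; the paper's argument is shorter once the black-box lemma is granted, and avoids the split because the symmetric-difference bound already absorbs the normalization discrepancy.
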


\begin{proof}[Proof of Lemma \ref{total_variation_between_uniforms}]
Since $\Phi$ is $(1+ \norm{d \Phi - I_D}_{op})$-Lipschitz, \cite[Lemma 7]{Arias13} asserts that 
\[
{\mathcal{H}^d}\left(\Phi(M \cap B)\right)\leq (1+ \norm{d \Phi - I_D}_{op})^d {\mathcal{H}^d}(M\cap B) \leq 2 {\mathcal{H}^d}(M\cap B).
\]
Therefore,
\begin{align*}
{\mathcal{H}^d}\left( \Phi(M) \right) - {\mathcal{H}^d}(M) 
&= {\mathcal{H}^d}\left(\Phi(M \cap B)\right) - {\mathcal{H}^d}\left(M \cap B\right) \\
&\leq {\mathcal{H}^d}(M \cap B) \leq {\mathcal{H}^d}(M).
\end{align*}
Now, writing $\triangle$ for the symmetric difference of sets, we have $M \triangle \Phi(M) = (B\cap M)\triangle(B\cap \Phi(M)) \subset (B\cap M)\cup(B\cap \Phi(M))$. Therefore, \cite[Lemma 7]{Arias13} yields,
\begin{align*}
TV\left(\lambda_M,\lambda_{\Phi\left(M\right)}\right)
&\leq 4 \frac{{\mathcal{H}^d}\left(M \triangle \Phi(M) \right) }{{\mathcal{H}^d}(M \cup \Phi(M))}\\
&\leq 4\frac{{\mathcal{H}^d}\left(M \cap B \right) + {\mathcal{H}^d}\left(\Phi(M) \cap B \right) }{{\mathcal{H}^d}(M)} \\
&= 4\frac{{\mathcal{H}^d}\left(M \cap B \right) + {\mathcal{H}^d}\left(\Phi(M\cap B)\right) }{{\mathcal{H}^d}(M)} \\
&\leq 12 \frac{{\mathcal{H}^d}(M\cap B)}{{\mathcal{H}^d}(M)} = 12 \lambda_M(B).
\end{align*}
\end{proof}

Let us now tackle the proof of Lemma \ref{thm:total_variation_identiy}. For this, we will need the following elementary differential geometry results Lemma  \ref{lem:nonsingular_level_set} and Corollary \ref{thm:non_parallel_intersections_have_measure_zero}.

\begin{lem}\label{lem:nonsingular_level_set}
Let $g: \R^d \rightarrow \R^k$ be $\mathcal{C}^1$ and $x\in \R^d$ be such that $g(x) = 0$ and $d_x g \neq 0$. Then there exists $r>0$ such that $\mathcal{H}^d\left( g^{-1}(0) \cap \mathcal{B}(x,r) \right) = 0$.
\end{lem}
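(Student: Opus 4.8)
The plan is to reduce the statement to a local graph representation of the zero set near $x$ via the implicit function theorem. Since $d_x g \neq 0$, at least one of the component functions $g_1,\dots,g_k$ has nonzero differential at $x$; relabel so that $d_x g_1 \neq 0$. Then $d_x g_1 (e_j) \neq 0$ for some coordinate direction $e_j$, and after permuting coordinates we may assume $j = d$, i.e. $\frac{\partial g_1}{\partial x_d}(x) \neq 0$. Apply the implicit function theorem to $g_1$ alone: there is a neighborhood $U \times V \ni (x', x_d) = x$ in $\R^{d-1}\times\R$ and a $\mathcal{C}^1$ map $\varphi: U \to V$ such that, on $U\times V$, the equation $g_1(y',y_d)=0$ holds if and only if $y_d = \varphi(y')$. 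Choosing $r>0$ small enough that $\mathcal{B}(x,r) \subset U\times V$, we get
\[
g^{-1}(0)\cap \mathcal{B}(x,r)
\subset
g_1^{-1}(0)\cap \mathcal{B}(x,r)
\subset
\{(y',\varphi(y')) : y' \in U\}.
\]
The graph of the Lipschitz-on-compacts (indeed $\mathcal{C}^1$) function $\varphi$ over the $(d-1)$-dimensional domain $U$ is a $(d-1)$-dimensional $\mathcal{C}^1$ submanifold of $\R^d$, hence has $\mathcal{H}^d$-measure zero; intersecting with a ball does not change this. Therefore $\mathcal{H}^d\bigl(g^{-1}(0)\cap \mathcal{B}(x,r)\bigr) = 0$.

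Concretely, to see the graph has $\mathcal{H}^d$-measure zero without invoking submanifold theory, note that the map $\Psi: y' \mapsto (y',\varphi(y'))$ is Lipschitz on any compact subset of $U$ (its differential $d\varphi$ is continuous, hence locally bounded), so it maps the $\mathcal{H}^{d-1}$-negligible... rather, it maps a subset of the hyperplane $\R^{d-1}\times\{0\}$, which already sits inside $\R^d$ with $\mathcal{H}^d$-measure zero, onto the graph while being Lipschitz; since Lipschitz maps do not increase Hausdorff dimension, the image has $\mathcal{H}^d$-measure zero. (Equivalently: cover $\mathcal{B}(x,r)\cap(\R^{d-1}\times\{0\})$ by $N^{d-1}$ balls of radius $\sim r/N$; each maps under $\Psi$ into a set of diameter $\lesssim r/N$, so the image is covered by $N^{d-1}$ sets of diameter $\lesssim r/N$, giving $\mathcal{H}^d$-premeasure $\lesssim N^{d-1}(r/N)^d = r^d/N \to 0$.)

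The argument is almost entirely routine; the only point requiring a little care is the hypothesis check for the implicit function theorem — one must single out a scalar component $g_i$ with $d_x g_i \neq 0$, which is guaranteed precisely by $d_x g \neq 0$ as a linear map $\R^d \to \R^k$ (some row of the Jacobian is nonzero), and then a coordinate permutation puts the relevant partial derivative in the last slot. No genuine obstacle is expected; the main thing to be careful about is not over-claiming (we only use $g_1$, discarding $g_2,\dots,g_k$, which is exactly why the inclusion above is the right one and a single scalar implicit-function argument suffices).
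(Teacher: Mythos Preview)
Your proposal is correct and follows essentially the same approach as the paper: single out a scalar component $g_i$ with nonzero differential, apply the implicit function theorem to that component alone to realize $g_i^{-1}(0)$ locally as a $(d-1)$-dimensional graph/submanifold, and then use $g^{-1}(0)\subset g_i^{-1}(0)$ to conclude. The paper's version is terser (it simply asserts that a codimension-one submanifold has $\mathcal{H}^d$-measure zero), whereas you spell out the Lipschitz covering argument, but the logic is identical.
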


\begin{proof}[Proof of Lemma \ref{lem:nonsingular_level_set}]
Let us prove that for $r>0$ small enough, the intersection $g^{-1}(0) \cap \mathcal{B}(x,r)$ is contained in a submanifold of codimension one of $\R^d$. Writing $g=(g_1,\ldots,g_k)$, assume without loss of generality that $\partial_{x_1} g_1 \neq 0$. Since $g_1 : \R^d \rightarrow \R$ is non-singular at $x$, the implicit function theorem asserts that $g_1^{-1}(0)$ is a submanifold of dimension $d-1$ of $\R^d$ in a neighborhood of $x \in \R^d$. Therefore, for $r>0$ small enough, $g_1^{-1}(0)\cap \mathcal{B}(x,r)$ has $d$-dimensional Hausdorff measure zero. The result hence follows, noticing that $g^{-1}(0) \subset g_1^{-1}(0)$.
\end{proof}

\begin{cor}\label{thm:non_parallel_intersections_have_measure_zero}
Let $M,M' \subset \R^D$ be two compact $d$-dimensional submanifolds, and $x\in M\cap M'$. If $T_x M \neq T_x M'$, there exists $r>0$ such that $A = M\cap M' \cap \mathcal{B}(x,r)$ satisfies $\lambda_M(A) = \lambda_{M'}(A) = 0$.
\end{cor}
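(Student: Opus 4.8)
The plan is to reduce the statement to Lemma~\ref{lem:nonsingular_level_set} by realizing $M\cap M'$ near $x$ as the zero set of a $\mathcal{C}^1$ map living on a chart of $M$, and then transporting the resulting negligibility back to $\R^D$.

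First I would fix compatible local descriptions of the two manifolds near $x$. Since $M'$ is a $\mathcal{C}^1$ submanifold of dimension $d$, hence of codimension $D-d$, there exist an open neighborhood $W\ni x$ in $\R^D$ and a $\mathcal{C}^1$ submersion $F:W\rightarrow\R^{D-d}$ with $M'\cap W=F^{-1}(0)$ and $\ker d_x F=T_xM'$. Likewise, there is a $\mathcal{C}^1$ parametrization $\Psi:\mathcal{B}_{\R^d}(0,\varepsilon)\rightarrow M$ with $\Psi(0)=x$, which is a homeomorphism onto an open neighborhood of $x$ in $M$ and satisfies $\operatorname{Im}(d_0\Psi)=T_xM$. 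Shrinking $\varepsilon$ if necessary, we may assume $\Psi(\mathcal{B}_{\R^d}(0,\varepsilon))\subset W$, so that $g:=F\circ\Psi:\mathcal{B}_{\R^d}(0,\varepsilon)\rightarrow\R^{D-d}$ is a well-defined $\mathcal{C}^1$ map with $g(0)=F(x)=0$.

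Next I would verify the non-degeneracy hypothesis of Lemma~\ref{lem:nonsingular_level_set}. By the chain rule $d_0 g=d_xF\circ d_0\Psi$; if $d_0 g=0$ then $d_xF$ vanishes on $\operatorname{Im}(d_0\Psi)=T_xM$, i.e. $T_xM\subset\ker d_xF=T_xM'$, and since both subspaces are $d$-dimensional this forces $T_xM=T_xM'$, contradicting the hypothesis. Hence $d_0 g\neq 0$, and Lemma~\ref{lem:nonsingular_level_set} produces $\rho\in(0,\varepsilon)$ with $\mathcal{H}^d\bigl(g^{-1}(0)\cap\mathcal{B}_{\R^d}(0,\rho)\bigr)=0$.

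Finally I would transfer this estimate back. Since $\Psi$ restricted to $\mathcal{B}_{\R^d}(0,\rho)$ is an open map, $\Psi(\mathcal{B}_{\R^d}(0,\rho))$ is an open neighborhood of $x$ in $M$, so one can choose $r>0$ with $\mathcal{B}(x,r)\subset W$ and $M\cap\mathcal{B}(x,r)\subset\Psi(\mathcal{B}_{\R^d}(0,\rho))$. For any $y\in A=M\cap M'\cap\mathcal{B}(x,r)$ we then have $y=\Psi(u)$ for a unique $u\in\mathcal{B}_{\R^d}(0,\rho)$, and $y\in M'\cap W$ gives $g(u)=F(y)=0$; thus $A\subset\Psi\bigl(g^{-1}(0)\cap\mathcal{B}_{\R^d}(0,\rho)\bigr)$. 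As $\Psi$ is $\mathcal{C}^1$, hence locally Lipschitz, it maps $\mathcal{H}^d$-null sets to $\mathcal{H}^d$-null sets, so $\mathcal{H}^d(A)=0$ and therefore $\lambda_M(A)=\mathcal{H}^d(A)/\mathcal{H}^d(M)=0$. The hypothesis $T_xM\neq T_xM'$ being symmetric, the same argument with the roles of $M$ and $M'$ exchanged gives a radius $r'>0$ with $\lambda_{M'}\bigl(M\cap M'\cap\mathcal{B}(x,r')\bigr)=0$; replacing $r$ by $\min(r,r')$ yields the claim. The only mildly delicate point in all of this is the bookkeeping of the neighborhoods $W$, $\Psi(\mathcal{B}_{\R^d}(0,\varepsilon))$, and $\mathcal{B}(x,r)$, so that $A$ genuinely corresponds to $g^{-1}(0)$ under $\Psi$; everything else is routine.
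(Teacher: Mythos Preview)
Your proof is correct and follows essentially the same approach as the paper: both reduce to Lemma~\ref{lem:nonsingular_level_set} by expressing $M\cap M'$ locally as the zero set of a $\mathcal{C}^1$ map with nonzero differential at the origin. The only difference is packaging---the paper flattens $M'$ via an ambient diffeomorphism and writes $M$ as a graph over it, whereas your submersion/parametrization pair achieves the same reduction while making the bookkeeping more explicit.
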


\begin{proof}[Proof of Corollary \ref{thm:non_parallel_intersections_have_measure_zero}]
Writing $k= D-d$, we see that up to ambient diffeomorphism --- which preserves the nullity of measure --- we can assume that locally around $x$, $M'$ coincides with $\R^d \times \left\{0 \right\}^{k}$ and that $M$ is the graph of a $\mathcal{C^1}$ function $g: \mathcal{B}_{\R^d}(0,r') \rightarrow \R^k$ for $r'>0$ small enough. The assumption $T_x M \neq T_x M'$ translates to $d_0 g \neq 0$, and the previous transformation maps smoothly $M\cap M' \cap \mathcal{B}(x,r'')$ to $g^{-1}(0)\cap \mathcal{B}(0,r')$ for $r''>0$ small enough.
We conclude by applying Lemma \ref{lem:nonsingular_level_set}.
\end{proof}

We are now in position to prove Lemma \ref{thm:total_variation_identiy}.

\begin{proof}[Proof of Lemma \ref{thm:total_variation_identiy}]
Notice that $Q$ and $Q'$ are dominated by the measure $\mu = \mathbbm{1}_{M \cup M'} \mathcal{H}^d$, with $d Q(x) = f(x) d \mu(x)$ and $d Q'(x) = f'(x) d \mu(x)$, where $f,f': \R^D \rightarrow \R_+$ have support $M$ and $M'$ respectively.
On the other hand, $P$ and $P'$ are dominated by $\nu(d x \, d T) = \delta_{\left\{T_x M, T_x M'\right\} }\left(d T\right) \mu\left(d x\right)$ with respective densities $\bar{f}(x,T) = \mathbbm{1}_{T= T_x M} f(x)$ and $\bar{f}'(x,T) = \mathbbm{1}_{T= T_x M'}f'(x)$, where we set arbitrarily $T_x M = T_0$ for $x\notin M$, and $T_x M' = T_0$ for $x\notin M'$.
Recalling that $f$ vanishes outside $M$ and $f'$ outside $M'$,
\begin{align*}
TV&(P,P')
\\
&= 
\frac{1}{2}\int_{\mathbb{R}^D \times \mathbb{G}^{d,D}} |\bar{f}-\bar{f}'| d \nu
\\
&=
\frac{1}{2}\int_{\mathbb{R}^D}   \mathbbm{1}_{T_x M = T_x M'}| f(x) - f'(x)| + \mathbbm{1}_{T_x M \neq T_x M'}( f(x) + f'(x))   \mathcal{H}^d(d x).
\end{align*}
From Corollary \ref{thm:non_parallel_intersections_have_measure_zero} and a straightforward compactness argument, we derive that 
\[
\mathcal{H}^d\left( M \cap M' \cap \left\{x | T_x M \neq T_x M' \right\} \right) = 0
.
\] As a consequence, the above integral expression becomes
\begin{align*}
TV(P,P')
&=
\frac{1}{2}\int_{\mathbb{R}^D} |f-f'| d \mathcal{H}^d
=TV(Q,Q'),
\end{align*}
which concludes the proof.
\end{proof}

\subsection{Construction of the Hypotheses}
\label{sec:appendix:construction}
This section is devoted to the construction of hypotheses that will be used in Le Cam's lemma (Lemma \ref{lecam_lemma}), to derive Proposition \ref{thm:minimax_nonconsistency} and Theorem \ref{thm:lower_bound_thm}.

\begin{lem}\label{thm:two_hypotheses_construction_generic}
Let $R, \ell, \eta>0$ be such that $\ell \leq \frac{R}{2}\wedge \left(2^{1/d}-1\right)$ and $\eta \leq \frac{\ell^2}{2R}$.
Then there exists a $d$-dimensional sphere of radius $R$ that we call $M$, such that $M \in \mathcal{M}^{d,D}_{R,\frac{1}{R^2}}$ and a global $\mathcal{C}^\infty$-diffeomorphism $\Phi: \R^D \rightarrow \R^D$ such that,
\begin{align*}
\norm{d \Phi - I_D}_{op} \leq \frac{3\eta}{\ell}
, \hspace{1em}
\norm{d^2 \Phi}_{op} \leq \frac{23 \eta}{\ell^2}
, \hspace{1em}
\norm{d^3 \Phi}_{op} \leq \frac{573 \eta}{\ell^3},
\end{align*}
and so that writing $M' = \Phi(M)$, we have
$\mathcal{H}^d(M') \leq 2 \mathcal{H}^d(M) = 2 \sigma_d R^d$,
\begin{align*}
\left| \frac{1}{\tau_{{M}}} - \frac{1}{\tau_{{M}'}}\right| \geq \frac{\eta}{\ell^2},
\hspace{1em} \text{ and } \hspace{1em}
TV(\lambda_M,\lambda_{M'}) \leq 12\left(\frac{\ell}{R}\right)^d
.
\end{align*}
\end{lem}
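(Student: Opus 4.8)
The plan is to take $M$ to be the round $d$-sphere of radius $R$ in the affine plane $\R^{d+1} \times \{0\}^{D-d-1} \subset \R^D$, centered at the origin. Then $\tau_M = R$, every arc-length geodesic is a great circle with $\|\gamma''\| \equiv 1/R$ and $\gamma''' \equiv 0$ in the ambient sense (more precisely $\|\gamma'''(0)\|$ is controlled by $1/R^2$), so $M \in \mathcal{M}^{d,D}_{R, 1/R^2}$. The construction of $\Phi$ is the heart of the matter: I would build a compactly supported ``bump'' diffeomorphism that pushes a small cap of $M$ slightly inward (or outward) along the radial direction, so that on a patch of diameter $\sim \ell$ the curvature is perturbed by an amount $\sim \eta/\ell^2$. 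Concretely, fix a point $p_0 \in M$, work in the tangent-normal coordinates at $p_0$, and let $\Phi(x) = x + \eta\, \chi\!\left(\frac{x - p_0}{\ell}\right) \nu$, where $\nu$ is the outward unit normal to $M$ at $p_0$ and $\chi: \R^D \to \R$ is a fixed smooth bump supported in the unit ball with $\chi \equiv 1$ near $0$, whose derivatives up to order $3$ are bounded by explicit numerical constants (these produce the constants $3, 23, 573$ after dividing by $\ell, \ell^2, \ell^3$ respectively, using $\|d^k\Phi - \delta_{k0}I_D\| \leq \eta \|d^k\chi\|_\infty \ell^{-k}$). The condition $\ell \leq 2^{1/d}-1$ together with $\eta \leq \ell^2/(2R) \leq \ell/4$ guarantees $\|d\Phi - I_D\|_{op} \leq 3\eta/\ell \leq 2^{1/d}-1$ and in particular $\Phi$ is a global diffeomorphism equal to the identity outside $B := \mathcal{B}(p_0, \ell)$.

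Next I would verify the three required conclusions. For the volume bound $\mathcal{H}^d(M') \leq 2\mathcal{H}^d(M)$: this is immediate from Lemma \ref{total_variation_between_uniforms} applied to this $\Phi$ and $B$, since $\|d\Phi - I_D\|_{op} \leq 2^{1/d}-1$ by the paragraph above. For the total variation bound: again Lemma \ref{total_variation_between_uniforms} gives $TV(\lambda_M, \lambda_{M'}) \leq 12\,\lambda_M(B) = 12\,\mathcal{H}^d(M \cap B)/\mathcal{H}^d(M)$, and since $M \cap B$ is contained in a geodesic cap of radius $\lesssim \ell$ on a sphere of radius $R$, a crude surface-area estimate (a spherical cap of Euclidean radius $\ell$ has area at most $\sigma_{d-1}$ times something of order $\ell^d$, or more simply it is covered by a flat disc of radius $\ell$ after projection) yields $\mathcal{H}^d(M \cap B) \leq \sigma_d \ell^d$... — in fact one wants $\mathcal{H}^d(M\cap B) \leq \mathcal{H}^d(M) (\ell/R)^d$, which follows since the cap of Euclidean radius $\ell$ is the image under $\exp$ of a ball of radius $\lesssim \ell$ in the tangent space and one compares with the total area $\sigma_d R^d$. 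This gives $TV(\lambda_M,\lambda_{M'}) \leq 12(\ell/R)^d$. For the reach gap $|1/\tau_M - 1/\tau_{M'}| \geq \eta/\ell^2$: this is where I would spend the most care. On the region where $\chi \equiv 1$, the map $\Phi$ is a rigid translation by $\eta\nu$, so near $p_0$ the manifold $M'$ coincides with a translate of $M$, hence still has directional curvature exactly $1/R$ there — that is not enough. The curvature change must come from the \emph{transition zone} of $\chi$: there the graph of the perturbed cap over $T_{p_0}M$ is $x \mapsto (\text{original height}) + \eta\chi(x/\ell)$, and the Hessian of $\eta\chi(x/\ell)$ has operator norm $\sim \eta/\ell^2$ at points where $\chi$ bends; choosing the sign of $\eta$ so this adds to the existing curvature $1/R$, one gets a point of $M'$ with directional curvature at least $1/R + c\,\eta/\ell^2$ for an explicit $c$ (one can even arrange the bump profile so that $c = 1$, absorbing the numerical factor into the admissible constants, since the statement only claims $\geq \eta/\ell^2$). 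By Proposition \ref{thm:geometry:upper_bound_on_second_derivative}, $1/\tau_{M'} \geq \|\gamma''_{M'}\| \geq 1/R + \eta/\ell^2$ at that point, while $1/\tau_M = 1/R$, giving the claim.

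The main obstacle I anticipate is making the reach-gap estimate rigorous with the stated \emph{clean} constant $1$ on the right-hand side: one must choose the bump profile $\chi$ carefully so that at some point its second derivative in a coordinate direction is at least $1$ in absolute value while simultaneously $\|d^k\chi\|_\infty$ stays below the values that yield $3, 23, 573$ — this is a concrete one-dimensional calculus-of-one-variable design problem (e.g. take $\chi$ radial, built from a standard mollifier, and track $\chi', \chi'', \chi'''$ explicitly), but it requires attention to ensure all four inequalities hold simultaneously for the \emph{same} fixed $\chi$ independent of $R, \ell, \eta, d$. A secondary subtlety is checking that $M' = \Phi(M)$ genuinely lies in the ambient $\R^D$ with the claimed smoothness and that no self-intersection or reach collapse happens elsewhere — but since $\Phi$ is $C^\infty$, globally a diffeomorphism, and $C^1$-close to the identity, Lemma \ref{reach_stability} (or directly Proposition \ref{thm:diffeomorphism_stability}) shows $\tau_{M'}$ stays bounded below by $\tau_{min}/2 = R/2 > 0$, so the only place the reach can be small is the intended curvature bump, and the gap estimate is a genuine lower bound on $1/\tau_{M'}$, not merely on a local curvature.
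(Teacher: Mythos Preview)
Your overall architecture matches the paper's: $M$ is a round $d$-sphere of radius $R$, $\Phi(x) = x + \eta\,\phi(x/\ell)\,\nu$ for a fixed radial bump $\phi$ supported in the unit ball, and the derivative bounds on $\Phi$, the volume bound, and the total-variation bound all follow exactly as you outline (the paper uses $\phi(x)=\exp\bigl(\tfrac{\|x\|^2}{\|x\|^2-1}\bigr)\mathbbm{1}_{\|x\|<1}$, whose first three derivative sup-norms give $3,23,573$).

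The one substantive divergence is the reach-gap estimate. You propose a \emph{local curvature} argument: find a point in the transition zone of $\chi$ where the Hessian of the added height $\eta\chi(\cdot/\ell)$ contributes at least $\eta/\ell^2$ to the second fundamental form, then invoke $\|\gamma''\|\le 1/\tau_{M'}$. As you correctly anticipate, this route forces you to (i) design a single bump whose second derivative attains at least $1$ somewhere while its $C^1,C^2,C^3$ norms stay below $3,23,573$, and (ii) control the discrepancy between the Hessian of the height function and the actual second fundamental form of the perturbed graph (the $1/\sqrt{1+|\nabla f|^2}$ factor, plus the fact that $\phi$ is evaluated at the full ambient point, not its tangential projection). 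Neither is insurmountable, but getting the clean constant $1$ requires real bookkeeping.

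The paper sidesteps all of this with a \emph{global medial-axis} argument that is both shorter and sharper. Because $\phi$ is radial, $M'=\Phi(M)$ inherits the rotational symmetry of $M$ about the axis $\mathrm{span}(\nu)$. The axis meets $M'$ in exactly two points $b=(0,\eta,0,\dots)$ and $b'=(0,-2R,0,\dots)$. One then writes down an explicit point $x_0$ on the axis and checks, by direct Euclidean distance computations, that the ``shoulder'' point $x=(R\sin\theta,\,R\cos\theta - R,\,0,\dots)$ with $\theta=\arccos(1-\ell^2/2R^2)$ (which lies on $M\cap M'$ at the edge of the bump's support) is strictly closer to $x_0$ than either $b$ or $b'$. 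By symmetry, $x_0$ then has at least two nearest neighbours on $M'$, hence $x_0\in Med(M')$ and $\tau_{M'}\le \|x_0-x\|$. An elementary estimate of $\|x_0-x\|$ gives $1/\tau_{M'}\ge 1/R + \eta/\ell^2$ directly. No curvature computation, no bump-profile tuning, no second-fundamental-form correction terms --- just distances between four explicit points. This is worth internalising: when the perturbation has a symmetry, producing a medial-axis point by symmetry is often cleaner than chasing curvature.
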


\begin{proof}[Proof of Lemma \ref{thm:two_hypotheses_construction_generic}]
Let ${M} \subset \R^{d+1} \times \{0\}^{D-d-1} \subset \R^D$ be the sphere of radius $R$ with center $(0,-R,0,\ldots,0)$.
The reach of $M$ is $\tau_{M} = R$, and its arc-length parametrized  geodesics are arcs of great circles, which have third derivatives of constant norm $\norm{\gamma'''(t)} = \frac{1}{R^2}$. Hence we see that $M \in \mathcal{M}^{d,D}_{R,\frac{1}{R^2}}$.
Let $\phi : \R^D \rightarrow \R_+$ be the map defined by $\phi(x) = \exp \bigl( \frac{\norm{x}^2}{\norm{x}^2-1} \bigr) \mathbbm{1}_{\norm{x}^2<1}$. 
$\phi$ is a symmetric $\mathcal{C}^\infty$ map with support equal to $\mathcal{B}(0,1)$ and elementary real analysis yields $\phi(0) = 1$, $\norm{d \phi}_{op} \leq 3$, $\norm{d^2 \phi}_{op} \leq 23$ and $\norm{d^3 \phi}_{op} \leq 573$. Let $\Phi : \R^D \rightarrow \R^D$ be defined by
\begin{align*}
\Phi(x) = x + \eta \phi\left( x/\ell \right)\cdot v,
\end{align*}
where $v = (0,1,0,\ldots,0)$ is the unit vertical vector. 
$\Phi$ is the identity map on $\mathcal{B}\left(0,\ell\right)^c$, and in $\mathcal{B}\left(0,\ell\right)$, $\Phi$ translates points on the vertical axis with a magnitude modulated by the weight function $\phi(x/\ell)$. 
From chain rule, $\norm{d \Phi - I_D}_{op} = \eta \norm{d \phi}_{\infty}/ \ell \leq 3\eta / \ell <1$. 
Therefore, $d_x \Phi$ is invertible for all $x\in \R^D$, so that $\Phi$ is a local $\mathcal{C}^\infty$-diffeomorphism according to the local inverse function theorem. 
Moreover, $\norm{\Phi(x)} \rightarrow \infty$ as $\norm{x} \rightarrow \infty$, so that $ \Phi $ is a global $\mathcal{C^\infty}$-diffeomorphism by Hadamard-Cacciopoli theorem \cite{DeMarco1994}. 
Similarly, from bounds on differentials of $\phi$ we get
\[
\norm{d^2 \Phi}_{op} \leq 23\frac{\eta}{\ell^2}
\hspace{1em}
\text{ and }
\hspace{1em}
\norm{d^3 \Phi}_{op} \leq 573 \frac{\eta}{\ell^3}.
\]

Let us now write $M' = \Phi\left(M \right)$ for the image of $M$ by the map $\Phi$ (see Figure~\ref{figure_lower_bound_no_boundary}). 
Denote by $\left(Oy\right)$ the vertical axis $\mathrm{span}(v)$, and notice that since $\phi$ is symmetric, $M'$ is symmetric with respect to the vertical axis $\left(Oy\right)$.
We now bound from above the reach $\tau_{M'}$ of $M'$ by showing that the point $x_0 = \left(0, \frac{R+\eta/2}{1+\frac{\ell^2}{2R\eta}} , 0,\ldots,0\right)$ belongs to its medial axis $Med(M')$ (see \eqref{eq:medialaxis}).
\begin{figure}
\centering
\includegraphics[clip = true, trim = 0 140 0 0mm,width=0.6\textwidth]{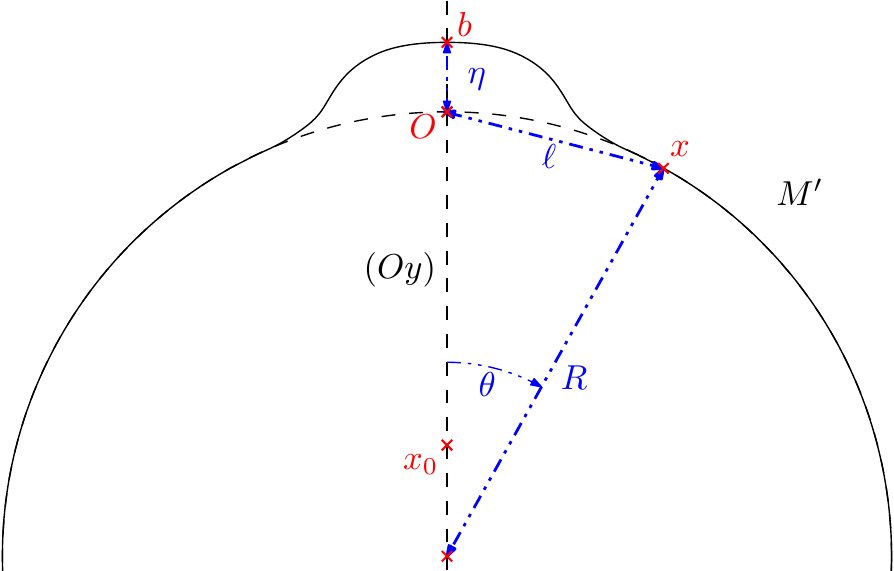}
\caption{The bumped sphere $M'$ of Lemma \ref{thm:two_hypotheses_construction_generic}.}
\label{figure_lower_bound_no_boundary}
\end{figure}
For this, write 
\begin{align*}
b = (0,\eta, 0,\ldots,0)
,
\hspace{1em}
b'=(0,-2R, 0,\ldots,0)
, 
\end{align*}
together with $\theta = \arccos (1 - \ell^2/(2R^2))$, and
\begin{align*}
x = (R\sin \theta, R \cos \theta -R, 0,\ldots,0).
\end{align*}
By construction, $b,b'$ and $x$ belong to $M'$. 
One easily checks that $\norm{x_0-x} < \norm{x_0-b}$ and $\norm{x_0-x} < \norm{x_0-b'}$, so that neither $b$ nor $b'$ is the nearest neighbor of $x_0$ on $M'$. 
But $x_0 \in \left(Oy\right)$ which is an axis of symmetry of $M'$, and $\left(Oy \right) \cap M' = \left\{b,b'\right\}$. 
As a consequence, $x_0$ has strictly more than one nearest neighbor on $M'$. That is, $x_0$ belongs to the medial axis $Med(M')$ of $M'$. 
Therefore,
\begin{align*}
\frac{1}{\tau_{M'}}
\geq
\frac{1}{d\left(x_0,M'\right)}
&\geq \frac{1}{\norm{x_0-x}} \\
&\geq 
\frac{1}{R\left| 1 - \frac{\ell^2}{2R^2}- \frac{1+\frac{\eta}{2R}}{1+\frac{\ell^2}{2R\eta}}  \right| } \\
&\geq \frac{1}{R \left(1 - \frac{1+\frac{\eta}{2R}}{1+\frac{\ell^2}{2R\eta}}\right) }
\geq \frac{1}{R}\left(1 + \frac{1+\frac{\eta}{2R}}{1+\frac{\ell^2}{2R\eta}}\right)
\geq \frac{1}{R} + \frac{\eta}{\ell^2},
\end{align*}
which yields the bound $\left| \frac{1}{\tau_{M}} - \frac{1}{\tau_{M'}}\right| = \left| \frac{1}{R} - \frac{1}{\tau_{M'}}\right|\geq \frac{\eta}{\ell^2}$.

Finally, since $M' = \Phi(M)$ with $\norm{d \Phi - I_D}_{op} \leq 2^{1/d}-1$ with $\Phi_{|\mathcal{B}(0,\ell)^c}$ coinciding with the identity map, Lemma \ref{total_variation_between_uniforms} yields 
$\mathcal{H}^d(M') \leq 2 \mathcal{H}^d(M) = 2 \sigma_d R^d$ and
\begin{align*}
TV\left(\lambda_M, \lambda_{M'}\right)
&\leq 
12 \lambda_M\left(\mathcal{B}(0,\ell)\right)
\\
&= 
12
\frac{
	\mathcal{H}^d\left(\mathcal{B}_{\mathcal{S}^d}\left(0,2 \arcsin\left(\frac{\ell}{2R}\right)\right)\right)
	}
	{
	\mathcal{H}^d\left(\mathcal{S}^d\right)
	} 
\\
&\leq 
12 \left( \frac{\ell}{R}\right)^d
,
\end{align*}
which concludes the proof.
\end{proof}

\begin{proof}[Proof of Proposition \ref{thm:two_hypotheses_construction}]

Apply  Lemma \ref{thm:two_hypotheses_construction_generic} with $R = 2\tau_{min}$. Then the sphere $M$ of radius $2\tau_{min}$ belongs to $\mathcal{M}^{d,D}_{2\tau_{min},1/(4\tau_{min}^2)}$. Furthermore, taking $\eta = c_{d} {\ell^3}/\tau_{min}^2$ for $c_{d} >0$ and $\ell > 0$ small enough, Proposition \ref{thm:diffeomorphism_stability} (applied to the unit sphere, yielding $c_d$, and reasoning by homogeneity for the sphere of radius $2\tau_{min}$) asserts that $M' = \Phi(M)$ belongs to $\mathcal{M}^{d,D}_{\tau_{min},1/(2\tau_{min}^2)} \subset \mathcal{M}^{d,D}_{\tau_{min},L}$, since $L \geq 1/(2\tau_{min}^2)$.
Moreover, 
\begin{align*}
\mathcal{H}^d(M')^{-1} \wedge \mathcal{H}^d(M)^{-1} 
\geq 
\bigl(
2^{d+1} \sigma_d \tau_{min}^d
\bigr)^{-1}
\geq
f_{min},
\end{align*}
so that $\lambda_M,\lambda_{M'} \in \mathcal{Q}^{d,D}_{\tau_{min},L,f_{min}}$, which gives the result.
\end{proof}

Let us now prove the minimax inconsistency of the reach estimation for $L = \infty$, using the same technique as above.

\begin{proof}[Proof of Proposition \ref{thm:minimax_nonconsistency}]
Let $M$ and $M'$ be given by Lemma \ref{thm:two_hypotheses_construction_generic} with $\ell \leq \frac{R}{2} \wedge (2^{1/d}-1)$,  $\eta = \ell^2/(23R)$ and $R = 2\tau_{min}$. We have $\norm{d \Phi - I_D}_{op} \leq 3\eta/\ell \leq 0.1$ and $\norm{d^2 \Phi}_{op} \leq 23 \eta/\ell^2 \leq 1/(2 \tau_{min})$. Since $\tau_M \geq 2 \tau_{min}$, Lemma \ref{reach_stability} yields
\begin{align*}
\tau_{M'} 
&\geq \frac{\tau_{M}(1 - \norm{ d \Phi - I_D }_{op})^2}{ \norm{d^2 \Phi }_{op} \tau_{M} + (1 + \norm{ d \Phi - I_D }_{op})} 
\geq \tau_{min}.
\end{align*}
As a consequence, $M$ and $M'$ belong to $\mathcal{M}^{d,D}_{\tau_{min}, L = \infty}$. 
Furthermore, since we have $f_{min} \leq \left({2^{d+1}\tau_{min}^d \sigma_d}\right)^{-1} \leq $ $\mathcal{H}^d(M)^{-1} \wedge \mathcal{H}^d(M')^{-1}$, we see that the uniform distributions $\lambda_M,\lambda_{M'}$ belong to $\mathcal{Q}^{d,D}_{\tau_{min},L = \infty,f_{min}}$.
Let now $P,P'$ denote the distributions of $\mathcal{P}^{d,D}_{\tau_{min},L = \infty,f_{min}}$ associated to $\lambda_M,\lambda_{M'}$ (Definition \ref{def:model_with_known_tangent_spaces}).
Lemma \ref{thm:total_variation_identiy} asserts that $TV(P,P') = TV(\lambda_M,\lambda_{M'})$.
Applying Lemma \ref{lecam_lemma} to $P,P'$, we get that for all $n\geq 1$, for $\ell$ small enough,
\begin{align*}
\inf_{\hat{\tau}_n} \sup_{P \in \mathcal{P}^{d,D}_{\tau_{min},L=\infty, f_{min}}} \E_{P^n} \left| \frac{1}{\tau_P} - \frac{1}{\hat{\tau}_n} \right|^p
&\geq
\frac{1}{2^{p}}\left|\frac{1}{\tau_M}-\frac{1}{\tau_{M'}}\right|^p \left(1-TV(P,P')\right)^{n} 
\\
&\geq
\frac{1}{2^p}
\left(
\frac{\eta}{\ell^2}
\right)^p
\left(
1
-
12\left(\frac{\ell}{2\tau_{min}}\right)^d
\right)^n
\\
&=
\frac{1}{2^p}
\left(
\frac{1}{46 \tau_{min}}
\right)^p
\left(
1
-
12\left(\frac{\ell}{2\tau_{min}}\right)^d
\right)^n
.
\end{align*}
Sending $\ell \rightarrow 0$ with $n\geq 1$ fixed yields the announced result.

\end{proof}

\section{Stability with Respect to Tangent Spaces}
\label{sec:appendix:stability_tangent}
\begin{proof}[Proof of Proposition \ref{thm:tangent_stability}]
To get the bound on the difference of suprema, we show the (stronger) pointwise bound. Indeed, for all $x,y \in \X$ with $x\neq y$,
\begin{align*}
\left| \frac{2d(y-x,T_x)}{\norm{y-x}^2} - \frac{2d(y-x,\tilde{T}_x)}{\norm{y-x}^2}\right|
&\leq \frac{2\Vert{\pi_{T_x}(y-x) - \pi_{\tilde{T}_x}(y-x)}\Vert}{\norm{y-x}^2}  \\
&\leq \frac{2 \Vert {\pi_{T_x} - \pi_{\tilde{T}_x}}\Vert _\mathrm{op} }{\norm{y-x}} 
\leq \frac{2 \sin \theta}{\delta}.
\end{align*}
\end{proof}

\bibliographystyle{imsart-number}
\bibliography{biblioReach}
\end{document}